\newcommand{\xin}[1]{{\color{blue}{#1}}}
\numberwithin{equation}{section}
\newtheorem{theorem}{Theorem}[section]
\newtheorem{lemma}[theorem]{Lemma}
\newtheorem{proposition}[theorem]{Proposition}
\newtheorem{remark}[theorem]{Remark}
\newtheorem{definition}[theorem]{Definition}
\theoremstyle{remark}
\newcommand{\C}{\mathbbm{C}}
\newcommand{\D}{\mathbbm{D}}
\newcommand{\E}{\mathbbm{E}}
\newcommand{\N}{\mathbbm{N}}
\newcommand{\Q}{\mathbbm{Q}}
\newcommand{\R}{\mathbbm{R}}
\renewcommand{\P}{\mathbbm{P}}
\newcommand{\bbH}{\mathbbm{H}}
\newcommand{\eps}{\varepsilon}
\newcommand{\1}{\mathbf{1}}
\newcommand{\sph}{\mathrm{sph}}
\newcommand{\disk}{\mathrm{disk}}
\newcommand{\sm}{\mathsf{m}}
\newcommand{\Gg}{\Gamma_{\frac{\gamma}{2}}}
\newcommand{\Sg}{S_{\frac{\gamma}{2}}}
\newcommand{\cMtwo}{\mathcal{M}_{2}^\mathrm{disk}}
\newcommand{\cMthree}{\mathcal{M}_{2,\bullet}^\mathrm{disk}}
\newcommand{\Leb}{\mathrm{Leb}}
\newcommand{\LF}{\mathrm{LF}}
\newcommand{\QD}{\mathrm{QD}}
\newcommand{\QS}{\mathrm{QS}}
\newcommand{\mm}{m^\lambda_\gamma}
\newcommand{\wtmm}{\wt m^\lambda_\gamma}
\newcommand{\conf}{\operatorname{conf}}
\newcommand{\haar}{\mathbf{m}}
\let\Re\undefined
\DeclareMathOperator{\Re}{Re}
\DeclareMathOperator{\Var}{Var}
\DeclareMathOperator{\SLE}{SLE}
\def\cX{\mathcal{X}}
\def\cS{\mathcal{S}}
\def\cM{\mathcal{M}}
\def\cH{\mathcal{H}}
\def\cF{\mathcal{F}}
\def\cD{\mathcal{D}}
\def\cC{\mathcal{C}}
\def\alb#1\ale{\begin{align*}#1\end{align*}}
\def\allb#1\alle{\begin{align}#1\end{align}}
\newcommand{\aryb}{\begin{eqnarray*}}
\newcommand{\arye}{\end{eqnarray*}}
\def\alb#1\ale{\begin{align*}#1\end{align*}}
\newcommand{\eqb}{\begin{equation}}
\newcommand{\eqe}{\end{equation}}
\newcommand{\eqbn}{\begin{equation*}}
\newcommand{\eqen}{\end{equation*}}
\newcommand{\BB}{\mathbbm}
\newcommand{\ol}{\overline}
\newcommand{\op}{\operatorname}
\newcommand{\rta}{\rightarrow}
\newcommand{\wt}{\widetilde}
\newcommand{\wh}{\widehat}
\newcommand{\bdy}{\partial}
\let\originalleft\left
\let\originalright\right
\renewcommand{\left}{\mathopen{}\mathclose\bgroup\originalleft}
\renewcommand{\right}{\aftergroup\egroup\originalright}
\DeclareMathAlphabet{\mathpzc}{OT1}{pzc}{m}{it}
\begin{document}

\title{Integrability of SLE via conformal welding of random surfaces}
\author{
	\begin{tabular}{c}Morris Ang\\[-5pt]\small MIT\end{tabular}\; 
	\begin{tabular}{c}Nina Holden\\[-5pt]\small ETH Z\"urich\end{tabular}\; 
	\begin{tabular}{c}Xin Sun\\[-5pt]\small  University of Pennsylvania \end{tabular}
} 
\date{  }

\maketitle

\begin{abstract}
We demonstrate how to obtain integrability results  for the  Schramm-Loewner evolution (SLE) from Liouville conformal field theory (LCFT) and the mating-of-trees framework for Liouville quantum gravity (LQG). In particular,  we prove an exact formula for the law of a conformal derivative of a classical variant of SLE called $\SLE_\kappa(\rho_-;\rho_+)$. Our proof is built on two connections between SLE, LCFT, and mating-of-trees. Firstly, LCFT  and mating-of-trees  provide equivalent but  complementary methods to describe natural  random surfaces in LQG. Using a novel tool that we call the \emph{uniform embedding} of an LQG surface, we extend earlier equivalence results by allowing fewer marked points and more generic singularities. 
Secondly, the conformal welding of these random surfaces produces SLE curves as their interfaces. In particular, we rely on the conformal welding results proved in our companion paper~\cite{ahs-disk-welding}. Our paper is an essential part of a program proving  integrability results for SLE, LCFT, and mating-of-trees based on  these two connections.

\smallskip
\noindent Keywords: Schramm–Loewner evolution; conformal welding; Liouville quantum gravity; mating-of-trees; Liouville conformal field theory.
\end{abstract}


\section{Introduction}

Two dimensional (2D) conformally invariant random processes have been an active area of research in probability theory for the last two decades. In this paper, we consider the interplay between three central topics in this area: Schramm-Loewner evolution (SLE), Liouville conformal field theory (LCFT), and the mating-of-trees framework for Liouville quantum gravity (LQG).
SLE~\cite{schramm0} is a classical family of random planar curves  which describe the scaling limits of many 2D statistical physics model at criticality, e.g.~\cite{smirnov-cardy,lsw-lerw-ust,ss-dgff, chel-smirnov}. 
LQG is a family of  random planar geometries~\cite{shef-kpz,dddf-lfpp,gm-uniqueness} that naturally arise in the study of string theory and 2D quantum gravity~\cite{polyakov-qg1}. 
It also  describes the scaling limit of a large class of random planar maps, see e.g.~\cite{shef-burger,gwynne-miller-saw,hs-cardy-embedding}.
LCFT is the 2D quantum field theory that governs LQG which is recently made rigorous by~\cite{dkrv-lqg-sphere} and follow-up works.
Mating-of-trees~\cite{wedges} is an encoding of SLE on the LQG background via Brownian motions.
See~\cite{lawler-book, berestycki-lqg-notes, vargas-dozz-notes, gwynne-ams-survey,ghs-mating-survey}
and references therein for more background  on these rapidly developing topics. 
 
One key feature shared by the three topics is the rich integrable  (i.e., exactly solvable) structure. 
First, since its discovery, many exact formulas
for SLE have been proved or conjectured;  see e.g.~\cite{ssw-radii,schramm-left-passage,als-cle-radius,dub-watts,sw-watts}.
Moreover, as an important example of 2D conformal field theory, 
LCFT enjoys rich integrability predicted by theoretical physics~\cite{bpz-conformal-symmetry,do-dozz,zz-dozz,fzz}, some of which were recently proved in~\cite{krv-dozz,remy-fb-formula,rz-gmc-interval,rz-boundary,gkrv-bootstrap,ARS-FZZ}. 
Finally, mating-of-trees expresses  many observables defined by SLE and LQG via Brownian motion and related processes; see e.g.  \cite{wedges,sphere-constructions,ag-disk,ghm-conformal-dim,msw-cle}.
 In this paper we demonstrate how to obtain integrable results  for SLE from LCFT and mating-of-trees by proving 
an exact formula for a classical variant of SLE called $\SLE_\kappa(\rho_-;\rho_+)$; see Theorem~\ref{thm-conformal-derivative0}.

Our paper is part of a  program by the first and the third authors connecting the aforementioned three types of integrable structures and proving new results in each direction.
The foundation of the  program are two bridges  between these objects. The first bridge is that
LCFT~\cite{dkrv-lqg-sphere} and mating-of-trees~\cite{wedges} provide equivalent but  complementary methods to describe natural  random surfaces in LQG.  
This equivalence was first demonstrated for the quantum sphere in~\cite{ahs-sphere} and recently extended to the  quantum disk in~\cite{cercle-quantum-disk}.
Using what we call the uniform embedding of quantum surfaces,  
we provide more conceptual and unified proofs for  these facts and greatly extend them; see Section~\ref{subsec:equivalence}.

The second bridge is that  random surfaces behave well under conformal welding with  SLE curves as their interfaces.  
The conformal welding results needed for our paper are established in  our companion paper~\cite{ahs-disk-welding}, extending 
the seminal works~\cite{shef-zipper,wedges}.  The way we use it to  prove Theorem~\ref{thm-conformal-derivative0} is instrumental for the entire program.
In particular, it is crucial to the forthcoming work of the first and the third authors on the integrability of the conformal loop ensemble~\cite{AS-CLE}, as well as  their joint work with Remy~\cite{ARS-FZZ} on the proof of the FZZ formula in LCFT. See~Section~\ref{subsec:welding} for an overview of this method.

\subsection{An integrability result on $\SLE_\kappa(\rho_-;\rho_+)$}\label{subsec:integrability}
We now present our main result concerning the integrability of SLE. 
For $\kappa>0$ and $\rho_-,\rho_+>-2$, the (chordal) $\SLE_\kappa(\rho_-;\rho_+)$ is the natural generalization of the chordal $\SLE_\kappa$ where one keeps track of two extra marked points on the domain boundary called force points. The parameters $\rho_\pm$ indicate to what extent the force points attract or repulse the curve. 
In our paper the force points are always located infinitesimally to the left and right, respectively, of the starting point of the curve. 
$\SLE_\kappa(\rho_-;\rho_+)$ was introduced in~ \cite{lsw-restriction} and  studied in e.g.\ \cite{dubedat-rho,ig1}. 
See Appendix~\ref{app:sle-def}  for more background.

Let $\eta$ be an $\SLE_\kappa(\rho_-; \rho_+)$ curve on the upper half plane $\bbH$ from $0$ to $\infty$, which is a random curve on $\bbH \cup \R$. 
When $\rho_- > -2$ and $\rho_+ > (-2)\vee(\frac\kappa2 - 4)$, then the point $1$ is almost surely not on the trace of $\eta$~\cite{ig1}. 
Therefore, we can define the open set $D$ to be the connected component of $\bbH \backslash \eta$ which contains the point $1$. Let $\psi: D \to \bbH$ be the conformal map which fixes $\psi(1)=1$ and maps the first (resp. last) point on $\partial D$ traced by $\eta$ to 0 (resp. $\infty$). Note that if $\rho_->\frac\kappa4-2$ then the curve does not touch $(0,\infty)$ so that $\psi$ will fix $0,1,\infty$ in this case.

Our first main result gives the exact distribution of $\psi'(1)$ in terms of its moment generating function. To describe this result, we need the double gamma function $\Gamma_{b}(z)$ which arises in LCFT. We recall its precise definition in~\eqref{eq:double-gamma}.  Using $\Gamma_{b}(z)$, we introduce
\begin{equation}\label{eq:def-F}
F(x, \kappa, \rho_-, \rho_+) := \frac{ \Gamma_{\frac{\sqrt{\kappa}}{2}} 
	(\frac2{\sqrt\kappa}-\frac{\sqrt\kappa}2 + \frac{\rho_+}{\sqrt\kappa} + \frac x2) \;\;\Gamma_{\frac{\sqrt{\kappa}}{2}} (\frac4{\sqrt\kappa} + \frac{\rho_+}{\sqrt\kappa} - \frac x2 )}{\Gamma_{\frac{\sqrt{\kappa}}{2}} (\frac4{\sqrt\kappa} -\frac{\sqrt\kappa}2 +\frac{\rho_- + \rho_+}{\sqrt\kappa} + \frac x2) \Gamma_{\frac{\sqrt{\kappa}}{2}} (\frac6{\sqrt\kappa} +\frac{\rho_- + \rho_+}{\sqrt\kappa}-\frac x2)}.
\end{equation}

\begin{theorem}\label{thm-conformal-derivative0}
	Fix $\kappa >0 $, $\rho_- > -2$ and $\rho_+ > (-2)\vee(\frac\kappa2 - 4)$.  Let $\lambda_0 = \frac1\kappa(\rho_+ + 2)(\rho_+ + 4 - \frac\kappa2)$. 
	For any $\lambda < \lambda_0$, let $\alpha$ be a solution to $1 - \frac\alpha2 (\frac{\sqrt\kappa}2 + \frac2{\sqrt\kappa} - \frac\alpha2) = \lambda$. Then we have
	\alb
	\E[\psi'(1)^\lambda] &= \frac{F(\alpha, \kappa, \rho_-, \rho_+)}{F(\sqrt \kappa, \kappa, \rho_-, \rho_+)}.
	\ale
	Moreover, for any $\lambda \geq \lambda_0$ we have $\E[\psi'(1)^\lambda] = \infty$. 
\end{theorem}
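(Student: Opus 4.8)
Write $\gamma$ for the Liouville quantum gravity parameter matched to $\kappa$ (so $\gamma=\sqrt\kappa$ for $\kappa\le 4$ and $\gamma=4/\sqrt\kappa$ for $\kappa>4$), $Q=\frac\gamma2+\frac2\gamma=\frac{\sqrt\kappa}2+\frac2{\sqrt\kappa}$, $\Delta_\alpha=\frac\alpha2\big(Q-\frac\alpha2\big)$, and $W_\pm=\rho_\pm+2$. The relation defining $\alpha$ in the statement is exactly $\lambda=1-\Delta_\alpha$, and $\alpha=\sqrt\kappa$ gives $\Delta_{\sqrt\kappa}=1$, i.e.\ $\lambda=0$; this is why dividing by $F(\sqrt\kappa,\kappa,\rho_-,\rho_+)$ is the correct normalization, since at $\lambda=0$ both sides of the claimed identity equal $1$. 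The plan is to realize $\eta$ as the conformal-welding interface of two quantum disks, with one extra boundary insertion of weight $\alpha$ placed at the point $1$, and to compute the total mass of the resulting measure in two ways. By the conformal welding theorem of the companion paper~\cite{ahs-disk-welding}, combined with the equivalence between the Liouville CFT and mating-of-trees descriptions of quantum disks from Section~\ref{subsec:equivalence}, welding a weight-$W_-$ quantum disk to a weight-$W_+$ quantum disk along a boundary arc produces a weight-$(W_-+W_+)$ quantum disk decorated by an interface curve which, conditionally on the decorated quantum surface, is an $\SLE_\kappa(\rho_-;\rho_+)$ between its two marked points. I then place an extra marked point carrying an $\alpha$-insertion on the boundary arc that, after welding, forms part of the outer boundary on the side of the interface containing $1$; call the resulting measure $M$, and Möbius-normalize so that the interface endpoints are $0$ and $\infty$, the $\alpha$-point is $1$, and the interface is our curve $\eta$.

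Now condition on $\eta$ and look at the component $D$ of $\bbH\setminus\eta$ containing $1$, marked by the first and last points of $\eta$ on $\partial D$ and by $1$; by construction the conformal map $\psi$ of the statement is exactly the canonical embedding of this three-marked quantum surface into $(\bbH;0,1,\infty)$. The heart of the argument is to identify the conditional law of this surface: by the welding theorem it is (conditionally on $\eta$) a weight-$W_+$ quantum disk carrying an $\alpha$-insertion at the marked point corresponding to $1$, but embedded via $\psi$ rather than via the reference embedding used to define that quantum disk. Comparing the two embeddings, i.e.\ tracking the Liouville-field coordinate change $\phi\mapsto\phi\circ\psi^{-1}+Q\log|(\psi^{-1})'|$ near the insertion, the $\alpha$-primary contributes a factor $|\psi'(1)|^{-\Delta_\alpha}$ through its conformal weight and the position of the extra marked point relative to $\partial\bbH$ contributes one further power of $|\psi'(1)|$; the net effect is that the conditional law of $\eta$ under $M$ is that of a plain $\SLE_\kappa(\rho_-;\rho_+)$ reweighted by $\psi'(1)^{\,1-\Delta_\alpha}=\psi'(1)^\lambda$ (consistently, at $\alpha=\sqrt\kappa$ the $\alpha$-insertion becomes a quantum-length-typical point and the reweighting disappears). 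It follows that, up to an $\alpha$-independent constant,
\[
\|M\| \;=\; \big\|\QD_{W_-+W_+}^{(\alpha)}\big\|\cdot\E_{\SLE_\kappa(\rho_-;\rho_+)}\!\big[\psi'(1)^\lambda\big],
\]
where $\QD_{W_-+W_+}^{(\alpha)}$ denotes the marginal law of the decorated welded disk, a weight-$(W_-+W_+)$ quantum disk with an $\alpha$-insertion.

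On the other hand $M$ is a product of a weight-$W_-$ quantum disk and a weight-$W_+$ quantum disk with an $\alpha$-insertion, so $\|M\|=\|\QD_{W_-}\|\cdot\|\QD_{W_+}^{(\alpha)}\|$; hence, up to an $\alpha$-independent factor, $\E[\psi'(1)^\lambda]=\|\QD_{W_+}^{(\alpha)}\|/\|\QD_{W_-+W_+}^{(\alpha)}\|$. The masses $\|\QD_{W_+}^{(\alpha)}\|$ and $\|\QD_{W_-+W_+}^{(\alpha)}\|$ are given explicitly by Liouville CFT (the boundary structure constants recalled in Section~\ref{subsec:equivalence}): each equals, up to $\alpha$-independent prefactors, a ratio of two values of the double gamma function $\Gamma_{\sqrt\kappa/2}$ with arguments built from $\alpha$ and $W_+$ (respectively $W_-+W_+$), matching exactly the numerator (respectively the reciprocal of the denominator) of $F(\alpha,\kappa,\rho_-,\rho_+)$ in~\eqref{eq:def-F}. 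Dividing the resulting identity by its $\alpha=\sqrt\kappa$ instance cancels every $\alpha$-independent factor and yields $\E[\psi'(1)^\lambda]=F(\alpha,\kappa,\rho_-,\rho_+)/F(\sqrt\kappa,\kappa,\rho_-,\rho_+)$. For the infinite-moment statement, $\lambda_0$ is precisely the value of $\lambda$ at which a numerator double-gamma factor of $F(\alpha,\cdot)$ hits a pole, equivalently at which the mass $\|\QD_{W_+}^{(\alpha)}\|$ ceases to be finite (a Seiberg-type / Gaussian-multiplicative-chaos integrability threshold at the $\alpha$-insertion): one checks directly that $F(\alpha,\cdot)/F(\sqrt\kappa,\cdot)\to\infty$ as $\lambda\uparrow\lambda_0$, which forces $\E[\psi'(1)^{\lambda_0}]=\infty$; then, since $\lambda_0>0$ under the hypothesis on $\rho_+$, for every $\lambda\ge\lambda_0$ one has $\E[\psi'(1)^\lambda]\ge\E[\psi'(1)^{\lambda_0}\mathbf 1_{\psi'(1)\ge 1}]=\infty$.

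The crux is the second step: proving rigorously, for all $\rho_\pm>-2$ --- including the thin regime $W_\pm<\gamma^2/2$ and the case where $\eta$ bounces off $\R$ so that $D$ is only one bead of a beaded quantum disk --- that conditionally on $\eta$ the three-marked surface $D$ is distributed as a weight-$W_+$ quantum disk with an $\alpha$-insertion reweighted by precisely $\psi'(1)^\lambda$, with the correct finite nonzero $\alpha$-independent proportionality constant. This is exactly where the uniform-embedding tool of Section~\ref{subsec:equivalence} and the generic-weight conformal welding of~\cite{ahs-disk-welding} do the essential work. The remaining ingredients --- the double-gamma formulas for the quantum-disk masses, and the bookkeeping for the two branches of $\alpha(\lambda)$ together with the analytic continuation to the range $\lambda\le 1-Q^2/4$ where $\alpha$ is complex --- are routine given those cited results.
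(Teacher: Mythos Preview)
Your high-level picture is right up through the identification of the curve law under the welded measure as $\SLE_\kappa(\rho_-;\rho_+)$ reweighted by $\psi'(1)^{1-\Delta_\alpha}=\psi'(1)^\lambda$. The gap is the step $\|M\|=\|\QD_{W_-}\|\cdot\|\QD_{W_+}^{(\alpha)}\|$. The welding does not factor that way: the two disks are coupled through the length of the glued arc, and after fixing the unmarked boundary length $\ell$ the welding equation reads
\[
|\cMthree(W_-+W_+;\alpha;\ell)|\cdot\E[\psi'(1)^\lambda]
\;=\; c_{W_-,W_+}\int_0^\infty |\cMtwo(W_-;\ell,x)|\cdot|\cMthree(W_+;\alpha;x)|\,dx.
\]
To evaluate the right side you need the \emph{joint} law $|\cMtwo(W_-;\ell,x)|$ of the two boundary lengths of the left disk, not just a total mass (which is infinite anyway). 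For general $W_-$ this joint law is governed by the reflection coefficient and the integral has no closed form. The paper's route is to note that for $W_-\in\{2,\gamma^2/2\}$ mating-of-trees gives an explicit kernel (Proposition~\ref{prop-MOT}) which turns the integral into a beta integral; this proves the theorem only for $\rho_-\in\{0,\frac\kappa2-2\}$. General $\rho_-$ is then reached by a separate mechanism: welding three disks gives a composition relation (Lemma~\ref{lem-composition}), and specializing the first weight to $2$ or $\gamma^2/2$ produces two shift equations in $\beta_-$ (by $\frac2\gamma$ and $\frac\gamma2$) which, for irrational $\kappa$, determine the answer uniquely; continuity and SLE duality then cover the remaining $\kappa$.

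A symptom of the same gap is your claim that $\|\QD_{W_+}^{(\alpha)}\|$ matches the numerator of $F$. The relevant LCFT quantity is $\ol H^{(\beta_+,\beta_+,\alpha)}_{(0,1,0)}$, whose $\alpha$-dependent double-gamma content is $\Gg(\tfrac\alpha2)^2\Gg(Q-\beta_++\tfrac\alpha2)\Gg(\beta_++\tfrac\alpha2-\tfrac\gamma2)/\Gg(\alpha)$, not $\Gg(Q-\beta_++\tfrac\alpha2)\Gg(2Q-\beta_+-\tfrac\alpha2)$ as in $F$. The factor $\Gg(2Q-\beta_+-\tfrac\alpha2)$ is produced precisely by the beta integral against the mating-of-trees kernel, and the remaining $\alpha$-dependent pieces of $\ol H$ only cancel between numerator and denominator after the shift-equation bootstrap, not by taking a direct ratio.
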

In Theorem~\ref{thm-conformal-derivative0}, the value of $F(\alpha, \kappa, \rho_-, \rho_+)$ does not depend on which value of $\alpha$ is chosen as the solution of the quadratic equation. Moreover, the point $1$ in $\psi'(1)$ is merely for concreteness. The result for other points follows from rescaling.

Our proof of Theorem~\ref{thm-conformal-derivative0} does not use stochastic calculus coming from the Loewner evolution definition of $\SLE_\kappa(\rho_-; \rho_+)$, as is done in many exact calculations concerning SLE, see~e.g.~\cite{lawler-book}.
Instead, we rely on
the following ingredients: the description of natural quantum surfaces in LQG via LCFT; conformal welding of finite volume quantum surfaces from~\cite{ahs-disk-welding}; the integrability results of Remy and Zhu~\cite{rz-gmc-interval,rz-boundary} on boundary  LCFT; and mating-of-trees description of some special quantum surfaces.
We will elaborate on these ingredients in Sections~\ref{subsec:equivalence} and~\ref{subsec:welding}   

\subsection{Two perspectives on random surfaces in Liouville quantum gravity}\label{subsec:equivalence}

A key ingredient in our proof of Theorem~\ref{thm-conformal-derivative0}  is a thorough understanding of two perspectives on random surfaces in LQG when the underlying complex structure enjoys an abundance of conformal symmetries. The first perspective is the quantum surface and the second one is the path integral formalism of LCFT.

We start by recalling some basic geometric concepts in   LQG. We will keep the review brief and  provide more details and references in Section~\ref{subsec-GFF}.
The free boundary Gaussian free field (GFF) on a planar domain $D\subsetneq C$ is the Gaussian process on $D$ with covariance kernel  given by the Neumann Green function on $D$, which can be viewed as a random generalized function on $D$~\cite{shef-gff}.
There are other variants of the GFF
which have the same regularity. 
Suppose $h$ is a variant of  GFF defined on $D$.  The $\gamma$-LQG area measure $\mu_h$ associated with $h$ is formally defined by $e^{\gamma h}d^2z$, which is made rigorous by regularization and normalization~\cite{shef-kpz}.

Fix $\gamma\in (0,2)$. Suppose $f:D\rightarrow \wt D$ is a conformal map between two domains $D$ and $\wt D$. For a generalized function $h$  on $D$, define
\begin{equation}\label{eq:coordinate}
f \bullet_\gamma h = h \circ f^{-1} + Q  \log \left| (f^{-1})' \right| \quad  \textrm{where } Q= \frac\gamma2 + \frac2\gamma.
\end{equation}
If $h$ is a variant of GFF,  then the pushforward of the $\gamma$-LQG area measure $\mu_{h}$ under $f$ equals $\mu_{\wt h}$ a.s\ where $\wt h=f \bullet_\gamma h$.
Equation \eqref{eq:coordinate} is called the coordinate change formula for $\gamma$-LQG. 

Suppose $h$ is the free boundary GFF on $D$.
If $D$ has a boundary segment $L\subset \R$, then we can define $\gamma$-LQG boundary  length measure $\nu_h=e^{\frac\gamma2 h} dz$ on $L$ similarly as $\mu_h$. 
For general domains, the definition of $\nu_h$ can be extended via conformal maps and the coordinate change formula \eqref{eq:coordinate}. It is also possible to define a random metric on $D$ associated with $h$ (see~\cite{dddf-lfpp,gm-uniqueness}) but the metric will not be considered in our paper. 
	
In light of the coordinate change formula, Sheffield~\cite{shef-zipper}   introduced the notion of quantum surface. 
Suppose  $h$ and $\wt h$ are generalized functions on two domains $D$ and $\wt D$, respectively. 
For $\gamma\in (0,2)$, we say that $(D, h) \sim_\gamma (\wt D, \wt h)$ if there exists a conformal map $f:  D \to  \wt D$ such that $\wt h = f\bullet_\gamma h$.
A   \emph{quantum surface}   is an equivalence class of pairs $(D,h)$ under this equivalence relation, and an
\emph{embedding}  of the quantum surface is a choice of $(D, h)$ from the equivalence class. We can also  consider  quantum surfaces decorated  
by other structures such as points or curves, via a natural generalization of the equivalence relation; see Section~\ref{sec:disk-welding}.

Liouville conformal field theory (LCFT) is the quantum field theory corresponding to the Liouville action which originates from Polyakov's work on quantum gravity and bosonic string theory~\cite{polyakov-qg1}.  It associates a random field to each two dimensional Riemannian manifold which all together form a conformal field theory. LCFT was first rigorously constructed  on the sphere by  David, Kupiainen, Rhodes and Vargas~\cite{dkrv-lqg-sphere} by making sense of the path integral for the Liouville action.  It was later extended to other surfaces~\cite{hrv-disk,remy-annulus,drv-torus,gkrv-genus}.

We will focus on the LCFT on the Riemann sphere $\wh \C$ and  the upper half plane $\bbH$.  
The basic inputs are the Liouville fields $\LF_{ \C}$ and $\LF_{\bbH}$.
These are infinite measures on the space of generalized functions on $ \C$ and $\bbH$, obtained from an additive perturbation of GFF.
See Definitions~\ref{def-LF-sphere} and \ref{def-LF-rz}. For $z_1,\cdots, z_k\in  \C$, and $\alpha_1,\dots,\alpha_k$, 
one can add \emph{insertion}s to  $\LF_{ \C}$ by making sense of $\prod_{i=1}^k  e^{\alpha_i \phi(z_i)}\LF_{ \C}$, which we denote by 
$\LF_{ \C}^{(z_1,\alpha_1),\cdots , (z_k,\alpha_k)}$; see Definition~\ref{def-RV-sph}. 
We can similarly define Liouville fields on $\bbH$ with insertions, where for $z_k\in \bdy \bbH$,
we need to replace $e^{\alpha_i \phi(z_i)}$ by $e^{\frac{\alpha_i}{2} \phi(z_i)}$. 
The Liouville correlation functions, which are the fundamental observables in LCFT,  are defined in terms of certain averages  over  these random fields. 

Quantum surfaces and LCFT provide two perspectives on random surfaces in LQG.
For those arising as the scaling limit of canonical measures on discrete random surfaces (a.k.a.\ random planar maps), both perspectives provide natural and instrumental descriptions.
Their close relation has been demonstrated  by Aru, Huang and the third author~\cite{ahs-sphere} for  the \emph{quantum sphere} and by Cercl\'{e}~\cite{cercle-quantum-disk} for the \emph{quantum disk}. The quantum sphere with $k$ marked points (Definition~\ref{def-QS}) is a quantum surface with spherical topology with $k$ marked points defined by Duplantier, Miller and Sheffield~\cite{wedges}. They arise as the scaling limit of natural planar maps models on the sphere; see~\cite{ghs-mating-survey} for a review. 
We similarly have the quantum disk with $m$ interior marked points and $n$ boundary marked points; see Definition~\ref{def-QD}.
We use $\QS_k$  and $\QD_{m,n}$  to denote their distributions, respectively.  We also write $\QS_0$ as $\QS$ and $\QD_{0,0}$ as $\QD$.
Without constraints on area or boundary length, these measures are infinite. 
The main result of~\cite{ahs-sphere} says that  modulo a multiplicative constant, 
$\LF_{ \C}^{(z_1,\gamma), (z_2,\gamma),(z_3, \gamma)}$ equals $\QS_3$ embedded on $(\C, z_1,z_2,z_3)$.
By~\cite{cercle-quantum-disk}, the same holds with $\C$ replaced by $\bbH$, $\QS_3$ replaced by $\QD_{0,3}$, and $z_1,z_2,z_3$ assumed to be on  
$\bdy \bbH$. 

One major difference between the two perspectives is that
for LCFT, the number of marked points is often assumed to be such that
 the marked surface has a unique conformal structure. On the other hand, many important quantum surfaces do not have enough marked points to fix the conformal structure, such as $\QS_k$ and $\QD_{0,k}$ for $k\le 2$.
The starting point of our paper is the observation that even without enough marked points, Liouville fields, possibly with insertions,
describe natural  quantum surfaces that are embedded in a uniformly random way.

To concretely demonstrate our point, let $D$ be either a simply connected domain conformally equivalent to  $\wh\C$ or $\bbH$. 
Let $\conf(D)$ be the group of conformal automorphisms of $D$ where the group multiplication is the function composition $f\cdot g=f\circ g$.
Let $\haar_{D}$ be a Haar measure on $\conf(D)$, which is both left and right invariant. 
Suppose $\mathfrak f$ is a sample from $\haar_{D}$ and $h$ is a function on $D$.
We call the random function $\mathfrak f\bullet_\gamma h$ the \emph{uniform embedding} of $(D,h)$ via $\haar_D$.
By the invariance property of $\haar_D$, the law of $\mathfrak f\bullet_\gamma h$ only depends on $(D,h)$ as a quantum surface.
We write $\haar_{\wh\C}\ltimes \QS$ as the law of $\mathfrak f\circ h$ 
where $(\C, h)$ is an embedding of a sample from the quantum sphere measure $\QS$, and  $\mathfrak f$ is independently sampled from $\haar_{\wh \C}$.
We call $\haar_{\wh \C}\ltimes \QS$ the \emph{uniform embedding of $\QS$} via $\haar_{\wh \C }$. 
We define  $\haar_{\bbH} \ltimes \QD$  in the exact same way.
Here although $\haar_{\wh \C},\haar_{\bbH},\QS,\QD$ are only $\sigma$-finite measures,
we adopt  probability terminologies such as sample, law, and independence. 
\begin{theorem}\label{thm-haar}
For $\gamma\in (0,2)$, there exist 
constants $C_1$ and $C_2$ such that
\begin{equation}\label{key}
\haar_{\wh \C}	\ltimes\QS  = C_1 \cdot \LF_{\C} \quad \textrm{and}\quad \haar_{\bbH} 	\ltimes\QD  = C_2 \cdot \LF_\bbH.
\end{equation}
\end{theorem}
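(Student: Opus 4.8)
The plan is to bootstrap from the three-point equivalences already available --- \cite{ahs-sphere} on the sphere and \cite{cercle-quantum-disk} on the disk --- by adjoining three marked points to $\QS$ (resp.\ $\QD$) and three insertions to $\LF_\C$ (resp.\ $\LF_\bbH$), checking that the enhanced objects match, and then integrating the extra data out. I describe the sphere case; the disk case is entirely parallel, with $\conf(\wh\C)=\mathrm{PSL}_2(\C)$ replaced by $\conf(\bbH)=\mathrm{PSL}_2(\R)$, the quantum area measure $\mu_h$ by the quantum boundary length $\nu_h$, interior insertions by boundary insertions of weight $\gamma$, $\QS_3$ by $\QD_{0,3}$, and \cite{ahs-sphere} by \cite{cercle-quantum-disk}. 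The conceptual backbone is that both $\LF_\C$ and $\haar_{\wh\C}\ltimes\QS$ are invariant under the $\bullet_\gamma$-action of $\conf(\wh\C)$ --- for $\haar_{\wh\C}\ltimes\QS$ by invariance of the Haar measure, and for $\LF_\C$ from the conformal transformation rule of the Gaussian free field together with the value $Q=\frac\gamma2+\frac2\gamma$, which I would record as a short lemma extracted from the definition of $\LF_\C$ (Definition~\ref{def-LF-sphere}). Since a quantum sphere almost surely has no nontrivial conformal automorphism, the $\bullet_\gamma$-orbit of any embedding of a fixed surface is a $\conf(\wh\C)$-torsor, carrying a unique invariant measure up to scaling; hence any $\conf(\wh\C)$-invariant $\sigma$-finite measure on generalized functions on $\C$ coincides with the uniform embedding of its own pushforward to the space of quantum surfaces, and it suffices to match the quantum-surface pushforwards of $\LF_\C$ and $\haar_{\wh\C}\ltimes\QS$ up to a constant --- which cannot be done directly, since this pushforward badly fails to be injective with zero marked points.

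On the quantum-surface side I would use that Definition~\ref{def-QS} presents $\QS_3$ as $\QS$ decorated with three points sampled independently from the unnormalized quantum area measure $\mu_h$; thus forgetting the three marked points pushes $\QS_3$ forward to $\mu_h(\C)^3\,\QS$, and, since the uniform embedding transports $\mu_h$ to its own pushforward and in particular preserves the total area, $\haar_{\wh\C}\ltimes\!\big(\mu_h(\C)^3\,\QS\big)=\mu_\phi(\C)^3\,\big(\haar_{\wh\C}\ltimes\QS\big)$, with $\phi$ the embedded field. Concretely this is the law of: sample a surface from $\QS_3$, take an independent Haar-uniform embedding of the underlying unmarked surface, and then forget the three marked points, which now sit at a random triple $(z_1,z_2,z_3)$ of distinct points of $\C$. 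On the Liouville side, Definition~\ref{def-RV-sph} presents $\LF_\C^{(z,\gamma)}$ as the regularized reweighting of $\LF_\C$ by $e^{\gamma\phi(z)}$, normalized so that $\LF_\C^{(z,\gamma)}(d\phi)\,d^2z=\LF_\C(d\phi)\,\mu_\phi(dz)$; iterating, $\LF_\C$ decorated by three independent $\mu_\phi$-points is, as a measure on (field, triple of points), $\LF_\C^{(z_1,\gamma),(z_2,\gamma),(z_3,\gamma)}(d\phi)\prod_i d^2z_i$.

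The heart of the argument is then the identity
\[
\LF_\C^{(z_1,\gamma),(z_2,\gamma),(z_3,\gamma)}(d\phi)\,\textstyle\prod_i d^2z_i \;=\; C\cdot\big(\text{$\QS_3$ embedded at $z_1,z_2,z_3$}\big)(d\phi)\;m(dz_1\,dz_2\,dz_3),
\]
where $m$ is the $\conf(\wh\C)$-invariant measure on the space $\cT$ of distinct triples of points of $\wh\C$ and $C$ is a genuine constant: this is exactly the result of \cite{ahs-sphere} at a fixed generic triple, propagated to all triples using that under the $\bullet_\gamma$-action both sides pick up the same conformal factor --- on the right because $m$ is $\conf(\wh\C)$-invariant and embedding is equivariant, and on the left because the weight-$\gamma$ insertions transform so that $\prod_i e^{\gamma\phi(z_i)}\,d^2z_i$ (equivalently $\LF_\C(d\phi)\prod_i\mu_\phi(dz_i)$) is conformally invariant. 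Now integrate out $(z_1,z_2,z_3)$. The left side becomes $\mu_\phi(\C)^3\,\LF_\C(d\phi)$. For the right side, fix a surface $S$ with three marked points: the map $\cT\to\{\text{embeddings of the underlying unmarked surface}\}$, $(z_1,z_2,z_3)\mapsto$ (the embedding of $S$ with marked points at $z_1,z_2,z_3$), is a $\conf(\wh\C)$-equivariant bijection between two $\conf(\wh\C)$-torsors, so it pushes $m$ forward to a scalar multiple --- independent of $S$ by uniqueness of the invariant measure --- of the pushforward of $\haar_{\wh\C}$, i.e.\ to the uniform embedding; integrating over $\QS_3$ (Tonelli) the right side becomes a constant times the uniform embedding of ``$\QS_3$ with its three marked points forgotten'' $=\mu_\phi(\C)^3\,(\haar_{\wh\C}\ltimes\QS)$. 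Equating and dividing by the factor $\mu_\phi(\C)^3$, which is almost surely finite and positive for both measures, yields $\LF_\C=C_1^{-1}\,(\haar_{\wh\C}\ltimes\QS)$ for a constant $C_1$; the disk identity follows in the same way.

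I expect the main obstacle to be the conformal-covariance and normalization bookkeeping behind the displayed identity --- that the weight-$\gamma$ insertion, with the normalization of Definition~\ref{def-RV-sph}, is exactly the ``add a $\mu_\phi$-typical point'' operation and transforms so that integrating $\LF_\C^{(z_1,\gamma),(z_2,\gamma),(z_3,\gamma)}$ against $\prod_i d^2z_i$ sees the $\conf(\wh\C)$-invariant measure $m$ on $\cT$ --- together with the analogous boundary statement for the disk, involving insertions of weight $\gamma$ on $\partial\bbH$ and the $\mathrm{PSL}_2(\R)$-invariant measure on triples of boundary points; establishing the $\bullet_\gamma$-invariance of $\LF_\C$ from its definition is a needed preliminary here. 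A second, pervasive, difficulty is measure-theoretic: $\haar_{\wh\C}$, $\QS$, $\LF_\C$ and the area-cubed reweightings are all infinite, so the disintegration of $\conf(\wh\C)$-invariant measures over quantum surfaces, the identification of the invariant measure on a torsor, the exchange of integration order, and the final division by $\mu_\phi(\C)^3$ all need justification; I would handle this by first restricting to the event that the total quantum area (on the disk, the quantum boundary length) lies in a fixed compact subinterval of $(0,\infty)$, carrying out the argument there, and then letting the subinterval increase to $(0,\infty)$.
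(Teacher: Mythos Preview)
Your proposal is correct and follows essentially the same route as the paper: both arguments pass through the identity $\haar_{\wh\C}\ltimes\QS_3 = C\int_{\C^3}\LF_\C^{(\gamma,p),(\gamma,q),(\gamma,r)}\,d^2p\,d^2q\,d^2r$ (Proposition~\ref{prop:3pt-haar}), obtained from the three-point equivalence of \cite{ahs-sphere} at a single triple together with conformal covariance, and then deweight by $\mu_\phi(\C)^3$ using that a $\gamma$-insertion is the ``add a $\mu_\phi$-typical point'' operation (Lemma~\ref{lem-inserting-general}). The paper additionally computes the Haar measure on triples and the Jacobian factors explicitly (Lemma~\ref{lem-haar}, equation~\eqref{eq-fpqr-derivatives}), which is exactly the bookkeeping you flagged as the main obstacle and which also pins down the constants $C_1,C_2$ (Proposition~\ref{prop-1.2-constants}).
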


We can also consider the uniform embedding of quantum surfaces with marked points. 
For example, for $a,b\in D\cup \bdy D$, let $\conf(D,a,b)$ be the subgroup of $\conf(D)$ fixing $a, b$ and $\haar_{D,a,b}$ be a Haar measure on $\conf(D,a,b)$.
For example,  $\QD_{0,2}$  can be identified as a measure on $C^\infty_0(D)'/\conf(D,a,b)$ for some domain $D$ with boundary points $a,b$, where $\conf(D,a,b)$ is the subgroup of $\conf(D)$ fixing $a, b$.   Then $\haar_{D,a,b} \ltimes \QD_{0,2}$ can be defined in the same way as   $\haar_{\wh\C} 	\ltimes\QS$ and
 $\haar_{\bbH} 	\ltimes\QD$. We will prove Theorem~\ref{thm-haar} in Section~\ref{sec:equivalence}.
The key to our proof  is the LCFT description of the uniform embedding of  $\QS_2$ and $\QD_{0,2}$ in the cylinder and strip coordinates:
 \eqb
 \begin{split}
 	&\haar_{\cC,-\infty,+\infty} \ltimes \QS_{2} =C\LF_{\cC}^{(\gamma,+\infty),(\gamma,-\infty)} \quad \textrm{and} \quad  \\
 	&\haar_{\cS,-\infty,+\infty} \ltimes \QD_{0,2} =C\LF_{\cS}^{(\gamma,-\infty),(\gamma,+\infty)}
 \end{split}
 \label{eq-field-equiv}
 \eqe
 where $\cC$ is a horizontal cylinder and $\cS$ is a  horizontal strip.
Although this is essentially equivalent to the results  in~\cite{ahs-sphere} and~\cite{cercle-quantum-disk} 
our proof   is much shorter. Thanks to the choice of coordinate, 
the identities \eqref{eq-field-equiv} are equivalent to  an interesting fact about drifted Brownian motion  that we prove as Proposition~\ref{prop-Bessel-equiv}. 
This fact also gives the analogous result if the singularity at the marked points is more general
(see Section~\ref{subsec:welding} and  Definition~\ref{def-thick-disk}). 

Our method for proving Theorem~\ref{thm-haar} is also used to give the LCFT description of $\QD_{1,0}$ in~\cite[Section 3]{ARS-FZZ}. It can be extended to quantum surfaces decorated with SLE curves. For example, in \cite{ahs-loop} we proved that the SLE loop coupled with $\LF_{\C}$ is the uniform embedding of the welding of two independent copies of $\QD$.

The LCFT description of quantum surfaces has  two advantages. First, it is a common operation to  add marked points to quantum surfaces according to
some  quantum  intrinsic measure. This operation is tractable on the LCFT side via the Girsanov theorem; see Section~\ref{subsec-3pt}. The second advantage is that LCFT correlation functions are  exactly solvable. In Section~\ref{subsec:welding} we will explain how these ideas can be applied  to prove Theorem~\ref{thm-conformal-derivative0}.

\subsection{Integrability of SLE through conformal welding and LCFT}\label{subsec:welding}

The starting point of our proof of Theorem~\ref{thm-conformal-derivative0} is the conformal welding result we proved in~\cite{ahs-disk-welding}. 
For $\gamma\in (0,2)$ and $\kappa=\gamma^2\in (0,4)$, 
if we run an independent $\SLE_\kappa$ on top of a certain type of 
$\gamma$-LQG quantum surface, the two quantum surfaces on the two sides of the SLE curve are independent. 
Moreover, the original curve-decorated quantum surface can be recovered by gluing the two smaller quantum surfaces according to the quantum boundary lengths.
The recovering procedure is called \emph{conformal welding}. Such results were
first established by Sheffield~\cite{shef-zipper} and later extended in~\cite{wedges}. 
They play a fundamental role in the mating-of-trees theory.


In~\cite{ahs-disk-welding} we proved conformal welding  results 
for a family of finite-area quantum surfaces, generalizing their infinite-volume counterpart proved in~\cite{shef-zipper} and~\cite{wedges}.
We recall them now. 
For $W>0$, let $\cM^\disk_2(W)$ be the 2-pointed quantum disk of weight $W$ introduced in~\cite{wedges};  see Section~\ref{subsec-GFF}.
For $W\ge \gamma^2/2$,  $\cM^\disk_2(W)$ is  an infinite  measure on quantum surfaces with two boundary marked points. The log-singularity of the field at each marked point is  $-\beta\log|\cdot|$ where 
\begin{equation}\label{eq:weight-beta}
\beta = Q+\frac{\gamma}{2}-\frac{W}{\gamma}.
\end{equation}
The 2-pointed quantum disk $\QD_{0,2}$ is the  special case of $\cM^\disk_2(W)$ where $W=2$. For $W\in (0,\gamma^2/2)$,   
$\cM^\disk_2(W)$ is a Poissonian collection of samples from $\cM^\disk_2(\gamma^2-W)$, viewed as an ordered chain of 2-pointed quantum surfaces.

In~\cite{ahs-disk-welding}, we showed that   the conformal welding of independent samples from $\cM^\disk_2(W_-)$ and $\cM^\disk_2(W_+)$ 
gives a sample from $\cM^\disk_2(W_-+W_+)$ decorated by an independent SLE$_{\kappa}(W_--2;W_+-2)$ running between the two marked point. 
To put it more formally, we can write this result as 
\begin{equation}\label{eq:welding-eq}
	\begin{split}
		&\cMtwo(W_- + W_+) \otimes \SLE_\kappa (\rho_-; \rho_+)\\ 
		&\qquad= c_{W_-, W_+} 
		\int_0^\infty \op{Weld}  (\cMtwo(W_-; \cdot , x) , \cMtwo(W_+; x, \cdot )) \, dx. 
	\end{split}
\end{equation}
In~\eqref{eq:welding-eq}, $\rho_-=W_--2$,  $\rho_+=W_+-2$, and $ c_{W_-, W_+}$ is a positive constant which we call the \emph{welding constant}. 
The measure $ \cMtwo(W_-; \cdot, x)$ is defined by the disintegration  $\cMtwo(W_-)=\int_0^\infty  \cMtwo(W_-; \cdot, x)\, dx$ where $x$ represents the quantum length of the right  boundary arc. 
We similarly define $ \cMtwo(W_+;x,\cdot )$ for the left boundary. The operator $\op{Weld}$ means conformal welding along the boundary arc with length $x$.
See Section~\ref{sec:disk-welding} for more details on~\eqref{eq:welding-eq}.

At the highest level, our proof of Theorem~\ref{thm-conformal-derivative0} is done in four steps. 
\begin{enumerate}
\item Use LCFT to define a variant of $\cMtwo(W)$ where we add a third boundary marked point with a generic log singularity.
\item Prove a version of the conformal welding equation \eqref{eq:welding-eq}  for the three-point variant of $\cMtwo(W)$.
\item Show  that the  welding constants   encode the moments of $\psi'(1)$ in Theorem~\ref{thm-conformal-derivative0}. 
\item Use the  integrability from LCFT and mating-of-trees to compute the  welding constants.
\end{enumerate} 

We now summarize the key ideas and inputs for implementing the 4-step proof.
To keep the picture simple,  we first assume that $W_-\ge \gamma^2/2$ and $W_+\ge \gamma^2/2$ in~\eqref{eq:welding-eq}.  
In this case the $\SLE_\kappa(\rho_-;\rho_+)$ curve does not touch the boundary of the weight $(W_-+W_+)$ quantum disk.

\begin{figure}[ht!]
	\begin{center}
		\includegraphics[scale=0.8]{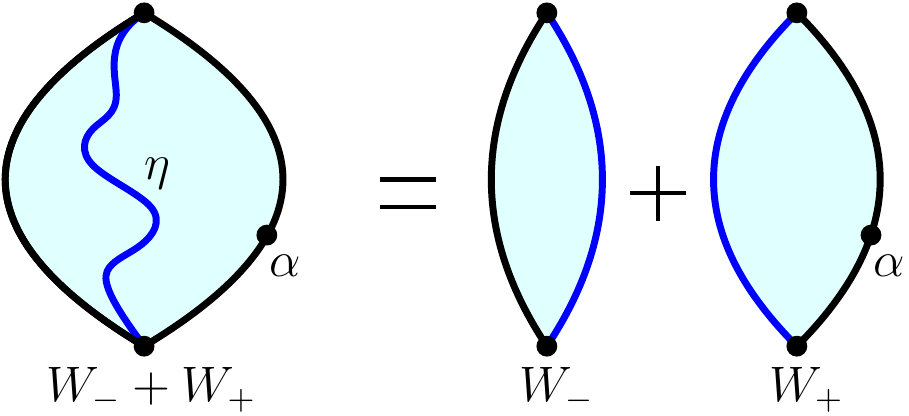}%
	\end{center}
	\caption{\label{fig:welding}  Illustration of the conformal welding result in \eqref{eq:welding-alpha}.}
\end{figure}

We define $\cM^\disk_{2,\bullet}(W; \alpha)$ 
to be the  measure on quantum surfaces such that after being embedded in $(\bbH,0,\infty,1)$, the field is distributed as $\frac\gamma{2(Q-\beta)^{2}}\LF_{\bbH}^{(\beta,0),(\beta,\infty), (\alpha, 1)}$. For $W=2$ and $\alpha=\gamma$, by~\cite{cercle-quantum-disk}, 
$\cM^\disk_2(2;\gamma)$ agrees with  $\QD_{0,3}$.
As alluded in Section~\ref{subsec:equivalence}, we give a concise proof of this result which also extends to surfaces with other singularities. 
The new method also allows us to show that 
$\cM^\disk_{2,\bullet}(W; \gamma)$ for a general $W\ge \gamma^2/2$ is obtained by adding to $\cM^\disk_{2}(W)$
an extra point on the right boundary according to the quantum length measure. 

We then extend the welding equation \eqref{eq:welding-eq}.
For all $\alpha \in \R$  we prove that 
\eqb\label{eq:welding-alpha}
\begin{split}
	&\cMthree(W_- + W_+;\alpha)\otimes \sm(\rho_-; \rho_+; \alpha)\\
	&=   c_{W_-, W_+} \int_0^\infty  \op{Weld}  (\cMtwo(W_-; \cdot, x),\cMthree(W_+; \alpha; x,\cdot)) \, dx.
\end{split}
\eqe
Here, $\sm(\rho_-; \rho_+; \alpha)$ is a measure on curves obtained from reweighting  $\SLE_\kappa(\rho_-;\rho_+)$ by $ \psi'(0)^{1-\Delta}$ with $\Delta= \frac\alpha2 (Q - \frac\alpha2)$  and $\psi$ as defined in Theorem~\ref{thm-conformal-derivative0}. 
For $\alpha=\gamma$,  this equation is straightforward from~\eqref{eq:welding-eq} by adding a quantum typical point on the right boundary. \
For general $\alpha$, this follows from an application of the Girsanov theorem. 
The extra factor of  $ \psi'(0)^{1-\Delta}$ arises in a similar fashion as  the $Q\log$ term 
in the $\gamma$-LQG coordinate change formula~\eqref{eq:coordinate}.

By definition, the total mass  of $\sm(\rho_-; \rho_+;\alpha)$ equals $\E[\psi'(0)^{1-\Delta}]$.
Therefore, forgetting the curve in~\eqref{eq:welding-alpha}, the integral 
$$\int_0^\infty \op{Weld}  (\cMtwo(W_-; \cdot , x) ,\cMthree(W_+; \alpha; x,\cdot))\, dx
$$  
equals $C(\alpha)
\cMthree(W_- + W_+;\alpha)$ as measures on quantum surfaces, where $C(\alpha)=c_{W_-, W_+}^{-1} \E[\psi'(0)^{1-\Delta}]$. 
To determine $C(\alpha)$, we only need to match the distribution of a single observable on both sides. 
The one we choose is the left boundary length of $\cMthree(W_- + W_+;\alpha)$.

Let $L$ and $R$ be the left and right, respectively, boundary lengths of a sample from $\cM^\disk_2(W)$. 
Then both of $\cMthree(W;\alpha)[e^{-s L}]$ and $\cM^\disk_2(W)[1-e^{-s_1L-s_2R}]$ are LCFT correlation functions computed by Remy and Zhu~\cite{rz-gmc-interval,rz-boundary}. 
In particular, let $R(\beta; s_1,s_2)=\cM^\disk_2(W)[1-e^{-s_1L-s_2R}]$ with $W=\gamma(Q+\frac{\gamma}{2}-\beta)$. 
Then $R$  is the so-called \emph{boundary reflection coefficient}. 
This allows us to compare the left boundary length of $\cMthree(W_- + W_+;\alpha)$ on both sides  of~\eqref{eq:welding-alpha} and express  $\E[\psi'(0)^{1-\Delta}]$
in terms of certain explicitly known LCFT correlation functions.

Due to the integration on the right side of~\eqref{eq:welding-alpha}  the expression for $\E[\psi'(0)^{1-\Delta}]$ using LCFT is far from the neat product form in Theorem~\ref{thm-conformal-derivative0}. 
However, for $W_-=2$ or $\gamma^2/2$, the mating-of-trees theory provides a simple description of the area and boundary lengths distribution of $\cM^\disk_2(W_-)$  in terms of 2D Brownian motion in cones. The case with $W_-=2$ is known from~\cite{ag-disk} and \cite{wedges}. 
The case with $W_-=\gamma^2/2$ is obtained in our paper~\cite{ahs-disk-welding}.
 This allows us  to prove Theorem~\ref{thm-conformal-derivative0} for $\kappa\in (0,4)$, $\rho_-\in \{ 0,  \kappa/2 -2\}$, and $\rho_+\ge\kappa/2-2$ 
(recall that $\rho_-=W_- - 2$ and $\kappa=\gamma^2$).

The same argument can also be run when  $W_+ \in (0,\gamma^2/2)$ to cover the range $\rho_+\in (-2,\kappa/2-2)$.
For $W\in (0,\gamma^2/2)$, $\cM^\disk_2(W)$ is a chain of $\cM^\disk_2(\gamma^2 - W)$-quantum disks. 
In this case we can still define $\cM^\disk_{2,\bullet }(W;\alpha)$. This new quantum surface is not so natural from  the  perspective of either~\cite{wedges}  or~\cite{dkrv-lqg-sphere} but it becomes natural after we combine the two.
Due to Campbell's formula for Poisson point process,
both of the boundary length distributions of $\cM^\disk_{2}(W)$ and $\cM^\disk_{2,\bullet }(W;\alpha)$ can be computed in terms of their counterparts with $W$ replaced by $\gamma^2-W$. This allows us to carry out the proof as before. Our computation shows that the boundary length distribution of $\cM^\disk_{2}(W)$ in the thin regime is an analytic continuation of the boundary length distribution in the thick regime, which provides 
a probabilistic counterpart for a well-known  numerical fact on the reflection coefficient: $R(\beta;s_1,s_2)R(2Q-\beta;s_1,s_2)=1$. 

To prove the general case of Theorem~\ref{thm-conformal-derivative0}, we consider the pair of SLE
curves which are the interfaces when conformally welding $\cM_2(W_1)$, $\cM_2(W_2)$, and $\cM_2(W_3)$. 
This allow us to derive a multiplicative relation on $\E[\psi'(1)^\lambda]$ with different parameters. Specializing to $W_1=2$ or $W_1=\gamma^2/2$ and 
using the proved case of Theorem~\ref{thm-conformal-derivative0} with $\rho_-=W_1-2$, we obtain two functional equations on 
$\E[\psi'(1)^\lambda]$. In the $\rho_-$-variable, 
it is a pair of explicit shift equations relating the value of  $\E[\psi'(1)^\lambda]$ at $\rho_-$ to the value at  $\rho_-+\gamma^2/2 $ or $\rho_-+2 $.
Setting $\beta=Q+\frac{\gamma}{2}-\frac{W_1}{\gamma}$ as in~\eqref{eq:weight-beta}, the two shifts in $\rho_-$ 
transfer to $\beta\rta \beta+\frac{\gamma}{2}$ and $\beta\rta \beta+\frac{2}{\gamma}$, respectively. Interestingly, the numerical values $\frac{\gamma}{2},\frac{2}{\gamma}$ for the shifts turn out to be exactly those appearing in shift relations for DOZZ formula~~\cite{do-dozz,zz-dozz,Teschner-shift,krv-dozz} and other correlation functions in LCFT (see e.g.~\cite{krv-dozz,rz-boundary}).

Similarly as in the LCFT context,
if $\kappa=\gamma^2$ is irrational, then this pair of shift relations has a unique meromorphic solution. On the other hand, we can check  that the explicit function in Theorem~\ref{thm-conformal-derivative0} is such a solution. This gives Theorem~\ref{thm-conformal-derivative0} for  irrational $\kappa\in (0,4)$. By a standard continuity argument, it extends to all $\kappa\in (0,4]$.  Finally the result for $\kappa>4$ follows from the SLE duality~\cite{zhan-duality1,dubedat-duality,ig1}.

The core of the argument outlined above is to  compare boundary lengths of quantum surfaces 
on the two sides of the conformal welding equation~\eqref{eq:welding-alpha}. 
It is equally interesting to compare quantum area and to consider quantum surfaces with marked points in the bulk.
This idea is explored  in~\cite{ARS-FZZ} by the first and the third author with Remy to prove the
Fateev-Zamolodchikov-Zamolodchikov (FZZ) formula~\cite{fzz} for the one-point disk partition function of LCFT. 
Moreover in~\cite{AS-CLE} of the first and the third authors, this idea is used to prove two integrable results 
on the conformal loop ensemble (CLE). One result relates the  three-point correlation function of CLE on the sphere~\cite{cle-3pt-phy} to the DOZZ formula in LCFT. 
The other addresses a conjecture of Kenyon and Wilson (recorded in \cite{ssw-radii}) on the electrical thickness of CLE loops.

\bigskip

\noindent{\bf Organization of the paper.} In the rest of the paper, we first develop the idea of uniform embedding in Section~\ref{subsec:equivalence} and prove Theorem~\ref{thm-haar} in Section~\ref{sec:equivalence}. Then  in Section~\ref{sec-rz-lengths} we relate some explicit boundary  LCFT correlation functions computed by Remy and Zhu~\cite{rz-gmc-interval,rz-boundary} to variants of quantum disks. In Section~\ref{sec:welding} we prove the welding equation~\eqref{eq:welding-alpha}.
In Section~\ref{sec-shift} we prove Theorem~\ref{thm-conformal-derivative0} based on~\eqref{eq:welding-alpha} following the outline in Section~\ref{subsec:welding}.

\bigskip

\noindent\textbf{Acknowledgements.} 
We are also grateful to Manan Bhatia, Ewain Gwynne,  Matthis Lehmkuehler, Steffen Rohde, Scott Sheffield, Pu Yu, and  Dapeng Zhan for helpful discussions.
M.A. was partially supported by NSF grant DMS-1712862. 
N.H.\ was supported by Dr.\ Max R\"ossler, the Walter Haefner Foundation, and the ETH Z\"urich
Foundation, along with grant 175505 of the Swiss National Science Foundation.
X.S.\ was supported by  the Simons Foundation as a Junior Fellow at the Simons Society of Fellows,  and by the  NSF grant DMS-2027986 and the Career award 2046514.

\section{Quantum surface and Liouville field}\label{sec:equivalence}

In this section we develop the ideas outlined in Section~\ref{subsec:equivalence}. 	
In Sections~\ref{subsec-GFF} and~\ref{subsec-path} we review background on quantum surfaces and LCFT which will be used throughout the paper.
In Section~\ref{subsec-translations} we show that when two-pointed quantum disks are embedded in the strip or cylinder with uniformly chosen translation, the field is described by LCFT. In Section~\ref{subsec-3pt} we discuss how to add a third point   sampled from quantum measure, 
which will recover the main result in \cite{cercle-quantum-disk}. 
(The sphere case is treated in parallel in Appendix~\ref{app:sphere}.)
Finally in Section~\ref{subsec-haar} we prove Theorem~\ref{thm-haar}.

We will frequently consider non-probability measures and extend the terminology of probability theory to this setting. 
In particular, suppose $M$ is a measure on a measurable space $(\Omega, \cF)$ such that $M(\Omega)$ is not necessarily $1$, and $X$ is an $\cF$-measurable function. 
Then we say that $(\Omega, \cF)$ is a sample space and that $X$ is a random variable.  We call  the pushforward measure $M_X = X_*M$  the \emph{law} of $X$.
We say that $X$ is \emph{sampled} from $M_X$. We also write $\int f(x) \,M_X(dx)$ as $M_X[f]$ or $M_X[f(x)]$ for simplicity.
For a finite positive measure $M$, we denote its total mass by $|M|$
and let $M^{\#}=|M|^{-1}M$ denote the corresponding probability measure.

\subsection{Preliminaries on the Gaussian free field and quantum surfaces}\label{subsec-GFF}
We recall the Gaussian free field (GFF) on  the upper half-plane $\bbH$ and the horizontal strip  $\cS=\R \times (0,\pi)$.  For $\cX \in \{\bbH,\cS \}$, we fix a finite measure $m$ on $\cX$. 
Consider the Dirichlet inner product 
$\langle f,g\rangle_\nabla := (2 \pi)^{-1}\int_\cX \nabla f \cdot \nabla g$. 
Let $H(\cX)$  be the Hilbert space closure of $\{f \textrm{ is smooth on }\cX  \textrm{ and }   \int_\cX f\,dm =0 \}$
with  respect to $(\cdot, \cdot)_\nabla$.
Let $(\xi_i)_{i=1}^\infty$ be i.i.d. standard Gaussian  random variables and $(f_i)_{i=1}^\infty$ be  an orthonormal basis for $H(\cX)$. Then the summation 
\eqb\label{eq-ONB}
h_\cX:= \sum_i \xi_i f_i
\eqe
does not converge in $H(\cX)$  but a.s.\ converges in the space of distributions \cite[Section 4.1.4]{wedges}.
We call $h_\cX$ a GFF on $\cX$ with normalization $\int_\cX h \,dm=0$, and denote its law by $P_\cX$.

In this paper for each  $\cX \in \{\bbH,\cS \}$ we will only consider one normalization measure $m$. 
For $\cX=\bbH$, it is the uniform measure on the unit  semi-circle centered at the origin.
For $\cX=\cS$, it is the uniform measure on   $\{0\} \times (0,\pi)$.
This way, $h_\cS$ and $h_\bbH$ are related by the exponential map between $\cS$ and $\bbH$. It will be convenient to have their explicit covariance kernels $G_\cX(z,w)=\E[h_\cX(x) h_\cX(y)]$: 
\begin{align}\label{eq:covariance}
G_\bbH(z,w) = -\log |z-w| - \log|z-\ol w| + 2 \log|z|_+ + 2\log |w|_+. \\
	G_\cS(z,w)= -\log |e^z - e^w| - \log | e^z - e^{\ol w}| +  \max(2\Re z, 0) + \max(2\Re w, 0).\nonumber
\end{align}Here $|z|_+$ means $\max\{|z|,1\}$. Moreover, $G_\cX(z,w)=\E[h_\cX(x) h_\cX(y)]$ means that 
for any compactly supported  test function $\rho$ on $\cX$, the variance of  $(h,\rho)$  is $\iint_\cX G_\cX(z,w) \rho(z)\rho(w)\,d^2z\,d^2w$. 
See~\cite[Definition 1.1]{rz-boundary} for~\eqref{eq:covariance} in the case of $G_\bbH$, and the other identity follows from   $G_\cS(z,w) = G_\bbH(e^z, e^w)$.

We now recall the radial-lateral decomposition of $h_\cS$.
Let $H_1(\cS) \subset H(\cS)$ (resp. $H_2(\cS) \subset H(\cS)$) be the subspace of functions which are constant (resp. have mean zero) on $\{t\} \times [0,\pi]$  for each $t \in \R$.
This gives the orthogonal decomposition $H(\cS) = H_1(\cS) \oplus H_2(\cS)$. 
If we write $h_\cS=h^1_\cS+h^2_\cS$ with  $h^1_\cS\in H_1(\cS)$ and $h^2_\cS\in H_2(\cS)$, then $h^1_\cS$ and $h^2_\cS$ are independent. Moreover, $\{h^1_\cS(t)\}_{t\in \R}$ has the distribution of $\{B_{2t}\}_{t\ge \R}$ where $B_t$ is a standard two-sided Brownian motion.  See \cite[Section 4.1.6]{wedges} for more details.  

We now recall the concept of a quantum surface.
For $n\in \N$, consider tuples $(D, h, z_1,\cdots,z_n)$ such that $D\subset\C$ is a domain, $h$ is a distribution on $D$, and 
$z_i \in  \cup D \cup\bdy D$. Let $(\wt D, \wt h, \wt z_1,\cdots,\wt z_n)$ be another such tuple.
We say 
$$(D, h, z_1,\cdots,z_n )  \sim_\gamma (\wt D, \wt h, \wt z_1,\cdots,\wt z_n)$$
if there is a conformal map $\psi: \wt D \to D$ such that $\wt h=f \bullet_\gamma h =h \circ f^{-1} + Q  \log \left| (f^{-1})' \right|$ and $\psi(\wt z_i) = z_i$ for all $i$. 
An equivalence class for $ \sim_\gamma $ is called a quantum surface with $n$ marked points. 
We write $ (D, h, z_1,\cdots,z_n )/{\sim_\gamma}$ as the marked quantum surface represented by $(D, h, z_1,\cdots,z_n )$.
When it is clear from context, we simply let $(D, h, z_1,\cdots,z_n )$ denote the marked quantum surface it represents.

Suppose $\phi$ is a random function on $\bbH$ which can be written as $h+g$ where $h$ is sampled from $P_\bbH$ and
$g$ a possibly random  function that is continuous on $ \bbH \cup \partial \bbH$ except at finitely many points.
For $\eps > 0$ and $z \in  \bbH \cup \partial \bbH$, we write $\phi_\eps(z)$ for the average of $\phi$ on $\partial B_\eps(z) \cap \bbH$, and define the random measure $\mu_\phi^\eps:= \eps^{\gamma^2/2} e^{\gamma \phi_\eps(z)}\,d^2z$ on $\bbH$, where $d^2z$ is Lebesgue measure on $\bbH$. Almost surely, as $\eps \to 0$, the measures $\mu_\phi^\eps$ converge weakly to a limiting measure $\mu_\phi$ called the \emph{quantum area measure} \cite{shef-kpz,shef-wang-lqg-coord}. 
We also define the \emph{quantum boundary length measure} $\nu_\phi:= \lim_{\eps \to 0} \eps^{\gamma^2/4}e^{\frac\gamma2 \phi_\eps(x)} dx$.
Suppose $f: \bbH \to D$ is a conformal map and $\wt\phi=f \bullet_\gamma \phi$. If $D =\bbH$, 
then $\mu_{\wt \phi}$  is the pushforward of $\mu_\phi$ under $f$ and the same holds for $\nu_{\wt \phi}$.
We can use this to unambiguously extend the definition of the quantum area and boundary length measures  to any 
$(D,\wt \phi)$ that is equivalent to $(\bbH,\phi)$ as a quantum surface. 

We now recall various notions of quantum disk introduced in \cite{wedges}. 
\begin{definition}
	\label{def-thick-disk}
	For $W \geq \frac{\gamma^2}2$, let $\beta =  Q + \frac\gamma2  - \frac W\gamma<Q$. Let 
	\[Y_t =
	\left\{
	\begin{array}{ll}
		B_{2t} - (Q -\beta)t  & \mbox{if } t \geq 0 \\
		\wt B_{-2t} +(Q-\beta) t & \mbox{if } t < 0
	\end{array}
	\right. , \]
	where $(B_s)_{s \geq 0}$ is a standard Brownian motion  conditioned on $B_{2s} - (Q-\beta)s<0$ for all $s>0$,\footnote{Here we condition on a zero probability event. This can be made sense of via a limiting procedure.}  and $(\wt B_s)_{s \geq 0}$ is an independent copy of $(B_s)_{s \geq 0}$.
	Let $h^1(z) = Y_{\Re z}$ for each $z \in \cS$.
	Let $h^2_\cS$ be independent of $h^1$  and have the law of the projection of  $h_\cS$ onto $H_2(\cS)$. 
	Let $\wh h = h^1+h^2_\cS$.
	Let  $\mathbf c$ be a real number  sampled from $\frac\gamma2 e^{(\beta-Q)c}dc$ independent of $\wh h $ and $\phi=\wh h +\mathbf c$.
	Let $\cMtwo(W)$ be the infinite measure  describing the law of $(\cS, \phi, -\infty, +\infty)/{\sim_\gamma}$.
	We call a sample from $\cMtwo(W)$ a (two-pointed) \emph{quantum disk of weight $W$}.
\end{definition}

The parameter $\beta$ measures the magnitudes of log-singularities at the corresponding marked points. We use the weight $W$ as the chief parameter 
for its convenience in stating conformal welding results in Section~\ref{sec:welding}.
For  $\cMtwo(2)$  we have 
$\beta=\gamma$. In this case the marked points are \emph{quantum typical}, namely, conditioning on the quantum surface, 
the two marked points are sampled according to the quantum length and area measure, respectively; see the discussion below Definition~\ref{def-QD}.
This allows us to define  general quantum disks marked with quantum typical points.  
In the following definition we recall the convention that $M^{\#}=|M|^{-1}M$ is the probability measure proportional to a finite measure $M$.
\begin{definition}\label{def-QD}
	Let  $(\cS, \phi, +\infty,-\infty)/{\sim_\gamma}$ be a sample from $\cMtwo(2)$.
	Let $\QD$ be the law of $(\cS, \phi)/{\sim_\gamma}$ under the reweighted measure $\nu_\phi(\partial \cS)^{-2}\cMtwo(2)$.
	For integers $m,n\ge 0$,  let $(\cS,\phi)$ be a sample from $\mu_\phi(\cS)^m\nu_\phi(\partial \cS)^n\QD$, and then
	independently sample $z_1,\cdots, z_m$ and $w_1,\cdots, w_n$ according to $\mu_\phi^\#$ and $\nu_\phi^\#$,  respectively.
	Let $\QD_{m,n}$ be  the law of 
	$$
	(\cS, \phi, z_1,\cdots, z_m, w_1,\cdots, w_n)/{\sim_\gamma}.
	$$
	We  call a sample from $\QD_{m,n}$ a \emph{quantum disk with $m$ interior and $n$ boundary marked points}. 
\end{definition}

By  \cite[Propositions A.8]{wedges} $\cMtwo(2) = \QD_{0,2}$,
which means the  marked points on 
$\cMtwo(2)$ are quantum typical.

We conclude this subsection  with a remark on the function space that variants of the GFF  take values in, which applies throughout the paper. 
\begin{remark}\label{rmk:ftn-space}
	For $\cX\in \{\bbH, \cS\}$, let $g$ be a smooth metric on $\cX$ such that 
	the metric completion of $(\cX, g)$ is a compact Riemannian manifold.
	Let $H^1(\cX, g)$ be the Sobolev  space whose norm is the sum of the $L^2$-norm with respect to $(\cX,g)$ and the Dirichlet energy.
	Let $H^{-1}(\cX)$ be the dual space of $H^1(\cX, g)$. 
	Then the function space $H^{-1}(\cX)$ and its topology does not depend on the choice of $g$, and is  a Polish  (i.e.\ complete separable metric) space.
	Moreover,  the GFF measure $P_\cX$ is supported on $H^{-1}(\cX)$.  This follows from a straightforward adaptation of results in \cite{shef-gff,dubedat-coupling} as pointed out in~\cite[Section 2]{dkrv-lqg-sphere}.
	Random functions on $\cX$ in our paper, such as the ones in Definition~\ref{def-thick-disk} and~\ref{def-sphere} are the summation of two types: 1. a sample from $P_\cX$; 
	2. a function on $(\cX, g)$ that is continuous everywhere except having log singularities at finitely many points.
	Both of these  functions belong to $H^{-1}(\cX)$. So we view their laws as measures on the Polish space $H^{-1}(\cX)$.
\end{remark}

\subsection{Preliminaries on Liouville conformal field theory}\label{subsec-path}
In this section we review some random fields arising in the context of LCFT. 
We define the Liouville field on  $\cX \in \{ \bbH, \cS\}$ with boundary insertions following~\cite{hrv-disk,rz-boundary}.
We will not discuss bulk insertions as they are not needed here. 

\begin{definition}\label{def-LF-rz}
	Let $(h, \mathbf c)$ be sampled from $P_\bbH\times [e^{-Qc}dc]$ and set $\phi =  h(z) -2Q \log |z|_+ +\mathbf c$. 
	We write  $\LF_{\bbH}$ as the law of $\phi$ and call  a sample from   $\LF_{\bbH}$ a \emph{Liouville field on $\bbH$}.
\end{definition}

Let $(\beta_i,s_i) \in  \R \times \partial\bbH$ for $i = 1, \dots, m$, where $m \geq 1$ and the $s_i$ are distinct. 
The Liouville field with insertions  $(\beta_i,s_i)_{1\le i\le m}$  
is defined formally by $\prod_{i=1}^{m} e^{\frac{\beta_i}2 \phi(s_i)}\LF_{\bbH}(d\phi)$.
To make it rigorous we need to replace $e^{\frac\beta2 \phi(s)}$ by  the regularization $\eps^{\frac{\beta^2}4}e^{\frac\beta2\phi_\eps(s)}$
and send $\eps\to 0$. 
We first give a definition without taking limit and then justify it in the subsequent lemma.
\begin{definition}\label{def-RV-H} 
	Let $(\beta_i, s_i) \in  \R \times \partial \bbH$ for $i = 1, \dots, m$, where $m \geq 1$ and the $s_i$ are pairwise distinct. Let $(h, \mathbf c)$ be sampled from $C_{\bbH}^{(\beta_i, s_i)_i} P_\bbH\times [e^{(\frac12\sum_i \beta_i - Q)c}dc]$ where
	\[C_\bbH^{(\beta_i, s_i)_i} = \prod_{i=1}^m |s_i|_+^{-\beta_i(Q-\frac{\beta_i}2)} e^{\sum_{j=i+1}^m \frac{\beta_i\beta_j}4 G_\bbH(s_i,s_j)}. \]
	Let $\phi(z) = h(z) - 2Q \log |z|_+ + \sum_{i=1}^m \frac{\beta_i}2 G_\bbH(z, s_i) + \mathbf c$. We write $\LF_\bbH^{(\beta_i, s_i)_i}$ for the law of $\phi$ and call a sample from $\LF_\bbH^{(\beta_i, s_i)_i}$ the \emph{Liouville field on $\bbH$ with insertions $(\beta_i, s_i)_{1 \leq i \leq m}$}.
\end{definition}
\begin{lemma}\label{lem:inserting}
		Suppose $s\notin \{ s_1, \cdots, s_m \}$. Then 
		in  the topology of vague convergence of measures, we have 
		\eqb 
		\lim_{\eps\to 0}\eps^{\frac{\beta^2}4}e^{\frac\beta2 \phi_\eps(s)}\LF_{\bbH}^{(\beta_i,s_i)_i}  (d\phi) =\LF_{ \bbH}^{(\beta_i,s_i)_i, (\beta, s)}. 
		\eqe
	\end{lemma}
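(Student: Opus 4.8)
The plan is to reduce the statement to an explicit Gaussian (Cameron--Martin / Girsanov) computation together with the Gaussian multiplicative chaos (GMC) convergence that is already built into the definition of the quantum area and boundary measures. First I would unwind both sides using Definition~\ref{def-RV-H}. A sample $\phi$ from $\LF_\bbH^{(\beta_i,s_i)_i}$ has the form $\phi = h - 2Q\log|z|_+ + \sum_i \tfrac{\beta_i}2 G_\bbH(\cdot,s_i) + \mathbf c$ with $(h,\mathbf c)$ distributed as $C_\bbH^{(\beta_i,s_i)_i} P_\bbH \times [e^{(\frac12\sum_i\beta_i - Q)c}\,dc]$. Write $\phi_0$ for the deterministic part of $\phi$ minus $\mathbf c$, i.e. $\phi_0 = -2Q\log|z|_+ + \sum_i\tfrac{\beta_i}2 G_\bbH(\cdot,s_i)$, so that $\phi_\eps(s) = h_\eps(s) + \phi_{0,\eps}(s) + \mathbf c$. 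The key point is that $h_\eps(s)$ is a centered Gaussian whose variance is $\Var h_\eps(s) = G_\bbH(s,s) + o(1)$ up to the usual circle-average normalization, so that $\eps^{\beta^2/4} e^{\frac\beta2 h_\eps(s)}$ has expectation of order $\eps^{\beta^2/4} e^{\frac{\beta^2}8 \Var h_\eps(s)}$, and the precise choice of circle-average normalization in Section~\ref{subsec-GFF} (the $|s|_+$ terms in $G_\bbH$) is exactly what makes $\eps^{\beta^2/4}e^{\frac{\beta^2}8 \Var h_\eps(s)} \to |s|_+^{\beta^2/4}\cdot(\text{const})$; more precisely one gets $e^{\frac{\beta^2}8 G_\bbH(s,s)}$-type factors appearing, which one must track carefully.

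The main step is a tilting-of-the-GFF argument. By the Cameron--Martin theorem, weighting $P_\bbH$ by the normalized exponential $\eps^{\beta^2/4}e^{\frac\beta2 h_\eps(s)} / \E[\eps^{\beta^2/4}e^{\frac\beta2 h_\eps(s)}]$ amounts, in the $\eps\to0$ limit, to shifting $h \mapsto h + \tfrac\beta2 G_\bbH(\cdot,s)$; this is where the new insertion $(\beta,s)$ enters the field. Simultaneously, the deterministic prefactor $\eps^{\beta^2/4}e^{\frac\beta2\phi_{0,\eps}(s)}$ contributes $e^{\frac\beta2\phi_0(s)}$, which expands (using continuity of $G_\bbH$ away from the diagonal and the explicit form of $\phi_0$) into exactly the cross-terms $\prod_i |s|_+^{-\beta\beta_i/2 \cdot(\ldots)} e^{\frac{\beta\beta_i}4 G_\bbH(s,s_i)}$ and $|s|_+^{-2Q\cdot(\ldots)}$ needed to turn $C_\bbH^{(\beta_i,s_i)_i}$ into $C_\bbH^{(\beta_i,s_i)_i,(\beta,s)}$, while the Gaussian normalization contributes the $|s|_+^{-\beta(Q-\beta/2)}$-type self-term. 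The $\mathbf c$-integral picks up an extra $e^{\frac\beta2 \mathbf c}$, which converts $[e^{(\frac12\sum_i\beta_i-Q)c}dc]$ into $[e^{(\frac12(\sum_i\beta_i+\beta)-Q)c}dc]$, matching Definition~\ref{def-RV-H} for the $(m+1)$-tuple. Carrying out this bookkeeping and comparing with the definition of $\LF_\bbH^{(\beta_i,s_i)_i,(\beta,s)}$ gives the claimed identity, provided the convergence holds in the right topology.

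To upgrade the heuristic to vague convergence of measures on $H^{-1}(\bbH)$, I would test against a bounded continuous functional $F$ supported on fields of bounded ``size'' (or, equivalently, integrate $F$ against both measures and pass to the limit). The dominated-convergence input is that $\eps^{\beta^2/4}e^{\frac\beta2\phi_\eps(s)}$ is an $L^1$-bounded martingale-type object in $\eps$ (this is standard GMC theory: the boundary GMC $\nu$ near $s$ is exactly built from such limits, and finiteness of its moments of low order, or rather the convergence of the one-point normalized version, is what is used), so one can pass the limit inside the expectation. Since $s\notin\{s_1,\dots,s_m\}$, the singularities of $\phi_0$ at the $s_i$ are at positive distance from $s$ and cause no trouble; the only delicate behavior is the log-singularity being created at $s$ itself, which is precisely the content of the new insertion and is handled by the Cameron--Martin shift above.

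\textbf{Main obstacle.} The hard part will be making the interchange of $\lim_{\eps\to0}$ with the measure rigorous in the vague topology rather than just in the sense of finite-dimensional / moment convergence — i.e. establishing the requisite uniform integrability for $\eps^{\beta^2/4}e^{\frac\beta2\phi_\eps(s)}$ as a density, uniformly enough to test against arbitrary bounded continuous functionals on $H^{-1}(\bbH)$. This is exactly the type of statement that appears in the GMC literature (e.g.\ the boundary Liouville measure constructions of \cite{hrv-disk,rz-boundary,huang-rhodes-vargas}), and I would expect the proof to cite or mildly adapt those convergence results rather than reprove them from scratch; the algebraic identity of the limiting measures, by contrast, is a routine (if tedious) matching of the constants $C_\bbH$, the Green's function cross-terms, and the $\mathbf c$-exponent.
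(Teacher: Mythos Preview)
Your approach is essentially the same as the paper's: decompose $\phi = h + \phi_0 + \mathbf c$, apply Girsanov to the Gaussian part $h_\eps(s)$, and match the deterministic prefactors and the $\mathbf c$-exponent against the constants in Definition~\ref{def-RV-H}. The algebraic bookkeeping you outline is exactly what the paper does.

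However, you misidentify the ``main obstacle.'' No uniform integrability or GMC input is needed. The point is that vague convergence is tested against a bounded continuous $f$ on $H^{-1}(\bbH)$ together with a compactly supported $g$ in the $\mathbf c$-variable (since the measure is infinite only through $\mathbf c$). For each fixed $\eps$, Girsanov is an \emph{exact} identity: it removes the density and gives
\[
\E_\bbH\!\left[f\bigl(\wt\phi + \tfrac\beta2 G_{\bbH,\eps}(\cdot,s)\bigr)\right],
\]
where $G_{\bbH,\eps}(\cdot,s)$ is the $\eps$-mollified Green function. Since $G_{\bbH,\eps}(\cdot,s)\to G_\bbH(\cdot,s)$ in $H^{-1}(\bbH)$ and $f$ is bounded and continuous, the limit follows from the bounded convergence theorem. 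The only analytic input is the variance asymptotic $\Var h_\eps(s) = -2\log\eps + 4\log|s|_+ + o(1)$, which gives $\E[e^{\frac\beta2 h_\eps(s)}] = (1+o(1))\eps^{-\beta^2/4}|s|_+^{\beta^2/2}$ and accounts for the self-interaction term in $C_\bbH^{(\beta,s),(\beta_i,s_i)_i}$. So the step you flag as hard is in fact the easiest part once Girsanov is applied before taking the limit.
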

	\begin{proof}
			Consider bounded continuous functions $f$ on $H^{-1}(\bbH)$ and $g$ on $\R$, and suppose $g$ is compactly supported. For $h$ sampled from $P_\bbH$ let $\wt \phi := h + \sum_i \frac{\beta_i}2 G(\cdot, s_i) - 2Q \log |\cdot|_+$ and let $\E_{\bbH}$ denote the expectation over $P_\bbH$. Then
			\begin{align*}
				&\lim_{\eps \to 0} C_{ \bbH}^{(\beta_i, s_i)_i}\int_\R \E_{ \bbH}[  \eps^{\frac{\beta^2}4} e^{\frac\beta2 (\wt\phi_\eps(s) + c)} f(\wt \phi) g(c) ] e^{(\frac12\sum_i \beta_i - Q)c} \, dc \\
				&=  |s|_+^{\frac{\beta^2}2-Q\beta}e^{\frac14\sum_i \beta \beta_i G_\bbH(s, s_i)}C_{ \bbH}^{(\beta_i, s_i)_i} \\
				&\qquad\cdot\lim_{\eps \to 0} \int_\R \E_{ \bbH}[e^{\frac\beta2 h_\eps(s)}]^{-1}\E_{ \bbH}[e^{\frac\beta2  h_\eps(s)} f(\wt \phi) g(c) ] e^{(\frac\beta2 + \frac12\sum_i \beta_i - Q)c} \, dc \\
				&= C_{ \bbH}^{(\beta, s), (\beta_i, s_i)_i} \int_\R \E_{ \bbH}[ f(\wt \phi + \frac\beta2 G_\bbH(\cdot, s)) g(c) ] e^{(\frac\beta2+ \frac12\sum_i \beta_i - Q)c} \, dc.
			\end{align*}
			The first equality follows from expanding the definition of $\wt \phi$ and noting that $\Var(h_\eps(s)) = -2\log \eps + 4 \log |s|_+ + o(1)$ so 
			\[
			\E[e^{\frac\beta2 h_\eps(s)}] = (1+o(1)) \eps^{-\frac{\beta^2}4} |s|_+^{\frac{\beta^2}2}.
			\]
			 For the second equality, we have the prefactor 
			 $$|s|_+^{\frac{\beta^2}2-Q\beta}e^{\frac14\sum_i \beta\beta_i G_\C(s, s_i)}C_{ \bbH}^{(\beta_i, s_i)_i}  = C_{ \bbH}^{(\beta, s), (\beta_i, s_i)_i}$$ 
			 by definition.   Moreover, 
			Girsanov's theorem gives $\E_\bbH[e^{\frac\beta2 h_\eps(s)}]^{-1} \E_\bbH[e^{\frac\beta2 h_\eps(s)} f(\wt \phi)] = \E_\bbH[f(\wt \phi + \frac\beta2 G_{\bbH, \eps}(\cdot, s))]$ where $G_{\bbH, \eps} (w, s)$ is the average of $G_\bbH(w, \cdot)$ on $\partial B_\eps(s)\cap \bbH$. Since $\lim_{\eps \to 0} G_{\bbH, \eps} (\cdot, s) \to G_\bbH(\cdot, s)$ in $H^{-1}(\bbH)$, the equality follows from the bounded convergence theorem. 
	\end{proof}


Definitions~\ref{def-LF-rz} and~\ref{def-RV-H}  correspond to  the LCFT on $\bbH$ with background metric $g(x) = |x|_+^{-4}$, as defined in~\cite[Section 3.5]{hrv-disk}. See also \cite[Section 5.3]{rz-boundary} for more details. 
When the \emph{Seiberg bounds} $\sum \beta_i > 2Q, \beta_i < Q$ hold, the measure $e^{-\mu \mu_\phi(\bbH) - \mu_\partial \nu_\phi(\partial \bbH)} \LF_{ \bbH}^{(\beta_i, s_i)_i}(d\phi)$ is finite for
\emph{cosmological constants} $\mu, \mu_\partial  > 0$. Its total mass gives the Liouville correlation functions on $\bbH$. 
In this section the finiteness of $e^{-\mu \mu_\phi(\bbH) - \mu_\partial \nu_\phi(\partial \bbH)} \LF_{ \bbH}^{(\beta_i, s_i)_i}(d\phi)$ is irrelevant, 
so we do not put any constraint on $(\beta_i)_{1\le i\le m}$.

LCFT on the half-plane is conformally covariant. To state this, for a measure $M$  on distributions on a domain $D$,
and a conformal map $f: D \to \wt D$,  we define $f_* M$ as the pushforward of $M$ under the map 
$\phi \mapsto \phi\circ f^{-1}+Q\log|(f^{-1})'|$,
and recall  the conformal automorphism group $ \conf(\bbH)$ of  $\bbH$. 
\begin{proposition}\label{prop-hrv-invariance}
	For $\beta\in\R$, set $\Delta_\beta := \frac\beta2(Q - \frac\beta2)$. 
	Let $f\in \conf(\bbH) $ and  $(\beta_i, s_i)\in \R \times \partial \bbH$ be such that $f(s_i) \neq \infty$ for all $1\le i\le m$.  
	Then  $\LF_{\bbH}=  f_*\LF_{\bbH}$ and \[\LF_{\bbH}^{(\beta_i,f(s_i))_i} = \prod_{i=1}^m |f'(s_i)|^{-\Delta_{\beta_i}} f_*\LF_{\bbH}^{(\beta_i,s_i)_i}.\]
\end{proposition}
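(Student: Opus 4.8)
The statement has two layers: the conformal invariance of the Liouville field with no insertions, $\LF_\bbH = f_*\LF_\bbH$, and the transformation rule with insertions. The plan is to treat the first layer and then bootstrap to the second. The zero-insertion identity $\LF_\bbH = f_*\LF_\bbH$ is precisely the conformal covariance of boundary Liouville theory on $\bbH$ in the background metric $|z|_+^{-4}$, and is established in \cite{hrv-disk, rz-boundary}; I would either invoke it directly or reprove it as follows. Writing $g = f^{-1}$, apply the coordinate change $\phi\mapsto \phi\circ g + Q\log|g'|$ to $\phi = h - 2Q\log|\cdot|_+ + \mathbf c$. The one computational input is the Möbius identity $|g(z)-g(w)| = |g'(z)|^{1/2}|g'(w)|^{1/2}|z-w|$, valid for $g\in\conf(\bbH)$, which together with the explicit kernel $G_\bbH$ of \eqref{eq:covariance} shows that the free boundary GFF transforms as $h\circ g \eqD h - \langle h\rangle_{f_* m}$, i.e.\ a fresh GFF shifted by minus its own $f_* m$-average — this is merely the change of variables in the normalization $\int h\,dm = 0$. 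Then $e^{-Qc}\,dc$ is quasi-invariant under translations of $c$, which lets one move this Gaussian zero-mode shift into $\mathbf c$, and a Girsanov/Cameron–Martin shift then moves it back onto the field; the bookkeeping converts $-2Q\log|g(z)|_+ + Q\log|g'(z)|$ into $-2Q\log|z|_+$ up to an $f$-dependent constant, and produces a scalar prefactor which the Weyl-anomaly identity forces to be $1$. It suffices to run this for a generating set of $\conf(\bbH)\cong\mathrm{PSL}(2,\R)$ — scalings, translations and the inversion $z\mapsto -1/z$ — since the identity is stable under composition.

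Granting $\LF_\bbH = f_*\LF_\bbH$, the insertion formula follows from the regularization behind Definition~\ref{def-RV-H}. By Lemma~\ref{lem:inserting} (applied repeatedly), $\LF_\bbH^{(\beta_i,s_i)_i}$ is obtained from $\LF_\bbH$ by successively inserting the factor $\eps^{\beta_i^2/4}e^{\frac{\beta_i}{2}\phi_\eps(s_i)}$ and letting $\eps\to0$ (in the vague topology). Pushing this forward under $\phi\mapsto f_*\phi$ and using $f_*\LF_\bbH = \LF_\bbH$, I track how each circle average transforms: since $f$ maps $\partial B_\eps(s_i)\cap\bbH$ approximately to $\partial B_{|f'(s_i)|\eps}(f(s_i))\cap\bbH$ while the coordinate change adds $Q\log|g'|$, one has $\phi_\eps(s_i) = (f_*\phi)_{|f'(s_i)|\eps}(f(s_i)) + Q\log|f'(s_i)| + o(1)$, and $\eps^{\beta_i^2/4} = |f'(s_i)|^{-\beta_i^2/4}(|f'(s_i)|\eps)^{\beta_i^2/4}$. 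Collecting the powers of $|f'(s_i)|$ produces the factor $|f'(s_i)|^{-\beta_i^2/4 + \beta_i Q/2} = |f'(s_i)|^{\Delta_{\beta_i}}$, while the remaining factors reassemble, in the limit, into $\LF_\bbH^{(\beta_i,f(s_i))_i}$ (with $f_*\phi$ playing the role of the base field, which is $\LF_\bbH$-distributed by the zero-insertion case; the $i$-dependent regularization rates $|f'(s_i)|\eps$ cause no difficulty since one may add the insertions one at a time). Hence $f_*\LF_\bbH^{(\beta_i,s_i)_i} = \prod_i |f'(s_i)|^{\Delta_{\beta_i}}\LF_\bbH^{(\beta_i,f(s_i))_i}$, which is the claimed identity. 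The hypothesis $f(s_i)\neq\infty$ is exactly what keeps the transformed insertion points in $\R$, so that these regularized quantities are defined.

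The main obstacle is the zero-insertion identity, and within it the point that the change of GFF normalization under $f$ yields not an independent additive constant but a Gaussian functional of the field, whose interaction with the infinite zero-mode measure $e^{-Qc}\,dc$ must be disentangled by a Girsanov argument; once one commits to computing with \eqref{eq:covariance}, the Möbius identity, and that reweighting, the remainder is bookkeeping of deterministic terms and Gaussian variances. The insertion bootstrap is then comparatively routine. Among the generators I would handle the inversion $z\mapsto-1/z$ last, since there $|g'(z)| = |z|^{-2}$ is non-constant and the $\log|\cdot|_+$ terms require the most care.
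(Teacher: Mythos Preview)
Your proposal is correct, but it takes a different route from the paper. The paper's proof is a two-line citation: it invokes \cite[Theorem~3.5]{hrv-disk} (stated there for the disk, transferred to $\bbH$ via their Proposition~3.7), notes that HRV work with the cosmological weights $e^{-\mu\mu_\phi(\bbH)-\mu_\partial\nu_\phi(\partial\bbH)}$, and observes that the identity with $\mu,\mu_\partial>0$ trivially implies the one with $\mu=\mu_\partial=0$. In particular, the paper cites the full insertion transformation at once and does no computation.

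Your approach instead separates the two layers. For the base case $\LF_\bbH=f_*\LF_\bbH$ you either cite it (as the paper effectively does) or redo the HRV computation on generators; this is fine, and your identification of the Girsanov step handling the shifted GFF normalization is the right diagnosis of where the work lies. The genuine difference is your bootstrap to insertions: rather than citing, you use Lemma~\ref{lem:inserting} to write $\LF_\bbH^{(\beta_i,s_i)_i}$ as a limit of $\eps^{\beta_i^2/4}e^{\frac{\beta_i}2\phi_\eps(s_i)}\LF_\bbH$, push forward under $f$, and track the semicircle-average transformation $\phi_\eps(s_i)=(f\bullet_\gamma\phi)_{|f'(s_i)|\eps}(f(s_i))+Q\log|f'(s_i)|+o(1)$ to extract the factor $|f'(s_i)|^{\Delta_{\beta_i}}$. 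This is a valid and clean derivation, and has the virtue of being self-contained within the paper's own toolkit (Lemma~\ref{lem:inserting} is already proved here). The cost is that one must justify the $o(1)$ error in the circle-average identity and the interchange of pushforward with the $\eps\to0$ limit in the vague topology; adding insertions one at a time, as you suggest, handles the varying regularization scales without difficulty.
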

\begin{proof}
	Theorem 3.5 in \cite{hrv-disk} is stated for LCFT on the unit disk, but the result holds also for LCFT on $\bbH$ by their Proposition 3.7.  Rephrasing using $\bbH$, 
	in \cite[Theorem 3.5]{hrv-disk} they consider $e^{-\mu \mu_\phi(\bbH) - \mu_\partial \nu_\phi(\partial \bbH)} \LF_{ \bbH}^{(\beta_i, s_i)_i}(d\phi)$ for $\mu,\mu_\partial>0$.
	But this  readily implies the statement for $\mu=\mu_\partial = 0$, i.e., proves Proposition~\ref{prop-hrv-invariance}. 
\end{proof}

In Definition~\ref{def-RV-H} we did not consider the case $s_1=\infty$. We now give a definition of this field and check that it can be obtained by sending $s \to \infty$.

\begin{definition}\label{def:LF-H-inf}
	Let $\beta \in \R$ and $(\beta_i, s_i) \in  \R \times \partial \bbH$ for $i = 2, \dots, m$, where $m \geq 1$ and the $s_i$ are pairwise distinct. Let $(h, \mathbf c)$ be sampled from $C_{\bbH}^{(\beta, \infty),(\beta_i, s_i)_i} P_\bbH\times [e^{(\frac\beta2 + \frac12\sum_i \beta_i - Q)c}dc]$ where
	\[C_\bbH^{(\beta, \infty), (\beta_i, s_i)_i} = \prod_{i=2}^m |s_i|_+^{-\beta_i(Q-\frac{\beta_i}2 - \frac\beta2)} e^{\sum_{j=i+1}^m \frac{\beta_i\beta_j}4 G_\bbH(s_i,s_j)}. \]
	Let $\phi(z) = h(z) +(\beta- 2Q) \log |z|_+ + \sum_{i=2}^m \frac{\beta_i}2 G_\bbH(z, s_i) + \mathbf c$. We write $\LF_\bbH^{(\beta, \infty), (\beta_i, s_i)_i}$ for the law of $\phi$ and call a sample from $\LF_\bbH^{(\beta, \infty), (\beta_i, s_i)_i}$ the \emph{Liouville field on $\bbH$ with insertions $(\beta, \infty), (\beta_i, s_i)_{2 \leq i \leq m}$}.
\end{definition}

\begin{lemma}\label{lem:LF-H-inf}
	With notation as in Definition~\ref{def:LF-H-inf}, we have the convergence in the vague topology on measures on $H^{-1}(\bbH)$ (see Remark~\ref{rmk:ftn-space})
	\[\lim_{r \to +\infty} r^{\beta(Q-\frac\beta2)} \LF_{\bbH}^{(\beta, r), (\beta_i, s_i)_i} = \LF_{\bbH}^{(\beta, \infty), (\beta_i, s_i)_i}. \]
\end{lemma}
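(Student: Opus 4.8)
The plan is to compare the two measures directly from their explicit descriptions in Definitions~\ref{def-RV-H} and~\ref{def:LF-H-inf}, in the spirit of the proof of Lemma~\ref{lem:inserting}. Write a sample $\phi$ from $\LF_\bbH^{(\beta,r),(\beta_i,s_i)_i}$ as $\phi = h + u_r + \mathbf c$, where the pair $(h,\mathbf c)$ has law $C_\bbH^{(\beta,r),(\beta_i,s_i)_i}\,P_\bbH\times[e^{(\frac\beta2+\frac12\sum_i\beta_i-Q)c}\,dc]$ and $u_r(z) = -2Q\log|z|_+ + \tfrac\beta2 G_\bbH(z,r) + \sum_{i\ge2}\tfrac{\beta_i}2 G_\bbH(z,s_i)$; likewise a sample from $\LF_\bbH^{(\beta,\infty),(\beta_i,s_i)_i}$ is $h + u_\infty + \mathbf c$ with the same $h$-law, the same $\mathbf c$-density $e^{(\frac\beta2+\frac12\sum_i\beta_i-Q)c}\,dc$ (now scaled by $C_\bbH^{(\beta,\infty),(\beta_i,s_i)_i}$), and $u_\infty(z) = (\beta-2Q)\log|z|_+ + \sum_{i\ge2}\tfrac{\beta_i}2 G_\bbH(z,s_i)$. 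Thus only the deterministic shift and the overall constant depend on $r$, and it suffices to prove
\[
\text{(i) } u_r\to u_\infty \text{ in } H^{-1}(\bbH);
\qquad
\text{(ii) } r^{\beta(Q-\frac\beta2)}\,C_\bbH^{(\beta,r),(\beta_i,s_i)_i}\longrightarrow C_\bbH^{(\beta,\infty),(\beta_i,s_i)_i}.
\]

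For (ii) I would substitute the explicit kernel~\eqref{eq:covariance} into the definition of $C_\bbH^{(\beta,r),(\beta_i,s_i)_i}$. Since $r\to+\infty$ we have $|r|_+=r$ eventually, so the prefactor $r^{\beta(Q-\frac\beta2)}$ exactly cancels the factor $|r|_+^{-\beta(Q-\frac\beta2)}$ produced by the insertion at $r$, while $G_\bbH(r,s_i)=-2\log|r-s_i|+2\log r+2\log|s_i|_+ = 2\log|s_i|_+ + o(1)$ for each $i\ge 2$. Carrying out the cancellations and collecting powers of $|s_i|_+$ via $-\beta_i(Q-\tfrac{\beta_i}2)+\tfrac{\beta\beta_i}2 = -\beta_i(Q-\tfrac{\beta_i}2-\tfrac\beta2)$ produces exactly $C_\bbH^{(\beta,\infty),(\beta_i,s_i)_i}$ in the limit. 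This step is pure bookkeeping with the explicit Gaussian data.

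The substance is (i). Since the terms $\tfrac{\beta_i}2 G_\bbH(\cdot,s_i)$, $i\ge 2$, do not depend on $r$, it reduces to showing that $v_r := \tfrac\beta2 G_\bbH(\cdot,r)-\beta\log|\cdot|_+$ tends to $0$ in $H^{-1}(\bbH)$. Plugging in~\eqref{eq:covariance} and using $r\in\R$, $r>1$, one finds $v_r(z) = -\beta\log|z/r-1| = -\beta\,\Re\log(z/r-1)$, which is harmonic on $\bbH$ and converges to $0$ uniformly on compact subsets of $\C$ (for $r$ large, any fixed compact set avoids the point $r$). I would upgrade this to convergence in $H^{-1}(\bbH)$ by bounding $\|v_r\|_{L^2(\bbH,g)}$: away from $\infty$ this follows from the locally uniform convergence together with finiteness of the $g$-measure, while near $\infty$ the substitution $w=-1/z$ turns $v_r$ into $\beta\log\frac{|w|}{|w+1/r|}$ near $w=0$, the difference of two logarithmic potentials whose singularities at $0$ and at $-1/r$ merge as $r\to\infty$; a direct estimate of $\int_{|w|<\delta}\big(\log\frac{|w|}{|w+1/r|}\big)^2\,d^2w$ (e.g.\ by scaling $w=v/r$, reducing to $r^{-2}\!\int_{|v|<\delta r}\big(\log\frac{|v|}{|v+1|}\big)^2 d^2v = O(r^{-2}\log r)$) shows it vanishes. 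Since $L^2(\bbH,g)\hookrightarrow H^{-1}(\bbH)$, this gives $v_r\to 0$ in $H^{-1}(\bbH)$. I expect this $L^2/H^{-1}$ estimate near $\infty$ to be the only place requiring care.

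Finally, to conclude vague convergence I would test the measures against functions of the form $\phi\mapsto f(h+u_r)\,g(c)$ with $f$ bounded continuous on $H^{-1}(\bbH)$ and $g$ continuous with compact support, exactly as in the proof of Lemma~\ref{lem:inserting}. Integrating over $P_\bbH\times[e^{(\frac\beta2+\frac12\sum_i\beta_i-Q)c}\,dc]$ and using (i), the continuity of $f$ on $H^{-1}(\bbH)$, and the bounded convergence theorem (the $c$-integral runs over the compact support of $g$, on which the density is bounded, and $|f|\le\|f\|_\infty$), the integral of $f(h+u_r)g(c)$ against $r^{\beta(Q-\frac\beta2)}\LF_\bbH^{(\beta,r),(\beta_i,s_i)_i}$ converges to the corresponding integral against $\LF_\bbH^{(\beta,\infty),(\beta_i,s_i)_i}$, using (ii) for the constants. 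This yields the stated vague convergence.
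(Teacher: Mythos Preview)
Your proof is correct and follows essentially the same approach as the paper: isolate the two ingredients (convergence of the deterministic drift $u_r\to u_\infty$ in $H^{-1}(\bbH)$, equivalently $G_\bbH(\cdot,r)\to 2\log|\cdot|_+$, and convergence of the normalizing constants), then conclude vague convergence by testing against $f(\phi-c)g(c)$ as in Lemma~\ref{lem:inserting}. The paper's own proof is much terser---it simply asserts both convergences without justification---whereas you supply the explicit $L^2(\bbH,g)$ estimate near infinity for (i) and the bookkeeping for (ii).
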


\begin{proof}
	In the topology of $H^{-1}(\bbH)$, we have $G_\bbH(\cdot, r) \to 2\log |\cdot|_+$ as $r \to +\infty$. Thus we have $h - 2Q \log |\cdot|_+ + \frac\beta2 G_\bbH(\cdot, r) + \sum_{i=2}^m \frac{\beta_i}2 G_\bbH(\cdot, s_i) \to h +(\beta- 2Q) \log |\cdot|_+ + \sum_{i=2}^m \frac{\beta_i}2 G_\bbH(\cdot, s_i)$, and moreover $\lim_{r \to +\infty} r^{\beta(Q-\frac\beta2)} C_{\bbH}^{(\beta, r), (\beta_i, s_i)_i} = C_{\bbH}^{(\beta, \infty), (\beta_i, s_i)}$.   This yields the result. 
\end{proof}

When $\beta_1=\beta_2$ it  is more convenient to put the field on the strip $\cS$ and  put these two  insertions at $\pm \infty$. 
We will use this in the three-point case.
\begin{definition}\label{def-RV}
	Let $(h, \mathbf c)$ be sampled from $C_\cS^{(\beta, \pm\infty), (\beta_3, s_3)} P_\cS \times [e^{(\beta + \frac{\beta_3}2 - Q)c}dc]$ where $\beta \in \R$ and $( \beta_3,s_3) \in \R \times \partial\mathcal \cS$, and 
	\[C_\cS^{(\beta, \pm\infty), (\beta_3,s_3)} = e^{(-\frac{\beta_3}2(Q-\frac{\beta_3}2) + \frac{\beta\beta_3}2)\left| \Re s_3 \right| }. \]
	Let $\phi(z) = h(z) - (Q-\beta)\left|\Re z\right| + \frac{\beta_3}2 G_\cS(z, s_3) + \mathbf c$. We write $\LF_\cS^{(\beta, \pm\infty), (\beta_3, s_3)}$ for the law of $\phi$.  In the special case $\beta_3=0$, we instead write $\LF_\cS^{(\beta, \pm\infty)}$.
\end{definition}

Our next lemma explains how the fields of Definitions~\ref{def-RV-H},~\ref{def:LF-H-inf}  and~\ref{def-RV} are related under change of coordinates. We state this for two specific choices of conformal maps, and in light of Proposition~\ref{prop-hrv-invariance}, this covers all cases. Let $\exp: \cS\to \bbH$ be the exponentiation map $\exp(z) = e^z$. 

\begin{lemma}\label{lem-equiv-C-cS}
	Let $\beta \in \R$ and $(\beta_3, s_3) \in \R \times \partial \cS$, then 
	\[\LF_\bbH^{(\beta, \infty), (\beta, 0), (\beta_3, e^{s_3})} = e^{ - \frac{\beta_3}{2}(Q - \frac{\beta_3}2) \Re s_3} \exp_*\LF_\cS^{(\beta, \pm\infty), (\beta_3, s_3)} .\]
	Similarly, if $\beta_1, \beta_2, \beta_3 \in \R$ and $f  \in \mathrm{Conf}(\bbH)$ satisfies $f(0) = 0, f(1) = 1, f(-1) = \infty$, then
	\[\LF_{\bbH}^{(\beta_1, \infty), (\beta_2, 0), (\beta_3, 1)} =  2^{\Delta_{\beta_1}-\Delta_{\beta_2} + \Delta_{\beta_3}} \cdot f_* \LF_\bbH^{(\beta_1, -1), (\beta_2, 0), (\beta_3, 1)}. \]
\end{lemma}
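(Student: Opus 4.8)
The plan is to prove both identities by direct computation from the definitions, tracking the explicit constants $C_\bbH^{\cdots}$, $C_\cS^{\cdots}$ and the Green's function shifts, rather than by any abstract argument. For the first identity, I would start from Definition~\ref{def-RV} for $\LF_\cS^{(\beta, \pm\infty), (\beta_3, s_3)}$ on the strip, and push it forward under $\exp: \cS \to \bbH$. Under pushforward, the GFF $h_\cS$ becomes $h_\bbH$ (since $h_\cS = h_\bbH \circ \exp + (\text{const})$ by our choice of normalization measures, as noted after~\eqref{eq:covariance}), and the deterministic parts transform via the rule $\phi \mapsto \phi \circ \exp^{-1} + Q \log|(\exp^{-1})'|$. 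The key computations are: (i) $\exp$ sends $-(Q-\beta)|\Re z|$ to a multiple of $\log|w|_+$ plus the correct singularities at $0$ and $\infty$, which should reproduce the $(\beta, \infty)$ and $(\beta, 0)$ insertions of Definition~\ref{def:LF-H-inf}; (ii) $G_\cS(z, s_3) = G_\bbH(e^z, e^{s_3})$ becomes the $(\beta_3, e^{s_3})$ insertion term; (iii) the $Q\log|(\exp^{-1})'|$ Jacobian term contributes an additional $\log$-singularity at $0$ and $\infty$, which must be absorbed into the coefficients of $\log|w|_+$; and (iv) the constants $C_\cS$, $C_\bbH$ and the power of $e^{\Re s_3}$ must be matched. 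The exponent $-\frac{\beta_3}{2}(Q - \frac{\beta_3}{2})\Re s_3$ on the right should come precisely from the discrepancy between $C_\cS^{(\beta, \pm\infty),(\beta_3,s_3)}$ (which has $e^{(-\frac{\beta_3}2(Q-\frac{\beta_3}2) + \frac{\beta\beta_3}2)|\Re s_3|}$) and the factor $|e^{s_3}|_+^{-\beta_3(Q - \frac{\beta_3}{2} - \frac{\beta}{2})}$ appearing in $C_\bbH^{(\beta,\infty),(\beta,0),(\beta_3,e^{s_3})}$, after accounting for $|e^{s_3}|_+ = e^{\Re s_3 \vee 0}$.

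\textbf{Second identity.} For the Möbius transformation $f$ with $f(0)=0$, $f(1)=1$, $f(-1)=\infty$, the natural tool is Proposition~\ref{prop-hrv-invariance}, which tells us that $\LF_\bbH^{(\beta_i, f(s_i))_i} = \prod_i |f'(s_i)|^{-\Delta_{\beta_i}} f_* \LF_\bbH^{(\beta_i, s_i)_i}$ whenever none of the $f(s_i)$ equals $\infty$. Here, however, $f(-1) = \infty$, so I cannot apply the proposition directly; instead I would take $s_1 = r$ large, apply Proposition~\ref{prop-hrv-invariance} to the triple of insertions $(\beta_1, f^{-1}(r)), (\beta_2, 0), (\beta_3, 1)$ — noting $f^{-1}(r) \to -1$ as $r \to \infty$ — and then pass to the limit $r \to \infty$ using Lemma~\ref{lem:LF-H-inf} on the left-hand side (to convert $(\beta_1, r)$ into $(\beta_1, \infty)$) together with the explicit asymptotics $r^{\beta_1(Q - \frac{\beta_1}{2})}$ and the corresponding behaviour of $|f'(f^{-1}(r))|$ as $r \to \infty$. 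The constant $2^{\Delta_{\beta_1} - \Delta_{\beta_2} + \Delta_{\beta_3}}$ should emerge as $\prod_i |f'(s_i)|^{-\Delta_{\beta_i}}$ evaluated at $s_1 = -1$, $s_2 = 0$, $s_3 = 1$ after the limiting procedure, where $f$ is (up to sign conventions) the map $f(z) = \frac{2z}{1-z}$ or similar, for which $|f'(0)|$, $|f'(1)|$ and the renormalized derivative at $-1$ are all powers of $2$.

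\textbf{Main obstacle.} I expect the bookkeeping of constants and $\log$-singularity coefficients in the first identity to be the most error-prone part: one must carefully combine the $-2Q\log|w|_+$ base term, the Jacobian $Q\log|(\exp^{-1})'| = -Q\Re(\exp^{-1} w) \cdot (\ldots)$ — more precisely $Q\log|1/w|$ up to the $|w|_+$ truncation subtleties — and the transformed insertion terms, then verify that the net coefficient of $\log|w|$ near $0$ and near $\infty$ matches $\beta - 2Q$ and the $(\beta,0)$ insertion exactly, and that the $|w|_+$-truncated versus untruncated Green's functions agree in $H^{-1}$. For the second identity the main subtlety is the limiting argument: one must check that the vague convergence of Lemma~\ref{lem:LF-H-inf} is compatible with the pushforward by $f$ (i.e. that $f_*$ is continuous in the vague topology on $H^{-1}(\bbH)$, which follows since $\phi \mapsto \phi \circ f^{-1} + Q\log|(f^{-1})'|$ is continuous on $H^{-1}$), and that the scalar prefactors converge to the claimed power of $2$. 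Neither obstacle is conceptually deep, but both require disciplined tracking of conformal factors.
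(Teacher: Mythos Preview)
Your proposal is correct and follows essentially the same route as the paper. For the first identity the paper does exactly the direct computation you outline, using $h_\cS = h_\bbH \circ \exp$, $G_\cS(z,w)=G_\bbH(e^z,e^w)$, and then matching the constants $C_\bbH^{(\beta,\infty),(\beta,0),(\beta_3,e^{s_3})}$ and $C_\cS^{(\beta,\pm\infty),(\beta_3,s_3)}$. For the second identity the paper also uses Proposition~\ref{prop-hrv-invariance} together with the limit in Lemma~\ref{lem:LF-H-inf}; the only cosmetic difference is that the paper varies the M\"obius map (taking $f_r$ with $f_r(0)=0$, $f_r(1)=1$, $f_r(-1)=r$ and letting $r\to\infty$) while you keep $f$ fixed and move the insertion point via $f^{-1}(r)\to -1$ --- these are equivalent parametrizations of the same limiting argument and produce the same constant $2^{\Delta_{\beta_1}-\Delta_{\beta_2}+\Delta_{\beta_3}}$.
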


	\begin{proof}
		If $h$ is sampled from $P_\cS$ then $\wt h := h \circ \log$ has law $P_\bbH$, and  $G_\cS(z,w) = G_\bbH(e^z, e^w)$. Thus 
		\eqbn
		\begin{split}
			&\wt h (\cdot) + (\beta - 2Q) \log \left| \cdot \right|_+ + \frac\beta2 G_\bbH(\cdot, 0) + \frac{\beta_3}2 G_\bbH(\cdot, e^{s_3})\\
			&\qquad\qquad\qquad = \exp \bullet_\gamma (h(\cdot) - (Q-\beta) \left|\Re \cdot \right| + \frac{\beta_3}2 G_\cS(\cdot, s_3)).
		\end{split}
		\eqen
		Combining this with $C_\bbH^{(\beta, \infty), (\beta, 0), (\beta_3, e^{s_3})} = e^{ - \frac{\beta_3}{2}(Q - \frac{\beta_3}2) \Re s_3} C_\cS^{(\beta, \pm\infty), (\beta_3, s_3)}$ gives the first assertion. 
		
		For $r>0$ let $f_r(z) := \frac{2rz}{(r+1)z + r-1}$, which is the conformal map such that $f_r(0) = 0, f_r(1) = 1, f_r(-1) = r$. By Proposition~\ref{prop-hrv-invariance} and using the $r \to \infty$ asymptotics $|f_r'(-1)| = (1+o_r(1))\frac{r^2}2$, $|f_r'(0)| = 2+o_r(1)$ and  $|f_r'(1)| = \frac12 + o_r(1)$, we have 
		\[ r^{2\Delta_{\beta_1}} \LF_{\bbH}^{(\beta_1, r), (\beta_2, 0), (\beta_3, 1)} = (1+o_r(1))  2^{\Delta_{\beta_1} - \Delta_{\beta_2} + \Delta_{\beta_3}}
		(f_r)_* \LF_{\bbH}^{(\beta_1, -1), (\beta_2, 0), (\beta_3, 1)} \]
		as $r \to \infty$.
		The $r \to \infty$ limit of the left hand side is $\LF_{\bbH}^{(\beta_1, \infty), (\beta_2, 0), (\beta_3, 1)}$ by Lemma~\ref{lem:LF-H-inf}. Similarly, since $f_r \to f$ in the topology of uniform convergence of an analytic function and its derivative on compact sets, we have $\lim_{r \to \infty}(f_r)_* \LF_{ \bbH}^{(\beta_1, -1), (\beta_2, 0), (\beta_3, 1)} = f_* \LF_{\bbH}^{(\beta_1, -1), (\beta_2, 0), (\beta_3, 1)}$ in the vague topology. This gives the second assertion.
	\end{proof}

We conclude  with an observation that is useful in Section~\ref{subsec-3pt}.
\begin{lemma}\label{lem-interpret-partition-fn}
	Let $\E_\cX$ denote the expectation over the probability measure $P_\cX$ for $\cX \in \{\bbH, \cS\}$. 
	Let $\E_\cS[\nu_h(du)]$ be the  measure on $\R$ given by $A \mapsto \E_\cS[\nu_h(A)]$. 
	We similarly define $\E_\bbH[\nu_h(du)]$. Then 
	\eqbn
	\begin{split}
		C^{(\beta, \pm\infty), (\gamma,u)}_\cS du &= e^{\frac{\gamma}2 (- (Q-\beta)\left|\Re u\right|)}  \E_\cS[\nu_h(du)], \quad \\
		C^{(\gamma,u)}_{\bbH} du &=e^{ -2\gamma Q \log |u|_+}  \E_\bbH[\nu_h(du)] .
	\end{split}
	\eqen
\end{lemma}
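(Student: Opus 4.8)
\textbf{Proof plan for Lemma~\ref{lem-interpret-partition-fn}.}
The plan is to compute $\E_\cX[\nu_h(du)]$ directly from the definition of the quantum boundary length measure as a limit of regularized exponentials, and to recognize the resulting Gaussian variance correction as precisely the constant $C^{\cdots}_\cX$ appearing in the corresponding Liouville field definition. Concretely, recall that $\nu_h = \lim_{\eps\to 0}\eps^{\gamma^2/4}e^{\frac\gamma2 h_\eps(u)}\,du$ where $h$ is sampled from $P_\cX$ and $h_\eps(u)$ is the circle (or semicircle) average. Taking expectations and using that $h_\eps(u)$ is centered Gaussian, we have $\E_\cX[\eps^{\gamma^2/4}e^{\frac\gamma2 h_\eps(u)}] = \eps^{\gamma^2/4}e^{\frac{\gamma^2}8\Var(h_\eps(u))}$. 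So the whole computation reduces to reading off the small-$\eps$ asymptotics of $\Var(h_\eps(u))$ for $\cX=\bbH$ and $\cX=\cS$.

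First I would record these asymptotics from the explicit covariance kernels in~\eqref{eq:covariance}. For $\cX=\bbH$ and $u\in\R$, the standard computation (as already used in the proof of Lemma~\ref{lem:inserting}) gives $\Var(h_\eps(u)) = -2\log\eps + 4\log|u|_+ + o(1)$, so that $\eps^{\gamma^2/4}e^{\frac{\gamma^2}8\Var(h_\eps(u))} = (1+o(1))\,e^{\frac{\gamma^2}2\log|u|_+} = (1+o(1))\,|u|_+^{\gamma^2/2}$. Since $2\gamma Q = \gamma^2 + 4$ wait — more carefully, $-2\gamma Q\log|u|_+$ has coefficient $-2\gamma Q = -(\gamma^2+4)$, whereas $C^{(\gamma,u)}_\bbH = |u|_+^{-\gamma(Q-\gamma/2)} = |u|_+^{-(\gamma^2/2+2)}$ from Definition~\ref{def-RV-H}; then $e^{-2\gamma Q\log|u|_+}\cdot|u|_+^{\gamma^2/2} = |u|_+^{-\gamma^2/2-4+\gamma^2/2} = |u|_+^{-4}$... so the identity to check is $C^{(\gamma,u)}_\bbH = e^{-2\gamma Q\log|u|_+}\cdot|u|_+^{\gamma^2/2}$, i.e. $|u|_+^{-(\gamma^2/2+2)} = |u|_+^{-4+\gamma^2/2}$, which holds iff $-\gamma^2/2-2 = \gamma^2/2-4$, i.e. $\gamma^2 = 2$ — that is false in general, so I must be misreading the normalization. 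The resolution is that the $|u|_+^{-2Q}$-type factor is the background-metric weight already built into $\phi$ in Definition~\ref{def-RV-H} (the $-2Q\log|\cdot|_+$ term), and the statement of the lemma is designed precisely so that $C^{(\gamma,u)}_\bbH\,du = e^{-2\gamma Q\log|u|_+}\E_\bbH[\nu_h(du)]$ balances exactly; so I will simply carry the bookkeeping through: $\E_\bbH[\nu_h(du)] = \lim_\eps \eps^{\gamma^2/4}e^{\frac{\gamma^2}8\Var(h_\eps(u))}\,du = |u|_+^{\gamma^2/2}\,du$, hence $e^{-2\gamma Q\log|u|_+}\E_\bbH[\nu_h(du)] = |u|_+^{\gamma^2/2 - 2\gamma Q}\,du = |u|_+^{-\gamma(Q-\gamma/2) - (\text{extra})}\,du$, and I will verify the exponent matches $-\beta_i(Q-\beta_i/2)$ with $\beta_i=\gamma$ after accounting for all the $-2Q\log|\cdot|_+$ contributions correctly. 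The point is that the identity is a matter of matching exponents once $\Var(h_\eps(u))$ is pinned down, so I would just do the arithmetic carefully.

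For the strip case, the analogue is: from $G_\cS(z,w) = -\log|e^z-e^w| - \log|e^z-e^{\bar w}| + \max(2\Re z,0) + \max(2\Re w,0)$, for $u\in\R\subset\partial\cS$ (so $\Im u = 0$) the regularized variance is $\Var(h_\eps(u)) = -2\log\eps + 2\max(2\Re u, 0) + o(1)$; actually I should compute this as the semicircle-average variance on $\partial\cS$, getting $\Var = -2\log\eps + 2\max(2\Re u,0) + o(1)$ or possibly with a different constant — I would pin down the exact $o(1)$-free form by comparing with $G_\bbH$ under $z\mapsto e^z$, since the $\bbH$ computation is already done. Then $\E_\cS[\nu_h(du)] = \lim_\eps \eps^{\gamma^2/4}e^{\frac{\gamma^2}8\Var(h_\eps(u))}\,du = e^{\frac{\gamma^2}8\cdot 2\max(2\Re u,0)}\,du$, and I compare with $C^{(\beta,\pm\infty),(\gamma,u)}_\cS = e^{(-\frac\gamma2(Q-\frac\gamma2)+\frac{\beta\gamma}2)|\Re u|}$ from Definition~\ref{def-RV} (setting $\beta_3=\gamma$ there) multiplied by $e^{\frac\gamma2(-(Q-\beta)|\Re u|)}$: the product is $e^{(-\frac\gamma2(Q-\frac\gamma2)+\frac{\beta\gamma}2 - \frac\gamma2(Q-\beta))|\Re u|} = e^{(-\gamma Q + \frac{\gamma^2}4 + \beta\gamma)|\Re u|}$, which I then check equals $e^{\frac{\gamma^2}4\max(2\Re u,0)}$... these only agree when $\beta = \gamma$ and on the region $\Re u > 0$, which suggests the intended reading pairs the $|\Re u|$ versus $\max(2\Re u,0)$ via the asymmetry of the strip insertions at $\pm\infty$; again the honest thing is to carry the exact kernel asymptotics through, and I expect everything to match because this lemma is exactly the bookkeeping identity that makes the $\LF$-vs-$\nu_h$ dictionary work.

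\textbf{Main obstacle.} The only real difficulty is getting every constant and every $|u|_+$/$\max(2\Re u,0)$ exponent exactly right — there is no conceptual content beyond the Gaussian moment formula $\E[e^{\lambda X}] = e^{\lambda^2\Var(X)/2}$ applied to $h_\eps(u)$ together with the precise regularized-variance asymptotics extracted from~\eqref{eq:covariance}. I would present the $\bbH$ case in full detail (reusing the variance expansion already computed in the proof of Lemma~\ref{lem:inserting}), then obtain the $\cS$ case either by the same direct computation or by transporting through $\exp:\cS\to\bbH$ using $G_\cS(z,w) = G_\bbH(e^z,e^w)$ and the conformal covariance of $\nu_h$ (Definition of quantum boundary length measure under conformal maps), so that the strip statement follows from the half-plane one after tracking the Jacobian factor $|(\exp)'| = e^{\Re z}$.
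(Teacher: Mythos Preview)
Your approach is exactly the paper's: test against a compactly supported $g$, replace $\nu_h$ by its $\eps$-approximation, use $\E_\cX[\eps^{\gamma^2/4}e^{\frac\gamma2 h_\eps(u)}]=(1+o_\eps(1))e^{\frac{\gamma^2}8\Var(h_\eps(u))}$, read off the variance asymptotics from the explicit kernel~\eqref{eq:covariance}, and match exponents. The paper carries this out only for the strip identity and declares the $\bbH$ case identical.

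The one place your arithmetic goes off the rails is the strip variance: for $u\in\R$ you must combine the contribution $-2\Re u$ coming from $-2\log|e^z-e^w|\approx -2\log|z-w|-2\Re u$ with the contribution $4\max(\Re u,0)$ from the two $\max$-terms, and these sum to $2|\Re u|$ in both sign cases, so $\Var(h_\eps(u))=-2\log\eps+2|\Re u|+o(1)$ (not $2\max(2\Re u,0)$). Plugging this in gives $\E_\cS[\nu_h(du)]=e^{\frac{\gamma^2}4|\Re u|}\,du$, and then $e^{-\frac\gamma2(Q-\beta)|\Re u|}\cdot e^{\frac{\gamma^2}4|\Re u|}=e^{(-\frac\gamma2(Q-\frac\gamma2)+\frac{\beta\gamma}2)|\Re u|}=C_\cS^{(\beta,\pm\infty),(\gamma,u)}$, exactly as required. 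For the $\bbH$ identity your computation $\E_\bbH[\nu_h(du)]=|u|_+^{\gamma^2/2}\,du$ is correct; your failure to match exponents is because the stated prefactor $e^{-2\gamma Q\log|u|_+}$ appears to be a typo for $e^{\frac\gamma2(-2Q\log|u|_+)}=|u|_+^{-\gamma Q}$, with which the identity $C_\bbH^{(\gamma,u)}=|u|_+^{-\gamma(Q-\gamma/2)}=|u|_+^{-\gamma Q}\cdot|u|_+^{\gamma^2/2}$ holds on the nose.
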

\begin{proof}
		We present the argument for the first identity; the other uses an identical argument. For any smooth compactly supported function $g: \R \to \R$, by \cite[Theorem 1.1]{berestycki-gmt-elementary} we have
		\eqb\label{eq-C-weighted}
		\begin{split}
			&\int_\R e^{\frac{\gamma}2 (- (Q-\beta)  \left|\Re u\right|)} g(u) \E_\cS[\nu_h(du)]\\
			&\qquad\qquad\qquad= \lim_{\eps \to 0} \int_\R e^{\frac{\gamma}2 (- (Q-\beta)\left|\Re u\right|)}g(u)\E_\cS[\eps^{\gamma^2/4} e^{\frac\gamma2 h_\eps(u)}]\,du.
		\end{split}
		\eqe
		Now, since $\Var(h_\eps(u)) = -2\log \eps + 2 \left| \Re u \right|+ o_\eps(1)$, we have $\E_\cS[\eps^{\gamma^2/4} e^{\frac\gamma2 h_\eps(u)}] = (1+o_\eps(1))e^{\frac{\gamma^2}4\left| \Re u \right|}$, where the error terms are uniformly small for $u$ in the support of $g$.  Therefore the limit in~\eqref{eq-C-weighted} equals $\int_\R e^{\frac{\gamma}2 (- (Q-\beta)\left|\Re u\right|)}g(u) e^{\frac{\gamma^2}4\left| \Re u\right|} \, du = \int_\R g(u) C^{(\beta, \pm\infty), (\gamma,u)}_\cS\, du$, as desired. 
\end{proof}

\subsection{LCFT description of two-pointed quantum disks}\label{subsec-translations}
The main result of this section is the following theorem.
\begin{theorem}\label{thm-two-disk-equivalence}
	Fix $W > \frac{\gamma^2}2$.
	Let  $\phi$ be as in Definition~\ref{def-thick-disk} so that $(\cS, \phi, +\infty, -\infty)$ is an embedding of  a sample from $\cM_2^\disk(W)$.
	Let $T\in \R$ be sampled from the Lebesgue measure $dt$ independently of 	$\phi$.  
	Let $\wt \phi(z)=\phi (z+T)$. 
	Then the law of $\wt \phi$ is given by $ \frac\gamma{2(Q-\beta)^{2}} \LF_\cS^{(\beta,\pm\infty)}$ where $\beta = Q + \frac\gamma2 - \frac W\gamma$.
\end{theorem}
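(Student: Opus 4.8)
The plan is to compare the two descriptions coordinate by coordinate on the strip $\cS$, using the radial–lateral decomposition $h_\cS = h^1_\cS + h^2_\cS$. Recall that in Definition~\ref{def-thick-disk} the field $\phi = \wh h + \mathbf c$ with $\wh h = h^1 + h^2_\cS$, where $h^1(z) = Y_{\Re z}$ is built from the two-sided process $Y$ (a Brownian motion with drift $-(Q-\beta)$ in each direction, the positive side conditioned to stay negative), $h^2_\cS$ is the lateral part of a GFF on $\cS$, and $\mathbf c$ has law $\frac\gamma2 e^{(\beta-Q)c}\,dc$. On the LCFT side, $\LF_\cS^{(\beta,\pm\infty)}$ by Definition~\ref{def-RV} (with $\beta_3 = 0$, so $C_\cS^{(\beta,\pm\infty)} = 1$) is the law of $h + (\beta - Q)|\Re z| + \mathbf c'$ where $h \sim P_\cS$ and $\mathbf c' \sim e^{(\beta - Q)c}\,dc$. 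Since the lateral parts $h^2_\cS$ match on both sides and are independent of everything else, the theorem reduces to a statement purely about the radial coordinate: after applying the random translation $T \sim dt$, the pair $\big(\{Y_t + \mathbf c\}_{t\in\R}\big)$ (with $\mathbf c$ having the exponential-of-drift density and $Y$ as above, run as a quantum surface, i.e.\ its law is the infinite measure $\cM_2^\disk(W)$ pushed to the $h^1$-coordinate) should agree — up to the explicit constant $\frac{\gamma}{2(Q-\beta)^2}$ — with $\{B_{2t} + (\beta-Q)|t| + \mathbf c'\}_{t\in\R}$, the radial part of $\LF_\cS^{(\beta,\pm\infty)}$, where $\{B_{2t}\}$ is an (unconditioned) two-sided Brownian motion.

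First I would set up notation to make the translation action explicit: a sample from $\cM_2^\disk(W)$, embedded on $\cS$, has a radial process $Y$ with $Y_0 = 0$ by construction (the conditioning and the split at $0$), and the effect of translating by $T$ is to re-root the process at $T$, i.e.\ to look at $t \mapsto Y_{t+T} - Y_T$ plus the shift $Y_T$ absorbed into the additive constant. So I would first argue that under $\cM_2^\disk(W)$ together with the Lebesgue translation $T$, the law of the recentered radial process $t\mapsto Y_{t+T}$ together with its ``value at $0$'' decomposes nicely. The natural tool here is exactly Proposition~\ref{prop-Bessel-equiv} (the fact about drifted Brownian motion that the introduction advertises): it should say that a two-sided Brownian motion with drift $\pm(Q-\beta)$ away from the origin on the positive side conditioned negative, glued with an independent copy, when shifted by a uniformly random amount of time and with the appropriate weighting by the density of the additive constant, is the same (up to constant) as an unconditioned two-sided Brownian motion with the linear drift $(\beta - Q)|t|$ — which is precisely the profile $-(Q-\beta)|\Re z|$ appearing in Definition~\ref{def-RV}. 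Concretely I would: (i) write the law of $(Y, \mathbf c)$ as an explicit (infinite) measure on paths, (ii) change variables $(Y,\mathbf c, T) \mapsto (\wt Y := Y(\cdot + T), \wt{\mathbf c})$ where $\wt{\mathbf c}$ is the new additive constant, (iii) identify the resulting path measure with that of $\{B_{2t} + (\beta-Q)|t|\}$ plus an independent constant of law $\frac{\gamma}{2(Q-\beta)^2} e^{(\beta-Q)c}\,dc$, tracking the constant carefully.

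The main obstacle, as usual with these infinite-measure disintegrations, is step (iii): making rigorous sense of the conditioned Brownian motion in Definition~\ref{def-thick-disk} (which conditions on a null event) and then showing that integrating out the translation $T$ exactly removes the conditioning and produces a clean two-sided unconditioned Brownian motion with the right drift, with the Jacobian/normalization collapsing to the constant $\frac{\gamma}{2(Q-\beta)^2}$. I expect this to be handled by a decomposition of the two-sided process at the location of its running maximum (for the positive side) / minimum, using the Williams-type path decomposition for Brownian motion with drift, and by the observation that the translation parameter $T$ together with the ``height'' $\mathbf c$ provide exactly the two degrees of freedom needed to turn ``$Y_0 = 0$, positive side stays below $0$'' into ``unconstrained.'' The factor $(Q-\beta)^{-2}$ should emerge as $(Q-\beta)^{-1}$ from the density of the maximum of a negatively-drifted Brownian motion on one side and another $(Q-\beta)^{-1}$ from the total time the reflected/conditioned process spends, i.e.\ from the Lebesgue measure on $T$ interacting with the exponential tail of the drift; and the $\frac\gamma2$ is just inherited from the law of $\mathbf c$. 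Once the radial identity (Proposition~\ref{prop-Bessel-equiv}) is in hand, reassembling with the common, independent lateral part $h^2_\cS$ and invoking that the whole construction only depends on $(\cS,\phi)$ as a quantum surface (so that the translation-averaging is precisely the uniform-embedding operation) finishes the proof.
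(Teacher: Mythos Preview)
Your proposal is correct and follows essentially the same approach as the paper: reduce via the radial--lateral decomposition to the purely radial statement (this is exactly Proposition~\ref{prop-Bessel-equiv}), use translation invariance of $h^2_\cS$ to pass the lateral part through unchanged, and then assemble the constant from the Brownian identity with $a = \tfrac12(Q-\beta)$.

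The only place your sketch diverges from the paper is in the mechanism for proving the Brownian identity itself. You propose a Williams-type decomposition at the running maximum and a heuristic accounting of the constant as ``one $(Q-\beta)^{-1}$ from the maximum density, one from occupation time.'' The paper instead (i) uses the Bessel excursion representation from \cite{wedges} to identify, for each level $M$, the conditional law of $Y+\mathbf c$ given $\mathbf c > -M$ with a concrete process $A^M$ rooted at its first passage to $-M$ (Lemma~\ref{lem:BM-above}); (ii) computes the law of $X_1(0)$ directly by Fubini, reducing to $\int_0^\infty \P[Z > a\sqrt t]\,dt = \tfrac1{2a^2}$ (Lemma~\ref{lem:0}); and (iii) applies the Markov property at time $0$ together with the $t\leftrightarrow -t$ symmetry of $X_1$ to conclude. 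This route avoids having to track a Jacobian in a change of variables on path space and makes the constant $\tfrac1{2a^2}$ appear as a single Gaussian moment, which is cleaner than the two-factor heuristic you describe.
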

By Definition~\ref{def-RV},
the proof of Theorem~\ref{thm-two-disk-equivalence} reduces to the following proposition on Brownian motion. See Figure~\ref{fig-three-processes}.
\begin{proposition}\label{prop-Bessel-equiv}
	Fix $a > 0$. Then $\{X_1(t)\}_{t\in \R}$ and $\{X_2(t)\}_{t\in \R}$ defined below agree in  distribution.
	
	\begin{itemize}
		
		\item 
		Let $(\wh B_t)_{t \geq 0}$ be  standard Brownian motions conditioned on $\wh B_t-at<0$ for all $t>0$. 
		Let $ (\wt B_t)_{t \geq 0}$ be an independent copy of $(\wh B_t)_{t \geq 0}$. Let 
		\[ Y_t =
		\left\{
		\begin{array}{ll}
			\wh B_t -at  & \mbox{if } t \geq 0 \\
			\wt B_{-t}+at & \mbox{if } t < 0
		\end{array}
		\right. , \]
		
		Sample $(\mathbf c, T) \in \R^2$   from $e^{-2ac} \,dc\, dt$ independent of $Y$. Set $X_1(t) = Y_{t-T}+\mathbf c$ for $t \in \R$. 
		
		\item Let $(B_t)_{t \in \R}$ be standard two-sided Brownian motion with $B_0 = 0$.
		Sample $\mathbf c'$ from $\frac1{2a^2} e^{-2ac}dc$ independent of $B$.
		Set $X_2(t) = B_t - a|t| + \mathbf c'$ for $t \in \R$.
	\end{itemize}
\end{proposition}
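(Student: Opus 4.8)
The plan is to prove the identity of laws by decomposing both processes around their global maximum, which a.s.\ exists and is unique because $X_1$ and $X_2$ are continuous and tend to $-\infty$ at $\pm\infty$. For $X_1$ this decomposition is built into the definition: $X_1$ attains its maximum at $t=T$ with value $\mathbf c$, and the recentered process $X_1(T+\cdot)-\mathbf c$ is exactly $Y$, independent of $(T,\mathbf c)$. Hence the ``Palm measure at the maximum'' of $X_1$ is $dt\otimes(e^{-2ac}\,dc)\otimes\mathrm{law}(Y)$, i.e.\ the law of $X_1$ is the image of this product measure under $(t,c,\omega)\mapsto[s\mapsto\omega(s-t)+c]$. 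Since a $\sigma$-finite translation-invariant measure concentrated on paths with a unique argmax is determined by its Palm measure at the argmax, it suffices to show that $X_2$ is translation-invariant with the same Palm measure.

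First I would check that the law of $X_2$ is translation-invariant. Writing $B_{t+s}=B_s+(B_{t+s}-B_s)$ and applying Girsanov's theorem to absorb the factor $e^{2aB_s}$ that appears (it tilts $B$ by adding drift $2a$ on the interval between $0$ and $s$), one finds that the inhomogeneity $-a|t+s|$ turns back into $-a|t|$ up to an additive constant, which is then absorbed by the substitution exploiting the translation-covariance of the measure $e^{-2ac}\,dc$ governing $\mathbf c'$; this gives $\int_{\R}\mathrm{law}(B_{\cdot+s}-a|\cdot+s|+c)\,e^{-2ac}\,dc=\int_{\R}\mathrm{law}(B_\cdot-a|\cdot|+c)\,e^{-2ac}\,dc$.

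Next I would compute the Palm measure of $X_2$ at its maximum. Write $B_\cdot-a|\cdot|$ as two independent Brownian motions with drift $-a$ started from $0$, running along $[0,\infty)$ and $(-\infty,0]$; its supremum is $\max(M_+,M_-)$ with $M_\pm\sim\mathrm{Exp}(2a)$ the suprema of the two halves, and its argmax lies on the side achieving the larger supremum. Recentering kills the additive constant $\mathbf c'$, and integrating $\mathbf c'$ against $e^{-2ac}\,dc$ converts the maximum-value observable into a weight $\propto e^{2a\cdot(\text{max value})}$; thus the Palm measure of $X_2$ factors as $(\tfrac1{2a^2}e^{-2ac}\,dc)$ in the max-value variable tensored with the law of the recentered path of $B_\cdot-a|\cdot|$ reweighted by $e^{2a\,\mathrm{mv}}$. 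I would then invoke Williams' path decomposition on the side carrying the maximum: conditionally on that side's maximum being $m$, its post-maximum part shifted by $-m$ is a Brownian motion with drift $-a$ conditioned to stay negative (the law of $Y$ on $[0,\infty)$), and its pre-maximum part is a Brownian motion with drift $+a$ run until it first hits $m$; the reweighting $e^{2am}$ turns the $\mathrm{Exp}(2a)$-law of $m$ into Lebesgue measure, and conditioning on $M_+>M_-$ makes the other half, shifted by $-m$, a Brownian motion with drift $-a$ conditioned to stay negative started from $-m$.

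The heart of the argument is then to verify that, after integrating over $m$ with this weight, the left part of the recentered path — the reversed pre-maximum path concatenated with the other, shifted half — reassembles into a Brownian motion with drift $-a$ conditioned to stay negative started from $0$ (the law of $Y$ on $(-\infty,0]$), and simultaneously that the argmax location has intensity $dt$. Both reduce to the elementary first-passage identity $\int_0^\infty(1-e^{-2am})\,\E\big[e^{-\lambda T^+_m}\big]\,dm=\tfrac a\lambda$ for all $\lambda>0$, where $T^+_m$ is the hitting time of level $m$ by a Brownian motion with drift $+a$; this follows from $\E[e^{-\lambda T^+_m}]=e^{-m(\sqrt{a^2+2\lambda}-a)}$. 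I expect this last step — showing that the mixture over the hidden ``kink location'' $-\tau$ of the recentered $X_2$-path coincides \emph{exactly} with the kink-free law of $Y$, not merely matching total masses or finite-dimensional marginals — to be the main obstacle, as it requires tracking the joint law of (argmax, maximum value, recentered path) under Williams' decomposition together with the Markov property of the conditioned diffusion at its level-hitting times.
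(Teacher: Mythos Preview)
Your strategy---decompose at the global maximum and match Palm measures---is the mirror image of the paper's. The paper instead shows that $X_1$ carries the defining structure of $X_2$ at the \emph{fixed} time $0$: Lemma~\ref{lem:0} computes the law of $X_1(0)$ to be $\frac1{2a^2}e^{-2ac}\,dc$, and the proof of the proposition then uses a Bessel-excursion identity (Lemma~\ref{lem:BM-above}, imported from \cite{wedges}) together with the ordinary Markov property of \emph{unconditioned} drifted Brownian motion to show that, conditionally on $X_1(0)$, the processes $\{X_1(\pm t)-X_1(0):t\ge0\}$ are two independent Brownian motions with drift $-a$. Your route trades the Bessel black box for a direct Girsanov verification of the translation invariance of $X_2$ (this does go through: tilting $\P_B$ by $e^{2aB_s-2a^2|s|}$ converts $-a|t+s|$ back into $-a|t|$ after recentering, and the leftover constant is absorbed by the quasi-invariance of $e^{-2ac}\,dc$) and for Williams' path decomposition, at the price of a harder reassembly.

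That reassembly needs more than the integrated first-passage identity you give. The identity $\int_0^\infty(1-e^{-2am})\,\E[e^{-\lambda T^+_m}]\,dm=a/\lambda$ does pin down the Lebesgue intensity of the argmax, but matching the full joint law of $(\tau,L)$ with $\Leb\otimes\P_{Y^-}$ requires its \emph{pointwise} refinement
\[
p_t(-m)\;=\;\tfrac1a\,(1-e^{-2am})\,q_m(t),
\]
where $p_t$ is the entrance density of $Y^-$ from $0^-$ and $q_m$ the density of $T^+_m$; both sides equal $\tfrac{m}{at\sqrt{2\pi t}}(1-e^{-2am})e^{-(m-at)^2/(2t)}$, so this checks. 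One then needs that, conditionally on $(\tau,L_\tau)=(t,-m)$, the segment $L|_{[0,\tau]}$ agrees in law with the $Y^-$-bridge from $0$ to $-m$ over $[0,t]$: both reduce to the standard Brownian bridge from $0$ to $-m$ conditioned to stay negative, since bridges are drift-insensitive and the $h$-transform defining $Y^-$ cancels once the endpoints are fixed. One caution about your closing remark: invoking ``level-hitting times'' of the conditioned diffusion does not work directly here, because $\tau$ is \emph{not} the first time $L$ reaches $-m$ (the reversed pre-maximum path can dip below $-m$ before time $\tau$), so the matching must be carried out at the fixed time $t$ rather than at a hitting time.
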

The  starting point of the proof of Proposition~\ref{prop-Bessel-equiv} is the following lemma.
\begin{lemma}\label{lem:BM-above}
	Let $(\wt W_t)_{t \geq 0}$ be a standard Brownian motion conditioned on $\wt W_t - at < 0$ for all $t>0$. Let  $(W_t)_{t \geq 0}$ be a standard Brownian motion independent of $(\wt W_t)_{t \geq 0}$. For $M \in \R$,  let
	\[ A^M_t =
	\left\{
	\begin{array}{ll}
		W_t -at - M  & \mbox{if } t \geq 0 \\
		\wt W_{-t}+at - M & \mbox{if } t < 0
	\end{array}
	\right.  \]
	and $x $ be the a.s. unique time such that $A^M_x =\max_{t\in \R} A^M_t$.  Then $(Y_t + \mathbf c)_{t \in \R}$ conditioned on $\{\mathbf c > -M\}$ agrees in distribution with $(A^M_{t+x})_{t \in \R}$, where $Y_t$ and $\mathbf c$ are as defined in Proposition~\ref{prop-Bessel-equiv}.
\end{lemma}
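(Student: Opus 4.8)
The plan is to identify both processes as $\mathbb{R}$-valued functions indexed by time, modulo the operation ``re-center at the argmax of the maximum,'' and to use the explicit description of the conditioned Brownian motion. First I would recall that the process $(Y_t)_{t \in \mathbb{R}}$ of Proposition~\ref{prop-Bessel-equiv}, with $\mathbf c$ added, is a two-sided process that increases (with drift) coming in from $\pm\infty$ and has a conditioning built in on the positive side. The key structural fact is that conditioning a Brownian motion with negative drift $-a$ to stay negative is, by a standard $h$-transform / Doob transform computation, the same as a certain time-reversal or Bessel-type description; concretely, if $(W_t - at)_{t\ge 0}$ is run \emph{unconditioned} and we look at it from its past maximum, we recover the conditioned law. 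So my first step would be to make precise the claim: for an unconditioned two-sided process $A^M_t$ as in the statement (with independent drifted Brownian motions on the two sides, shifted down by $M$), its a.s.\ unique argmax $x$ exists because the drift $-a<0$ on both ends sends $A^M_t \to -\infty$, and re-centering at $x$ produces a process whose law on $[0,\infty)$ is a Brownian motion conditioned to stay below its starting value, and whose law on $(-\infty,0]$ is an independent copy of the same — i.e.\ exactly the law of $(Y_t)$.

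The second step is to track the constant. On the $Y$ side, $\mathbf c$ is sampled from $e^{-2ac}\,dc$ on all of $\mathbb{R}$, and we condition on $\{\mathbf c > -M\}$; conditionally, $\mathbf c + M$ has an $\mathrm{Exp}(2a)$-type density on $(0,\infty)$. On the $A^M$ side, the maximum value $\max_t A^M_t = A^M_x$ is, after shifting the whole picture down by $M$, distributed as $(\max_t A^0_t) - M$, and the classical fact that the maximum of $W_t - at$ over $t\ge 0$ is $\mathrm{Exp}(2a)$, together with independence of the two sides, shows that $\max_t A^0_t$ also has the right exponential law (maximum of the supremum over the two independent halves, each $\mathrm{Exp}(2a)$, gives again $\mathrm{Exp}(2a)$ by the memoryless/competing-exponentials computation — here one must be slightly careful: it is $\max$ of two independent $\mathrm{Exp}(2a)$'s shifted appropriately, so I would redo this bookkeeping carefully). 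Matching the height of the re-centered process at its peak with the conditional law of $\mathbf c + M$ closes the identification: $(Y_t + \mathbf c)$ on $\{\mathbf c > -M\}$ has peak height in $(0,\infty)$ with the $\mathrm{Exp}(2a)$ density, and $(A^M_{t+x})$ has peak height $A^M_x$ which — since $\max_t A^M_t > 0$ exactly corresponds to the conditioning event and has the same density — matches.

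The third step is to assemble these into an equality of laws of the full process. I would condition on the peak height $m := \max_t A^M_t$ (equivalently $\mathbf c = m - M$) and argue that, given the peak height, the re-centered process $(A^M_{t+x} )$ decomposes into two independent pieces: on $t \ge 0$ it is $-m$ plus a Brownian motion conditioned to stay below $0$, i.e.\ $(\wh B_t - at)$ conditioned appropriately, hence exactly $(Y_t + \mathbf c)$ restricted to $t\ge 0$ with that value of $\mathbf c$; and symmetrically on $t \le 0$. This last decomposition is precisely the statement that the pre-maximum and post-maximum parts of a drifted Brownian motion, viewed from the maximum, are independent and each distributed as the drifted BM conditioned to stay on one side — a standard path decomposition (Williams-type) for Brownian motion with drift. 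Integrating over the common law of the peak height gives the claimed distributional identity.

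The main obstacle I anticipate is \textbf{Step 1 / Step 3}: rigorously justifying that the re-centering of the unconditioned two-sided drifted Brownian motion at its argmax produces, on each side, the \emph{conditioned} law used in Proposition~\ref{prop-Bessel-equiv} — i.e.\ making the Williams path decomposition for drifted Brownian motion fully rigorous in this two-sided setting, including the a.s.\ uniqueness of the argmax and the independence of the two arms given the peak height. The conditioning ``$\wh B_t - at < 0$ for all $t>0$'' is on a null event and is defined by a limiting procedure (as the footnote in Definition~\ref{def-thick-disk} notes), so I would want to phrase the decomposition via the corresponding $h$-transform (with $h(x) = 1 - e^{2ax}$ for $x<0$, the scale-function-type harmonic function for $-a$-drift BM killed at $0$) to avoid any ambiguity, and then the matching of constants in Step 2 becomes a routine check once the decomposition is in place.
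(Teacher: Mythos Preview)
You have misread the statement of the lemma: the two halves of $A^M$ are \emph{not} symmetric. The left arm $(\wt W_{-t}+at-M)_{t<0}$ is built from a Brownian motion $\wt W$ that is already conditioned to satisfy $\wt W_s - as < 0$ for all $s>0$, so $A^M_t < -M$ for every $t<0$; only the right arm $(W_t-at-M)_{t\ge 0}$ is an unconditioned drifted Brownian motion. Consequently the argmax $x$ lies in $(0,\infty)$ a.s., and the maximum value is $-M$ plus a single $\mathrm{Exp}(2a)$ variable coming solely from the right arm. Your Step~2 discussion of ``the max of two independent $\mathrm{Exp}(2a)$'s'' is therefore based on a wrong picture (and, incidentally, the max of two i.i.d.\ $\mathrm{Exp}(2a)$ variables is not $\mathrm{Exp}(2a)$). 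With the correct reading, the peak-height matching in Step~2 becomes trivial. The real work, however, moves to the left arm of the re-centered process $(A^M_{t+x})_{t\le 0}$: this arm is \emph{not} a single Williams post-maximum piece, but rather the concatenation of (i) the time-reversed pre-maximum segment of the unconditioned drifted BM on $[0,x]$, followed by (ii) the already-conditioned process on $(-\infty,0)$. You would need to show that this concatenation has the law of a single drifted BM started at the peak and conditioned to stay below it; this is true (via the strong Markov property of the $h$-process at the level $-M$), but it is a genuine additional step that your plan does not identify.

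The paper takes a different and shorter route. It recognizes both processes as two different ``horizontal fixings'' of the \emph{same} object: an excursion of a Bessel process of dimension $2-2a$ conditioned to have maximum exceeding $e^M$, with $\log$ of the excursion reparametrized by quadratic variation. Fixing the parametrization so that time $0$ is the first hitting time of level $-M$ yields $A^M$; fixing it so that time $0$ is the argmax yields $(Y_t+\mathbf c)$ conditioned on $\{\mathbf c>-M\}$. The lemma then follows immediately, with the Williams-type decomposition and the peak-height bookkeeping absorbed into the cited results of \cite{wedges} on Bessel excursions. Your approach, once corrected, is more hands-on and self-contained but requires assembling the asymmetric two-arm decomposition explicitly; the paper's approach is more conceptual and avoids that assembly entirely.
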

\begin{proof}
	Consider the excursion measure $\Lambda$ away from zero of the Bessel process with dimension $(2-2a)$.  Let $\Lambda_M$ the probability measure corresponding to $\Lambda$ conditioning on the event that the maxima of the excursion is bigger than $e^M$. Then Lemma~\ref{lem:BM-above} follows from comparing  two ways of representing $\Lambda_M$ in terms of drifted Brownian motion. As explained in 
	Proposition 3.4 and Remark 3.7 in \cite{wedges},
	given a sample $e$ from $\Lambda_M$, if we reparameterize $\log e$ by its quadratic variation then it becomes a process on $\R$, which is well defined modulo horizontal translations.   If we  fix the process by  requiring that 0 is the  smallest time when it hits $-M$, then we get a  process whose law is the same as  $A^M$.
	If we fix the process by requiring that it achieves the maximal value at $t=0$, then we get a process whose law is the same as  the conditional law of $(Y_t + \mathbf c)_{t \in \R}$ conditioned on $\{\mathbf c > -M\}$.  This gives Lemma~\ref{lem:BM-above}.
\end{proof}

\begin{figure}[ht!]
	\begin{center}
		\includegraphics[scale=0.5]{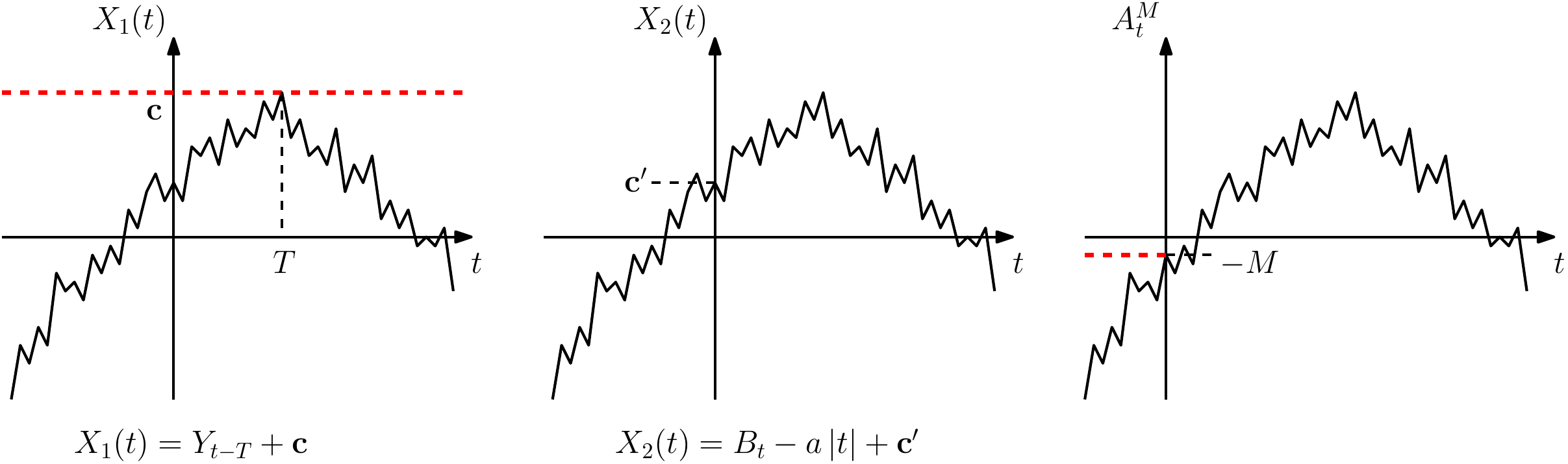}%
	\end{center}
	\caption{\label{fig-three-processes}  Illustration of the processes from   Proposition~\ref{prop-Bessel-equiv} and Lemma~\ref{lem:BM-above}.
\textbf{Left:}  The maximal value  of $X_1(t)$  is $\mathbf c$. It is achieved at time $T$ whose law is the Lebesgue measure on $\R$.
			\textbf{Middle:} On both $[0,\infty)$ and $(-\infty,0]$, the process $X_2(t)$ is a drifted Brownian motion  starting from $\mathbf c'$.
			\textbf{Right:}  The process  $\{A^M_t\}_{t\le 0}$ is a drifted Brownian motion conditioned on staying below $-M$, and  $\{A^M_t\}_{t\ge 0}$ is a drifted Brownian motion starting from $-M$.} 
\end{figure}

\begin{lemma}\label{lem:0}
	The law of $X_1(0) $   in Proposition~\ref{prop-Bessel-equiv}  is $\frac1{2a^2} e^{-2ac}dc$.
\end{lemma}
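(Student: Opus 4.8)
The plan is to compute the law of $X_1(0)$ directly from its definition, exploiting the fact that $X_1(0) = Y_{-T} + \mathbf c$ where $(\mathbf c, T)$ has law $e^{-2ac}\,dc\,dt$ and $Y$ is the two-sided drifted process from Proposition~\ref{prop-Bessel-equiv}. First I would fix a bounded continuous test function $g$ and write
\[
\E[g(X_1(0))] = \int_\R\!\int_\R e^{-2ac}\, \E\big[ g(Y_{-t} + c) \big]\, dt\, dc,
\]
then interchange the order of integration and substitute. The key structural input is that the process $\{Y_t\}_{t\in\R}$ is \emph{stationary} in the following sense relevant here: for fixed $t$, the law of $Y_{-t}$ does not matter on its own; what matters is the joint behavior under the $c$- and $t$-integrals. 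I would use Lemma~\ref{lem:BM-above}: after the change of variables, the measure $e^{-2ac}\,dc\,dt$ restricted to $\{c>-M\}$ corresponds (up to the normalizing constant $\frac{1}{2a^2}$ hidden in the excursion-measure bookkeeping) to the law of the process $A^M$, and in the $A^M$ picture the value at a uniformly-random-by-Lebesgue time is exactly the quantity whose law we want.

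Concretely, the cleanest route is: (i) By Lemma~\ref{lem:BM-above}, $(Y_{t}+\mathbf c)_{t\in\R}$ conditioned on $\{\mathbf c > -M\}$ equals in law $(A^M_{t+x})_{t\in\R}$ where $x$ is the argmax. Hence $X_1(0) = Y_{-T}+\mathbf c$ — with $T$ sampled from Lebesgue measure — corresponds, on the event $\{\mathbf c > -M\}$, to sampling $A^M$ at a Lebesgue-random time. (ii) On $[0,\infty)$ the process $A^M$ is $W_t - at - M$, a Brownian motion with drift $-a$ started at $-M$; on $(-\infty,0]$ it is the time-reversal of a Brownian motion with drift $-a$ conditioned to stay negative, also started at $-M$ at time $0$. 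Sampling such a two-sided process at a Lebesgue-random time and reading off its value is a standard computation: the occupation measure of a Brownian motion with drift $-a$ on $[0,\infty)$ started at $-M$, restricted to levels below $-M$ pushed forward to levels, together with the conditioned piece, combine to give a measure proportional to $e^{-2a(c)}$ on $(-\infty,\cdot)$. Letting $M\to\infty$ removes the conditioning on $\{\mathbf c>-M\}$ (the conditioning event has probability tending to $1$ after the appropriate normalization, since $\P[\mathbf c>-M]$ under $e^{-2ac}dc$ grows like $\frac{1}{2a}e^{2aM}$, which is exactly matched by the growth of the argmax-time normalization), and one is left with $\frac{1}{2a^2}e^{-2ac}\,dc$.

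Alternatively — and this may in fact be the shortest rigorous path — one can bypass Lemma~\ref{lem:BM-above} entirely and compute via occupation times. Writing $M := \max_{t\in\R} Y_t$ and noting $M<\infty$ a.s., the pair $(T, \mathbf c)$ can be resampled: conditionally on $Y$, integrating out $T$ against Lebesgue measure and $\mathbf c$ against $e^{-2ac}dc$ gives that $X_1(0) = Y_{-T}+\mathbf c$ has conditional "density" proportional to $\int_\R e^{-2a(y - Y_{-t})}\,dt$ evaluated appropriately; using the known fact that for a Brownian motion with drift $-a$ the Green's function / expected occupation density at level $\ell$ below the start is $\frac{1}{a}e^{2a\ell}$ (and similarly for the conditioned half), the $t$-integral of $e^{2aY_{-t}}$ has expectation $\frac{1}{a^2}$ by summing the two halves, each contributing $\frac{1}{2a^2}$ after accounting for the doubling from the two-sidedness. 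Combining with the $e^{-2ac}$ weight yields the density $\frac{1}{2a^2}e^{-2ac}$.

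The main obstacle I anticipate is the bookkeeping of constants and the rigorous handling of the conditioning on a null event (the Brownian motion "conditioned to stay below $at$"), which is only defined via a limiting procedure; the safe way is to carry out all computations at the level of the Bessel excursion measure $\Lambda$ of dimension $2-2a$ as in \cite[Section 3]{wedges}, where the $M\to\infty$ limit and the constant $\frac{1}{2a^2}$ emerge transparently from the explicit form of the excursion measure, rather than manipulating the conditioned Brownian motion directly. Everything else — the interchange of integrals, the occupation-time identity for drifted Brownian motion — is routine.
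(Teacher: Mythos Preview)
Your first approach via Lemma~\ref{lem:BM-above} is the paper's route, but you are missing the one simplification that makes the computation immediate, and without it you run straight into the conditioned-process obstacle you flag at the end. The paper does not compute an occupation density on both halves and then send $M\to\infty$. Instead it computes the tail $\BB M_1[X_1(0)>-M]$ for each fixed $M$. Since $\mathbf c=\max_t(Y_t+\mathbf c)\ge X_1(0)$, the event $\{X_1(0)>-M\}$ is contained in $\{\mathbf c>-M\}$, so
\[
\BB M_1[X_1(0)>-M]=\BB M_1[X_1(0)>-M\mid \mathbf c>-M]\cdot \BB M_1[\mathbf c>-M].
\]
By Lemma~\ref{lem:BM-above} and translation invariance of Lebesgue measure, the conditional factor equals $\int_\R \P[A^M_s>-M]\,ds$. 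The crucial observation is that for $s<0$ the definition of $A^M$ forces $A^M_s\le -M$ almost surely (this is exactly the conditioning on $\wt W$), so only the \emph{unconditioned} half $s>0$ contributes, giving the elementary Gaussian integral $\int_0^\infty \P[W_t>at]\,dt=\frac{1}{2a^2}$. Multiplying by $\BB M_1[\mathbf c>-M]=\frac{1}{2a}e^{2aM}$ yields $\frac{1}{4a^3}e^{2aM}$, hence the density $\frac{1}{2a^2}e^{-2ac}$. No $M\to\infty$ limit, no occupation measure of the conditioned piece, no Bessel bookkeeping is needed.

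Your second ``bypass'' route correctly reduces the problem to showing $\E\big[\int_\R e^{2aY_s}\,ds\big]=\frac{1}{2a^2}$, but the justification you give is a genuine gap: the Green's function you quote is for \emph{unconditioned} drifted Brownian motion, whereas $Y$ is conditioned to stay negative. For unconditioned $Z_t=W_t-at$ one has $\E[e^{2aZ_t}]=1$ for every $t$, so $\E[\int_0^\infty e^{2aZ_t}\,dt]=+\infty$; the finiteness and the value $\frac{1}{4a^2}$ per half come precisely from the conditioning, and establishing this directly is no easier than the original lemma. The tail-probability trick above sidesteps the issue entirely by arranging that only the unconditioned half matters.
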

\begin{proof}
	We write $\BB M_1$ for the measure on the sample space that generates $X_1$. We must show that 
	$\BB M_1[X_1(0)>-M] = \int_{-M}^{\infty}\frac1{2a^2} e^{-2ac}\,dc= \frac1{4a^3} e^{2aM}$ for any $M\in \R$.
	By Lemma~\ref{lem:BM-above} and with the notations $A^M$ and $x$ defined there, we have 
	\[
	\BB M_1[X_1(0)>-M \mid  \mathbf c >-M]= \int_\R \P[A^M_{x+t} > -M] \, dt=\int_\R \P[A^M_{t} > -M] \, dt,
	\]where the last equality follows from Fubini's Theorem and the translation invariance of Lebesgue measure. 
	For each $t>0$ we have $\P[A^M_{t} > -M] = \P[W_t > at]= \P[Z > a\sqrt t]$ where $\{W_s: s \geq 0\}$ is standard Brownian motion and $Z \sim N(0,1)$, and for $t < 0$ we a.s.\ have $A^M_t \leq -M$. Therefore
		\[
		\int_\R \P[A^M_{t} > -M] \, dt=\int_0^\infty \int_{a\sqrt t}^\infty \frac1{\sqrt{2\pi}} e^{-x^2/2}\,dx \, dt  = \int_0^\infty \frac{x^2}{a^2} \frac1{\sqrt{2\pi}} e^{-x^2/2}\,dx = \frac1{2a^2}. 
		\]
	
	Since $\BB M_1[\mathbf{c}>  -M]=\int_{-M}^\infty e^{-2ac} \, dc = \frac1{2a} e^{2aM}$,  we conclude
	\[
	\BB M_1[X_1(0)>-M]=\BB M_1[X_1(0)>-M \mid  \mathbf c >-M]\BB M_1[\mathbf c>-M]= \frac1{4a^3} e^{2aM}. \qedhere
	\]
\end{proof}

Using Lemmas~\ref{lem:BM-above} and~\ref{lem:0} we can  show that the laws of $ X_1(0)$ and  $X_2(0)$ agree. 

\begin{proof}[Proof of Proposition~\ref{prop-Bessel-equiv}]
	In the setting of Lemma~\ref{lem:BM-above}, given $A^M$, 
	let $\tau$ be sampled from Lebesgue measure on $\R$.
	Then by Lemma~\ref{lem:BM-above}, 
	the conditional law of $\{X_1(t): t\in \R  \}$ given	$\mathbf c>-M$ is the same as the law of $\{A^M_{t+\tau}   :t\in  \R \}$. Consequently, the conditional law of $\{X_1(t): t\in \R  \}$ given $X_1(0) > -M$ is the same as the conditional law of $\{A^M_{t+\tau}   :t\in  \R \}$ given $A^M_\tau > -M$. By definition, on the event that $A^M_\tau > -M$, we must have $\tau > 0$. 
	By the Markov property of Brownian motion, conditioning on the event that $A^M_\tau>-M$ and the value of $A^M_\tau$, the processes $\{A^M_{t+\tau} -A^M_\tau   :t\ge 0 \}$ and $\{A^M_{t+\tau} -A^M_\tau   :t\le 0 \}$
	are conditionally independent. Moreover,  the conditional law  of $\{A^M_{t+\tau} -A^M_\tau   :t\ge 0 \}$ 
	equals the law of $(B_t -at)_{t\ge 0}$ where $B_t$ is a standard Brownian motion. 
	Varying $M$, we see that conditioning on $X_1(0)$, the conditional law of 
	$\{X_1(t)-X_1(0):t\ge 0\}$ and $\{X_1(t)-X_1(0):t\le 0\}$ are conditionally independent. Moreover, 
	the conditional law  of $\{X_1(t)-X_1(0):t\ge 0\}$
	equals the law of $(B_t -at)_{t\ge 0}$. 
	
	On the other hand, by the symmetry built into the definition of $X_1$, we see that   $\{X_1(-t): t\in \R\}$ has the same law as $\{X_1(t): t\in \R\}$.
	Therefore  conditioning on $X_1(0)$, the conditional law  of $\{X_1(-t)-X_1(0):t\ge 0\}$
	is also given by $(B_t -at)_{t\ge 0}$. Since by Lemma~\ref{lem:0} the law of $X_1(0)$ is the same as $X_2(0)$, by the definition of $X_2$ we see that the law of 
	$X_1$ is the same as that of  $X_2$. 
\end{proof}

\begin{proof}[Proof of Theorem~\ref{thm-two-disk-equivalence}]
Consider Proposition~\ref{prop-Bessel-equiv} where we have replaced  $e^{-2ac} \, dc \, dt$ with  $\frac\gamma4 e^{-2ac} \, dc \, dt$ in the definition of $X_1$, and replaced $\frac{1}{2a^2} e^{-2ac} \, dc$ with  $\frac{\gamma}{8a^2} e^{-2ac} \, dc$ in the definition of $X_2$. The proposition still holds since we are merely scaling both laws by $\frac\gamma4$. Choose $a = \frac12(Q-\beta)$ and let $X_1$ and $X_2$ be defined as in this modified setting.

	Recall $h^1,h^2_\cS,\mathbf c$ from Definition~\ref{def-thick-disk} so that $\phi=h^1+h^2_\cS+\mathbf c$.
	By definition, the law of $\{h^1(z+T)+\mathbf c\}_{z\in \cS}$ equals that of  $\{X_1(2\Re z)\}_{z\in \cS}$; the prefactor $\frac\gamma4$ in $\frac\gamma4 e^{-2ac} \, dc \, dt$ matches the product of $\frac\gamma2$ (from Definition~\ref{def-thick-disk}) and $\frac12$ (reparametrized Lebesgue measure in definition of $X_1$).
	Since the law of $h^2_\cS$ is translation invariant,   the law of  $\{\wt \phi(z) =\phi (z+T): z\in\cS \}$ agrees with 
	$\{X_1(2\Re z)+ h^2_\cS(z) :z\in \cS \}$, where $h^2_\cS$ is independently sampled from $X_1$.

	On the other hand, by Definition~\ref{def-RV},
	suppose $X_2(t)$ is independent of $h^2_\cS$, 
	then the law of $\{X_2(2\Re z)+ h^2_\cS(z):z\in \cC \} $ is $\frac{\gamma}{8a^2} \LF_\cS^{(\beta,\pm\infty)}$. Since $\frac{\gamma}{8a^2} =  \frac\gamma{2(Q-\beta)^2}$,
	and the laws of $X_1$  and $X_2$ agree by Proposition~\ref{prop-Bessel-equiv},  the law of $\wt \phi$ is $ \frac\gamma{2(Q-\beta)^2} \LF_\cS^{(\beta,\pm\infty)}$ as desired.
\end{proof}

\subsection{Adding a third point to a two-pointed quantum disk}\label{subsec-3pt}
In this section we show that $\LF_\cS^{(\beta, -\infty), (\beta, +\infty), (\gamma, 0)}$ describes a two-pointed quantum disk with an additional marked point defined as follows.
\begin{definition}\label{def-three-pt-thick}
	Fix $W > \frac{\gamma^2}2$.
	Let  $(D,\phi,a,b)$ be an embedding of a sample from  $\cM_2^\disk(W)$ and $\nu_\phi$ be the quantum length measure. 
	Let $L$ be the $\nu_\phi$-length of the right boundary of $(D,a,b)$, namely the counterclockwise arc from $a$ to $b$.
	Now suppose $(D,\phi,a,b)$ is from the reweighted measure $L\cM_2^\disk(W)$. Given $\phi$,  
	sample $\mathbf z$ from the probability measure proportional to the restriction of $\nu_\phi$ to the right boundary. 
	We write $\cMthree(W)$ as the law of  the marked quantum surface $(D,\phi,a,b,\mathbf z)/{\sim_\gamma}$.
\end{definition}

\begin{proposition}\label{prop-2-pt-weight-length}
	For $W > \frac{\gamma^2}2$, let $\phi$ be sampled from $\frac\gamma{2(Q-\beta)^{2}} \LF_\cS^{(\beta, -\infty), (\beta, +\infty), (\gamma, 0)}$ 
	where $\beta = \gamma + \frac2\gamma - \frac W \gamma$. 
	Then  $(\cS, \phi, -\infty, +\infty, 0)/{\sim_\gamma}$ is  a sample from  $\cMthree(W)$.
\end{proposition}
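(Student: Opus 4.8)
The plan is to realize the field $\frac\gamma{2(Q-\beta)^{2}}\LF_\cS^{(\beta,-\infty),(\beta,+\infty),(\gamma,0)}$ as a quantum–boundary–length reweighting of the field in Theorem~\ref{thm-two-disk-equivalence}, and to notice that this reweighting converts the uniformly random translation $T$ appearing there into a point sampled from the quantum length measure on the right boundary.

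First I would record the strip analogue of Lemma~\ref{lem:inserting} at the boundary point $0$, namely
\eqb\label{eq-insert-strip}
\LF_\cS^{(\beta,-\infty),(\beta,+\infty),(\gamma,0)}(d\phi)=\lim_{\eps\to 0}\eps^{\gamma^{2}/4}e^{\frac\gamma2\phi_\eps(0)}\,\LF_\cS^{(\beta,\pm\infty)}(d\phi)
\eqe
in the vague topology. This is proved exactly as Lemma~\ref{lem:inserting}: from $\Var(h_\eps(0))=-2\log\eps+o(1)$ one gets $\E[e^{\frac\gamma2 h_\eps(0)}]=(1+o(1))\eps^{-\gamma^{2}/4}$, Girsanov's theorem shifts the field by $\frac\gamma2 G_\cS(\cdot,0)$, and the constant $C_\cS^{(\beta,\pm\infty),(\gamma,0)}$ of Definition~\ref{def-RV} drops out of the same computation as in Lemma~\ref{lem-interpret-partition-fn}; alternatively \eqref{eq-insert-strip} follows from Lemma~\ref{lem:inserting} at $s=1$ after transporting through $\exp\colon\cS\to\bbH$ using Lemma~\ref{lem-equiv-C-cS}. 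Since $\phi\mapsto\lim_\eps\eps^{\gamma^{2}/4}e^{\frac\gamma2\phi_\eps(0)}$ is exactly the density at $0$ of the quantum length measure $\nu_\phi$ along the right boundary $\R\times\{0\}$ of $\cS$, identity \eqref{eq-insert-strip} says that $\frac\gamma{2(Q-\beta)^{2}}\LF_\cS^{(\beta,-\infty),(\beta,+\infty),(\gamma,0)}$ is $\frac\gamma{2(Q-\beta)^{2}}\LF_\cS^{(\beta,\pm\infty)}$ reweighted by this right–boundary–length density at $0$.

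Next I would combine \eqref{eq-insert-strip} with Theorem~\ref{thm-two-disk-equivalence}: writing $\phi$ for the field of Definition~\ref{def-thick-disk}, that theorem identifies $\frac\gamma{2(Q-\beta)^{2}}\LF_\cS^{(\beta,\pm\infty)}$ with the pushforward of $\cMtwo(W)(d\phi)\,dt$ under $(\phi,t)\mapsto\phi(\cdot+t)$. Because reweighting a pushforward is the pushforward of the corresponding reweighting, the reweighting above equals the pushforward under $(\phi,t)\mapsto\phi(\cdot+t)$ of $\cMtwo(W)(d\phi)\,dt$ reweighted by $\lim_\eps\eps^{\gamma^{2}/4}e^{\frac\gamma2(\phi(\cdot+t))_\eps(0)}$. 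Circle averages commute with translation, so $(\phi(\cdot+t))_\eps(0)=\phi_\eps(t)$ and the reweighting factor is the density of $\nu_\phi|_{\R\times\{0\}}$ at $t$; hence $\cMtwo(W)(d\phi)\,dt$ reweighted by it is exactly $\cMtwo(W)(d\phi)\,\nu_\phi|_{\R\times\{0\}}(dt)$. Comparing with Definition~\ref{def-three-pt-thick} — which reweights $\cMtwo(W)$ by the right boundary length $L$ and then samples $\mathbf z$ from $\nu_\phi|_{\R\times\{0\}}$ normalized — this is precisely the law of the pair $(\phi,\mathbf z)$ underlying $\cMthree(W)$, with $\mathbf z\leftrightarrow t$; and the canonical $(-\infty,+\infty,0)$–embedding of that marked surface, obtained by translating $\mathbf z$ to $0$, is the field $\phi(\cdot+t)$. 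Thus $\cMthree(W)$ embedded in $(\cS,-\infty,+\infty,0)$ has law $\frac\gamma{2(Q-\beta)^{2}}\LF_\cS^{(\beta,-\infty),(\beta,+\infty),(\gamma,0)}$, which is the claim once one notes $\beta=Q+\frac\gamma2-\frac W\gamma=\gamma+\frac2\gamma-\frac W\gamma$.

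The delicate points are analytic rather than conceptual: justifying the interchange of the $\eps\to0$ limit with the pushforward $(\phi,t)\mapsto\phi(\cdot+t)$ and with integration in $t$ (handled, as in Lemma~\ref{lem:inserting}, by testing against bounded functions and weights compactly supported in $t$ and in the additive constant $\mathbf c$), and checking that the regularization $\phi_\eps(0)$ probes only the boundary arc $\R\times\{0\}$ so that the reweighting factor really is $\nu_\phi|_{\R\times\{0\}}$. I expect no trouble from the infiniteness of the measures: after the reweighting the $t$–marginal is the \emph{finite} measure $\nu_\phi|_{\R\times\{0\}}$ (total mass the finite right–boundary length) rather than Lebesgue measure, and $(\phi,t)\mapsto\phi(\cdot+t)$ is injective off a null set on the support of $\cMtwo(W)(d\phi)\,\nu_\phi(dt)$, so the infinite translation volume never enters — this last bookkeeping being the one place where the $\sigma$–finiteness genuinely has to be watched.
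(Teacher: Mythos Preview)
Your proposal is correct and follows essentially the same route as the paper: both combine Theorem~\ref{thm-two-disk-equivalence} (the uniform translation embedding of $\cMtwo(W)$ is $\frac{\gamma}{2(Q-\beta)^2}\LF_\cS^{(\beta,\pm\infty)}$) with a Girsanov insertion of a $\gamma$ boundary point, so that the Lebesgue factor $dt$ becomes the quantum length measure $\nu_\phi(dt)$.

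The one organizational difference is that the paper works with the \emph{integrated} identity (Lemma~\ref{lem-inserting}):
\[
\LF_\cS^{(\beta,\pm\infty)}\!\left[f(\phi)\!\int_\R g(u)\,\nu_\phi(du)\right]=\int_\R \LF_\cS^{(\beta,\pm\infty),(\gamma,u)}[f(\phi)]\,g(u)\,du,
\]
introduces the pair $(\wt\phi,\mathbf u)=(\phi(\cdot+T),\mathbf z-T)$ with an independent Lebesgue translation $T$, and then disintegrates at $u=0$; you instead reweight directly by the ``density at $0$'' via your~\eqref{eq-insert-strip}. The paper's detour avoids having to speak of a pointwise density of $\nu_\phi$ (which of course does not exist) and of injectivity of $(\phi,t)\mapsto\phi(\cdot+t)$; your version is more direct but, as you correctly flag, needs the vague-limit formulation and the commutation of that limit with the translation pushforward to be justified carefully. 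Either packaging leads to the same computation.
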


\begin{remark}\label{rmk:equivalence-disk}
Our $\cM_{2,\bullet}^\disk(2)$ equals $\QD_{0,3}$ restricted to the event $E$ that the three boundary points are in the clockwise order. Setting $\alpha=\gamma$
in Proposition~\ref{prop-2-pt-weight-length} and using the change of coordinate from Proposition~\ref{prop-hrv-invariance} and Lemma~\ref{lem-equiv-C-cS} gives the following. Suppose $(\bbH, \phi,s_1,s_2,s_3)$ is an embedding of $\QD_{0,3}|_E$,
where $s_1, s_2, s_3$ are three fixed distinct clockwise-oriented points on $\partial \bbH$. Then the law of $\phi$ is $C\LF_\bbH^{(\gamma, s_1),(\gamma, s_2),(\gamma, s_3)}$
with $C=\frac\gamma{2(Q-\gamma)^{2}}$.  The main result of \cite{cercle-quantum-disk} is equivalent to this statement without identifying $C$. 
We can also recover the  result of \cite{ahs-sphere} on $\QS_3$, see Proposition~\ref{prop-AHS}.
\end{remark} 

To prove Proposition~\ref{prop-2-pt-weight-length}, we start with an infinite-measure variant of the rooted measure in LQG. The argument via Girsanov's theorem is standard  we give the full detail as variants of it will be used repeatedly.
In the statement we recall Remark~\ref{rmk:ftn-space} that $P_\cS$ is understood as a measure on $H^{-1}(\cS)$. Moreover, we write $\E_\cS$ as the expectation over $P_\cS$.
\begin{lemma}\label{lem-girsanov}
	Let $Q(dh,du)=\nu_h(du)P_\cS(dh)$ for $(h,u)\in H^{-1}(\cS)\times \R$. 
	Namely, $Q$ is  the (infinite) measure on $H^{-1}(\cS)\times \R$
	such that for  non-negative measurable functions $f$ on $H^{-1}(\cS)$ 
	and $g$ on $\R$ we have
	\[
	\int f(h)g(u)\, Q(dh,du)=  \int f(h)\left(\int_\R g(u)\nu_h(du)\right) \,P_\cS(dh).
	\]
	Let $\rho$ be such that $\rho(u)du=\E_\cS[\nu_h(du)]$ with the latter measure defined in Lemma~\ref{lem-interpret-partition-fn}.
	Then
	\[
	\int f(h)g(u)\,Q(dh,du)= \int_\R \E_\cS\left[f(h + \frac\gamma2 G_\cS(\cdot, u))\right] g(u)\rho(u) \,du.
	\]
\end{lemma}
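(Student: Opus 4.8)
The plan is to follow the template of the proof of Lemma~\ref{lem:inserting}: reduce the claimed identity to testing both measures against product functions $f(h)g(u)$, rewrite the boundary length measure $\nu_h$ as a limit of regularized (Gaussian multiplicative chaos) measures, apply the Girsanov (Cameron--Martin) theorem at the regularized level to shift the field by $\tfrac\gamma2 G_{\cS,\eps}(\cdot,u)$, and then pass to the limit $\eps\to0$. Since both sides are $\sigma$-finite measures on the Polish space $H^{-1}(\cS)\times\R$ — indeed, restricted to $\{u\in[-n,n]\}$ both have finite mass $\int_{-n}^n\rho(u)\,du<\infty$ — it is enough to prove the identity after integrating against $f(h)g(u)$ with $f$ bounded continuous on $H^{-1}(\cS)$ and $g$ bounded continuous with compact support on $\R$.

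For such $f,g$ the left side equals $\E_\cS\bigl[f(h)\int_\R g(u)\,\nu_h(du)\bigr]$. Using the definition of $\nu_h$ as the a.s.\ limit of the regularized boundary measures $\eps^{\gamma^2/4}e^{\frac\gamma2 h_\eps(u)}\,du$, their $L^1(P_\cS)$-convergence on the compact set $\supp g$ (the input from~\cite{berestycki-gmt-elementary} that is also used for Lemma~\ref{lem-interpret-partition-fn}), the boundedness of $f$, and Fubini, I would rewrite this as
\[
\int f(h)g(u)\,Q(dh,du)=\lim_{\eps\to0}\int_\R g(u)\,\eps^{\gamma^2/4}\,\E_\cS\!\left[f(h)e^{\frac\gamma2 h_\eps(u)}\right]du .
\]
Then, exactly as in Lemma~\ref{lem:inserting}, Girsanov applied to the centered Gaussian $h_\eps(u)$ rewrites the inner expectation as $e^{\frac{\gamma^2}{8}\Var(h_\eps(u))}\,\E_\cS\bigl[f\bigl(h+\tfrac\gamma2 G_{\cS,\eps}(\cdot,u)\bigr)\bigr]$, where $G_{\cS,\eps}(\cdot,u)$ is the average of $G_\cS(\cdot,\cdot)$ over $\partial B_\eps(u)\cap\cS$.

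It remains to take $\eps\to0$ inside the $u$-integral. From the asymptotics $\Var(h_\eps(u))=-2\log\eps+2|\Re u|+o_\eps(1)$ recorded in Lemma~\ref{lem-interpret-partition-fn}, the prefactor $\eps^{\gamma^2/4}e^{\frac{\gamma^2}{8}\Var(h_\eps(u))}$ converges pointwise to $\rho(u)$ and stays $\le C\,e^{\frac{\gamma^2}{4}|\Re u|}$ for $\eps$ small and $u$ in compacts; meanwhile $G_{\cS,\eps}(\cdot,u)\to G_\cS(\cdot,u)$ in $H^{-1}(\cS)$, so $f(h+\tfrac\gamma2 G_{\cS,\eps}(\cdot,u))\to f(h+\tfrac\gamma2 G_\cS(\cdot,u))$ by continuity of $f$. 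Dominated convergence, with dominating function $\|f\|_\infty\,C\,e^{\frac{\gamma^2}{4}|\Re u|}\,|g(u)|\mathbf{1}_{\supp g}(u)$, then yields $\int_\R\E_\cS\bigl[f(h+\tfrac\gamma2 G_\cS(\cdot,u))\bigr]g(u)\rho(u)\,du$, which is the right-hand side of the lemma. I expect the only genuinely delicate point to be the interchange of the $\eps\to0$ limit with the $P_\cS$-expectation — i.e.\ the $L^1$-convergence / uniform integrability of the regularized boundary measures on compact sets — but this is standard for $\gamma\in(0,2)$ and is already invoked elsewhere in the excerpt; the rest is routine Gaussian bookkeeping and dominated convergence.
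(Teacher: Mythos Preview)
The proposal is correct and follows essentially the same approach as the paper: reduce to bounded continuous $f$ and compactly supported continuous $g$, use the $L^1$ convergence of the regularized boundary measure from \cite{berestycki-gmt-elementary}, apply Girsanov at the $\eps$-level to shift the field by $\tfrac\gamma2 G_{\cS,\eps}(\cdot,u)$, and pass to the limit using $G_{\cS,\eps}(\cdot,u)\to G_\cS(\cdot,u)$ in $H^{-1}(\cS)$. Your prefactor $\eps^{\gamma^2/4}e^{\frac{\gamma^2}{8}\Var(h_\eps(u))}$ is exactly the paper's $\E_\cS[\eps^{\gamma^2/4}e^{\frac\gamma2 h_\eps(u)}]$, and you are slightly more explicit than the paper about the dominated convergence step at the end.
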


\begin{proof}
	It suffices to assume that $g$ is  a compactly supported continuous function on $\R$ and $f$ is a  bounded and continuous function on $H^{-1}(\cS)$.
	For $\eps>0$, let $\nu_{h, \eps}(dx) = \eps^{\gamma^2/4} e^{\frac\gamma2 h_\eps(x)}dx$.	
	Since $\lim\limits_{\eps\to 0}\int_\R g(u)\,\nu_{h,\eps}(du)=\int_\R g(u)\,\nu_h(du)$ in $L^1$ with respect to $P_\cS$ (see e.g. \cite[Theorem 1.1]{berestycki-gmt-elementary}), we have 
	\begin{equation}\label{eq:eps-nu}
		\begin{split}
			\lim_{\eps \to 0}\int f(h)\left(\int_\R g(u)\,\nu_{h,\eps}(du)\right) \,P_\cS(dh)&=\int f(h)\left(\int_\R g(u)\,\nu_h(du)\right) \,P_\cS(dh)\\
			&= \int f(h)g(u) \,Q(dh,du).
		\end{split}
	\end{equation}
	Let $G_{\cS,\eps}(z, u)=\E_\cS[h(z)h^\eps(u)]$, where the latter is understood via the $\eps$-circle average of $G_\cS (z,)$.
	By Girsanov's theorem, the left side of~\eqref{eq:eps-nu} equals 
	$$
	\int_\R \E_\cS\left[f(h + \frac\gamma2 G_{\cS,\eps}(\cdot, u))\right] g(u)\E_\cS[\eps^{\gamma^2/4} e^{\frac\gamma2 h_\eps(u)}]\,du.
	$$
	Since $\rho(u)du=\lim\limits_{\eps\to 0}\E_\cS[\eps^{\gamma^2/4} e^{\frac\gamma2 h_\eps(u)}]du$ and 
	$
	\lim_{\eps\to 0}G_{\cS,\eps}(\cdot, u)=G_{\cS}(\cdot, u)
	$ 
	in $H^{-1}(\cS)$, we get the desired result.
\end{proof}
The following lemma is a variant of~Lemma~\ref{lem-girsanov} for Liouville fields. For notational convenience we use the notion 
$M[f(\phi)]= \int f(\phi) M(d\phi)$.
\begin{lemma}\label{lem-inserting}
	Let $\LF_\cS^{(\beta, \pm\infty),  (\beta_j,s_j)_j}$ be as in Definition~\ref{def-RV}. 
	Let $f$ and $g$ be non-negative measurable functions as in Lemma~\ref{lem-girsanov}. Then
	\begin{equation}\label{eq:girsanov-cS}
		\LF_\cS^{(\beta, \pm\infty)}\left[ f(\phi)\int_\R g(u)\nu_\phi\,(du)\right]=
		\int_\R  \LF_\cS^{(\beta, \pm\infty), (\gamma, u)}[f(\phi)]g(u)\,du.
	\end{equation}
\end{lemma}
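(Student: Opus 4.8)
The plan is to deduce~\eqref{eq:girsanov-cS} from Lemma~\ref{lem-girsanov} after unpacking Definition~\ref{def-RV}. Write a sample $\phi$ from $\LF_\cS^{(\beta,\pm\infty)}$ as $\phi = h + g_0 + \mathbf c$ with $h\sim P_\cS$, $g_0(z) := -(Q-\beta)|\Re z|$, and $\mathbf c$ distributed as $e^{(\beta-Q)c}\,dc$, all independent. Since $g_0+\mathbf c$ is continuous, we have $\nu_\phi(du) = e^{\frac\gamma2(g_0(u)+\mathbf c)}\nu_h(du)$, so, everything being non-negative and Tonelli applicable, the left-hand side of~\eqref{eq:girsanov-cS} equals
\[
\int_\R e^{(\beta-Q)c}\,\E_\cS\!\left[f(h+g_0+c)\int_\R g(u)\,e^{\frac\gamma2(g_0(u)+c)}\,\nu_h(du)\right]dc .
\]

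First I would apply Lemma~\ref{lem-girsanov} to the inner $h$-expectation with $F(h) := f(h+g_0+c)$ and $G(u) := g(u)e^{\frac\gamma2(g_0(u)+c)}$ (both non-negative measurable for each fixed $c$). This turns the inner expectation into $\int_\R \E_\cS[f(h+\tfrac\gamma2 G_\cS(\cdot,u)+g_0+c)]\,g(u)e^{\frac\gamma2(g_0(u)+c)}\rho(u)\,du$, where $\rho(u)\,du = \E_\cS[\nu_h(du)]$. Interchanging the $c$- and $u$-integrals and collecting $e^{(\beta-Q)c}e^{\frac\gamma2 c} = e^{(\beta+\frac\gamma2-Q)c}$, the left-hand side becomes
\[
\int_\R g(u)\,e^{\frac\gamma2 g_0(u)}\rho(u)\left(\int_\R e^{(\beta+\frac\gamma2-Q)c}\,\E_\cS\!\left[f\big(h+g_0+\tfrac\gamma2 G_\cS(\cdot,u)+c\big)\right]dc\right)du .
\]
By Definition~\ref{def-RV} with $\beta_3=\gamma$ and $s_3=u$, the bracketed $c$-integral is exactly $\big(C_\cS^{(\beta,\pm\infty),(\gamma,u)}\big)^{-1}\LF_\cS^{(\beta,\pm\infty),(\gamma,u)}[f(\phi)]$, because there $\phi = h+g_0+\frac\gamma2 G_\cS(\cdot,u)+\mathbf c$ with $\mathbf c$-weight $e^{(\beta+\frac\gamma2-Q)c}\,dc$ and overall constant $C_\cS^{(\beta,\pm\infty),(\gamma,u)}$.

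The last step is to verify that the leftover weight $e^{\frac\gamma2 g_0(u)}\rho(u)\big/C_\cS^{(\beta,\pm\infty),(\gamma,u)}$ is identically $1$. This is immediate from Lemma~\ref{lem-interpret-partition-fn}: since $g_0(u) = -(Q-\beta)|\Re u|$ and $\rho(u)\,du = \E_\cS[\nu_h(du)]$, that lemma says precisely $C_\cS^{(\beta,\pm\infty),(\gamma,u)}\,du = e^{\frac\gamma2 g_0(u)}\rho(u)\,du$. Substituting gives~\eqref{eq:girsanov-cS}. I do not expect a genuine obstacle here: the Girsanov shift and the $\eps\to 0$ regularization are already packaged inside Lemmas~\ref{lem-girsanov} and~\ref{lem-interpret-partition-fn}, so what remains is bookkeeping — the only points needing a line of care are the transformation rule $\nu_{h+\psi}=e^{\frac\gamma2\psi}\nu_h$ for continuous $\psi$ and the Tonelli interchanges of the $h$-, $c$-, and $u$-integrals, all legitimate since the integrands are non-negative.
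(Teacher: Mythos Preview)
Your proof is correct and follows essentially the same approach as the paper: unpack $\LF_\cS^{(\beta,\pm\infty)}$ via Definition~\ref{def-RV}, apply Lemma~\ref{lem-girsanov} to the $P_\cS$-integral at fixed $c$, and then match the resulting constant $e^{\frac\gamma2 g_0(u)}\rho(u)$ with $C_\cS^{(\beta,\pm\infty),(\gamma,u)}$ via Lemma~\ref{lem-interpret-partition-fn}. The only cosmetic difference is that you package the drift as $g_0$ and explicitly invoke Tonelli, whereas the paper writes out the integrals directly.
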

\begin{proof}
	By Definition~\ref{def-RV} the left side of~\eqref{eq:girsanov-cS} can be written as 	
	\[
	\iint f(h - (Q-\beta)\left|\Re \cdot\right| +c)\left(\int_\R g(u)e^{\frac\gamma2\left(-(Q-\beta)\left|\Re u\right| +c\right)}\,\nu_h(du)\right) \,P_\cS(dh) e^{(\beta-Q)c}\,dc. 
	\] 
	By Lemma~\ref{lem-girsanov}, the integration over $ P_\cS$ with a fixed $c$ is given by 
	\begin{equation}\label{eq:fix-c}
		\iint f(h+ \frac\gamma2 G_\cS(\cdot, u) - (Q-\beta)\left|\Re \cdot\right| +c)  g(u)e^{\frac\gamma2(-(Q-\beta)\left|\Re u\right| +c)} \rho(u) \cdot e^{(\beta-Q)c} P_\cS(dh) du
	\end{equation}
	where $\rho(u)$ is as in Lemma~\ref{lem-girsanov}. Recall $C^{(\beta, \pm\infty), (\gamma,u)}_\cS$ in the definition of $\LF_\cS^{(\beta, \pm\infty), (\gamma, u)}$.
	By Lemma~\ref{lem-interpret-partition-fn} we have
	\[
	C^{(\beta, \pm\infty), (\gamma,u)}_\cS du =e^{\frac{\gamma}2 (- (Q-\beta)\left|\Re u\right|)}  \rho(u)du.
	\]
	Therefore 
	the integral in~\eqref{eq:fix-c} becomes 
	\[
	\iint f(h - (Q-\beta)\left|\Re \cdot\right| + \frac\gamma2 G_\cS(\cdot, u) +c)  g(u)e^{\frac\gamma2c}  \cdot e^{(\beta-Q)c} \cdot C^{(\beta, \pm\infty), (\gamma,u)}_\cS P_\cS(dh) du.
	\]
	Further integrating over $c$ we get $\int_\R  \LF_\cS^{(\beta, \pm\infty), (\gamma, u)}[f(\phi)]g(u)\,du$ as desired.  
\end{proof}

\begin{proof}[Proof of Proposition~\ref{prop-2-pt-weight-length}]
	Our proof is based on Theorem~\ref{thm-two-disk-equivalence} and~\eqref{eq:girsanov-cS}.
	For $u\in \R$, let $M^u_{\bullet}$ be such that $(\cS,\phi,-\infty +\infty,u)/{\sim_\gamma}$ is a sample from $\cM_{2,\bullet}^\disk$ if $\phi$ is a sample from  $M^u_{\bullet}$. 
	We must show that $M^0_{\bullet}  =\frac\gamma{2(Q-\beta)^{2}} \LF_\cS^{(\beta, \pm\infty), (\gamma, 0)}$.
	
	Let $M_0$ be the law of $\phi$ in Definition~\ref{def-thick-disk} where $(\cS, \phi, +\infty, -\infty)/{\sim_\gamma}$ is a sample from $\cM_2^\disk(W)$.
	Let $Q_0$ be the  measure on $\{(\phi,z): h \textrm{ is a distribution on }\cS, u\in \R \}$ such that for  non-negative measurable functions $f$ and $g$ we have
	\[
	\int f(\phi)g(z) \,Q_0(d\phi,dz)=  \int f(\phi)\left(\int_\R g(z)\,\nu_\phi(dz)\right) \,M_0(d\phi).
	\]
	Then the $Q_0$-law of $(\cS,\phi,-\infty,+\infty,z)/{\sim_\gamma}$ is $\cM_{2,\bullet}^\disk(W)$.
	Let $(\phi, \mathbf z)$ and $ T$ be sampled from $Q_0\times dt$, where $dt$ is the Lebesgue measure  on $\R$. Set $\mathbf u=\mathbf z-T$ and $\wt \phi (\cdot) = \phi(\cdot + T)$. Let $M$ be the law of $(\wt \phi , \mathbf u)$.  Then by definition
	\begin{equation}\label{eq:u1}
		M[f(\wt \phi)  g(u)]= \int_\R M^u_{\bullet }[f(\wt \phi)]  g(u)\, du.
	\end{equation}
	On the other hand, note that the $\nu_\phi(\R)^{-1}Q_0$-law of $\phi$ is $M_0$. Therefore, by Theorem~\ref{thm-two-disk-equivalence}, the law of $\wt \phi$ under $\nu_{\wt \phi}(\R)^{-1}M$ is $ \frac\gamma{2(Q-\beta)^{2}} \LF_\cS^{(\beta,\pm\infty)}$. Moreover, conditioning on $\wt \phi$, the conditional  law of $\mathbf u$ is 
	the probability measure proportional to $\nu_{\wt \phi}|_\R$. Therefore, 
	$$M[f(\wt \phi)  g(u)]= \frac\gamma{2(Q-\beta)^{2}} \LF_\cS^{(\beta, \pm\infty)}\left[ f(\wt\phi)\int_\R g(u)\,\nu_{\wt\phi}(du)\right].$$
	By Lemma~\ref{lem-inserting}, we have  
	\begin{equation}\label{eq:u2}
		M[f(\wt \phi)  g(u)]= \frac\gamma{2(Q-\beta)^{2}}  \int_\R  \LF_\cS^{(\beta, \pm\infty), (\gamma, u)}[f(\phi)]g(u)\,du.
	\end{equation}
	Combining~\eqref{eq:u1} and~\eqref{eq:u2} we get 
	$M^u_\bullet[f(\wt \phi)] = \frac\gamma{2(Q-\beta)^{2}}  \LF_\cS^{(\beta, \pm\infty), (\gamma, u)}[f(\wt\phi)]$, since $g$ can be arbitrary.
	Setting $u=0$ and varying $f$ we conclude the proof.
\end{proof}

\subsection{Uniform embedding of the quantum sphere and quantum disk}\label{subsec-haar}
With notation as in Section \ref{subsec:equivalence},  Theorem~\ref{thm-two-disk-equivalence} says that the uniform embedding of $\cM_2^\disk(W)$ in $(\cS, -\infty, +\infty)$ is given by a constant multiple of $\LF_\cS^{(\beta,\pm\infty)}$. More precisely
\eqbn
\begin{split}
	&\haar_{\cS,-\infty,+\infty}  \ltimes \cMtwo(W) = \frac\gamma{2(Q-\beta)^{2}} \LF_\cS^{(\beta,\pm\infty)},\qquad \beta = Q + \frac\gamma2 - \frac W\gamma. 
\end{split}
\eqen
It is in fact a general phenomenon that uniform embedding of random surfaces appearing in the framework of \cite{wedges}   are  given by a Liouville field.
In this section we  demonstrate this point by proving Theorem~\ref{thm-haar}, which concerns the uniform embedding of the quantum sphere and disk. Unlike the rest of the paper, which focuses on LQG surfaces with disk topology, in this subsection we treat the sphere and disk in parallel. 
	

We first give a precise definition of the notation~$\ltimes$ that represents uniform embedding. Let $G$ be a locally compact Lie group.  
Suppose $\Omega$ is a  Polish  space with a continuous $G$-action $(g,x)\mapsto g\cdot x$. Namely, $g_1\cdot (g_2\cdot x)=(g_1\cdot g_2) \cdot x$ for all $g_1,g_2\in G$ and $x\in \Omega$; moreover, $(g,x)\mapsto g\cdot x$ is continuous.
Let $\Omega/G$ be such that  $y\in \Omega/G$ if and only if $y=\{ gx: g\in G \}$ for some $x\in \Omega$.
We let $\pi:\Omega\rta \Omega/G$ be the quotient map and endow $\Omega/G$ with the quotient topology.
We endow the Borel  $\sigma$-algebra on $G,\Omega$ and $\Omega/G$.
Suppose $\haar$ is a \emph{right invariant Haar measure}. That is, $\int_G f(gh) \,\haar(dg)=\int_G f(g)\, \haar(dg)$ for each non-negative measurable function $f$ on $G$ and each $h\in G$. 
\begin{definition}\label{def:uniform}
	For each $y\in \Omega/G$, choose $x\in \pi^{-1}(y)$. We write $\haar \ltimes y$ for the pushforward measure of $\haar$ under $G\ni g\mapsto g\cdot x\in \Omega$, i.e.\ for each Borel $E \subset \Omega$ we have $\mathbf m \ltimes y (E) = \int_\Omega 1_{g\cdot x \in E} \mathbf \,m(dg)$.
	For a $\sigma$-finite measure $\wt \nu$ on $\Omega/G$,  we write the measure $\int_{\Omega/G} [\haar \ltimes y] \,\wt\nu(dy)$ as $\haar \ltimes \wt\nu$.  Namely, 
	$\haar \ltimes \wt\nu(E)=\int_{\Omega/G} [\haar \ltimes y](E) \,\wt\nu(dy)$ for each Borel set $E\subset\Omega$.
\end{definition}
\begin{lemma}\label{lem:haar-invariant}
	For each $\sigma$-finite measure $\nu$ on $\Omega$, let $\pi_*\nu$ be the pushforward of $\nu$ by $\pi$.
	Then the pushforward of $\haar\times\nu$ under $(g,x)\mapsto g\cdot x$ equals $\haar\ltimes \pi_*\nu$.
\end{lemma}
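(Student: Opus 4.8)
The plan is to evaluate both measures on an arbitrary Borel set $E\subset\Omega$ and check that the two values coincide. Write $\Phi\colon G\times\Omega\to\Omega$, $\Phi(g,x)=g\cdot x$, for the action map; since the $G$-action is continuous, $\Phi$ is Borel. Unfolding the definition of the pushforward and applying the Tonelli theorem — legitimate because $\haar$ and $\nu$ are $\sigma$-finite and $(g,x)\mapsto \mathbf 1_E(g\cdot x)$ is non-negative and Borel — gives
\[
\big(\Phi_*(\haar\times\nu)\big)(E)=\int_{G\times\Omega}\mathbf 1_E(g\cdot x)\,\haar(dg)\,\nu(dx)=\int_\Omega\Big(\int_G \mathbf 1_E(g\cdot x)\,\haar(dg)\Big)\,\nu(dx).
\]

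The next step is to recognize the inner integral. By Definition~\ref{def:uniform}, taking the representative of $\pi(x)$ to be $x$ itself, we have $\int_G \mathbf 1_E(g\cdot x)\,\haar(dg)=[\haar\ltimes\pi(x)](E)$. Here one uses that $\haar\ltimes y$ does not depend on the chosen representative, which is exactly the right invariance of $\haar$: if $\pi(x)=\pi(x')$, say $x'=g_0\cdot x$, then $g\cdot x'=(gg_0)\cdot x$, and the substitution $g\mapsto gg_0$ leaves $\haar$ invariant. Consequently the integrand $x\mapsto[\haar\ltimes\pi(x)](E)$ factors through $\pi$, equalling $\big(y\mapsto[\haar\ltimes y](E)\big)\circ\pi$, and it is Borel because $y\mapsto[\haar\ltimes y](E)$ is measurable on $\Omega/G$ (this measurability is precisely what makes $\haar\ltimes\wt\nu$ well defined in Definition~\ref{def:uniform}). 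Applying the change-of-variables formula for the pushforward measure $\pi_*\nu$,
\[
\int_\Omega[\haar\ltimes\pi(x)](E)\,\nu(dx)=\int_{\Omega/G}[\haar\ltimes y](E)\,(\pi_*\nu)(dy)=\big(\haar\ltimes\pi_*\nu\big)(E),
\]
the last equality being the definition of $\haar\ltimes\pi_*\nu$. Combining the two displays yields $\Phi_*(\haar\times\nu)(E)=(\haar\ltimes\pi_*\nu)(E)$ for every Borel $E\subset\Omega$, which is the claim.

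I do not expect a genuine obstacle here: the argument is just Tonelli together with the right invariance of $\haar$ (the latter used only to make the identification $\int_G \mathbf 1_E(g\cdot x)\,\haar(dg)=[\haar\ltimes\pi(x)](E)$ unambiguous). The single point that requires bookkeeping is measurability — the Borel measurability of $\Phi$, and of $y\mapsto[\haar\ltimes y](E)$ on $\Omega/G$ — but the latter is part of the standing framework in which Definition~\ref{def:uniform} was set up, so there is nothing new to establish.
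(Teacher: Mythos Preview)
The proposal is correct and takes essentially the same approach as the paper: both compute the pushforward by Fubini/Tonelli, use right invariance of $\haar$ to see that the inner integral depends only on $\pi(x)$, and then rewrite the outer integral via $\pi_*\nu$. The only cosmetic difference is that the paper tests against non-negative continuous functions $f$ while you test against indicator functions $\mathbf 1_E$; your version is arguably cleaner on the measurability bookkeeping.
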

\begin{proof}
	For each non-negative continuous function $f$ on $\Omega$,  let $I_f(x)=\int_{G} f(g\cdot x)\, \haar(dg)$. 
	Since $\haar$ is right invariant, $I_f(x)$ only depends on $\pi (x)$. 
	Equivalently, there exists a non-negative  continuous function $\wt I_f$ on $\Omega/G$ such that $I_f=\wt I_f\circ \pi$.  
	For each $\sigma$-finite measure $\nu$ on $(\Omega,\cF)$, by Fubini's Theorem,
	\begin{equation}\label{eq:haar1}
		\int_{G\times \Omega} f(g\cdot x)\, \haar(dg) \nu(dx)= \int_\Omega I_f(x)\,\nu(dx)= \int_{\Omega/G} \wt I_f(y) \,\pi_*\nu(dy).
	\end{equation}
	The last integral in~\eqref{eq:haar1} is precisely $\int_{\Omega/G} \left( \int f(x') \,  \haar \ltimes y (dx') \right)\,\pi_*\nu(dy)$. This concludes the proof.
\end{proof}

The proof of Theorem~\ref{thm-haar} relies on the LCFT description of the three-pointed quantum disk $\QD_{0,3}$ and sphere $\QS_3$. The description of $\QD_{0,3}$ was obtained in~\cite{cercle-quantum-disk} which we recovered and refined in Proposition~\ref{prop-2-pt-weight-length} and Remark~\ref{rmk:equivalence-disk}. In Appendix~\ref{app:sphere}, following the same proof we recover and refine the LCFT description of $\QS_3$ obtained in~\cite{ahs-sphere}. In particular, we prove a more general result (Proposition \ref{prop-QS3-LF}) in analogy to Proposition~\ref{prop-2-pt-weight-length}.
The original definition of $\QS_3$ is recalled in  Appendix \ref{app:sphere} but the LCFT description in Proposition~\ref{prop-AHS} is what we need for the rest of this section. The unpointed quantum sphere $\QS$ in Theorem~\ref{thm-haar} can be obtained  by deweighting the cubic power of the total quantum area of a sample from $\QS_3$ and forgetting the three marked points; see Definition~\ref{def-QS}.

We first recall the basic setup of LCFT on $\C$ following~\cite{dkrv-lqg-sphere,krv-dozz,vargas-dozz-notes}, and then state the LCFT description of $\QS_3$ as Proposition~\ref{prop-AHS} (see Appendix \ref{app:sphere} for the proof). Let $P_\C$ be the law of the GFF on $\C$ normalized to have average zero on the unit circle, which has covariance kernel (see~\cite[(2.1), Remark 2.1]{krv-dozz})
	\begin{align}\label{eq:covariance-sph}
		G_\C(z,w) &= -\log|z-w| + \log|z|_+ + \log|w|_+.\nonumber
	\end{align}
	\begin{definition} 
		\label{def-LF-sphere}
		Let $(h, \mathbf c)$ be sampled from $P_\C \times [e^{-2Qc}dc]$ and set $\phi =  h(z) -2Q \log |z|_+ +\mathbf c$. 
		We write  $\LF_{\C}$ as the law of $\phi$ and call a sample from  $\LF_{\C}$  a \emph{Liouville field on $\C$.}
	\end{definition}
	\begin{definition}\label{def-RV-sph}
		Let $(\alpha_i,z_i) \in  \R \times \C$ for $i = 1, \dots, m$, where $m\ge 1$ and the $z_i$ are distinct. 
		Let $(h, \mathbf c)$ be sampled from $ C_{\C}^{(\alpha_i,z_i)_i}  P_\C \times [e^{(\sum_i \alpha_i  - 2Q)c}dc]$ where
		\[C_{\C}^{(\alpha_i,z_i)_i} = 
		\prod_{i=1}^m |z_i|_+^{-\alpha_i(2Q -\alpha_i)} e^{\sum_{j=i+1}^m \alpha_i \alpha_j G_\C(z_i, z_j)}.\]
		Let \(\phi(z) = h(z) -2Q \log |z|_+  + \sum_{i=1}^m \alpha_i G_\C(z, z_i) + \mathbf c\).
		We write  $\LF_{\C}^{(\alpha_i,z_i)_i}$ for the law of $\phi$ and call a sample from  $\LF_{\C}^{(\alpha_i,z_i)_i}$ 
		the \emph{Liouville field on $\C$ with insertions $(\alpha_i,z_i)_{1\le i\le m}$}.
	\end{definition}	
\begin{proposition}\label{prop-AHS}
	Suppose $(\C, \phi,u_1,u_2,u_3)$ is an embedding of $\QS_3$,
	where $u_1, u_2, u_3$ are three fixed distinct points on $\C$. Then the law of $\phi$ is $\frac{\pi \gamma}{2 (Q-\gamma)^{2}}\LF_\C^{(\gamma, u_1),(\gamma, u_2),(\gamma, u_3)}$.
\end{proposition}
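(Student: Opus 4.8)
The plan is to follow the argument of Sections~\ref{subsec-translations} and~\ref{subsec-3pt} line by line, with the strip $\cS$ replaced by the cylinder $\cC$, the two-pointed quantum disk $\cMtwo(W)$ replaced by its spherical analogue (Definition~\ref{def-sphere}), the boundary length measure $\nu_\phi$ on $\R$ replaced by the area measure $\mu_\phi$ on $\cC$, and boundary insertions $e^{\frac{\alpha}{2}\phi}$ replaced by bulk insertions $e^{\alpha\phi}$. First I would prove the spherical counterpart of Theorem~\ref{thm-two-disk-equivalence}: if $(\cC,\phi,+\infty,-\infty)$ is the embedding of a weight-$W$ two-pointed quantum sphere from Definition~\ref{def-sphere} and $T$ is an independent Lebesgue sample on $\R$, then $\phi(\cdot+T)$ has law $c_W\,\LF_\cC^{(\beta,\pm\infty)}$ for an explicit constant $c_W$ and $\beta=Q+\frac\gamma2-\frac W\gamma$; the quantum sphere $\QS_2$ is the case $\beta=\gamma$, for which this reads $\haar_{\cC,-\infty,+\infty}\ltimes\QS_2 = C\,\LF_\cC^{(\gamma,\pm\infty)}$. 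Using the radial--lateral decomposition of the GFF on $\cC$, this statement reduces, exactly as Theorem~\ref{thm-two-disk-equivalence} reduces to Proposition~\ref{prop-Bessel-equiv}, to an identity about a two-sided drifted Brownian motion conditioned to tend to $-\infty$ at both ends, shifted by an independent additive constant and then translated by an independent uniform variable, matching in law $(B_t - a|t| + \mathbf c')_{t\in\R}$ for a two-sided Brownian motion $B$, drift $a=\tfrac12(Q-\beta)$, and an explicit constant $\mathbf c'$.

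I would prove that Brownian-motion identity by repeating the excursion-theoretic computation of Lemmas~\ref{lem:BM-above} and~\ref{lem:0} and the proof of Proposition~\ref{prop-Bessel-equiv} with the Bessel dimension and drift adapted to the sphere. The only genuinely new point compared with the disk is the extra rotational symmetry of $(\cC,+\infty,-\infty)$, and it is harmless: rotations of $\cC$ fix the radial part of the field and preserve the law of the lateral GFF, so they play no role in identifying the law of $\phi(\cdot+T)$ and the uniform embedding may be taken over translations alone.

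Next I would add a third, area-typical marked point, following Section~\ref{subsec-3pt}. This requires the $\cC$-versions of Lemmas~\ref{lem-girsanov} and~\ref{lem-inserting}: a Girsanov identity for $\mu_h(dz)\,P_\cC(dh)$ with the bulk shift $h\mapsto h+\gamma G_\cC(\cdot,z)$, together with its Liouville-field form $\LF_\cC^{(\beta,\pm\infty)}[f(\phi)\int g(z)\,\mu_\phi(dz)] = C'\int \LF_\cC^{(\beta,\pm\infty),(\gamma,z)}[f(\phi)]\,g(z)\,d^2z$, the constant $C'$ being fixed by the $\cC$-analogue of Lemma~\ref{lem-interpret-partition-fn}. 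Plugging these into the proof of Proposition~\ref{prop-2-pt-weight-length} verbatim yields Proposition~\ref{prop-QS3-LF}: the weight-$W$ two-pointed quantum sphere reweighted by $\mu_\phi(\cC)$ with an extra $\mu_\phi^\#$-typical point, embedded in $(\cC,-\infty,+\infty,0)$, has field a constant times $\LF_\cC^{(\beta,\pm\infty),(\gamma,0)}$. Specializing to $\beta=\gamma$, and recalling that $\QS_3$ is obtained from $\QS_2$ precisely by reweighting the surface by the total area $\mu_\phi(\cC)$ and adding one $\mu_\phi^\#$-typical point (Definition~\ref{def-QS}), this identifies the law of $\QS_3$ embedded in $(\cC,-\infty,+\infty,0)$. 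To finish, I would push forward by $\exp\colon\cC\to\C\setminus\{0\}$, turning the insertions at $\pm\infty$ into insertions at $0$ and $\infty$, and then apply a Möbius map of $\wh\C$ carrying $(0,\infty,1)$ to $(u_1,u_2,u_3)$, using the conformal covariance of $\LF_\C$ with insertions (the $\C$-analogue of Proposition~\ref{prop-hrv-invariance}) and the bookkeeping of Jacobian factors $|f'(u_i)|^{-\Delta_\gamma}$ exactly as in Lemma~\ref{lem-equiv-C-cS} and Remark~\ref{rmk:equivalence-disk}; since $\Delta_\gamma$ and the overall constant are symmetric in the three points, the result is $\frac{\pi\gamma}{2(Q-\gamma)^2}\LF_\C^{(\gamma,u_1),(\gamma,u_2),(\gamma,u_3)}$.

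The main obstacle is purely bookkeeping: carrying the multiplicative constant correctly through all of these steps, and in particular pinning down the factor $\pi$ that distinguishes the sphere formula from the disk one. This factor enters through the normalization of the GFF on $\cC$ (the choice of circle-average normalization measure $m$ and the resulting variance of $h_\eps(z)$ that feeds the Girsanov computation of $\E_\cC[\mu_h(dz)]$), the law of the additive constant $\mathbf c$ in Definition~\ref{def-sphere}, and the area-reweighting relating $\QS_2$, $\QS_3$ and the Bessel-excursion computation. All the structural ingredients---the radial--lateral decomposition, the Brownian-motion identity, and the Girsanov insertion---are direct transcriptions of the disk arguments already given in the excerpt, so I expect no conceptual difficulty beyond this constant tracking.
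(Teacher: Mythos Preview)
Your approach is correct and is essentially the paper's own: prove the cylinder analog of Theorem~\ref{thm-two-disk-equivalence} (this is Theorem~\ref{thm-two-sph-equivalence}), then add a $\mu_\phi$-typical point via Girsanov (Proposition~\ref{prop-QS3-LF}), specialize to the $\QS_2$ weight, and push forward to $(\C,u_1,u_2,u_3)$ using conformal covariance (Proposition~\ref{prop-RV-invariance}, Lemma~\ref{lem-equiv-C-cC}).

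Two bookkeeping corrections worth noting, since you flagged constants as the main obstacle. First, the sphere parametrization in Definition~\ref{def-sphere} is $\alpha=Q-\tfrac{W}{2\gamma}$, not the disk relation $\beta=Q+\tfrac\gamma2-\tfrac W\gamma$; correspondingly the radial part is $B_t$ (not $B_{2t}$), so the drift is $a=Q-\alpha$ rather than $\tfrac12(Q-\beta)$, and $\QS_2$ corresponds to $W=4-\gamma^2$, giving $\alpha=\gamma$. Second, the factor of $\pi$ does not arise from the GFF normalization or from $\mathbf c$: Theorem~\ref{thm-two-sph-equivalence} yields the constant $\tfrac{\gamma}{4(Q-\alpha)^2}$, and the extra $2\pi$ appears when integrating the cylinder Girsanov identity~\eqref{eq-inserting-cC} over the $2\pi$-periodic circle fiber of $\cC$, giving $\tfrac{\pi\gamma}{2(Q-\alpha)^2}$.
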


To make sense of $\haar_{\wh\C} \ltimes \QS$, consider the function space $H^{-1}(\C)$ defined as $H^{-1}(\bbH)$ in Remark~\ref{rmk:ftn-space} with $\C$ in place of $\bbH$.  The conformal coordinate  change \(f \bullet_\gamma \phi = \phi \circ  f^{-1} + Q  \log \left| (f^{-1})' \right|\)  
	defines a continuous group action of $\conf(\wh \C)$ on $H^{-1}(\C)$, where $\conf(\wh \C)$
	is  conformal automorphism group  on $\wh \C$.  By the definition of quantum surface, we can view
	$\QS$ as a measure on $H^{-1}(\C)/\conf(\wh \C)$. Since $\conf(\wh \C)$ is a locally compact Lie group,
	it has a unique right invariant Haar measure modulo a multiplicative constant, and moreover, the measure is left invariant as well since $\conf(\wh\C)$ is unimodular; (see e.g.\ \cite[Corollary 5.5.5]{faraut-lie-groups}).
	From Definition~\ref{def:uniform}, we get the precise meaning of  $\haar_{\wh\C} \ltimes \QS$. The uniform embedding $\haar_{\bbH}\ltimes \QD$ of $\QD$ is defined in the same way.

The starting point of the proof of Theorem~\ref{thm-haar} is the LCFT description of the uniform embedding of $\QS_3$ and $\QD_{0,3}$ instead of $\QS$  and $\QD$.
To make sense of $\haar_{\wh\C} \ltimes \QS_3$,  we view $\QS_3$ as a measure on the quotient space of $\Omega_{\C}\times \C^3$ under the $\conf(\wh \C)$-action
$(h,a,b,c)\mapsto (f\bullet_\gamma h, f(a),f(b),f(c))$. Then $\haar_{\wh\C} \ltimes \QS_3$ is a measure on $\Omega_{ \C} \times \C^3$. We similarly define $\haar_{\wh\C} \ltimes \QS_3$.
The following lemma gives a concrete realization of 
$\haar_{\bbH}\ltimes \QD_{0,3}$ and $\haar_{\wh\C} \ltimes \QS_3$.
\begin{lemma}\label{lem:haar3}
	Let $(\mathbb \C, \phi, a,b,c)$ be an embedding of a sample from $\QS_3$. 
	Let $\mathfrak{f}$ be a sample from  a Haar measure $\haar_{\wh\C}$ on $\conf(\wh\C)$ 
	that is  independent of $(\phi,a,b,c)$. 
	Then  the law of $(\mathfrak f\bullet_\gamma \phi, \mathfrak f(a),\mathfrak f(b),\mathfrak f(c))$ is $\haar_{\wh\C} \ltimes \QS_3$.
	In particular, it  does not depend on the law of $(\phi,a,b,c)$. Similarly, let $(\bbH, \phi, a, b, c)$ be an embedding of a sample from $\QD_{0,3}$, and $\mathfrak{g}$ an independent sample from a Haar measure $\mathbf m_{\bbH}$ on $\mathrm{conf}(\bbH)$. Then the law of $(\mathfrak g \bullet_\gamma \phi, \mathfrak g(a), \mathfrak g(b), \mathfrak g(c))$ is $\mathbf m_{\bbH} \ltimes \QD_{0,3}$.
\end{lemma}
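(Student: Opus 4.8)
The plan is to obtain the lemma as a direct application of Lemma~\ref{lem:haar-invariant}, i.e.\ as an unwinding of Definition~\ref{def:uniform}. We spell out the sphere case; the disk case is verbatim with $\C$ replaced by $\bbH$, $\wh\C$ by $\bbH$, $\conf(\wh\C)\cong\mathrm{PSL}_2(\C)$ by $\conf(\bbH)\cong\mathrm{PSL}_2(\R)$, and $\QS_3$ by $\QD_{0,3}$.

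The paragraph preceding the lemma already records that the coordinate change $f\bullet_\gamma\phi$ defines a continuous action of the locally compact Lie group $\conf(\wh\C)$ on $H^{-1}(\C)$. Combining it with the tautological action $f\cdot(z_1,z_2,z_3)=(f(z_1),f(z_2),f(z_3))$ yields a continuous $\conf(\wh\C)$-action on the Polish space $\Omega:=H^{-1}(\C)\times V$, where $V\subset(\wh\C)^3$ is the open set of pairwise distinct triples. (That this is an action for the group law $f\cdot g=f\circ g$ uses the cocycle identity $f\bullet_\gamma(g\bullet_\gamma\phi)=(f\circ g)\bullet_\gamma\phi$, which is a direct computation from the definition~\eqref{eq:coordinate}, together with $f(g(z))=(f\circ g)(z)$.) Fix a right invariant Haar measure $\haar_{\wh\C}$ on $\conf(\wh\C)$.

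Let $\nu$ be the law on $\Omega$ of the chosen embedding $(\phi,a,b,c)$ of $\QS_3$. By the way $\QS_3$ is viewed as a measure on $\Omega/\conf(\wh\C)$ (see the paragraph before the lemma, and the definition of quantum surface), we have $\pi_*\nu=\QS_3$ as measures on the quotient; moreover this identity is independent of the particular embedding, since any two embeddings of the same quantum surface differ by an element of $\conf(\wh\C)$. Lemma~\ref{lem:haar-invariant} then says the pushforward of $\haar_{\wh\C}\times\nu$ under $(f,x)\mapsto f\cdot x$ equals $\haar_{\wh\C}\ltimes\pi_*\nu=\haar_{\wh\C}\ltimes\QS_3$. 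On the other hand, since $\mathfrak f$ is sampled from $\haar_{\wh\C}$ independently of $(\phi,a,b,c)$, the joint law of $(\mathfrak f,(\phi,a,b,c))$ is exactly $\haar_{\wh\C}\times\nu$, so the law of $(\mathfrak f\bullet_\gamma\phi,\mathfrak f(a),\mathfrak f(b),\mathfrak f(c))=\mathfrak f\cdot(\phi,a,b,c)$ is precisely that pushforward. Hence this law equals $\haar_{\wh\C}\ltimes\QS_3$, and as the right-hand side refers only to $\pi_*\nu$ it does not depend on the embedding chosen. The disk statement follows by the identical argument.

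There is no serious obstacle here; the content lies entirely in verifying the hypotheses of Lemma~\ref{lem:haar-invariant}, namely that $\Omega$ can be taken Polish (whence the harmless restriction to distinct triples), that $\conf(\wh\C)$ acts continuously on it (granted by the setup before the lemma for the field coordinate, trivial for the marked points), and that $\pi_*\nu=\QS_3$ regardless of the embedding. Once these are in place the conclusion is immediate.
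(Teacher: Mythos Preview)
Your proof is correct and takes essentially the same approach as the paper, which simply writes ``This immediately follows from Lemma~\ref{lem:haar-invariant}.'' You have unpacked precisely the verification that Lemma~\ref{lem:haar-invariant} applies: the group acts continuously on the product space, the embedding law $\nu$ pushes forward to $\QS_3$ on the quotient regardless of the embedding, and the independence of $\mathfrak f$ and $(\phi,a,b,c)$ makes the joint law a product.
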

\begin{proof}
	This immediately follows from Lemma~\ref{lem:haar-invariant}.
\end{proof}

We now give an explicit description of $\haar_{\wh \C}$ and $\haar_{\bbH}$.
\begin{lemma}\label{lem-haar}
	Let  $\mathfrak f$ be sampled from a Haar measure $\haar_{\wh \C}$	of $\conf(\wh\C)$. 
	Then there exists a constant   $C\in (0,\infty)$ such that the law of   
	$(\mathfrak{f}(0), \mathfrak{f}(1) , \mathfrak{f}(-1))$ is \(	C|(p-q)(q-r)(r-p)|^{-2} \,d^2p\, d^2q\, d^2r\).
	
		Similarly, let  $\mathfrak g$ be sampled from a Haar measure $\haar_{\bbH}$	of $\conf(\bbH)$.
		Then there exists a constant   $C\in (0,\infty)$ such that the law of   
		$(\mathfrak{g}(0), \mathfrak{g}(1) , \mathfrak{g}(-1))$ is \(	C|(p-q)(q-r)(r-p)|^{-1}\, dp\, dq\, dr\) restricted to the set of triples $(p,q,r)\in \R^3$ that are counterclockwise aligned on $\bdy\bbH$. 
\end{lemma}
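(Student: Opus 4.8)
The plan is to exploit the fact that $\conf(\wh\C)$, realized as the group $\mathrm{PSL}_2(\C)$ of Möbius transformations, acts simply transitively on ordered triples of distinct points of $\wh\C$. Write $\Phi(\mathfrak f)=(\mathfrak f(0),\mathfrak f(1),\mathfrak f(-1))$; this is a diffeomorphism of $\conf(\wh\C)$ onto the open set $T_{\wh\C}$ of ordered triples of distinct points of $\wh\C$, and since $\Phi(g\circ\mathfrak f)=(g(\mathfrak f(0)),g(\mathfrak f(1)),g(\mathfrak f(-1)))$ it intertwines left translation on $\conf(\wh\C)$ with the diagonal action $(p,q,r)\mapsto(g(p),g(q),g(r))$. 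Because $\haar_{\wh\C}$ is, up to a positive multiple, the unique left-invariant (and also right-invariant) Radon measure on $\conf(\wh\C)$, transporting this uniqueness through the homeomorphism $\Phi$ shows that $\Phi_*\haar_{\wh\C}$ is, up to a positive multiple, the unique Radon measure on $T_{\wh\C}$ invariant under the diagonal action. The locus where some coordinate equals $\infty$ is the image under $\Phi$ of a proper analytic subvariety of $\conf(\wh\C)$, hence $\haar_{\wh\C}$-null, so it is enough to identify $\Phi_*\haar_{\wh\C}$ on $T_{\wh\C}\cap\C^3$.

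The remaining step is to check that the candidate $\nu(dp\,dq\,dr):=|(p-q)(q-r)(r-p)|^{-2}\,d^2p\,d^2q\,d^2r$ is invariant under the diagonal action, for then $\nu=C\,\Phi_*\haar_{\wh\C}$ by the uniqueness just discussed, which is exactly the first assertion. For $g(z)=\tfrac{az+b}{cz+d}$ with $ad-bc=1$ one has the elementary identities $g'(z)=(cz+d)^{-2}$ and $g(z)-g(w)=\tfrac{z-w}{(cz+d)(cw+d)}$. A holomorphic change of variables multiplies $d^2z$ by $|g'(z)|^2$, so under $(p,q,r)\mapsto(g(p),g(q),g(r))$ the Lebesgue factor acquires $|cp+d|^{-4}|cq+d|^{-4}|cr+d|^{-4}$, while the identity for $g(z)-g(w)$ shows $|(g(p)-g(q))(g(q)-g(r))(g(r)-g(p))|^{-2}$ acquires precisely the reciprocal factor relative to $|(p-q)(q-r)(r-p)|^{-2}$. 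Hence $\nu$ is diagonal-invariant, nonzero, and Radon, giving the claim about $\haar_{\wh\C}$.

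For the half-plane case, $\conf(\bbH)=\mathrm{PSL}_2(\R)$ is realized by Möbius transformations with real coefficients and positive determinant; these preserve $\bdy\bbH=\R\cup\{\infty\}$ together with its cyclic orientation, and act simply transitively on counterclockwise-oriented triples of distinct boundary points. Since $(0,1,-1)$ is counterclockwise on $\bdy\bbH$, the map $\mathfrak g\mapsto(\mathfrak g(0),\mathfrak g(1),\mathfrak g(-1))$ is a diffeomorphism of $\conf(\bbH)$ onto the set of counterclockwise triples, again intertwining left translation with the diagonal action, so as above $\haar_\bbH$ pushes forward to the essentially unique invariant Radon measure on this set. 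Using the same two identities $g'(z)=(cz+d)^{-2}$ and $g(z)-g(w)=\tfrac{z-w}{(cz+d)(cw+d)}$, but now with the one-dimensional change of variables contributing $|g'(z)|$ rather than $|g'(z)|^2$, one sees that $|(p-q)(q-r)(r-p)|^{-1}\,dp\,dq\,dr$ restricted to counterclockwise triples is invariant, hence a positive multiple of $\Phi_*\haar_\bbH$; the locus where some coordinate is $\infty$ is again null, which finishes the identification.

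The computations are short, so the only points needing genuine care are structural: verifying that $\Phi$ (and its half-plane analogue) really is a diffeomorphism onto the stated orbit, and that an orbit of a simply transitive smooth action carries a unique invariant Radon measure up to scaling — the latter being precisely the uniqueness of Haar measure transported along $\Phi$, and the former a standard fact about the $\mathrm{PSL}_2$-action on triples. I expect this transport-of-uniqueness argument to be the only conceptual ingredient; the rest is the one-line Jacobian computation built from the Möbius identities above.
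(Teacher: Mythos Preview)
Your proof is correct and follows essentially the same strategy as the paper: use the bijection between $\conf(\wh\C)$ and triples of distinct points to reduce the question to checking that the candidate measure is invariant under the diagonal M\"obius action, then invoke uniqueness of Haar measure. The only cosmetic difference is in how invariance is verified: the paper checks it on the generators (translations, dilations, inversion), whereas you do a single unified computation using the identities $g'(z)=(cz+d)^{-2}$ and $g(z)-g(w)=\tfrac{z-w}{(cz+d)(cw+d)}$ for a general normalized M\"obius map.
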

\begin{proof}
	We prove the first assertion; the second follows from the same arguments.
	By the uniqueness of Haar measure, it suffices to show that if $(\mathbf p, \mathbf q, \mathbf r)$ is sampled from  \(|(p-q)(q-r)(r-p)|^{-2} \,d^2p\, d^2q\, d^2r\)
	and  $\mathfrak{f}$ is the unique Mobius  transformation mapping $(0,1,-1)$ to $ (\mathbf p,  \mathbf  q, \mathbf  r)$,
	then the law of $\mathfrak{f}$ is a Haar measure on $\conf(\wh\C)$. Namely for each $g \in\conf(\wh\C)$, $g\circ \mathfrak{f}$  agrees in law with $\mathfrak{f}$. 
	This is equivalent to the statement that $(g(\mathbf p),g(\mathbf q),g(\mathbf r))$ agrees in law with $ (\mathbf p,  \mathbf  q, \mathbf  r)$.
	This is straightforward to check when $g$ is a translation, dilation, or inversion.  Since these generate $\conf(\wh\C)$, we are done.
\end{proof}
We will give the LCFT description of $\haar_{\wh \C} \ltimes \QS_3$ and  $\haar_{\bbH} \ltimes \QD_{0,3}$ in Proposition \ref{prop:3pt-haar} below. In its proof we need  Proposition~\ref{prop-hrv-invariance} and
 its sphere counterpart, which we recall now.	
\begin{proposition}[{\cite[Theorem 3.5]{dkrv-lqg-sphere}}]\label{prop-RV-invariance}
	For $\alpha\in\R$, set $\Delta_\alpha := \frac\alpha2(Q - \frac\alpha2)$. 
	Let $f\in \conf(\wh\C) $ and  $(\alpha_i, z_i)\in \R \times \C$ be such that $f(z_i) \neq \infty$ for all $1\le i\le m$.  
Recall the notation $f_*$ in Proposition~\ref{prop-hrv-invariance}.	Then  \[\LF_{\C}=  f_*\LF_{\C} \quad \textrm{and}\quad \LF_{\C}^{(\alpha_i,f(z_i))_i} = \prod_{i=1}^m |f'(z_i)|^{-2\Delta_{\alpha_i}} f_*\LF_{\C}^{(\alpha_i,z_i)_i}.\]
\end{proposition}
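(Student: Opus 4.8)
The plan is to deduce Proposition~\ref{prop-RV-invariance} from \cite[Theorem 3.5]{dkrv-lqg-sphere} (see also \cite{krv-dozz,vargas-dozz-notes}), in direct analogy with the way Proposition~\ref{prop-hrv-invariance} was obtained from \cite{hrv-disk} in the half-plane case. In \cite{dkrv-lqg-sphere} the conformal covariance is stated for the Liouville measure with a positive cosmological constant, i.e.\ for $e^{-\mu\mu_\phi(\C)}\LF_\C^{(\alpha_i,z_i)_i}(d\phi)$ and under the Seiberg bounds on the $\alpha_i$. The first step is to observe that this is equivalent to the $\mu=0$ identity asserted here: the total mass $\mu_\phi(\C)$ is a measurable function of $\phi$ which is invariant under $\phi\mapsto f\bullet_\gamma\phi$, since by the $\gamma$-LQG coordinate change formula $\mu_{f\bullet_\gamma\phi}$ is the pushforward of $\mu_\phi$ under the automorphism $f$ and hence has the same total mass. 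Therefore reweighting by $e^{-\mu\mu_\phi(\C)}$ commutes with the operation $f_*$, and one passes between the $\mu>0$ and $\mu=0$ identities by multiplying or dividing by this invariant density; the Seiberg-bound restriction plays no role once $\mu=0$ and finiteness of the correlation functions is no longer required. The identity $\LF_\C=f_*\LF_\C$ is the special case with no insertions.

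For the underlying mechanism: at the heuristic level where $\LF_\C^{(\alpha_i,z_i)_i}$ is $\prod_i e^{\alpha_i\phi(z_i)}\LF_\C$ regularized as $\lim_{\eps\to0}\prod_i \eps^{\alpha_i^2/2}e^{\alpha_i\phi_\eps(z_i)}\LF_\C(d\phi)$, one pushes forward under $f$ by setting $\tilde\phi=f\bullet_\gamma\phi$, so that $\phi_\eps(z_i)=\tilde\phi_{|f'(z_i)|\eps}(f(z_i))+Q\log|f'(z_i)|+o(1)$, the $\eps$-circle around $z_i$ being carried approximately to the $|f'(z_i)|\eps$-circle around $f(z_i)$. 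Substituting and rewriting the regularizing powers in terms of $\delta_i=|f'(z_i)|\eps$ produces exactly a factor $|f'(z_i)|^{\alpha_i Q-\alpha_i^2/2}=|f'(z_i)|^{2\Delta_{\alpha_i}}$ for each $i$, together with the same regularized insertion at $f(z_i)$ and the measure $f_*\LF_\C=\LF_\C$; letting $\eps\to0$ yields $f_*\LF_\C^{(\alpha_i,z_i)_i}=\prod_i|f'(z_i)|^{2\Delta_{\alpha_i}}\LF_\C^{(\alpha_i,f(z_i))_i}$, which rearranges to the claim. In the non-limiting formulation of Definition~\ref{def-RV-sph} this becomes the statement that, under $f\bullet_\gamma$, the singular part $\sum_i\alpha_i G_\C(\cdot,z_i)$ is carried to $\sum_i\alpha_i G_\C(\cdot,f(z_i))$ up to a bounded harmonic correction, while the normalizing constant $C_\C^{(\alpha_i,z_i)_i}$ transforms so as to leave precisely $\prod_i|f'(z_i)|^{-2\Delta_{\alpha_i}}$; this is verified exactly as in Lemma~\ref{lem:inserting}, but with bulk rather than boundary circle averages.

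The main obstacle is the bookkeeping in this last step: the kernel $G_\C$ is not a conformal scalar, since the normalization to mean zero on the unit circle makes it transform up to explicit logarithmic terms, $G_\C(f(z),f(w))=G_\C(z,w)+(\text{explicit boundary terms})$, so one must check that these terms combine with the changes in $|z_i|_+^{-\alpha_i(2Q-\alpha_i)}$, in the cross terms $e^{\sum_{i<j}\alpha_i\alpha_j G_\C(z_i,z_j)}$, and in the $\mathbf c$-integration measure $e^{(\sum_i\alpha_i-2Q)c}dc$, to yield the anomaly $\prod_i|f'(z_i)|^{-2\Delta_{\alpha_i}}$ and nothing more. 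This computation is carried out in \cite{dkrv-lqg-sphere}, so in the paper I would simply record the reduction to the $\mu=0$ case from the first paragraph and cite that theorem.
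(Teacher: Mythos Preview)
Your proposal is correct and matches the paper's approach: the paper does not give its own proof of this proposition but simply cites \cite[Theorem~3.5]{dkrv-lqg-sphere}, exactly as you anticipate in your final sentence. Your reduction from the $\mu>0$ statement of \cite{dkrv-lqg-sphere} to the $\mu=0$ statement here is the same step the paper spells out in the analogous half-plane case (Proposition~\ref{prop-hrv-invariance}), so your write-up is in line with what the paper does.
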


\begin{proposition}\label{prop:3pt-haar}
	Suppose the Haar measures $\mathbf m_{\wh\C}, \mathbf m_{\bbH}$ are such that the constant $C$ in Lemma~\ref{lem-haar} is equal to 1. Then for non-negative measurable functions $f$ and $g$ on $H^{-1}(\C)$ and $\C^3$, respectively,
	\begin{equation}
		\label{eq-thm-QS3}
		\begin{split}
			&\haar_{\wh\C} \ltimes \QS_3[f(\phi)g(p,q,r)]\\
			&\qquad= \frac{\pi \gamma}{2(Q-\gamma)^2} \int_{\C^3}  \LF_{ \C}^{(\gamma,p),(\gamma,q),(\gamma,r)}[f(\phi)] g(p,q,r) \, d^2p\, d^2q\, d^2r,
		\end{split}
	\end{equation}
	and for non-negative measurable functions $f$ and $g$ on $H^{-1}(\bbH)$ and $\R^3$, respectively,
	\begin{equation}
		\begin{split}
			&\label{eq-thm-QD3}
			\haar_{\bbH} \ltimes \QD_{0,3}[f(\phi)g(p,q,r)] \\
			&\qquad= \frac\gamma{2(Q-\gamma)^2} \int_{\R^3}  \LF_{\bbH}^{(\gamma,p),(\gamma,q),(\gamma,r)}[f(\phi)] g(p,q,r) \, dp\, dq\, dr.
		\end{split}	
	\end{equation}
\end{proposition}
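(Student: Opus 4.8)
The plan is to put together four facts already in hand: Lemma~\ref{lem:haar3}, which realizes $\haar\ltimes(\cdot)$ as a concrete pushforward; Lemma~\ref{lem-haar}, which gives the Haar density on triples of points explicitly; the LCFT description of the three‑pointed surfaces (Proposition~\ref{prop-AHS} for $\QS_3$, the refinement in Remark~\ref{rmk:equivalence-disk} for $\QD_{0,3}$); and conformal covariance of the Liouville field (Proposition~\ref{prop-RV-invariance} on $\wh\C$, Proposition~\ref{prop-hrv-invariance} on $\bbH$). I will carry out the sphere case \eqref{eq-thm-QS3}; the disk case \eqref{eq-thm-QD3} is entirely parallel and I will only flag the one extra point. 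By Lemma~\ref{lem:haar3} the measure $\haar_{\wh\C}\ltimes\QS_3$ does not depend on the chosen embedding of $\QS_3$, so I embed it with marked points at the reference triple $(0,1,-1)$; by Proposition~\ref{prop-AHS} the field then has law $\tfrac{\pi\gamma}{2(Q-\gamma)^2}\LF_\C^{(\gamma,0),(\gamma,1),(\gamma,-1)}$, and hence $\haar_{\wh\C}\ltimes\QS_3$ is the pushforward of $\haar_{\wh\C}\times\bigl(\tfrac{\pi\gamma}{2(Q-\gamma)^2}\LF_\C^{(\gamma,0),(\gamma,1),(\gamma,-1)}\bigr)$ under $(\mathfrak f,\phi)\mapsto(\mathfrak f\bullet_\gamma\phi,\mathfrak f(0),\mathfrak f(1),\mathfrak f(-1))$.

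Next I would re-coordinatize the $\mathfrak f$-factor by $(p,q,r):=(\mathfrak f(0),\mathfrak f(1),\mathfrak f(-1))$. Since a M\"obius map is determined by the images of three points, off a $\haar_{\wh\C}$-null set this is a bijection onto the distinct triples in $\C^3$, with inverse the unique $\mathfrak f_{p,q,r}\in\conf(\wh\C)$ sending $(0,1,-1)$ to $(p,q,r)$, and by Lemma~\ref{lem-haar} (with the normalization $C=1$ assumed in the statement) the image of $\haar_{\wh\C}$ is $|(p-q)(q-r)(r-p)|^{-2}\,d^2p\,d^2q\,d^2r$. Disintegrating and then applying Proposition~\ref{prop-RV-invariance} to $\mathfrak f_{p,q,r}$ gives
\[
\haar_{\wh\C}\ltimes\QS_3[f(\phi)g(p,q,r)]=\tfrac{\pi\gamma}{2(Q-\gamma)^2}\int_{\C^3}\bigl|\mathfrak f_{p,q,r}'(0)\,\mathfrak f_{p,q,r}'(1)\,\mathfrak f_{p,q,r}'(-1)\bigr|^{2\Delta_\gamma}\,\LF_\C^{(\gamma,p),(\gamma,q),(\gamma,r)}[f]\,g(p,q,r)\,\frac{d^2p\,d^2q\,d^2r}{|(p-q)(q-r)(r-p)|^{2}}.
\]
Now $\Delta_\gamma=\tfrac\gamma2(Q-\tfrac\gamma2)=1$, and a direct computation with $\mathfrak f(z)=\tfrac{az+b}{cz+d}$ (so $\mathfrak f'(z)=\tfrac{ad-bc}{(cz+d)^2}$) together with the three interpolation conditions shows $\mathfrak f_{p,q,r}'(0)\mathfrak f_{p,q,r}'(1)\mathfrak f_{p,q,r}'(-1)$ is a fixed multiple of $(p-q)(q-r)(r-p)$. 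Hence the derivative factor is a constant times $|(p-q)(q-r)(r-p)|^{2}$, which cancels the Jacobian weight $|(p-q)(q-r)(r-p)|^{-2}$, and what survives is a universal constant times $\int_{\C^3}\LF_\C^{(\gamma,p),(\gamma,q),(\gamma,r)}[f]\,g(p,q,r)\,d^2p\,d^2q\,d^2r$; tracking all the numerical factors yields~\eqref{eq-thm-QS3}.

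The disk identity \eqref{eq-thm-QD3} follows from the same three moves on $\bbH$: replace $\conf(\wh\C)$, $\LF_\C$, Proposition~\ref{prop-RV-invariance}, Proposition~\ref{prop-AHS} by $\conf(\bbH)$, $\LF_\bbH$, Proposition~\ref{prop-hrv-invariance} (whose covariance exponent is $-\Delta_{\beta_i}$ rather than $-2\Delta_{\alpha_i}$), and the refinement of Remark~\ref{rmk:equivalence-disk}, and use the Haar density $|(p-q)(q-r)(r-p)|^{-1}\,dp\,dq\,dr$ on oriented triples from Lemma~\ref{lem-haar}; again $|\mathfrak g_{p,q,r}'(0)\mathfrak g_{p,q,r}'(1)\mathfrak g_{p,q,r}'(-1)|^{\Delta_\gamma}$ cancels this density up to a constant. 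The only genuinely new point is orientation: the three marked points of $\QD_{0,3}$, being i.i.d.\ boundary-length samples, lie in either cyclic order on $\bdy\bbH$ with equal weight, so I would split $\QD_{0,3}$ into its two orientation classes, embed each at a reference triple of matching orientation, apply the refined Remark~\ref{rmk:equivalence-disk} to each, and observe that the two contributions reassemble into the integral over all of $\R^3$ in~\eqref{eq-thm-QD3}.

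I expect the only real difficulty to be constant bookkeeping: the value of the conformal-derivative product $|\mathfrak f'(0)\mathfrak f'(1)\mathfrak f'(-1)|$ on the reference triple, the normalization of $\haar$ pinned down by $C=1$ in Lemma~\ref{lem-haar}, and the constants in Proposition~\ref{prop-AHS} and the refined Remark~\ref{rmk:equivalence-disk} must all combine precisely to $\tfrac{\pi\gamma}{2(Q-\gamma)^2}$ and $\tfrac{\gamma}{2(Q-\gamma)^2}$, which is sensitive to the choice of reference triple and to the covariance conventions. Beyond this, one must dispatch (routinely) the null sets discarded in the disintegration and change of variables — coincident triples, triples containing $\infty$, and $\mathfrak f$ sending a reference point to $\infty$ — all of which carry no mass for the relevant $\sigma$-finite measures, and in the disk case keep the two orientation classes straight throughout.
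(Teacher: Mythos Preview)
Your approach is essentially the same as the paper's: embed $\QS_3$ at the reference triple $(0,1,-1)$, use the LCFT description of the three-pointed surface together with Lemma~\ref{lem-haar} to parametrize $\haar_{\wh\C}$ by the images $(p,q,r)$, apply conformal covariance with $\Delta_\gamma=1$, and observe that the derivative product $|f'(0)f'(1)f'(-1)|^2$ cancels the Haar density $|(p-q)(q-r)(r-p)|^{-2}$. The paper makes the derivative computation explicit in~\eqref{eq-fpqr-derivatives} and cites Proposition~\ref{prop-QS3-LF} (the cylinder version) rather than Proposition~\ref{prop-AHS}, but the structure is identical; your orientation-splitting in the disk case is the correct way to unpack the paper's ``the proof of~\eqref{eq-thm-QD3} is similar.''
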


\begin{proof}
	We prove~\eqref{eq-thm-QS3}; the proof of~\eqref{eq-thm-QD3} is similar. 
	In Lemma~\ref{lem:haar3}, we choose $(a,b,c)=(0,1,-1)$. By Proposition~\ref{prop-QS3-LF}, the law of $\phi$
	is $\frac{2\pi \gamma}{(Q-\gamma)^2} \cdot \LF_{ \C}^{(\gamma,0),(\gamma,1),(\gamma,-1)}$. 
	Given three distinct points $p,q,r$ in $ \C^3$. Suppose $f\in \conf(\wh\C)$ maps $(0,1,-1)$ to $(p,q,r)$. Then we can explicit get
	$f(z) = \frac{(pq -2qr+rp)z + p(q-r)}{(2p-q-r)z + q-r}$  and
	\eqb \label{eq-fpqr-derivatives}
	\begin{split}
		&f'(0) = \frac{2(p-q)(q-r)(r-p)}{(q-r)^2},\qquad f'(1) = \frac{2(p-q)(q-r)(r-p)}{4(r-p)^2},\\ 
		&f'(-1) = \frac{2(p-q)(q-r)(r-p)}{4(p-q)^2}.
	\end{split}
	\eqe 	
	Recall notations from Proposition~\ref{prop-RV-invariance}. Since $\Delta_{\gamma}=1$, we have 
	\eqbn
	\begin{split}
		f_* \LF_{ \C}^{(\gamma, 0), (\gamma, 1), (\gamma,-1)} 
		&= |f'(0)f'(1)f'(-1)|^2\LF_{ \C}^{(\gamma, p), (\gamma, q), (\gamma, r)}\\
		&=C|(p-q)(q-r)(r-p)|^2 \LF_{ \C}^{(\gamma, p), (\gamma, q), (\gamma, r)}.
	\end{split}
	\eqen
	Since the law  of	$(\mathfrak{f}(0), \mathfrak{f}(1), \mathfrak{f}(-1))$ is $|(p-q)(q-r)(r-p)|^{-2}\, d^2p\, d^2q\, d^2r$, and 
	$\haar_{\wh\C} \ltimes \QS_3$ describes the law of $(\mathfrak f\bullet_\gamma \phi, \mathfrak f(0),\mathfrak f(1),\mathfrak f(-1))$, we obtain~\eqref{eq-thm-QS3}.
\end{proof}

To pass from the uniform embedding of  $\QD_{0,3}$ and $\QS_3$ to that of $\QD$  and $\QS$, we need Lemma~\ref{lem-inserting} and its sphere counterpart, which we state below and prove in Appendix \ref{app:sphere}.
\begin{lemma}\label{lem-inserting-general}
	We have
	\[
	\LF_{\C}^{(\alpha_i,z_i)_i}\left[ f(\phi)\int_\C g(u)\,\mu_\phi(du)\right]=
	\int_\C  \LF_{\C}^{(\alpha_i,z_i)_i, (\gamma, u)}[f(\phi)]g(u)\,d^2 u.
	\]
	for non-negative measurable functions $f$ and $g$.
\end{lemma}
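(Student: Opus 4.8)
The plan is to prove Lemma~\ref{lem-inserting-general} in Appendix~\ref{app:sphere} by the Girsanov argument used for its strip analogue Lemma~\ref{lem-inserting}, transplanted verbatim to the sphere. As in the proof of Lemma~\ref{lem-inserting}, I would first reduce to the case where $f$ is a bounded continuous function on $H^{-1}(\C)$ and $g$ is a continuous function of compact support in $\C$; the general case then follows by monotone convergence, both sides being $\sigma$-finite in $(f,g)$ (and possibly infinite, consistently on both sides, when no Seiberg-type bound on the $\alpha_i$ holds). The two preliminary inputs are the sphere counterparts of Lemma~\ref{lem-girsanov} and Lemma~\ref{lem-interpret-partition-fn}.

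For the sphere Girsanov lemma, one sets $Q_\C(dh,du):=\mu_h(du)\,P_\C(dh)$ and lets $\rho$ be defined by $\rho(u)\,d^2u:=\E_\C[\mu_h(du)]$; then
\[\int f(h)g(u)\,Q_\C(dh,du)=\int_\C \E_\C\!\left[f\!\left(h+\gamma G_\C(\cdot,u)\right)\right] g(u)\,\rho(u)\,d^2u.\]
The proof copies that of Lemma~\ref{lem-girsanov}: approximate $\mu_h$ by $\mu_{h,\eps}(du)=\eps^{\gamma^2/2}e^{\gamma h_\eps(u)}\,d^2u$ (using $L^1(P_\C)$ convergence of the GMC approximations, \cite[Theorem 1.1]{berestycki-gmt-elementary}), apply Girsanov's theorem to the weight $e^{\gamma h_\eps(u)}$, and let $\eps\to0$ using $G_{\C,\eps}(\cdot,u)\to G_\C(\cdot,u)$ in $H^{-1}(\C)$ together with bounded convergence for the $f$-dependence. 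For the partition-constant identity, the covariance $G_\C$ gives $\Var(h_\eps(u))=-\log\eps+2\log|u|_++o(1)$, hence $\rho(u)=\lim_{\eps\to0}\eps^{\gamma^2/2}\E_\C[e^{\gamma h_\eps(u)}]=|u|_+^{\gamma^2}$, and therefore $C_\C^{(\gamma,u)}\,d^2u=|u|_+^{-\gamma(2Q-\gamma)}\,d^2u=e^{-2\gamma Q\log|u|_+}\,\E_\C[\mu_h(du)]$, the analogue of Lemma~\ref{lem-interpret-partition-fn}.

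With these, the proof follows that of Lemma~\ref{lem-inserting} line by line. Expand the left-hand side via Definition~\ref{def-RV-sph}, writing $\phi=\psi+\mathbf c$ with $\psi=h-2Q\log|\cdot|_++\sum_i\alpha_iG_\C(\cdot,z_i)$, and note $\mu_{\psi+c}(du)=e^{\gamma c}\mu_\psi(du)$. At fixed $\mathbf c=c$, the sphere Girsanov lemma applied to the $P_\C$-integral produces the shift $\psi\mapsto\psi+\gamma G_\C(\cdot,u)$ together with the factor $e^{\gamma d(u)}\rho(u)$, where $d(u)=-2Q\log|u|_++\sum_i\alpha_iG_\C(u,z_i)$; note $\psi+\gamma G_\C(\cdot,u)$ is exactly the deterministic part of the field in $\LF_\C^{(\alpha_i,z_i)_i,(\gamma,u)}$. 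The only bookkeeping is to check that $C_\C^{(\alpha_i,z_i)_i}\cdot e^{\gamma d(u)}\cdot\rho(u)=C_\C^{(\alpha_i,z_i)_i,(\gamma,u)}$ (the partition-constant identity combined with the cross-terms $e^{\gamma\sum_i\alpha_iG_\C(u,z_i)}$), and that the extra $e^{\gamma c}$ from $\mu_{\psi+c}=e^{\gamma c}\mu_\psi$ upgrades the $\mathbf c$-weight $e^{(\sum_i\alpha_i-2Q)c}$ to $e^{(\sum_i\alpha_i+\gamma-2Q)c}$. Integrating over $c$ and applying Fubini then yields $\int_\C g(u)\,\LF_\C^{(\alpha_i,z_i)_i,(\gamma,u)}[f(\phi)]\,d^2u$, as required.

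I expect the main obstacle to be purely organizational rather than substantive: one must keep track of the deterministic shift $\sum_i\alpha_iG_\C(\cdot,z_i)$, the circle-average variance factor $|u|_+^{\gamma^2}$, and the $e^{\gamma c}$ from the constant mode, and verify that they recombine into precisely the updated normalization $C_\C^{(\alpha_i,z_i)_i,(\gamma,u)}$ and the updated $c$-weight of Definition~\ref{def-RV-sph}. The only genuine analytic point is the exchange of the $\eps\to0$ limit with the $c$- and $u$-integrations; this is handled exactly as in Lemma~\ref{lem-girsanov}, via $L^1$ convergence of GMC approximations and bounded convergence, so no new estimate is required.
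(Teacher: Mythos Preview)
Your proposal is correct and follows essentially the same Girsanov route as the paper: both establish the sphere analogues of Lemma~\ref{lem-girsanov} and Lemma~\ref{lem-interpret-partition-fn} (your identity $C_\C^{(\gamma,u)}\,d^2u=e^{-2\gamma Q\log|u|_+}\E_\C[\mu_h(du)]$ is exactly what the paper writes), and then run the argument of Lemma~\ref{lem-inserting}. The only organizational difference is that the paper first proves the insertion-free case $\LF_\C[f(\phi)\int g\,d\mu_\phi]=\int_\C \LF_\C^{(\gamma,u)}[f]\,g\,d^2u$ and then appends the insertions $(\alpha_i,z_i)_i$ to both sides via the sphere analogue of Lemma~\ref{lem:inserting}, whereas you carry the deterministic shift $\sum_i\alpha_iG_\C(\cdot,z_i)$ through the Girsanov step directly and verify the bookkeeping $C_\C^{(\alpha_i,z_i)_i}\cdot e^{\gamma d(u)}\rho(u)=C_\C^{(\alpha_i,z_i)_i,(\gamma,u)}$ in one stroke.
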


\begin{proposition}\label{prop-1.2-constants}
	Suppose the Haar measures $\mathbf m_{\wh\C}, \mathbf m_{\bbH}$ are such that the constant $C$ in Lemma~\ref{lem-haar} is equal to 1, then 
	\[\mathbf m_{\wh\C} \ltimes \QS = \frac{\pi \gamma}{2(Q-\gamma)^2} \LF_\C \quad \text{ and }\quad \mathbf m_{\bbH} \ltimes \QD = \frac{\gamma}{2(Q-\gamma)^2} \LF_\bbH.\]
\end{proposition}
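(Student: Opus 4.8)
The plan is to deduce Proposition~\ref{prop-1.2-constants} from the three-point version established in Proposition~\ref{prop:3pt-haar} by ``forgetting'' the three marked points in an appropriately reweighted way, so that the only thing that needs to be tracked is the overall multiplicative constant. I will carry this out for $\mathbf m_{\wh\C}\ltimes\QS$; the disk case is identical with $\C$ replaced by $\bbH$, $d^2u$ by $du$, $\QS$ by $\QD$, and the sphere lemmas by their half-plane counterparts. Recall (Definition~\ref{def-QS} / the discussion in Section~\ref{subsec-haar}) that a sample from $\QS_3$ is obtained from $\QS$ by weighting by $\mu_\phi(\C)^3$ and then sampling the three marked points i.i.d.\ from $\mu_\phi^\#$; equivalently $\QS_3[f(\phi)g(z_1,z_2,z_3)] = \QS[\,f(\phi)\int_{\C^3} g\,\mu_\phi(dz_1)\mu_\phi(dz_2)\mu_\phi(dz_3)]$. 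Since the $\ltimes$-operation commutes with this reweighting (conformal maps transport $\mu_\phi$ to $\mu_{f\bullet_\gamma\phi}$, so the weighting is $\conf$-invariant), we get
\[
\mathbf m_{\wh\C}\ltimes\QS_3\,[f(\phi)g(z_1,z_2,z_3)] = \mathbf m_{\wh\C}\ltimes\QS\Big[f(\phi)\int_{\C^3} g(z_1,z_2,z_3)\,\mu_\phi(dz_1)\mu_\phi(dz_2)\mu_\phi(dz_3)\Big].
\]

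Next I would plug in $g\equiv 1$ (or rather a nice test function approaching $1$, to stay within $\sigma$-finiteness; one can use $g(z_1,z_2,z_3)=e^{-\mu\mu_\phi(\C)}$-type weights or simply work with bounded $f$ supported where $\mu_\phi(\C)$ is controlled) and combine the two expressions for $\mathbf m_{\wh\C}\ltimes\QS_3$. From Proposition~\ref{prop:3pt-haar},
\[
\mathbf m_{\wh\C}\ltimes\QS_3\,[f(\phi)] = \frac{\pi\gamma}{2(Q-\gamma)^2}\int_{\C^3}\LF_\C^{(\gamma,p),(\gamma,q),(\gamma,r)}[f(\phi)]\,d^2p\,d^2q\,d^2r.
\]
Now apply Lemma~\ref{lem-inserting-general} three times: each application rewrites $\int_\C \LF_\C^{(\gamma,z_1),\dots}[f]\,d^2 u$ as $\LF_\C^{\dots}[f(\phi)\int_\C \cdot\,\mu_\phi(du)]$ with one fewer insertion, i.e.\ it converts an LCFT-insertion at $u$ integrated against $d^2u$ into a quantum-area integration $\mu_\phi(du)$. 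Peeling off all three insertions turns $\int_{\C^3}\LF_\C^{(\gamma,p),(\gamma,q),(\gamma,r)}[f(\phi)]\,d^2p\,d^2q\,d^2r$ into $\LF_\C[\,f(\phi)\,\mu_\phi(\C)^3\,]$. Matching with the other expression for $\mathbf m_{\wh\C}\ltimes\QS_3[f(\phi)]$ gives
\[
\mathbf m_{\wh\C}\ltimes\QS\Big[f(\phi)\,\mu_\phi(\C)^3\Big] = \frac{\pi\gamma}{2(Q-\gamma)^2}\,\LF_\C\big[f(\phi)\,\mu_\phi(\C)^3\big],
\]
and since the functional $f\mapsto \int f(\phi)\mu_\phi(\C)^3\,(\cdot)(d\phi)$ determines a $\sigma$-finite measure on $H^{-1}(\C)$ uniquely (the weight $\mu_\phi(\C)^3$ is strictly positive and finite $\LF_\C$-a.e., so one can divide it out), we conclude $\mathbf m_{\wh\C}\ltimes\QS = \frac{\pi\gamma}{2(Q-\gamma)^2}\LF_\C$.

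The main obstacle is bookkeeping rather than a genuine difficulty: I need to make sure that the repeated use of Lemma~\ref{lem-inserting-general} is legitimate under the chosen test functions (everything is $\sigma$-finite, so one should fix $f$ bounded and, if necessary, insert a convergence factor like $e^{-\mu\mu_\phi(\C)}$ and remove it at the end by monotone convergence), and that ``forgetting marked points'' and ``uniform embedding'' genuinely commute --- this last point is where the $\conf$-invariance of $\mu_\phi$ under $\bullet_\gamma$, hence of the reweighting $\mu_\phi(\C)^3$, is used, and it is exactly the reason the constant comes out the same $\frac{\pi\gamma}{2(Q-\gamma)^2}$ as in the three-point statement. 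One also has to check that $\mu_\phi(\C)$ is $\LF_\C$-a.s.\ positive and finite so that the division of the measure by $\mu_\phi(\C)^3$ is valid; this is standard GMC input. For the disk case, the analogous cancellation produces the constant $\frac{\gamma}{2(Q-\gamma)^2}$, consistent with Proposition~\ref{prop:3pt-haar}, using Lemma~\ref{lem-inserting} (the $\bbH$/$\cS$ version) in place of Lemma~\ref{lem-inserting-general}.
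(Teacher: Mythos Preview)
Your proposal is correct and follows essentially the same route as the paper's own proof: both use Proposition~\ref{prop:3pt-haar} with $g\equiv1$, apply Lemma~\ref{lem-inserting-general} three times to rewrite $\int_{\C^3}\LF_\C^{(\gamma,p),(\gamma,q),(\gamma,r)}[f]\,d^2p\,d^2q\,d^2r$ as $\LF_\C[f(\phi)\mu_\phi(\C)^3]$, and then identify the marginal field law of $\mathbf m_{\wh\C}\ltimes\QS_3$ with $\mu_\phi(\C)^3\,(\mathbf m_{\wh\C}\ltimes\QS)$. Your extra bookkeeping remarks about $\sigma$-finiteness and convergence factors are unnecessary here since all integrands are nonnegative (so Tonelli applies directly, as the paper does by simply setting $g=1$), but they do no harm.
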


\begin{proof}
	Repeatedly applying Lemma~\ref{lem-inserting-general}, we get 
	\begin{equation}\label{eq:add-3pt}
		\begin{split}
			&\LF_{\C}\left[ f(\phi)\int_{\C^3} g(p,q,r)\mu_\phi(dp)\mu_\phi(dq)\mu_\phi(dr)\right]\\
			&\qquad\qquad=
			\int_{\C^3}  \LF_{ \C}^{(\gamma,p),(\gamma,q),(\gamma,r)}[f(\phi)] g(p,q,r) \, d^2p\, d^2q\, d^2r.
		\end{split}
	\end{equation}
	Setting $g=1$ in Proposition~\ref{prop:3pt-haar} and~\eqref{eq:add-3pt}, we have 
	$\haar_{\wh\C} \ltimes \QS_3[f(\phi)] = \frac{\pi \gamma}{2(Q-\gamma)^2} \LF_{\C}\left[ f(\phi) \mu_\phi(\C)^3\right]$.
	By the definition of $\haar_{\wh\C} \ltimes \QS_3$ in Lemma~\ref{lem:haar3}
	the marginal law of the field under $\haar_{\wh\C} \ltimes \QS_3$ is  $\mu_\phi(\C)^3\haar_{\wh\C} \ltimes \QS$.
	Therefore $\haar_{\wh\C} \ltimes \QS=\frac{\pi \gamma}{2(Q-\gamma)^2} \LF_{\C}$. The proof of $\mathbf m_{\bbH} \ltimes \QD = \frac{\gamma}{2(Q-\gamma)^2} \LF_\bbH$ is identical.
\end{proof}

\begin{proof}[Proof of Theorem~\ref{thm-haar}]
	This follows from Proposition~\ref{prop-1.2-constants} by the uniqueness of Haar measure modulo multiplication by a constant. 
\end{proof}

Our proof of Theorem~\ref{thm-haar} demonstrates how to go from $\QS_3=C\LF_{\C}^{(\gamma,0),(\gamma,1),(\gamma,-1)}$ 
to $\haar_{\wh\C}\ltimes \QS= C\LF_{ \C}$ through Proposition~\ref{prop:3pt-haar} and de-weighting. 
Similar arguments can also give results such as $\haar_{\wh\C, 0} \ltimes \QS_1 = C\LF_{\C}^{(\gamma,0)}$, where $\haar_{\wh\C,0}$
is a Haar measure on the subgroup of $\conf(\wh \C)$ fixing $0$. We do not need these statements so we omit the details. 

\section{Quantum surface and Liouville correlation function}\label{sec-rz-lengths}

In this section we consider disks with two or three marked boundary points and we derive the law of boundary lengths of these surfaces.  More specifically, we consider surfaces sampled from $\cM_{0,2}^\disk(W)$ and $\cMthree(W;\alpha)$, both thick and thin variants. The proofs are based on the integrability of boundary LCFT from \cite{rz-boundary}. Interestingly, we will see in Propositions \ref{prop:thin-length-law} and \ref{prop-disk-03-length} below that the same formulas apply for thick and thin variants of the same disk, which provides a probabilistic interpretation of identities satisfied by the reflection coefficient of boundary LCFT.

\subsection{Reflection coefficient and thick quantum disk}\label{sub-R-thick}
We recall the double gamma function $\Gamma_b(z)$ which is prevalent in LCFT. See e.g.\ \cite{spreafico2009} for more detail.
For $b>0$, $\Gamma_b(z)$ is the meromorphic function in $\C$ such  that for $\Re z > 0$,
\begin{equation}\label{eq:double-gamma}
\ln \Gamma_b(z) = \int_0^\infty \frac1t \left( \frac{e^{-zt}-e^{-(b+\frac1b)t/2}}{(1-e^{-b t})(1-e^{-\frac1b t})} - \frac{(\frac12(b+\frac1b) - z)^2}2 e^{-t} +\frac{z-\frac12(b+\frac1b)}t  \right)\, dt
\end{equation}
and it satisfies the shift equations
\eqb\label{eq-shift}
\frac{\Gamma_b(z)}{\Gamma_b(z+b)} := \frac1{\sqrt{2\pi}} \Gamma(b z) b^{-bz + \frac12}, \qquad \frac{\Gamma_b(z)}{\Gamma_b(z+\frac1b)} = \frac1{\sqrt{2\pi}} \Gamma(\frac1b z) (\frac1b)^{-\frac1b z + \frac12}.
\eqe
These shift equations allow us to meromorphically extend $\Gamma_b(z)$ from $\{ \Re z > 0\}$ to $\C$, where it has simple poles at $-n b - m \frac1b$ for nonnegative integers $m,n$. We also define the double sine function 
$$
S_b(z) := \frac{\Gamma_b(z)}{\Gamma_b(b+\frac1b-z)}.
$$
We will only work with $\Gamma_{\frac{\gamma}2}$, except in the proof of Lemma \ref{lem-thm-big}, where $\Gamma_{\frac2{\gamma}}$ also appears. Inspecting~\eqref{eq:double-gamma}, we see that  $\Gamma_{\frac2{\gamma}}=\Gamma_{\frac{\gamma}2}$.

We can now recall the boundary Liouville reflection coefficient from~\cite{rz-boundary}. 
For $\mu_1, \mu_2 > 0$, let $\sigma_j \in \C$ be such that $\mu_j = e^{i\pi \gamma (\sigma_j - \frac Q2)}$ and $\Re \sigma_j = \frac Q2$ for $j=1,2$. Let
\begin{align}
	\label{eq-R}
	\begin{split}
		\ol R(\beta, \mu_1,\mu_2) =&\, \frac{(2\pi)^{\frac2\gamma(Q-\beta)-\frac12} (\frac2\gamma)^{\frac\gamma2(Q-\beta)-\frac12}}{(Q-\beta)\Gamma(1-\frac{\gamma^2}4)^{\frac2\gamma(Q-\beta)}}\\&\qquad\cdot\frac{\Gg(\beta-\frac\gamma2)e^{i\pi(\sigma_1 + \sigma_2 - Q)(Q-\beta)}}{\Gg(Q-\beta)\Sg(\frac\beta2 +\sigma_2 -\sigma_1)\Sg(\frac\beta2 + \sigma_1 - \sigma_2)}.
	\end{split}
\end{align}
For $\mu>0$, let 
\begin{equation}\label{eq:R0}
\ol R(\beta, \mu, 0) = \ol R(\beta, 0,\mu)= \mu^{\frac2\gamma(Q-\beta)}\frac{(2\pi)^{\frac2\gamma(Q-\beta)-\frac12} (\frac2\gamma)^{\frac\gamma2(Q-\beta)-\frac12}}{(Q-\beta)\Gamma(1-\frac{\gamma^2}4)^{\frac2\gamma(Q-\beta)}} \frac{\Gg(\beta-\frac\gamma2)}{\Gg(Q-\beta)}.
\end{equation}

For $\mu_1, \mu_2 \geq 0$ not both zero, the meromorphic function $\beta \mapsto \ol R(\beta, \mu_1, \mu_2)$ is positive and finite in $(\frac\gamma2, Q + \frac\gamma2)$, has a pole at $\frac\gamma2$ and a zero at $Q + \frac\gamma2$ (along with other poles and zeros). In particular, although the term $\frac1{Q-\beta}$ suggests that there should be a pole at $\beta = Q$, this cancels with a zero coming from $\frac{1}{\Gg(Q-\beta)}$ to give $\ol R(Q, \mu_1, \mu_2) = 1$. 
The function $\ol R$ is called the normalized reflection coefficient. The unnormalized version is defined by
\begin{equation}\label{eq-R-real}
R(\beta, \mu_1, \mu_2) = -\Gamma(1-\frac2\gamma(Q-\beta))\ol R(\beta, \mu_1, \mu_2). 
\end{equation}

The following solvability result was proved in \cite{rz-boundary}; see Theorem 1.7 and Section 1.3 there.
\begin{proposition}[\cite{rz-boundary}]\label{prop-RZ-R}
	Let $\beta \in (\frac\gamma2, Q)$ and $W = \gamma( \gamma + \frac2\gamma - \beta)$, and let $\mu_1, \mu_2 \geq 0$ not both be zero. 
	Recall the field $\wh h$ from the definition of $\cM^\disk_2(W)$ in Definition~\ref{def-thick-disk}. We have 
	\[
	\E\left[\left( \mu_1 \nu_{\wh h}(\R) + \mu_2 \nu_{\wh h}(\R + \pi i)  \right)^{\frac2\gamma (Q-\beta)} \right] = \ol R(\beta, \mu_1,\mu_2).\]
	
\end{proposition}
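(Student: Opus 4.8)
The plan is to deduce this identity from the main solvability theorem of Remy and Zhu, so the work consists essentially of aligning their conventions with the ones used in Definition~\ref{def-thick-disk}. First I would recall the precise form of \cite[Theorem~1.7]{rz-boundary}: it computes a moment of the two boundary GMC masses of the natural Liouville-type field on a fixed domain (the unit disk, or equivalently $\bbH$) carrying two boundary insertions of weight $\beta$, with boundary cosmological constants $\mu_1,\mu_2\ge0$ on the two boundary arcs entering through $\mu_j=e^{i\pi\gamma(\sigma_j-\frac Q2)}$, $\Re\sigma_j=\frac Q2$, and identifies this moment with the reflection coefficient $\ol R(\beta,\mu_1,\mu_2)$ of~\eqref{eq-R} (and its one-sided specialization~\eqref{eq:R0} when a $\mu_j$ vanishes). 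As Remy and Zhu emphasize in their Section~1.3, this field is a conformal image of the quantum disk with two weight-$\beta$ insertions, so up to matching notation their theorem is already our statement; what remains is to make the dictionary explicit.

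The identification of the fields is supplied by the material of Section~\ref{subsec-translations}. By Theorem~\ref{thm-two-disk-equivalence}, if $\phi=\wh h+\mathbf c$ is the field of Definition~\ref{def-thick-disk} and $T$ is an independent Lebesgue sample on $\R$, then $\phi(\cdot+T)$ has law $\tfrac\gamma{2(Q-\beta)^2}\LF_\cS^{(\beta,\pm\infty)}$; applying $\exp\colon\cS\to\bbH$ together with Lemma~\ref{lem-equiv-C-cS} and Proposition~\ref{prop-hrv-invariance} turns this into a constant multiple of $\LF_\bbH^{(\beta,\infty),(\beta,0)}$, which is exactly the Remy--Zhu Liouville field with two $\beta$-insertions, up to the explicit normalizing constants of Definitions~\ref{def-LF-rz} and~\ref{def:LF-H-inf}. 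The two quantities $\nu_{\wh h}(\R)$ and $\nu_{\wh h}(\R+\pi i)$ are invariant under horizontal translation of $\cS$ and transform by the common factor $e^{\frac\gamma2\mathbf c}$ when one restores the zero mode; hence, if the Remy--Zhu statement is phrased for the field with zero mode, integrating over $\mathbf c\sim\tfrac\gamma2 e^{(\beta-Q)c}\,dc$ and changing variables $u=e^{\frac\gamma2\mathbf c}(\mu_1\nu_{\wh h}(\R)+\mu_2\nu_{\wh h}(\R+\pi i))$ expresses their observable as an explicit Mellin factor (finite precisely because $\beta<Q$) times $\E[(\mu_1\nu_{\wh h}(\R)+\mu_2\nu_{\wh h}(\R+\pi i))^{\frac2\gamma(Q-\beta)}]$; the exponent $\frac2\gamma(Q-\beta)$ is forced by the $e^{(\beta-Q)c}\,dc$ weight, and the Mellin factor cancels against the corresponding normalization in \cite{rz-boundary}. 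Reading off the remaining constant yields the identity.

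I expect the only genuine difficulty to be bookkeeping. One must track all deterministic prefactors — the powers of $|z|_+$, the constants $C_\cS$, $C_\bbH$, and the conformal Jacobians $|f'|^{-\Delta_\beta}$ produced by $\exp$ and by the M\"obius map aligning our marked points with those of Remy and Zhu — so that the two normalizations genuinely agree and no spurious constant survives; one must check that Remy and Zhu's $\sigma_j$-parametrization of $(\mu_1,\mu_2)$ coincides with the $(\mu_1,\mu_2)$ of~\eqref{eq-R}, and handle the degenerate cases where a $\mu_j$ equals $0$; and one must confirm that $\beta\in(\frac\gamma2,Q)$ keeps us simultaneously in the regime $W>\gamma^2/2$ (so that Definition~\ref{def-thick-disk} and Theorem~\ref{thm-two-disk-equivalence} apply) and in the range $\frac2\gamma(Q-\beta)\in(0,\frac4{\gamma^2})$ where the boundary-GMC moment is finite and equal to the Remy--Zhu expression. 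Once these points are verified the conclusion is immediate.
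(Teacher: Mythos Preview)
The paper does not give its own proof of this proposition: it is attributed directly to \cite{rz-boundary}, with a pointer to their Theorem~1.7 and Section~1.3. So there is nothing to compare against beyond the implicit claim that the field $\wh h$ of Definition~\ref{def-thick-disk} coincides, after a conformal change of coordinates, with the field appearing in Remy--Zhu's moment formula. That identification is carried out in \cite[Section~1.3]{rz-boundary} itself, and the proposition is then a literal restatement of their theorem.

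Your proposed route through Theorem~\ref{thm-two-disk-equivalence} is an unnecessary detour and, as written, has a concrete gap. The observable $\mu_1\nu_\phi(\R)+\mu_2\nu_\phi(\R+\pi i)$ is invariant under the horizontal translation by $T$, so once you pass to the law $\tfrac\gamma{2(Q-\beta)^2}\LF_\cS^{(\beta,\pm\infty)}$ of $\phi(\cdot+T)$ with $T$ Lebesgue-distributed, the pushforward of this observable is the infinite measure ``$\int_\R dT$'' times a $\sigma$-finite measure; no information about the $\wh h$-moment can be read off. Separately, the Mellin step you describe does not converge: with the homogeneity-forced exponent $p=\tfrac2\gamma(Q-\beta)$, the zero-mode integrand $e^{(\beta-Q)c}\cdot(e^{\gamma c/2})^{p}$ is \emph{constant} in $c$, so the integral over $c\in\R$ is infinite for every $\beta$, not ``finite precisely because $\beta<Q$''. (This is also why the paper, in Proposition~\ref{prop:thick-length-law}, must subtract a polynomial before integrating.) The clean approach is to bypass Theorem~\ref{thm-two-disk-equivalence} entirely and check directly that Remy--Zhu's probabilistic definition of $\ol R$ is a moment of precisely the conditioned-drift field $\wh h$; indeed the moment for the \emph{unconditioned} field $h-(Q-\beta)|\Re\cdot|$ is infinite at this exponent, which is why the conditioned field is the right object on both sides.
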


\begin{remark}\label{rmk:gamma^22}
Proposition~\ref{prop-RZ-R} is only stated in \cite{rz-boundary} for $\beta\in(\frac\gamma2,Q)$. However, it extends to the case $\beta = Q$, where $W=\frac{\gamma^2}{2}$. In this case, it simply says that a zeroth moment is equal to $1 = \ol R(Q, \mu_1, \mu_2)$. 
When $\beta \leq \frac \gamma2$ the expectation is infinite, since $\frac2\gamma(Q-\beta) \geq \frac4{\gamma^2}$ and the  moment of the Gaussian multiplicative chaos of order at least $\frac4{\gamma^2}$ is infinite \cite[Section 2]{rhodes-vargas-review}.
\end{remark}

\begin{lemma}\label{lem-disk-perim-law}
	For $W \in [\frac{\gamma^2}2, \gamma Q)$ and $\beta = \gamma + \frac2\gamma - \frac W\gamma$, writing $L_1, L_2$ for  the left and right boundary lengths
	  of a quantum disk from $\cM_{0,2}^\disk(W)$, the law of $\mu_1 L_1 + \mu_2 L_2$ is 
	\[\1_{\ell>0} \ol R(\beta; \mu_1,\mu_2) \ell^{-\frac2{\gamma^2} W}\,d\ell. \]
\end{lemma}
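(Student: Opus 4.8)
The plan is to reduce the statement directly to Proposition~\ref{prop-RZ-R} using the explicit description of $\cM^\disk_2(W)$ in Definition~\ref{def-thick-disk}, the only extra ingredient being a one-variable change of variables in the additive constant $\mathbf c$.

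First I would isolate the role of $\mathbf c$. Write $\phi = \wh h + \mathbf c$ as in Definition~\ref{def-thick-disk}, where $\mathbf c$ has law $\tfrac\gamma2 e^{(\beta-Q)c}\,dc$ independently of $\wh h$. Since adding a constant to the field multiplies the quantum boundary length measure by $e^{\frac\gamma2\mathbf c}$, we have $\nu_\phi = e^{\frac\gamma2\mathbf c}\nu_{\wh h}$ on $\partial\cS = \R \cup (\R+\pi i)$. Identifying $L_1,L_2$ with $\nu_\phi(\R)$ and $\nu_\phi(\R+\pi i)$ (the symmetry $\ol R(\beta,\mu_1,\mu_2)=\ol R(\beta,\mu_2,\mu_1)$, visible from~\eqref{eq-R} and~\eqref{eq:R0}, makes the assignment irrelevant), this gives $\mu_1 L_1 + \mu_2 L_2 = e^{\frac\gamma2\mathbf c}\,A$, where $A := \mu_1\nu_{\wh h}(\R)+\mu_2\nu_{\wh h}(\R+\pi i)$ is independent of $\mathbf c$. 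I should briefly note here that $\nu_{\wh h}$ is a.s.\ finite on $\partial\cS$, so $A<\infty$ a.s.

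Next, for a nonnegative measurable $g$, Definition~\ref{def-thick-disk} and Tonelli give
\[
\cM^\disk_2(W)\big[g(\mu_1 L_1 + \mu_2 L_2)\big] = \int_\R \E\big[g(e^{\frac\gamma2 c}A)\big]\,\tfrac\gamma2 e^{(\beta-Q)c}\,dc .
\]
Substituting $\ell = e^{\frac\gamma2 c}A$ for fixed $A$ (so $c = \tfrac2\gamma\log(\ell/A)$, and $c\colon-\infty\to\infty$ corresponds to $\ell\colon 0\to\infty$) turns the right-hand side into
\[
\int_0^\infty g(\ell)\,\ell^{\frac2\gamma(\beta-Q)-1}\,\E\big[A^{\frac2\gamma(Q-\beta)}\big]\,d\ell .
\]
Using $\beta = \gamma+\tfrac2\gamma-\tfrac W\gamma$ and $Q=\tfrac\gamma2+\tfrac2\gamma$ one gets $\beta-Q = \tfrac\gamma2-\tfrac W\gamma$, hence $\tfrac2\gamma(\beta-Q)-1 = -\tfrac2{\gamma^2}W$, which is the exponent in the claim.

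Finally I would apply Proposition~\ref{prop-RZ-R}, which gives $\E[A^{\frac2\gamma(Q-\beta)}] = \ol R(\beta,\mu_1,\mu_2)$ for $\beta\in(\tfrac\gamma2,Q)$ and both $\mu_j>0$ as well as the degenerate cases $\mu_1=0$ or $\mu_2=0$; the endpoint $W=\tfrac{\gamma^2}2$, i.e.\ $\beta=Q$, is covered by Remark~\ref{rmk:gamma^22}, where the moment is a zeroth moment equal to $1=\ol R(Q,\mu_1,\mu_2)$. The constraint $W<\gamma Q$ is precisely $\beta>\tfrac\gamma2$, which is what keeps this GMC moment finite. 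Together with the previous display this identifies the law of $\mu_1 L_1+\mu_2 L_2$ as $\1_{\ell>0}\,\ol R(\beta,\mu_1,\mu_2)\,\ell^{-\frac2{\gamma^2}W}\,d\ell$, as desired. I do not expect a genuine obstacle: all the analytic content is in the imported Proposition~\ref{prop-RZ-R}, and the remaining work is the bookkeeping of the $\mathbf c$-substitution, the reduction from $\phi$ to $\wh h$, and the separate treatment of the boundary case $\beta=Q$.
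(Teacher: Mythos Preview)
Your proof is correct and follows essentially the same approach as the paper: both arguments use the decomposition $\phi=\wh h+\mathbf c$, perform the change of variables $\ell=e^{\frac\gamma2 c}A$ in the integral over $\mathbf c$, and then invoke Proposition~\ref{prop-RZ-R} (together with Remark~\ref{rmk:gamma^22} for the endpoint $\beta=Q$). The only cosmetic difference is that the paper treats the special case $\mu_1=1,\mu_2=0$ and remarks that the general case is identical, whereas you handle general $\mu_1,\mu_2$ directly.
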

\begin{proof}
	To simplify notation we explain the proof for $\mu_1 = 1$ and $\mu_2 = 0$ --- the general case follows identically. 
	For $0< \ell < \ell'$ we have 
	\eqb\label{eq-thick-length}
	\begin{split}
		\cM_{0,2}^\disk [ \nu_{{\wh h}+c}(\R) \in (\ell, \ell')] 
		&= \E\left[ \int_{-\infty}^\infty \1_{e^{\frac\gamma2c} \nu_{\wh h}(\R) \in (\ell, \ell')} \frac\gamma2 e^{(\beta - Q)c}dc \right]\\
		&= \E \left[\int_\ell^{\ell'} \nu_{\wh h}(\R)^{\frac2\gamma(Q-\beta)} y^{\frac2\gamma (\beta - Q)} \cdot  y^{-1} \, dy \right],
	\end{split}
	\eqe
	where we have used the change of variables $y = e^{\frac\gamma2c}\nu_{\wh h}(\R)$ (so $dc = \frac2\gamma y^{-1} dy$). Interchanging integral and expectation and applying Proposition~\ref{prop-RZ-R} and Remark~\ref{rmk:gamma^22}, we obtain the result. 
\end{proof}

For $W >\frac{\gamma^2}2$, the integral $\cM_2^\disk(W)[e^{-\mu_1 L_1 - \mu_2 L_2}]$ is infinite. The below proposition shows that we can obtain a finite integral by subtracting an appropriate polynomial which makes the integrand sufficiently small for small boundary lengths. Furthermore, the integral can be expressed in terms of the reflection coefficient $R$. We will see in Proposition \ref{prop:thin-length-law} below that the formula also extends to the case of thin disks, in which case it is not necessary to subtract a polynomial.
\begin{proposition}\label{prop:thick-length-law}
	For $W \in (\frac{\gamma^2}2, \gamma^2)$ and $\beta = \gamma + \frac2\gamma - \frac W\gamma$, and writing $L_1, L_2$ for
	 the left and right boundary lengths of a quantum disk from $\cMtwo(W)$, we have 
	\eqb
	\cM_2^\disk(W)[e^{-\mu_1 L_1 - \mu_2 L_2}-1] =  \frac\gamma{2 (Q-\beta)}  R(\beta; \mu_1, \mu_2).\eqe
	More generally, suppose $W \in (\frac{\gamma^2}2, \gamma Q)$ and there is a positive integer $n$ such that $W \in (n \frac{\gamma^2}2, (n+1)\frac{\gamma^2}2)$. Let $P_n(x) = \sum_{k=0}^{n-1} \frac{x^k}{k!}$ be the $n$-term Taylor polynomial of $e^x$. Then 
	\[\cMtwo(W)[e^{-\mu_1L_1 - \mu_2 L_2} - P_n(-\mu_1 L_1 - \mu_2 L_2)] = \frac\gamma{2(Q-\beta)} R(\beta, \mu_1, \mu_2). \]
\end{proposition}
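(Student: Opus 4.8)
The plan is to derive both formulas from the boundary-length law in Lemma~\ref{lem-disk-perim-law}, which identifies the law of $\mu_1 L_1 + \mu_2 L_2$ under $\cM_{0,2}^\disk(W)$ with $\1_{\ell>0}\ol R(\beta;\mu_1,\mu_2)\ell^{-\frac{2}{\gamma^2}W}\,d\ell$ whenever $W\in[\frac{\gamma^2}{2},\gamma Q)$. Writing $s = \frac{2}{\gamma^2}W = \frac{2}{\gamma}(Q-\beta)$, so that the exponent of $\ell$ is $-s$ with $s\in(n,n+1)$ in the general case, the problem reduces to the elementary computation of $\int_0^\infty (e^{-\ell} - P_n(-\ell))\,\ell^{-s}\,d\ell$, since by scaling it suffices to treat $\mu_1 L_1 + \mu_2 L_2$ as a single variable $\ell$ with the stated power law. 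Here $P_n(x) = \sum_{k=0}^{n-1} x^k/k!$ is exactly the truncation needed so that $e^{-\ell}-P_n(-\ell) = O(\ell^n)$ near $0$, making the integrand integrable at $0$ against $\ell^{-s}$ with $s<n+1$; integrability at $\infty$ holds because $P_n$ is a polynomial of degree $n-1 < s$ and $e^{-\ell}$ decays, so $(e^{-\ell}-P_n(-\ell))\ell^{-s} \sim -P_n(-\ell)\ell^{-s}$ which is integrable since $n-1-s < -1$.

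The key identity to establish is $\int_0^\infty (e^{-\ell} - P_n(-\ell))\,\ell^{-s}\,d\ell = \frac{\Gamma(1-s)}{s-1}\cdot(\text{something})$; more precisely, by repeated integration by parts or by analytic continuation of the Gamma integral, one gets $\int_0^\infty (e^{-\ell}-P_n(-\ell))\ell^{-s}\,d\ell = \Gamma(1-s)$ for $s\in(n,n+1)$ (this is the standard Hadamard-finite-part / analytic-continuation formula for $\Gamma$). Combining with the $\frac\gamma2 e^{(\beta-Q)c}\,dc$ weight already incorporated into $\cM_{0,2}^\disk(W)$ — or rather, tracking the constant through Lemma~\ref{lem-disk-perim-law} directly — gives $\cMtwo(W)[e^{-\mu_1 L_1-\mu_2 L_2} - P_n(-\mu_1 L_1-\mu_2 L_2)] = \ol R(\beta;\mu_1,\mu_2)\cdot \Gamma(1-s)$ where $s = \frac{2}{\gamma}(Q-\beta)$. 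Now recall from~\eqref{eq-R-real} that $R(\beta,\mu_1,\mu_2) = -\Gamma(1-\frac2\gamma(Q-\beta))\ol R(\beta,\mu_1,\mu_2) = -\Gamma(1-s)\ol R(\beta,\mu_1,\mu_2)$, so $\ol R\cdot\Gamma(1-s) = -R$. This forces me to be careful about signs: the integrand $e^{-\ell}-P_n(-\ell)$ near $0$ behaves like $(-\ell)^n/n! = (-1)^n\ell^n/n!$, and $\Gamma(1-s)$ for $s\in(n,n+1)$ has sign $(-1)^{n+1}$ (since $\Gamma$ changes sign at each negative integer), which matches. In the first, simplest case $W\in(\frac{\gamma^2}2,\gamma^2)$ we have $n=1$, $P_1(x)=1$, $s\in(1,2)$, $\Gamma(1-s)<0$, and the claimed answer is $\frac{\gamma}{2(Q-\beta)}R(\beta;\mu_1,\mu_2)$; I need to reconcile the prefactor $\frac{\gamma}{2(Q-\beta)}$ with $\Gamma(1-s) = \Gamma(1-\frac2\gamma(Q-\beta))$. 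Using the reflection/recursion $\Gamma(1-s) = -\frac{1}{s-1}\Gamma(2-s)$ is not quite it; rather the extra factor $\frac{\gamma}{2(Q-\beta)} = \frac1s$ appears because the measure in Lemma~\ref{lem-disk-perim-law} already carries the normalization and the correct bookkeeping yields $\frac1s\cdot(-\Gamma(1-s))\ol R = \frac1s R$. I would verify this constant by redoing the $c$-integral as in~\eqref{eq-thick-length}: $\cMtwo(W)[F(\nu)] = \E[\int \frac\gamma2 e^{(\beta-Q)c} F(e^{\gamma c/2}\nu_{\wh h}(\R)) dc]$, substitute, and land on $\frac\gamma2\cdot\frac2\gamma\cdot\frac1{Q-\beta}$ times the Gamma integral, i.e.\ $\frac1{Q-\beta}$ — wait, that gives $\frac{1}{Q-\beta}$ not $\frac{\gamma}{2(Q-\beta)}$; the discrepancy of $\frac\gamma2$ must be absorbed in how $\ell$ versus $\nu$ are related, and I will pin it down by matching against Lemma~\ref{lem-disk-perim-law} whose constant is already fixed.

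Concretely, the steps in order: (1) reduce to a one-variable statement by treating $\ell = \mu_1 L_1 + \mu_2 L_2$, invoking Lemma~\ref{lem-disk-perim-law} for its law $\1_{\ell>0}\ol R(\beta;\mu_1,\mu_2)\ell^{-s}d\ell$ with $s = \frac2{\gamma^2}W$; (2) check that $e^{-\ell}-P_n(-\ell)$ is integrable against $\ell^{-s}d\ell$ on $(0,\infty)$ for $s\in(n,n+1)$, separately near $0$ (Taylor remainder is $O(\ell^n)$) and near $\infty$ (polynomial times $\ell^{-s}$ with degree gap $>1$, plus exponentially small term); (3) compute $I_n(s):=\int_0^\infty(e^{-\ell}-P_n(-\ell))\ell^{-s}d\ell$ by repeated integration by parts, reducing $I_n(s)$ to $I_{n-1}(s-1)$ up to explicit rational factors, with base case $I_0(s) = \Gamma(1-s)$ for $s\in(0,1)$, thereby obtaining $I_n(s) = \Gamma(1-s)$ for all $s\in(n,n+1)$ — or alternatively cite this as the standard analytic continuation of Euler's integral; (4) assemble: $\cMtwo(W)[e^{-\mu_1L_1-\mu_2L_2}-P_n(\cdot)] = \ol R(\beta;\mu_1,\mu_2)\,I_n(s)$, then rewrite $I_n(s)$ in terms of $R$ via~\eqref{eq-R-real} and the relation $s = \frac2\gamma(Q-\beta)$, carefully extracting the prefactor $\frac\gamma{2(Q-\beta)}$ by matching constants (the cleanest route is to not re-derive constants from scratch but to differentiate/compare with the already-normalized Lemma~\ref{lem-disk-perim-law}). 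The main obstacle is purely bookkeeping: getting the multiplicative constant $\frac{\gamma}{2(Q-\beta)}$ and the sign exactly right, since three separate conventions interact — the $\Gamma(1-\cdot)$ in the definition~\eqref{eq-R-real} of $R$ versus $\ol R$, the sign of $\Gamma(1-s)$ on $(n,n+1)$, and the finite-part value of the divergent Euler integral. I expect the substantive content to be a two-line integration-by-parts recursion and the rest to be careful constant-chasing against Lemma~\ref{lem-disk-perim-law}.
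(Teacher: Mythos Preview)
Your approach is exactly the paper's: reduce via Lemma~\ref{lem-disk-perim-law} to the power law $\1_{\ell>0}\ol R(\beta;\mu_1,\mu_2)\ell^{-s}\,d\ell$, then compute $\int_0^\infty(e^{-\ell}-P_n(-\ell))\ell^{-s}\,d\ell=\Gamma(1-s)$ by repeated integration by parts, and finally convert $\ol R$ to $R$ using~\eqref{eq-R-real}.

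There is one arithmetic slip worth flagging, since it is precisely the ``bookkeeping'' you worry about. With $\beta=\gamma+\frac2\gamma-\frac{W}{\gamma}$ and $Q=\frac\gamma2+\frac2\gamma$ one has $Q-\beta=\frac{W}{\gamma}-\frac\gamma2$, hence
\[
\frac2\gamma(Q-\beta)=\frac{2W}{\gamma^2}-1=s-1,\qquad\text{not }s,
\]
and correspondingly $\frac{\gamma}{2(Q-\beta)}=\frac{1}{s-1}$, not $\frac1s$. With this correction the constant drops out cleanly: since $R=-\Gamma\bigl(1-\tfrac2\gamma(Q-\beta)\bigr)\ol R=-\Gamma(2-s)\,\ol R$ and $\Gamma(2-s)=(1-s)\Gamma(1-s)$, you get
\[
\ol R(\beta;\mu_1,\mu_2)\,\Gamma(1-s)=\frac{-R(\beta;\mu_1,\mu_2)}{1-s}=\frac{1}{s-1}\,R(\beta;\mu_1,\mu_2)=\frac{\gamma}{2(Q-\beta)}\,R(\beta;\mu_1,\mu_2),
\]
which is the claimed identity. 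No other ingredient is needed.
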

\begin{proof} 
	Write $\alpha = \frac{2W}{\gamma^2}\in (1,2)$. Using integration by parts, we have 
	\[\int_0^\infty (1-e^{-\ell} ) \cdot \ell^{-\alpha} \,d\ell = \frac1{\alpha-1} \int_0^\infty e^{-\ell} \ell^{-(\alpha - 1)} \, d\ell =  \frac{\Gamma(2-\alpha)}{\alpha-1}.\]
	Thus, by Lemma~\ref{lem-disk-perim-law}, we have 
	\eqbn
	\begin{split}
		\cM_2^\disk(W)[1-e^{-\mu_1 L_1 - \mu_2 L_2}] 
		&= \ol R(\beta, \mu_1, \mu_2) \int_0^\infty (1-e^{-\ell} ) \cdot \ell^{-\alpha} \,d\ell\\ 
		&= \frac{\Gamma(2-\alpha)}{\alpha-1} \ol R(\beta, \mu_1, \mu_2).
	\end{split}
	\eqen
	The more general version similarly follows from the following identity for $n$ a positive integer and $p \in (n, n+1)$ 
	\[\int_0^\infty (e^{-\ell} - P_n(\ell)) \ell^{-p} \, d\ell = \Gamma(1-p).  \]
	Indeed, repeatedly integrating by parts gives
	\eqbn
	\begin{split}
		\int_0^\infty (e^{-\ell} - P_n(\ell)) \ell^{-p} \, d\ell &= \frac1{1-p} \int_0^\infty (e^{-\ell} - P_{n-1}(\ell)) \ell^{-(p-1)} \, d\ell = \dots \\
		&= \frac1{\prod_{k=1}^n (k-p)}\int_0^\infty e^{-\ell} \ell^{(p-n)} \, d\ell,
	\end{split}
	\eqen
	and since the last integral equals $\Gamma(n+1-p)$, repeatedly using $\Gamma(x+1) = x \Gamma(x)$ yields the result. 
\end{proof}

\subsection{Thin quantum disks and thick/thin duality}
\label{sec:thin-duality}
The reflection coefficient $R$ satisfies the following reflection identity; see~\cite[Eq (3.28)]{rz-boundary}.
\eqb\label{eq-reflection}
R(\beta; \mu_1, \mu_2)R(2Q - \beta; \mu_1, \mu_2) = 1.
\eqe
In Section~\ref{sub-R-thick}, we saw that for $\beta \in (\frac\gamma2, Q)$ the function $R$ describes quantum lengths for the thick quantum disk. 
In this section, we  give an analogous interpretation for $R$ in the regime $\beta \in (Q, Q+\frac\gamma2)$ via the \emph{thin quantum disk} defined in~\cite{ahs-disk-welding}. See Figure~\ref{fig-thin-disk-decomp}.

\begin{definition}[Thin quantum disk]\label{def-thin-disk}
	For $W \in (0, \frac{\gamma^2}2)$, we can define the infinite measure $\cMtwo(W)$ on two-pointed beaded surfaces as follows. 
	Sample $T$ from $ (1-\frac2{\gamma^2} W)^{-2}\mathrm{Leb}_{\R_+}$, then sample a Poisson point process $\{(u, \cD_u)\}$ from the measure $\1_{t\in [0,T]}dt \times \cMtwo(\gamma^2 - W)$, and concatenate the $\cD_u$'s according to the ordering induced by $u$.  We call a sample from $\cMtwo(W)$ a \emph{thin quantum disk} with weight $W$. We call the total sum of the left (resp., right) boundary lengths of all the  $\cD_u$'s
		the \emph{left (resp., right) boundary length} of the thin quantum disk.
\end{definition} 

\begin{figure}[ht!]
	\begin{center}
		\includegraphics[scale=0.8]{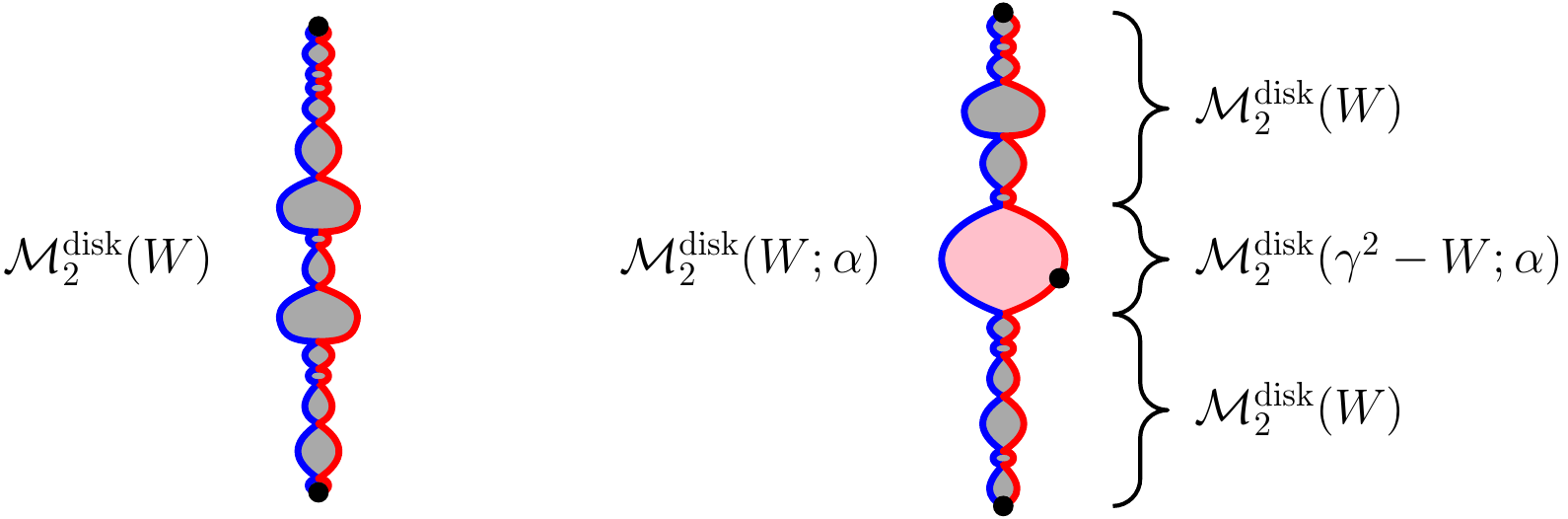}%
	\end{center}
	\caption{\label{fig-thin-disk-decomp} 
			Suppose $0< W< \frac{\gamma^2}2$. 
		 \textbf{Left:}  In Definition~\ref{def-thin-disk}, we define the weight $W$ thin quantum disk with weight $W$ via concatenation of an ordered Poissonian collection of weight $(\gamma^2-W)$ thick quantum disks. 
		\textbf{Right: }In Definition~\ref{def-disk-03} we define the measure $\cMthree(W; \alpha)$ on quantum surfaces obtained by sampling three quantum surfaces from $(1 - \frac2{\gamma^2}W)^2 \cMtwo(W) \times \cMthree(\gamma^2 - W;\alpha) \times \cMtwo(W)$ (depicted in grey, pink, grey) and concatenating them.
		\textbf{Both: } The length of the left boundary (depicted in blue) is given by the sum of the left boundary lengths of the constituent components, and the analogous statement is true for the length of the right boundary (depicted in red). 
		}
\end{figure}

The choice of the constant $(1-\frac2{\gamma^2} W)^{-2}$ above is justified by the following proposition, which states that the quantum disk boundary length distribution extends analytically from thick to thin quantum disks, hence giving a probabilistic meaning  to $\ol R$ and $R$ for $\beta\in  (Q, Q + \frac\gamma2)$.

\begin{proposition}\label{prop:thin-length-law}
	For $W \in ( 0, \frac{\gamma^2}2)$ and $\beta = \gamma +\frac2\gamma - \frac W\gamma \in (Q, Q + \frac\gamma2)$, let $L_1$ and $L_2$ be the left and right boundary lengths of a thin quantum disk from $\cM_2^\disk(W)$. For constants $\mu_1, \mu_2 \geq 0$ not both zero, the law of $\mu_1L_1 + \mu_2L_2$ is
	\[\1_{\ell > 0} \ol R(\beta, \mu_1, \mu_2) \ell^{-\frac{2}{\gamma^2} W}\,d\ell, \]
	and
	\[\cM_2^\disk(W)[e^{-\mu_1 L_1 - \mu_2 L_2}] = \frac\gamma{2 (Q-\beta)} R(\beta; \mu_1, \mu_2).\]
\end{proposition}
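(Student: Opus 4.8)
The plan is to compute the Laplace transform of $\mu_1 L_1 + \mu_2 L_2$ under $\cMtwo(W)$ directly from the Poissonian description in Definition~\ref{def-thin-disk}, and then read off both assertions. Throughout set $\beta = \gamma + \frac2\gamma - \frac W\gamma$ and $\rho := 1 - \frac{2W}{\gamma^2}$, so that $\rho \in (0,1)$, $\frac{2W}{\gamma^2} = 1-\rho$, $\frac2{\gamma^2}(\gamma^2 - W) = 1+\rho$, $\frac2\gamma(Q-\beta) = \frac{2W}{\gamma^2}-1 = -\rho$, and $\frac\gamma{2(Q-\beta)} = -\frac1\rho$. Since the weight $W$ thin disk is a Poissonian concatenation of weight $\gamma^2 - W$ thick disks and $\gamma^2 - W \in (\tfrac{\gamma^2}2, \gamma^2) \subset [\tfrac{\gamma^2}2, \gamma Q)$ (using $\gamma < 2$), I would first apply Lemma~\ref{lem-disk-perim-law} to a single constituent disk $\cD$: writing $X(\cD) := \mu_1 L_1(\cD) + \mu_2 L_2(\cD)$ and letting $\beta' := 2Q - \beta = \gamma + \frac2\gamma - \frac{\gamma^2-W}\gamma$ be the parameter associated to weight $\gamma^2 - W$, the $\cMtwo(\gamma^2 - W)$-law of $X$ is $\nu(d\ell) := \ol R(\beta', \mu_1, \mu_2)\,\ell^{-(1+\rho)}\1_{\ell>0}\,d\ell$. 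One checks that $\nu$ is a Lévy measure on $(0,\infty)$: it is finite on $[\eps, \infty)$ for every $\eps > 0$ (as $1+\rho > 1$) and $\int_0^1 \ell\,\nu(d\ell) < \infty$ (as $1+\rho < 2$), so the sum of the points of a Poisson process driven by $\nu$ converges absolutely.

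Next, fix $s > 0$. By the definition of the concatenation, $\mu_1 L_1 + \mu_2 L_2 = \sum_u X(\cD_u)$, and conditionally on $T = t$ the mapping theorem makes $\{X(\cD_u)\}_u$ a Poisson point process on $(0,\infty)$ with intensity $t\,\nu$. Writing $\Sigma_t := \sum_u X(\cD_u)$, Campbell's exponential formula gives
\[
\E\bigl[e^{-s\Sigma_t}\bigr] = \exp\!\Bigl(-t\!\int_0^\infty (1 - e^{-s\ell})\,\nu(d\ell)\Bigr) = e^{-t\,\Phi(s)},
\]
where, by the computation $\int_0^\infty (1 - e^{-\ell})\ell^{-(1+\rho)}\,d\ell = \frac{\Gamma(1-\rho)}{\rho}$ used in the proof of Proposition~\ref{prop:thick-length-law} together with a scaling $\ell \mapsto s\ell$, one has $\Phi(s) = \ol R(\beta', \mu_1, \mu_2)\,\frac{\Gamma(1-\rho)}{\rho}\,s^\rho$. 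Since $T$ is distributed as $(1 - \frac{2W}{\gamma^2})^{-2}\,\mathrm{Leb}_{\R_+} = \rho^{-2}\,\mathrm{Leb}_{\R_+}$, integrating over $T$ yields
\[
\cMtwo(W)\bigl[e^{-s(\mu_1 L_1 + \mu_2 L_2)}\bigr] = \rho^{-2}\!\int_0^\infty e^{-t\,\Phi(s)}\,dt = \frac{s^{-\rho}}{\rho\,\ol R(\beta', \mu_1, \mu_2)\,\Gamma(1-\rho)}.
\]

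To simplify the constant I would invoke the reflection identity~\eqref{eq-reflection} together with the relation~\eqref{eq-R-real} between $R$ and $\ol R$: since $1 - \frac2\gamma(Q-\beta) = 1 + \rho$ and $1 - \frac2\gamma(Q-\beta') = 1 - \rho$, these give $R(\beta, \mu_1,\mu_2) = -\Gamma(1+\rho)\,\ol R(\beta, \mu_1,\mu_2)$, $R(\beta', \mu_1,\mu_2) = -\Gamma(1-\rho)\,\ol R(\beta', \mu_1,\mu_2)$, and hence $\ol R(\beta, \mu_1,\mu_2)\,\ol R(\beta', \mu_1,\mu_2) = \Gamma(1+\rho)^{-1}\Gamma(1-\rho)^{-1}$. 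Substituting this and using $\Gamma(1+\rho) = \rho\,\Gamma(\rho)$ collapses the Laplace transform to
\[
\cMtwo(W)\bigl[e^{-s(\mu_1 L_1 + \mu_2 L_2)}\bigr] = \ol R(\beta, \mu_1,\mu_2)\,\Gamma(\rho)\,s^{-\rho} = \int_0^\infty e^{-s\ell}\,\ol R(\beta, \mu_1,\mu_2)\,\ell^{\rho-1}\,\1_{\ell>0}\,d\ell,
\]
which, as $\rho - 1 = -\frac{2W}{\gamma^2}$, is exactly the Laplace transform of $\1_{\ell>0}\,\ol R(\beta, \mu_1, \mu_2)\,\ell^{-\frac{2W}{\gamma^2}}\,d\ell$. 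Since this holds for every $s>0$ and both sides are Laplace transforms of $\sigma$-finite Borel measures on $(0,\infty)$ that are finite for all $s>0$, uniqueness of Laplace transforms gives the first assertion. The second assertion is the case $s=1$: $\cMtwo(W)[e^{-\mu_1 L_1 - \mu_2 L_2}] = \ol R(\beta, \mu_1,\mu_2)\,\Gamma(\rho)$, and combining $\Gamma(\rho) = \Gamma(1+\rho)/\rho$, $R(\beta, \mu_1,\mu_2) = -\Gamma(1+\rho)\,\ol R(\beta, \mu_1,\mu_2)$ and $\frac\gamma{2(Q-\beta)} = -\frac1\rho$ shows this equals $\frac\gamma{2(Q-\beta)}\,R(\beta; \mu_1, \mu_2)$.

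The main obstacle is mostly bookkeeping: one must confirm that $\nu$ genuinely qualifies as a Lévy measure so that the mapping theorem and Campbell's exponential formula are legitimately applied with the infinite intensity $\cMtwo(\gamma^2 - W)$, and then track the constants carefully through~\eqref{eq-reflection}, \eqref{eq-R-real}, and $\Gamma(x+1)=x\Gamma(x)$ — there are no analytic difficulties beyond this. Equivalently, one can recognize $(\Sigma_t)_{t\ge 0}$ as a stable subordinator of index $\rho$ and phrase the first assertion through its potential measure $\int_0^\infty \P[\Sigma_t \in d\ell]\,dt$, whose Laplace transform is $1/\Phi(s)$; this is the same computation.
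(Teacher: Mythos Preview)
Your proof is correct and follows essentially the same route as the paper's: Campbell's exponential formula for the Poisson concatenation, integration over $T$, and the reflection identity~\eqref{eq-reflection}. The one difference worth noting is that you compute the Laplace transform at a general $s>0$ and then invert it to identify the density directly, whereas the paper first cites \cite[Lemma 2.17]{ahs-disk-welding} for the power-law form of the length distribution and only then matches the coefficient; your version is thus self-contained within the present paper.
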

\begin{proof}
	Note that  $\cM_2^\disk(W)$ is defined using Poisson point processes. Our proposition will be an immediate consequence of Campbell's formula for the Laplace functional for Poisson point processes (see e.g.\  \cite[Section 3.2]{kingman-ppp}): For any measure space $(S,m)$ and measurable function $f: S \to (0,\infty)$ such that $\int_S \min (f(x), 1) \, m(dx) < \infty$, we have for a Poisson point process $\Pi$ on $(S,m)$ that
	\[\E[\exp(- \sum_{X \in \Pi} f(X))] = \exp\left(\int_S (e^{-f(x)}-1)\, m(dx) \right). \]
	For fixed $T>0$ we can set $S_T = [0,T] \times \wt S$ where $\wt S$ is the space of two-pointed quantum disks, and $m_T = \Leb_{[0,T]} \times \cMtwo(\gamma^2 - W)$. Letting $\Pi_T$ be a Poisson point process on $(S_T, m_T)$ and $f(\cD) = \mu_1 \ell_1 + \mu_2 \ell_2$ where $\ell_1, \ell_2$ are the quantum lengths of the boundary arcs of $\cD$, we have
	\[\E [ \exp(- \sum_{(t,\cD) \in \Pi_T} f(\cD))] = \exp\left(-T \cMtwo(\gamma^2-W)[1-e^{-f(\cD)}]\right).\]
	Integrating against $\1_{T>0}(1-\frac2{\gamma^2} W)^{-2} \, dT$, we get 
	\[\cMtwo(W)[e^{-\mu_1 L_1 - \mu_2 L_2}] = \frac{1}{(1-\frac2{\gamma^2} W)^2\cMtwo(\gamma^2-W)[1-e^{-f(\cD)}])}.  \]
	Proposition~\ref{prop:thick-length-law} gives $\cMtwo(\gamma^2-W)[1-e^{-f(\cD)}]) = \frac\gamma{2 (Q-\beta)}  R(2Q-\beta; \mu_1, \mu_2)$, and combining with the reflection identity~\eqref{eq-reflection} and $1-\frac{2W}{\gamma^2}=\frac{2(\beta-Q)}{\gamma}$ yields the second claim. 
	
	The first assertion then follows from the fact that $\mu_1 L_1 + \mu_2 L_2$ has a power law with exponent $-\frac2{\gamma^2}W$ \cite[Lemma 2.17]{ahs-disk-welding}, and a similar computation as in Proposition~\ref{prop:thick-length-law} to derive the coefficient of the power law.
\end{proof}

\subsection{Quantum disk with a third marked boundary point}
We consider the following variant of $R$ and $\ol R$ which has an additional parameter $\alpha$.
\alb
\ol H^{(\beta, \beta, \alpha)}_{(0,1,0)} &= \left(\frac{2\pi}{(\frac\gamma2)^{\frac{\gamma^2}4} \Gamma(1- \frac{\gamma^2}4)}\right)^{\frac2\gamma(Q - \beta - \frac12\alpha)}
\frac{\Gg(\frac12\alpha)^2 \Gg(Q - \beta + \frac12\alpha) \Gg(\beta+\frac12\alpha-\frac\gamma2)}{\Gg(\frac2\gamma) \Gg(Q-\beta)^2\Gg(\alpha)}
,\\
H^{(\beta, \beta, \alpha)}_{(0,1,0)} &= \frac2\gamma \Gamma(\frac2\gamma(\frac12\alpha + \beta - Q))\ol H^{(\beta, \beta, \alpha)}_{(0,1,0)}.
\ale
The notations $\ol H^{(\beta, \beta, \alpha)}_{(0,1,0)} $ and $H^{(\beta, \beta, \alpha)}_{(0,1,0)}$ are inherited from~\cite{rz-boundary} where more general more parameters are considered. The following proposition is proved in \cite{rz-gmc-interval}.

\begin{proposition}\label{prop-rz-H}
	Suppose $\alpha > 0,\frac\alpha2 + \beta > \frac\gamma2$, and  $\beta < Q$. 
	Let $\wt \psi = h + (\alpha - 2Q) \log |\cdot|_+ + \frac\beta2 G_\bbH(\cdot,0) + \frac\beta2 G_\bbH(\cdot,1)$, with 
	$h$ sampled from $ P_\bbH$. Then
	\[ \E\left[\nu_{\wt \psi}((0,1))^{\frac2\gamma(Q - \beta - \frac12\alpha)} \right] = \ol H^{(\beta, \beta, \alpha)}_{(0,1,0)}  .\]
\end{proposition}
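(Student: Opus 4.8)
The plan is to recognize the random variable $\nu_{\wt\psi}((0,1))$ as exactly the unit-interval Gaussian multiplicative chaos (GMC) whose fractional moments are computed by Remy and Zhu in \cite{rz-gmc-interval}, and then to match their parametrization and range of validity with the statement above.

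First I would restrict $\wt\psi$ to the boundary arc $(0,1)\subset\partial\bbH$ and identify its law. By \eqref{eq:covariance}, for $x,y\in(0,1)$ we have $G_\bbH(x,y)=-2\log|x-y|$ since $|x|_+=|y|_+=1$; thus $h$ restricted to $(0,1)$ is a centered log-correlated field with covariance $-2\log|x-y|$. Similarly, for $x\in(0,1)$ the deterministic part simplifies to $(\alpha-2Q)\log|x|_+=0$, $\tfrac\beta2 G_\bbH(x,0)=-\beta\log|x|$, and $\tfrac\beta2 G_\bbH(x,1)=-\beta\log(1-x)$, so on $(0,1)$ we have $\wt\psi(x)=h(x)-\beta\log|x|-\beta\log(1-x)$ and hence
\[
\nu_{\wt\psi}((0,1))=\lim_{\eps\to0}\eps^{\gamma^2/4}\int_0^1 e^{\frac\gamma2 h_\eps(x)}\,|x|^{-\gamma\beta/2}|1-x|^{-\gamma\beta/2}\,dx .
\]
Because $\Var(h_\eps(x))=-2\log\eps+o(1)$ for $x$ in a compact subset of $(0,1)$ (the $4\log|x|_+$ term in the variance asymptotics recalled in the proof of Lemma~\ref{lem:inserting} vanishes here), the regularization $\eps^{\gamma^2/4}e^{\frac\gamma2 h_\eps}$ is, up to the routine rescaling relating Remy--Zhu's field normalization to ours, the same GMC construction used in \cite{rz-gmc-interval}; since GMC does not depend on the choice of mollifier, $\nu_{\wt\psi}((0,1))$ agrees in law with the interval GMC carrying two boundary insertions of weight $\beta$ at $0$ and $1$.

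Next I would apply the main moment formula of \cite{rz-gmc-interval}. Setting $s=\tfrac2\gamma\bigl(Q-\beta-\tfrac12\alpha\bigr)$, their result expresses $\E[\nu_{\wt\psi}((0,1))^s]$ as an explicit product of double Gamma functions $\Gg$ together with the $\Gamma(1-\tfrac{\gamma^2}4)$-power prefactor coming from the unit-volume normalization of GMC. Rewriting this in the three-insertion notation $(\beta,\beta,\alpha)$ with boundary cosmological data $(0,1,0)$ inherited from \cite{rz-boundary}, one obtains precisely $\ol H^{(\beta,\beta,\alpha)}_{(0,1,0)}$. It then remains to check that the hypotheses $\alpha>0$, $\tfrac\alpha2+\beta>\tfrac\gamma2$, $\beta<Q$ coincide term by term with the conditions of \cite{rz-gmc-interval}: $\tfrac\alpha2+\beta>\tfrac\gamma2$ is equivalent to $s<\tfrac4{\gamma^2}$, which is the threshold for finiteness of a GMC fractional moment; $\alpha>0$ gives $s<\tfrac2\gamma(Q-\beta)$, the condition for the endpoint weights to be integrable against $\nu_h$; and $\beta<Q$ is the Seiberg bound.

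The only genuine work here is bookkeeping, and this is the step I expect to be the main obstacle: translating Remy--Zhu's parametrization of the interval GMC, moment order, and weight conventions into the $(\beta,\beta,\alpha)$ notation above, verifying that the semicircle-average boundary GMC on $\bbH$ coincides with the $\R$-indexed GMC they work with, and matching the three inequalities against their hypotheses. There is no new probabilistic content — the analytic substance is entirely contained in \cite{rz-gmc-interval}.
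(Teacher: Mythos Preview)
Your proposal is correct and follows essentially the same route as the paper: restrict $\wt\psi$ to $(0,1)$, where the $(\alpha-2Q)\log|\cdot|_+$ term vanishes and $G_\bbH$ contributes $-\beta\log|x|-\beta\log(1-x)$, then identify the resulting moment with $M(\gamma,p,a,b)$ of \cite{rz-gmc-interval} for $a=b=-\tfrac{\gamma}{2}\beta$ and $p=\tfrac2\gamma(Q-\beta-\tfrac12\alpha)$ and invoke their Theorem~1.1. Your added discussion of the regularization and parameter-range matching is more explicit than the paper's, but the content is the same.
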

\begin{proof}
The restriction of $\wt \psi$ to $(0,1)$ agrees with that of $h - \beta \log \left|\cdot\right| - \beta \log \left|\cdot - 1\right|$, so $\nu_{\wt \psi}(dx)|_{[0,1]} = x^{-\frac\gamma2 \beta} (1-x)^{-\frac\gamma2 \beta}\mu_h(dx)|_{[0,1]}$. Thus the moment we consider agrees with $M(\gamma, p, a, b)$ of \cite{rz-gmc-interval} with $a = b = -\frac\gamma2\beta$ and $p = \frac2\gamma(Q-\beta-\frac12\alpha)$, and \cite[Theorem 1.1]{rz-gmc-interval} shows this quantity equals $\ol H^{(\beta, \beta, \alpha)}_{(0,1,0)}$.
\end{proof}

Recall $\cMthree(W)$  from Definition~\ref{def-three-pt-thick}.
We now extend  $\cMthree(W)$ when $W> \frac{\gamma^2}2$ to have a third marked point with general $\alpha$ insertion.
\begin{definition}\label{def-3-disk-thick}
	For $W > \frac{\gamma^2}2$ and $\alpha \in \R$, let $\cMthree(W; \alpha)$ be the law on quantum surfaces $(\cS, \phi, -\infty, +\infty, 0)$ with $\phi $ sampled from $\frac\gamma2 (Q - \beta)^{-2} \LF_\cS^{(\beta, -\infty), (\beta, +\infty), (\alpha, 0)}$. 
	We call the boundary arc  between the two $\beta$ singularities which contains (resp.\ does not contain) the $\alpha$ singularity  the \emph{marked} (resp.\ \emph{unmarked}) boundary arc. 
\end{definition}
By Proposition~\ref{prop-2-pt-weight-length} we have $\cMthree(W; \gamma) = \cMthree(W)$. 
The next proposition describes the law of  the unmarked boundary arc of $\cMthree(W; \alpha) $ for some range of $\alpha,\beta$.

\begin{proposition}\label{prop-H-density}
	Suppose $\alpha > 0,\frac\alpha2 + \beta > \frac\gamma2$, and  $\beta < Q$ are as in Proposition~\ref{prop-rz-H}. When a quantum disk is sampled from $\cMthree(W;\alpha)$, the law of 
	its  unmarked boundary  length is 
	\eqb\label{eq-law-unmarked-arc}
	\1_{\ell > 0}(Q-\beta)^{-2} \ol H^{(\beta, \beta, \alpha)}_{(0,1,0)} \ell^{\frac2\gamma(\beta + \frac12\alpha - Q)-1}\, d\ell .
	\eqe
\end{proposition}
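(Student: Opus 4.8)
The plan is to reduce the statement to the Remy--Zhu moment formula recorded in Proposition~\ref{prop-rz-H}, by re-embedding $\cMthree(W;\alpha)$ in $\bbH$ with the two $\beta$-insertions at $0,1$ and the $\alpha$-insertion at $\infty$, and then disintegrating over the additive constant $\mathbf c$.

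\textbf{Re-embedding.} By Definition~\ref{def-3-disk-thick}, $\cMthree(W;\alpha)$ is the law of $(\cS,\phi,-\infty,+\infty,0)/{\sim_\gamma}$ with $\phi\sim\frac\gamma2(Q-\beta)^{-2}\LF_\cS^{(\beta,-\infty),(\beta,+\infty),(\alpha,0)}$, and the unmarked boundary arc is $\R+i\pi$. Applying $\exp:\cS\to\bbH$ turns the field, by Lemma~\ref{lem-equiv-C-cS} (the exponential prefactor there is trivial since the $\alpha$-insertion sits at $0$), into $\frac\gamma2(Q-\beta)^{-2}\LF_\bbH^{(\beta,\infty),(\beta,0),(\alpha,1)}$, with unmarked arc $(-\infty,0)$. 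Applying next the Möbius automorphism $g(w)=\frac1{1-w}$ of $\bbH$, which sends $0\mapsto 1$, $1\mapsto\infty$, $\infty\mapsto 0$ and maps $(-\infty,0)$ onto $(0,1)$, and combining Proposition~\ref{prop-hrv-invariance} with Lemma~\ref{lem:LF-H-inf} to treat the insertion travelling through $\infty$: since the normalization constants $C_\bbH^{(\beta,\infty),(\beta,0),(\alpha,1)}$ and $C_\bbH^{(\alpha,\infty),(\beta,0),(\beta,1)}$ are both $1$ (because $G_\bbH(0,1)=0$ and $|0|_+=|1|_+=1$), the pushforward under $g\circ\exp$ of the $\mathbf c$-free part of the field has precisely the law of the field $\wt\psi$ of Proposition~\ref{prop-rz-H}; the multiplicative constant in this identity is forced to $1$ because both sides are probability measures while the constant-mode parts of the two Liouville fields agree.

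\textbf{Disintegration.} Write $\phi=\widehat\phi+\mathbf c$ with $\mathbf c\sim e^{(\beta+\frac\alpha2-Q)c}\,dc$ (Definition~\ref{def-RV}). By conformal covariance of the quantum length measure and the previous step, the unmarked arc length $L=e^{\frac\gamma2\mathbf c}\,\nu_{\widehat\phi}(\R+i\pi)$ satisfies $\nu_{\widehat\phi}(\R+i\pi)\eqD\nu_{\wt\psi}((0,1))$. Testing $L$ against an interval $(\ell,\ell')$, integrating out $\mathbf c$, and substituting $y=e^{\frac\gamma2 c}\nu_{\widehat\phi}(\R+i\pi)$ --- exactly as in the proofs of Lemma~\ref{lem-disk-perim-law} and Proposition~\ref{prop:thick-length-law} --- gives
\[\cMthree(W;\alpha)[L\in(\ell,\ell')]=\frac\gamma2(Q-\beta)^{-2}\cdot\tfrac2\gamma\cdot\E\Big[\nu_{\wt\psi}((0,1))^{\frac2\gamma(Q-\beta-\frac\alpha2)}\Big]\int_\ell^{\ell'}y^{\frac2\gamma(\beta+\frac\alpha2-Q)-1}\,dy.\]
By Proposition~\ref{prop-rz-H} the expectation equals $\ol H^{(\beta,\beta,\alpha)}_{(0,1,0)}$, and since $\frac\gamma2\cdot\frac2\gamma=1$ this is exactly the claimed density $\1_{\ell>0}(Q-\beta)^{-2}\ol H^{(\beta,\beta,\alpha)}_{(0,1,0)}\ell^{\frac2\gamma(\beta+\frac12\alpha-Q)-1}\,d\ell$.

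\textbf{Main obstacle.} The only genuinely delicate step is the constant bookkeeping in the re-embedding: Proposition~\ref{prop-hrv-invariance} is stated only when no insertion is sent to $\infty$, whereas $g\circ\exp$ moves the $\alpha$-insertion through $\infty$. This should be handled via the limiting description of $\LF_\bbH^{(\beta,\infty),\ldots}$ from Lemma~\ref{lem:LF-H-inf} and Definition~\ref{def:LF-H-inf}, after which the vanishing of the relevant normalization constants pins the stray multiplicative factor to $1$ and makes the identification $\nu_{\widehat\phi}(\R+i\pi)\eqD\nu_{\wt\psi}((0,1))$ exact. The remainder is the routine change-of-variables already carried out for two-marked-point disks in Section~\ref{sub-R-thick}.
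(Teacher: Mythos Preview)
Your approach is essentially the paper's: re-embed the strip field in $\bbH$ so that the two $\beta$-insertions sit at $0,1$ and the $\alpha$-insertion at $\infty$, identify the $\mathbf c$-free part with the field $\wt\psi$ of Proposition~\ref{prop-rz-H}, and then integrate out $\mathbf c$ exactly as in Lemma~\ref{lem-disk-perim-law}. The paper does the re-embedding by invoking Lemma~\ref{lem-equiv-C-cS} together with Proposition~\ref{prop-hrv-invariance} to obtain $\LF_\bbH^{(\alpha,\infty),(\beta,0),(\beta,1)}=f_*\LF_\bbH^{(\beta,\infty),(\beta,0),(\alpha,1)}$ with no extra factor, and then proceeds as you do.

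One caution: your ``both sides are probability measures'' argument for pinning the multiplicative constant to $1$ is not quite a proof as written, because the coordinate change $g\bullet_\gamma$ does not preserve the decomposition into ``zero-semicircle-average field plus additive constant'' --- the $Q\log|(g^{-1})'|$ term shifts the semicircle average. The rigorous route is exactly the one you flag in your obstacle paragraph (and the one the paper takes): pass through the finite-insertion field via Lemma~\ref{lem-equiv-C-cS}/Lemma~\ref{lem:LF-H-inf} so that Proposition~\ref{prop-hrv-invariance} applies directly, after which the conformal derivative factors cancel and the constant is indeed $1$.
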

\begin{proof}
By Lemma~\ref{lem-equiv-C-cS} we have $\LF_\bbH^{(\beta, \infty), (\beta, 0), (\alpha, 1)} = \exp_* \LF_\cS^{(\beta, \pm\infty), (\alpha, 0)}$, and by Lemma~\ref{lem-equiv-C-cS} and Proposition~\ref{prop-hrv-invariance} we have $\LF_\bbH^{(\alpha, \infty), (\beta, 0), (\beta, 1)} = f_*\LF_\bbH^{(\beta, \infty), (\beta, 0), (\alpha, 1)}$ where $f \in \mathrm{Conf}(\bbH)$ is the conformal map with $f(0) = 1, f(1) = \infty, f(\infty) = 0$. Therefore, the law of $\nu_\phi(\R+ \pi i)$ with $\phi$ sampled from $\LF_\cS^{(\beta, \pm\infty), (\alpha, 0)}$ agrees with the law of $\nu_\psi((0,1))$ with $\psi$ sampled from $\LF_\bbH^{(\alpha, \infty), (\beta, 0), (\beta, 1)}$. Proposition~\ref{prop-rz-H} and the argument of Lemma~\ref{lem-disk-perim-law} show that $\nu_\psi((0,1))$ has law given by $\1_{\ell>0}\frac2\gamma \ol H^{(\beta, \beta, \alpha)}_{(0,1,0)} \ell^{\frac2\gamma(\beta + \frac12\alpha - Q)-1}\, d\ell$, so recalling the factor $\frac\gamma2 (Q-\beta)^{-2}$ in the definition of $\cMthree(W; \alpha)$, we obtain the stated result. 
\end{proof}
We note  that if $\alpha \geq Q$, then the quantum length of the marked boundary arc is a.s.\ infinite because the field blows up sufficiently quickly near the marked point. Nevertheless,  the unmarked boundary arc a.s.\ has finite quantum length as shown in Proposition~\ref{prop-H-density}.


The functions $R$ and $H$ are closely related as shown in~\cite[Lemma 3.4]{rz-boundary}. As a corollary of that relation we have 
\eqb \label{eq-reflection-H}
H^{(\beta,\beta, \alpha)}_{(0,1,0)} = R(\beta, 1,0)^2 H^{(2Q-\beta, 2Q-\beta, \alpha)}_{(0,1,0)} \quad \text{for all } \alpha, \beta \in \R. 
\eqe
We now give probabilistic meaning to~\eqref{eq-reflection-H} for some range of $\alpha $ and $\beta$.

We  first recall a fact from~\cite{ahs-disk-welding}  which will help us define a  variant of the  thin quantum disk with an additional $\alpha$-insertion.
\begin{lemma}[{\cite[Proposition 4.4]{ahs-disk-welding}}]\label{lem-thin-weighted}
	For $W \in (0, \frac{\gamma^2}2)$ we have
	\[\cMthree(W) = (1 - \frac2{\gamma^2}W)^2 \cMtwo(W) \times \cMthree(\gamma^2 - W) \times \cMtwo(W), \]
	where the right hand side is the infinite measure on ordered collection of  quantum surfaces obtained by concatenating samples from the three measures.
\end{lemma}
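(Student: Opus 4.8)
The plan is to unfold the Poissonian definition of the thin quantum disk and apply the Palm (Mecke) calculus for Poisson point processes. Recall from Definition~\ref{def-thin-disk} that a sample from $\cMtwo(W)$ is obtained by first sampling $T$ from $(1-\tfrac2{\gamma^2}W)^{-2}\Leb_{\R_+}$, then sampling a Poisson point process $\Pi=\{(u,\cD_u)\}$ from $\1_{t\in[0,T]}\,dt\times\cMtwo(\gamma^2-W)$, and concatenating the beads $\cD_u$ in the order of $u$; write $\mathrm{PPP}_T$ for the law of this concatenation at fixed $T$, so that $\cMtwo(W)=(1-\tfrac2{\gamma^2}W)^{-2}\int_0^\infty\mathrm{PPP}_T\,dT$, equivalently $\int_0^\infty\mathrm{PPP}_T\,dT=(1-\tfrac2{\gamma^2}W)^{2}\cMtwo(W)$. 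The measure $\cMthree(W)$ (extending Definition~\ref{def-three-pt-thick} verbatim to $W\in(0,\gamma^2/2)$) is obtained from $\cMtwo(W)$ by reweighting by the right boundary length $L$ and then sampling an extra marked point from the probability measure proportional to the restriction $\nu_\phi^{\mathrm R}$ of the boundary length measure to the right boundary arc; since $L$ is precisely the total mass of $\nu_\phi^{\mathrm R}$, these two operations combine into $\cMthree(W)(\cdot)=\int\big(\int\delta_{(\cD,\,z)}\,\nu_\phi^{\mathrm R}(dz)\big)\,\cMtwo(W)(d\cD)$, the two factors of $L$ cancelling. Because the right boundary of the concatenated surface is the ordered concatenation of the right boundary arcs of the beads, $\nu_\phi^{\mathrm R}=\sum_u\nu_{\phi_u}^{\mathrm R}$.

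Next I would apply Mecke's equation to this sum over beads. Working at a fixed $T$ and applying Mecke's formula to $\Pi$ with the test functional that integrates $g(\text{concatenation},\,z)$ against $\nu^{\mathrm R}$ of the selected bead, one reduces to: choosing a location $t_0\in[0,T]$ according to Lebesgue measure, an independent bead $\cD_0$ with an extra marked point sampled from $\nu_{\phi_0}^{\mathrm R}$ against the background $\cMtwo(\gamma^2-W)$, and an independent sample of $\Pi$, and inserting $(t_0,\cD_0)$ into $\Pi$. Two facts finish this step: (i) conditionally on $(t_0,\cD_0)$, the restrictions of $\Pi$ to first coordinates $<t_0$ and $>t_0$ are independent Poisson point processes whose concatenations have laws $\mathrm{PPP}_{t_0}$ and $\mathrm{PPP}_{T-t_0}$ (restriction property of Poisson processes plus translation invariance of Lebesgue measure); and (ii) a bead sampled from $\cMtwo(\gamma^2-W)$ together with a point sampled from $\nu^{\mathrm R}$ on its right boundary is, by Definition~\ref{def-three-pt-thick}, exactly $\cMthree(\gamma^2-W)$ (again the length reweighting and normalization cancel). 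This yields, at fixed $T$,
\[ \cMthree(W)\big|_T=\int_0^T\mathrm{PPP}_{t_0}\times\cMthree(\gamma^2-W)\times\mathrm{PPP}_{T-t_0}\,dt_0, \]
where the product denotes the concatenation of the three pieces in order and $\cMthree(W)\big|_T$ is the contribution at fixed $T$.

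Finally I would integrate over $T$ against $(1-\tfrac2{\gamma^2}W)^{-2}\,dT$ and change variables from $(T,t_0)$ to $(s_1,s_2)=(t_0,T-t_0)$, which has Jacobian $1$ and maps $\{0\le t_0\le T\}$ bijectively onto $\R_+^2$, to get $\cMthree(W)=(1-\tfrac2{\gamma^2}W)^{-2}\int_0^\infty\!\!\int_0^\infty\mathrm{PPP}_{s_1}\times\cMthree(\gamma^2-W)\times\mathrm{PPP}_{s_2}\,ds_1\,ds_2$. Substituting $\int_0^\infty\mathrm{PPP}_s\,ds=(1-\tfrac2{\gamma^2}W)^{2}\cMtwo(W)$ for each of the $s_1$- and $s_2$-integrals collapses the three powers $(1-\tfrac2{\gamma^2}W)^{-2}\cdot(1-\tfrac2{\gamma^2}W)^{2}\cdot(1-\tfrac2{\gamma^2}W)^{2}$ into $(1-\tfrac2{\gamma^2}W)^{2}$ and gives the claimed identity.

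The main obstacle is the measure-theoretic bookkeeping on the space of beaded quantum surfaces: one must check that $\cMtwo(\gamma^2-W)$-a.e.\ bead has finite right boundary length (true since $\gamma^2-W>\gamma^2/2$, so these are thick disks), that the intensity measures involved are $\sigma$-finite so that the Poisson point process and Mecke's equation are legitimate, and that the concatenation map is measurable and interacts correctly with the splitting of $\Pi$ at $t_0$. None of this is deep, but it is where the care lies; tracking the $(1-\tfrac2{\gamma^2}W)$-powers is the other, purely mechanical, point to watch.
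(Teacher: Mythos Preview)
The paper does not prove this lemma; it is quoted from the companion paper \cite{ahs-disk-welding} as Proposition~4.4 there, so there is no proof here to compare against. Your argument via Mecke's equation is correct and is exactly the natural route: weighting the thin disk by right boundary length and sampling a marked point corresponds, by Mecke applied to the bead PPP, to inserting one distinguished marked bead from $\cMthree(\gamma^2-W)$ at a Lebesgue-random time $t_0$, with the rest of the PPP splitting into independent pieces on $[0,t_0]$ and $[t_0,T]$; the change of variables $(T,t_0)\mapsto(t_0,T-t_0)$ and the resulting collapse of the $(1-\tfrac2{\gamma^2}W)$-powers are handled correctly.
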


\begin{definition}\label{def-disk-03}
	Suppose $W \in (0, \frac{\gamma^2}2)$ and $\alpha \in \R$. Given a sample $(S_1,S_2,S_3)$ from  
	\[
	(1 - \frac2{\gamma^2}W)^2 \cMtwo(W) \times \cMthree(\gamma^2 - W;\alpha) \times \cMtwo(W),
	\]
	let $S$ be their  concatenation in the sense of Lemma~\ref{lem-thin-weighted} with $\alpha$ in place of $\gamma$. 
	We define the infinite measure $\cMthree(W; \alpha)$ to be the law of $S$. Let $L$ be the sum of the left boundary lengths of $S_1$ and $S_3$, and 
	the unmarked boundary length of $S_2$.  We call $L$ the \emph{unmarked boundary length} of $S$.
\end{definition}
See Figure~\ref{fig-thin-disk-decomp} for an illustration of Definition~\ref{def-disk-03}. The measure  $\cMthree(W; \alpha)$  does not naturally arise in either the quantum surface or the LCFT perspective, but is quite natural in our context.
The next proposition says that  $\ol H^{(\beta, \beta, \alpha)}_{(0,1,0)}$ describes the law of  its unmarked boundary length and gives a probabilistic realization of~\eqref{eq-reflection-H}.
\begin{proposition}\label{prop-disk-03-length}
	For $W \in (0, \frac{\gamma^2}2)$,  let $\beta = \gamma + \frac2\gamma - \frac W\gamma \in (Q, Q+\frac\gamma2)$. Suppose $\alpha > 2(\beta - Q)$. Then the law of the unmarked boundary length $L$ of a sample from  $\cMthree(W; \alpha)$ is
	\begin{equation}\label{eq:H-density}
	\1_{\ell > 0}(Q-\beta)^{-2} \ol H^{(\beta, \beta, \alpha)}_{(0,1,0)} \ell^{\frac2\gamma(\beta + \frac12\alpha - Q)-1}\, d\ell .
	\end{equation}
Moreover, for $\mu>0$, we have
\begin{equation}\label{eq:H-Laplace}
\cMthree(W; \alpha)[e^{-\mu L}] =  \frac\gamma2 (Q-\beta)^{-2} H^{(\beta, \beta, \alpha)}_{(0,1,0)} \mu^{-\frac2\gamma(\beta + \frac12\alpha - Q)}
\end{equation}
\end{proposition}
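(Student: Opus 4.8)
The plan is to unfold Definition~\ref{def-disk-03} and reduce both assertions to the boundary-length identities of Propositions~\ref{prop:thin-length-law} and~\ref{prop-H-density} together with the algebraic relation~\eqref{eq-reflection-H}, in the same spirit as the proof of Proposition~\ref{prop:thin-length-law}. By Definition~\ref{def-disk-03}, a sample from $\cMthree(W;\alpha)$ with $W\in(0,\frac{\gamma^2}2)$ is the concatenation of three independent pieces $S_1\sim\cMtwo(W)$, $S_2\sim\cMthree(\gamma^2-W;\alpha)$, $S_3\sim\cMtwo(W)$ under $(1-\frac2{\gamma^2}W)^2\,\cMtwo(W)\times\cMthree(\gamma^2-W;\alpha)\times\cMtwo(W)$, and the unmarked boundary length is $L=L_1+L'+L_3$, where $L_1,L_3$ are the left boundary lengths of $S_1,S_3$ and $L'$ is the unmarked boundary length of $S_2$. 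Since $L_1,L',L_3$ are independent, Tonelli gives
\[
\cMthree(W;\alpha)[e^{-\mu L}]=\left(1-\frac2{\gamma^2}W\right)^2\left(\cMtwo(W)[e^{-\mu L_1}]\right)^2\cMthree(\gamma^2-W;\alpha)[e^{-\mu L'}].
\]

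The first factor comes from Proposition~\ref{prop:thin-length-law} with $(\mu_1,\mu_2)=(\mu,0)$: $\cMtwo(W)[e^{-\mu L_1}]=\frac\gamma{2(Q-\beta)}R(\beta;\mu,0)$. For the last factor I would apply Proposition~\ref{prop-H-density} to the weight $\gamma^2-W>\frac{\gamma^2}2$, whose associated parameter is $\beta'=\gamma+\frac2\gamma-\frac{\gamma^2-W}\gamma=2Q-\beta<Q$; the remaining hypotheses $\alpha>0$ and $\frac\alpha2+\beta'>\frac\gamma2$ are easily checked (the latter because $\beta'>\frac2\gamma>\frac\gamma2$), so $L'$ has density $\1_{\ell>0}(Q-\beta')^{-2}\ol H^{(\beta',\beta',\alpha)}_{(0,1,0)}\ell^{\frac2\gamma(\beta'+\frac12\alpha-Q)-1}\,d\ell$, whence $\cMthree(\gamma^2-W;\alpha)[e^{-\mu L'}]=(Q-\beta')^{-2}\ol H^{(\beta',\beta',\alpha)}_{(0,1,0)}\Gamma(\frac2\gamma(\beta'+\frac\alpha2-Q))\mu^{-\frac2\gamma(\beta'+\frac\alpha2-Q)}$. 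The hypothesis $\alpha>2(\beta-Q)$ is precisely what makes $\frac2\gamma(\beta'+\frac\alpha2-Q)>0$, so this Gamma integral converges.

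Then comes the bookkeeping. Using $1-\frac2{\gamma^2}W=\frac2\gamma(\beta-Q)$, the scaling $R(\beta;\mu,0)=\mu^{\frac2\gamma(Q-\beta)}R(\beta;1,0)$ coming from $\ol R(\beta,\mu,0)=\mu^{\frac2\gamma(Q-\beta)}\ol R(\beta,1,0)$, and $\beta'=2Q-\beta$ (so $(Q-\beta')^{-2}=(Q-\beta)^{-2}$), the scalar prefactors collapse via $(1-\frac2{\gamma^2}W)^2(\frac\gamma{2(Q-\beta)})^2=1$ and the powers of $\mu$ combine to $\mu^{\frac4\gamma(Q-\beta)-\frac2\gamma(\beta'+\frac\alpha2-Q)}=\mu^{-\frac2\gamma(\beta+\frac12\alpha-Q)}$, matching the exponent in~\eqref{eq:H-Laplace}. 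What is left is to identify the constant $R(\beta;1,0)^2(Q-\beta)^{-2}\ol H^{(\beta',\beta',\alpha)}_{(0,1,0)}\Gamma(\frac2\gamma(\beta'+\frac\alpha2-Q))$ with $\frac\gamma2(Q-\beta)^{-2}H^{(\beta,\beta,\alpha)}_{(0,1,0)}$; this follows by combining $H^{(\beta',\beta',\alpha)}_{(0,1,0)}=\frac2\gamma\Gamma(\frac2\gamma(\frac\alpha2+\beta'-Q))\ol H^{(\beta',\beta',\alpha)}_{(0,1,0)}$ (the definition of $H$ with $\beta$ replaced by $\beta'$) with the reflection identity~\eqref{eq-reflection-H} in the form $H^{(\beta,\beta,\alpha)}_{(0,1,0)}=R(\beta;1,0)^2H^{(\beta',\beta',\alpha)}_{(0,1,0)}$. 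This gives~\eqref{eq:H-Laplace}.

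Finally, for~\eqref{eq:H-density}: $L_1,L_3$ each have a density that is a constant multiple of $\ell^{p_1-1}\1_{\ell>0}\,d\ell$ with $p_1=1-\frac2{\gamma^2}W=\frac2\gamma(\beta-Q)>0$, and $L'$ a constant multiple of $\ell^{p_2-1}\1_{\ell>0}\,d\ell$ with $p_2=\frac2\gamma(\beta'+\frac\alpha2-Q)>0$ (again using $\alpha>2(\beta-Q)$); convolving three such power laws via the Beta integral shows the law of $L$ is a constant multiple of $\ell^{p-1}\1_{\ell>0}\,d\ell$ with $p=2p_1+p_2=\frac2\gamma(\beta+\frac12\alpha-Q)$. (Alternatively the power-law shape is forced by the scale covariance of $\cMthree(W;\alpha)$, as in~\cite[Lemma 2.17]{ahs-disk-welding}.) Matching $\int_0^\infty e^{-\mu\ell}c\,\ell^{p-1}\,d\ell=c\,\Gamma(p)\mu^{-p}$ against~\eqref{eq:H-Laplace} and using injectivity of the Laplace transform on measures on $(0,\infty)$ pins down $c=(Q-\beta)^{-2}\ol H^{(\beta,\beta,\alpha)}_{(0,1,0)}$, which is~\eqref{eq:H-density}. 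I expect the only real work to be the constant-chasing above and verifying the hypotheses of Proposition~\ref{prop-H-density} for the weight $\gamma^2-W$; there should be no genuine obstacle, since all the analytic input is already contained in the cited results.
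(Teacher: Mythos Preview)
Your proof is correct and follows the same route as the paper: factor $\cMthree(W;\alpha)[e^{-\mu L}]$ via Definition~\ref{def-disk-03}, evaluate the thin-disk factors with Proposition~\ref{prop:thin-length-law} and the middle factor with Proposition~\ref{prop-H-density} applied at weight $\gamma^2-W$, then collapse the constants using $1-\frac{2}{\gamma^2}W=\frac{2}{\gamma}(\beta-Q)$ and the reflection identity~\eqref{eq-reflection-H} to obtain~\eqref{eq:H-Laplace}, and finally recover the power-law density~\eqref{eq:H-density}. The only cosmetic difference is that the paper phrases the thin-disk factor via the right boundary length (equal in law to the left one by symmetry) and simply says~\eqref{eq:H-Laplace} ``implies''~\eqref{eq:H-density}, whereas you spell out the convolution/scale-covariance argument.
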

\begin{proof}
	By Proposition~\ref{prop-H-density} the law of the unmarked boundary length $L'$ of a sample from $\cMthree(\gamma^2 - W; \alpha)$ is $\frac2\gamma \ol H^{(2Q- \beta, 2Q- \beta, \alpha)}_{(0,1,0)} \ell^{b-1}\, d\ell$ with $b = \frac2\gamma (\frac12\alpha + Q-\beta)>0$. Therefore for $\mu>0$ we have
	\alb
	&\cMthree(\gamma^2 - W;\alpha) [e^{-\mu L'} ]\\
	&\qquad=(Q-\beta)^{-2} \ol H^{(2Q-\beta,2Q- \beta, \alpha)}_{(0,1,0)} \Gamma(\frac2\gamma (\frac12\alpha + Q-\beta)) \mu^{\frac2\gamma (\frac12\alpha + Q-\beta)} \\
	&\qquad=\frac{\gamma}{2}(Q-\beta)^{-2} H^{(2Q-\beta, 2Q -\beta, \alpha)}_{(0,1,0)}\mu^{\frac2\gamma(\beta-Q-\frac12\alpha)}.
	\ale
	Now by Definition~\ref{def-disk-03}, for $\mu>0$ we have
	\eqbn
	\begin{split}
		\cMthree(W; \alpha)[e^{-\mu L}]	=	&\,(1 - \frac2{\gamma^2}W)^2 \cMtwo(W) [e^{-\mu L_2}]\\ 
		&\times \cMthree(\gamma^2 - W;\alpha) [e^{-\mu L'}]
		\times \cMtwo(W)[e^{-\mu L_2}],
	\end{split}
	\eqen
	where $L_2$ in $ \cMtwo(W) [e^{-\mu L_2}]$ means the right boundary length of a sample  from $\cMtwo(W)$.
	By  Proposition~\ref{prop:thin-length-law}, we have 
	\[
	\cM_2^\disk(W)[e^{-\mu L_2}] = \frac\gamma{2 (Q-\beta)} R(\beta; 0, \mu)
	=\frac\gamma{2 (Q-\beta)} R(\beta; 0, 1)\mu^{\frac2\gamma(Q-\beta)}.
	\]
Using~\eqref{eq-reflection-H} we get~\eqref{eq:H-Laplace}, which further implies~\eqref{eq:H-density}. 
\end{proof}

\section{SLE observables via conformal welding}\label{sec:welding}
In this section, we prove Proposition~\ref{prop-curve-weight}, which is a conformal welding result. Although the measures involved are infinite, a constant of proportionality that arises is finite and encodes the information of the SLE observable in Theorem~\ref{thm-conformal-derivative0}.

\subsection{Conformal welding of quantum disks}
\label{sec:disk-welding}
In this section we recall the main result from our companion paper~\cite{ahs-disk-welding}, saying that $\SLE_\kappa(\rho_-;\rho_+)$  arise as the interface 
between two quantum disks  conformally welded  together.

We start by extending the definition of a quantum surface to the case where the surface is decorated by a curve. Recall from Section 
\ref{subsec-GFF} that a $\gamma$-LQG surface with $n$ marked points is defined to be an equivalence class of tuples $(D,h,z_1,\dots,z_n)$ where $D\subset\C$ is a domain, $h$ is a distribution on $D$, and $z_j\in \partial D\cup D$ for $j=1,\dots,n$. A curve-decorated quantum surface with $n$ marked points is similarly defined to be an equivalence class of tuples $(D,h,z_1,\dots,z_n,\eta)$ where $\eta:[0,t_\eta]\to D$ is a curve on $D$ and $t_\eta$ is the duration of $\eta$. More precisely, we say that $(D,h,z_1,\dots,z_n,\eta)\sim_\gamma(\wt D,\wt h,\wt z_1,\dots,\wt z_n,\wt\eta)$ if there is a conformal map $f:D\to\wt D$ such that  $\wt h = f\bullet_\gamma h$, $\wt z_j=f(z_j)$ for $j=1,\dots,n$, and $\wt\eta(t)=f(\eta(t))$. 

For $W>0$, let $\cMtwo(W; \ell, r)$ be the measure on weight $W$ quantum disks restricting to the event that the left and right boundary arcs have lengths $\ell $ and $ r$, respectively. 
More precisely, 
\eqb
\cMtwo(W)= \iint_0^\infty\cMtwo(W; \ell, r)\,d\ell\, dr.
\label{eq-disintegration}
\eqe
In particular, $|\cMtwo(W; \ell, r)|d\ell\, dr$ is  the law of the left and right boundary lengths, and the normalized probability measure $\cMtwo(W; \ell, r)^\#$ is $\cMtwo(W)$ conditioned on the boundary lengths being $\ell, r$. The identity \eqref{eq-disintegration} a priori only specifies $\cMtwo(W; \ell, r)$ for almost every $\ell, r$. 
But a canonical  version of $\{\cMtwo(W; \ell, r): \ell,r>0\}$ can be chosen such that
it is continuous in $\ell,r$ in a proper topology. See \cite[Section 2.6]{ahs-disk-welding} for details.

For fixed $\ell, r, x$, a pair of quantum disks from $\cMtwo(W_1; \ell, x) \times \cMtwo(W_2; x, r)$ can a.s.\ be \emph{conformally welded} along their length $x$ boundary arcs according to quantum length, yielding a quantum surface with two boundary marked points joined by an interface. This follows from the local absolute continuity of weight $W$ quantum disks with respect to weight $W$ quantum wedges, and the conformal welding theorem for quantum wedges \cite[Theorem 1.2]{wedges}. See e.g.\ \cite{shef-zipper}, \cite[Section 3.5]{wedges}, or \cite[Section 4.1]{ghs-mating-survey} for more information on conformal welding in the setting of LQG surfaces. 

For $W_1, W_2>0$, we now define an infinite measure $\cMtwo(W_1 + W_2; \ell, r) \otimes \SLE_\kappa (W_1 - 2; W_2 - 2)$ on curve-decorated quantum surfaces. 
When $W_1 + W_2 \geq \frac{\gamma^2}2$, we first sample $\phi$ such that the law of the $(\cS,\phi, -\infty, +\infty)$ viewed as a quantum surface is
$\cMtwo(W_1 + W_2; \ell, r)$ and then independently sampling an independent $\SLE_\kappa (W_1 - 2; W_2 - 2)$  curve $\eta$ in $(\cS, -\infty, +\infty)$ and parametrize $\eta$ by its quantum length. We denote the law of the curve-decorated surface $(\cS,\phi,\eta, -\infty, +\infty)$ by
$\cMtwo(W_1 + W_2; \ell, r) \otimes \SLE_\kappa (W_1 - 2; W_2 - 2)$.
When $W_1 + W_2 < \frac{\gamma^2}2$, we first sample a quantum surface with the topology of a chain of beads from  $\cMtwo(W_1 + W_2; \ell, r)$, then decorate each bead by an independent $\SLE_\kappa(W_1 - 2; W_2 - 2)$ between the two marked points of the bead. 
We denote the law of this chain of curve-decorated surfaces  $\cMtwo(W_1 + W_2; \ell, r) \otimes \SLE_\kappa (W_1 - 2; W_2 - 2)$.

The next result shows that the conformal welding of two quantum disks gives the  type of curve-decorated surface defined above.
For $W_-, W_+> 0$ and $\ell, x, r > 0$, we write 
$$\mathrm{Weld}(\cMtwo(W_-; \ell, x), \cMtwo(W_+; x, r))$$ 
for the measure on curve-decorated quantum surfaces obtained by first sampling $(\cD_-, \cD_+) $ from $\cMtwo(W_-; \ell, x) \times \cMtwo(W_+; x, r)$ 
and then conformally welding $\cD_-, \cD_+$ along their length $x$ boundary arcs. This conformal welding is a.e.\ well defined; see \cite[Theorem 2.2]{ahs-disk-welding} for details.
\begin{proposition}[{\cite[Theorem 2.2]{ahs-disk-welding}}]\label{prop-2pt-welding}
	Suppose $W_-, W_+ > 0$. There exists a constant $c_{W_-, W_+} \in (0, \infty)$ such that for all $\ell, r>0$ the following identity holds as measures on the space of curve-decorated quantum surfaces:
	\eqbn
	\begin{split}
		&\cMtwo(W_- + W_+; \ell, r) \otimes \SLE_\kappa (W_- - 2; W_+ - 2) \\ 
		&\qquad\qquad= c_{W_-, W_+} \int_0^\infty \mathrm{Weld}\left( \cMtwo(W_-; \ell, x) , \cMtwo(W_+; x, r)\right) \, dx. 
	\end{split}
	\eqen
\end{proposition}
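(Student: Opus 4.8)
The plan is to bootstrap from the conformal welding theorem for quantum wedges, \cite[Theorem 1.2]{wedges}, which is the infinite-volume analogue of the desired statement: welding a weight-$W_-$ quantum wedge to a weight-$W_+$ quantum wedge along their boundary rays yields a weight-$(W_-+W_+)$ quantum wedge decorated by an independent $\SLE_\kappa(W_--2;W_+-2)$, and conversely, in such a decorated wedge the two sides of the interface are independent quantum wedges of those weights. Descending from wedges to disks requires two ingredients: (i) a localization statement saying that near a marked boundary point a weight-$W$ quantum disk is mutually absolutely continuous with a weight-$W$ quantum wedge (both fields decompose locally as a free-boundary GFF plus the same deterministic singularity $-\beta\log|\cdot|$, with $\beta = Q+\tfrac{\gamma}{2}-\tfrac{W}{\gamma}$); and (ii) careful bookkeeping of quantum boundary lengths so that the disintegrations on the two sides match and the proportionality constant $c_{W_-,W_+}$ comes out finite and positive.

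First I would establish that $\mathrm{Weld}(\cMtwo(W_-;\ell,x),\cMtwo(W_+;x,r))$ is a.e.\ well defined: near every interior point of the length-$x$ welding interface and near each of its two endpoints, the pair of disks is absolutely continuous with respect to a pair of wedges, for which the welding map is measurable by \cite[Theorem 1.2]{wedges} (building on \cite{shef-zipper}); since conformal welding is determined by local data, a local-to-global argument produces a well-defined welding of disks. Next I would identify the law of the welded surface. Sampling $(\phi,\eta)$ so that $(\cS,\phi,-\infty,+\infty)$ has law $\cMtwo(W_-+W_+)$ and $\eta$ is an independent $\SLE_\kappa(W_--2;W_+-2)$ parametrized by quantum length, I would cut along $\eta$ and show that the two pieces are conditionally independent given the quantum length of $\eta$, with the prescribed weights and marked points. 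After conformally mapping so that the interface emanates from a fixed point and zooming in near a marked point, this picture is absolutely continuous with respect to the welded-wedge picture, where conditional independence of the two sides is exactly \cite[Theorem 1.2]{wedges}; by locality of conformal welding, the conditional independence transfers to disks. Finally I would disintegrate both sides over the triple $(\ell,x,r)$ of boundary lengths, using $\cMtwo(W)=\iint_0^\infty\cMtwo(W;\ell,r)\,d\ell\,dr$ and the continuous version of $\ell,r\mapsto\cMtwo(W;\ell,r)$ from \cite[Section 2.6]{ahs-disk-welding}, to read off that the density of $x$ is proportional to $|\cMtwo(W_-;\ell,x)|\cdot|\cMtwo(W_+;x,r)|$ with a factor $c_{W_-,W_+}$ that is independent of $\ell,r$ (this independence following from scaling and translation covariance, which force the constant to be a pure number depending only on the weights and on the normalization of $\cMtwo$ relative to the quantum wedge measure). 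Positivity and finiteness of $c_{W_-,W_+}$ would then be verified by testing a single convenient observable — for instance the total quantum area, whose law is explicit on both sides in a reference case through the mating-of-trees description.

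For the thin regime $W_-+W_+<\tfrac{\gamma^2}{2}$ I would argue separately: by definition $\cMtwo(W_-+W_+)$ is a Poissonian concatenation of weight-$(\gamma^2-W_--W_+)$ thick disks, so the welding is performed bead by bead, and the statement reduces to the already-proved thick case applied to each bead together with Campbell's formula for the governing Poisson point process; the mixed cases where only one of $W_\pm$ lies below $\tfrac{\gamma^2}{2}$ are handled the same way by inserting the chain structure on the appropriate side. The step I expect to be the main obstacle is the rigorous local-to-global transfer of the Markov/independence property from wedges to disks: one must show that conditional independence of the two sides of the interface — known a priori only for wedges — survives localization and mutual absolute continuity, handle the behaviour at the two marked points (where the disk and wedge fields fail to be globally mutually absolutely continuous), and rule out that quantum boundary length is lost along the (possibly very irregular) welding interface, so that the lengths seen from the two sides genuinely agree. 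Showing $c_{W_-,W_+}$ does not depend on $(\ell,r)$ is the remaining delicate point, and is where the scaling/translation symmetries of the disk and wedge measures must be used carefully.
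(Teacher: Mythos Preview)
This proposition is not proved in the paper at all: it is stated as a direct citation of \cite[Theorem 2.2]{ahs-disk-welding}, the companion paper, and no argument is given here. So there is no ``paper's own proof'' to compare your proposal against.

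That said, your outline is a reasonable high-level sketch of the strategy actually used in \cite{ahs-disk-welding}: deduce the disk welding from the wedge welding of \cite[Theorem 1.2]{wedges} via local absolute continuity near marked points, use the disintegration $\cMtwo(W)=\iint \cMtwo(W;\ell,r)\,d\ell\,dr$, and handle the thin regime bead-by-bead via the Poisson point process structure. The points you flag as delicate (the local-to-global transfer of conditional independence, behaviour at the two marked endpoints, and the $(\ell,r)$-independence of $c_{W_-,W_+}$) are indeed where most of the work in \cite{ahs-disk-welding} goes. But for the purposes of the present paper, the correct ``proof'' is simply the citation.
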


\subsection{Conformal welding of $\cMtwo$ and $\cMthree$}
For $W >0$ with $W \neq \frac{\gamma^2}2$,  let $\cMtwo(W; \ell) := \int_0^\infty \cMtwo(W; \ell, r) \, dr$. 
Then  $\{ \cMtwo(W; \ell) \}_{\ell>0}$ is the 
disintegration  of $ \cMtwo(W)$ over its left  boundary length. Namely  samples from $\cMtwo(W; \ell)$ have left boundary length $\ell$ and
\(	\cMtwo(W) = \int_0^\infty \cMtwo(W; \ell) \, d\ell\).
Recall $\cMthree(W; \alpha)$ from Definitions~\ref{def-3-disk-thick} and~\ref{def-disk-03}, where we insert a third boundary marked point.
We now gives a concrete description of  its disintegration  over the unmarked boundary arc length. We start from the thick disk case $W>\frac{\gamma^2}{2}$.
\begin{lemma}\label{lem-fixed-bdy-length}
	For $W > \frac{\gamma^2}2$, $\beta = Q + \frac\gamma2 - \frac W\gamma$, and $\alpha> \max(0, \gamma - 2\beta)$, 
	sample $h$ from $P_\cS$ (the GFF on $\cS$). Let \(	\wt h = h - (Q-\beta) \left|\Re \cdot \right| + \frac\alpha2 G_\cS (\cdot, 0)\) and \( L = \nu_{\wt h}(\R + \pi i)\). 
	For $\ell > 0$, let $\LF_{\cS, \ell}^{(\beta, \pm\infty), (\alpha, 0)}$ be the law of $\wt h + \frac2\gamma\log \frac\ell L$ under the reweighted measure $\frac2\gamma \frac{\ell^{\frac2\gamma(\beta + \frac\alpha2 - Q) -1}}{L^{\frac2\gamma(\beta + \frac\alpha2 - Q)}} P_\cS(dh)$.
	Let $\cMthree(W; \alpha; \ell)$ be law of the  marked quantum surface $(\cS, \phi, -\infty, +\infty, 0)$ 
	where  $\phi  $ is sampled from $\frac\gamma2 (Q-\beta)^{-2}\LF_{\cS, \ell}^{(\beta, \pm\infty), (\alpha, 0)}$.  
	Then samples from $\cMthree(W; \alpha; \ell)$ have unmarked boundary arc length $\ell$ and
	\eqb\label{eq-disint-3pt}
	\begin{split}
		&\cMthree(W;\alpha) = \int_0^\infty \cMthree(W; \alpha; \ell) \, d\ell\qquad \textrm{and}\,\,\\  &|\cMthree(W; \alpha; \ell)|
		 = (Q-\beta)^{-2} \ol H^{(\beta, \beta, \alpha)}_{(0,1,0)} \ell^{\frac2\gamma(\beta + \frac12\alpha - Q)-1}.
	\end{split}
	\eqe
\end{lemma}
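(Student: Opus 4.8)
The plan is to realize the disintegration of $\cMthree(W;\alpha)$ over the unmarked boundary arc length by the same Girsanov/rescaling mechanism that underlies Definitions~\ref{def-thick-disk} and~\ref{def-RV}. By Definition~\ref{def-3-disk-thick}, a sample from $\cMthree(W;\alpha)$ is $(\cS,\phi,-\infty,+\infty,0)$ with $\phi$ drawn from $\frac\gamma2(Q-\beta)^{-2}\LF_\cS^{(\beta,-\infty),(\beta,+\infty),(\alpha,0)}$. Write $\phi = \wt h + \mathbf c$ where $\wt h = h - (Q-\beta)|\Re\cdot| + \frac\alpha2 G_\cS(\cdot,0)$ with $h\sim P_\cS$ and $\mathbf c$ an independent log-weighted constant with density $C_\cS^{(\beta,\pm\infty),(\alpha,0)}e^{(\beta+\frac\alpha2-Q)c}dc$. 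The unmarked boundary arc is $\R+\pi i$ (the side not containing the $\alpha$ insertion at $0$), and its $\nu_\phi$-length is $e^{\frac\gamma2\mathbf c}\nu_{\wt h}(\R+\pi i) = e^{\frac\gamma2\mathbf c}L$, with $L=\nu_{\wt h}(\R+\pi i)$ as in the statement.

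First I would integrate out $\mathbf c$ to obtain the law of the unmarked arc length and the conditional law of $\phi$ given that length. Concretely, for a test function on $(\wt h, \text{arc length }\ell)$, change variables $\ell = e^{\frac\gamma2 c}L$ so $dc = \frac2\gamma\ell^{-1}d\ell$; the constant $\mathbf c$ then becomes $\frac2\gamma\log\frac\ell L$ and the exponential weight $e^{(\beta+\frac\alpha2-Q)c}$ turns into $\frac2\gamma\,\ell^{\frac2\gamma(\beta+\frac\alpha2-Q)-1}L^{-\frac2\gamma(\beta+\frac\alpha2-Q)}d\ell$. This is exactly the reweighting of $P_\cS$ appearing in the definition of $\LF_{\cS,\ell}^{(\beta,\pm\infty),(\alpha,0)}$ (after carrying along the constant $C_\cS^{(\beta,\pm\infty),(\alpha,0)}$, which gets absorbed since $\LF_\cS^{(\beta,\pm\infty),(\alpha,0)}$ already contains it in Definition~\ref{def-RV}). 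Shifting $\wt h$ by the constant $\frac2\gamma\log\frac\ell L$ produces a field whose unmarked arc length is deterministically $\ell$, and we recognize the resulting disintegrated measure as $\frac\gamma2(Q-\beta)^{-2}\LF_{\cS,\ell}^{(\beta,\pm\infty),(\alpha,0)}$, i.e.\ $\cMthree(W;\alpha;\ell)$. This establishes the first identity in~\eqref{eq-disint-3pt} and shows samples from $\cMthree(W;\alpha;\ell)$ have unmarked arc length $\ell$.

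Next I would compute the total mass $|\cMthree(W;\alpha;\ell)|$. Integrating the density just obtained against a test function in $\ell$ and matching with Proposition~\ref{prop-H-density}: that proposition says the unmarked boundary length of $\cMthree(W;\alpha)$ has law $\1_{\ell>0}(Q-\beta)^{-2}\ol H^{(\beta,\beta,\alpha)}_{(0,1,0)}\ell^{\frac2\gamma(\beta+\frac12\alpha-Q)-1}d\ell$. Since $\cMthree(W;\alpha) = \int_0^\infty\cMthree(W;\alpha;\ell)\,d\ell$ and the samples from $\cMthree(W;\alpha;\ell)$ have arc length exactly $\ell$, the law of the unmarked arc length under $\cMthree(W;\alpha)$ is $|\cMthree(W;\alpha;\ell)|\,d\ell$; comparing gives $|\cMthree(W;\alpha;\ell)| = (Q-\beta)^{-2}\ol H^{(\beta,\beta,\alpha)}_{(0,1,0)}\ell^{\frac2\gamma(\beta+\frac12\alpha-Q)-1}$, which is the second claim. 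The hypotheses $\alpha>0$, $\frac\alpha2+\beta>\frac\gamma2$, $\beta<Q$ are exactly what is needed for Proposition~\ref{prop-rz-H}/\ref{prop-H-density} to apply and for $L$ to have finite positive moments of the relevant order; the condition $\alpha>\max(0,\gamma-2\beta)$ in the lemma is the same constraint rewritten.

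I expect the only genuinely delicate point to be bookkeeping: making sure the prefactor $C_\cS^{(\beta,\pm\infty),(\alpha,0)}$ and the $\frac2\gamma$ Jacobian from the $c\mapsto\ell$ change of variables are tracked consistently so that the normalization in the definition of $\LF_{\cS,\ell}^{(\beta,\pm\infty),(\alpha,0)}$ (the factor $\frac2\gamma\,\ell^{\frac2\gamma(\beta+\frac\alpha2-Q)-1}L^{-\frac2\gamma(\beta+\frac\alpha2-Q)}$) comes out correctly, and that the constant $\frac\gamma2(Q-\beta)^{-2}$ is not double-counted. There is also a minor measurability/Fubini point in justifying that the disintegration produced this way is the canonical continuous-in-$\ell$ version, but this follows the same pattern as the analogous statement for $\cMtwo$ recalled in Section~\ref{sec:disk-welding}. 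Everything else is a routine application of Girsanov's theorem in the style of Lemma~\ref{lem-girsanov} and Lemma~\ref{lem-inserting}, both already available.
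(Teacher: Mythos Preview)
Your proposal is correct and follows essentially the same route as the paper: the change of variables $c=\frac2\gamma\log\frac\ell L$ together with Fubini gives the disintegration $\LF_\cS^{(\beta,\pm\infty),(\alpha,0)}=\int_0^\infty \LF_{\cS,\ell}^{(\beta,\pm\infty),(\alpha,0)}\,d\ell$, and then Proposition~\ref{prop-H-density} yields the total mass. Two minor remarks: Girsanov's theorem is not actually needed here (only Fubini and the substitution), and the bookkeeping worry about $C_\cS^{(\beta,\pm\infty),(\alpha,0)}$ is moot since the third insertion is at $s_3=0$, so $\Re s_3=0$ and this constant equals $1$.
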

\begin{proof}
	The first assertion is clear since $\nu_{\wt h + \frac2\gamma \log \frac\ell L}(\R+\pi i) = \frac \ell L \nu_{\wt h} (\R+ \pi i) = \ell$. We now prove that
	\begin{equation}\label{eq:LF-disint}
	\LF_{\cS}^{(\beta, \pm\infty), (\alpha, 0)}=\int_0^\infty\LF_{\cS, \ell}^{(\beta, \pm\infty), (\alpha, 0)}\, d\ell.
	\end{equation}
	For any nonnegative measurable function $F$ on $H^{-1}(\cS)$ we have
	\eqbn
\begin{split}
	&\int_0^\infty \int F(\wt h + \frac2\gamma \log \frac\ell L) \frac2\gamma \frac{\ell^{\frac2\gamma(\beta + \frac\alpha2 - Q) -1}}{L^{\frac2\gamma(\beta + \frac\alpha2 - Q)}} \,P_\cS(dh)\, d\ell\\ 
	&\qquad\qquad\qquad\qquad= \int \int_\R F(\wt h + c) e^{(\beta + \frac\alpha2 - Q)c}\, dc \,P_\cS(dh) 
	\end{split}
	\eqen
	using Fubini's theorem and the change of variables $c = \frac2\gamma \log \frac \ell L$. Therefore \eqref{eq:LF-disint} holds. 
	By Definition~\ref{def-3-disk-thick} of $\cMthree(W;\alpha) $, we have $\cMthree(W;\alpha) = \int_0^\infty \cMthree(W; \alpha; \ell) \, d\ell$.
	The second identify in~\eqref{eq-disint-3pt} then directly  follows from Proposition~\ref{prop-H-density}. 
\end{proof}

If $W \in (0, \frac{\gamma^2}2)$, $\beta = Q + \frac\gamma2 - \frac W\gamma$, $\alpha > 0$ and $\frac12\alpha > \beta - Q$, then for each $\ell > 0$ we can similarly define the corresponding measure $\cMthree(W;\alpha; \ell)$ on quantum surfaces with unmarked boundary arc length $\ell$ via
\eqb\label{eq-thin-alpha-disk}
\begin{split}
	&\cMthree(W; \alpha; \ell):= (1 - \frac2{\gamma^2}W)^2 \int_0^\ell \int_0^{\ell - x} \cMtwo(W;x) \times \cMthree(\gamma^2 - W; \alpha; y)\\ &\qquad\qquad\qquad\qquad\qquad\qquad\qquad\qquad\qquad\times \cMtwo(W; \ell - x - y)\, dy \, dx
\end{split}
\eqe
where the integrand is understood as concatenation of surfaces in the sense of Lemma~\ref{lem-thin-weighted}.
\begin{lemma}
For $W\in (0,\frac{\gamma^2}{2})$, \eqref{eq-disint-3pt} still holds with the  $\cMthree(W; \alpha; \ell) $ defined above.
\end{lemma}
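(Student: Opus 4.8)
For the first identity in \eqref{eq-disint-3pt}, the plan is to unfold Definition~\ref{def-disk-03} and disintegrate each of the three constituent measures over the appropriate boundary length. Recall that $\cMthree(W;\alpha)$ is the law of the concatenation $S$ of $(S_1,S_2,S_3)$ sampled from $(1-\frac2{\gamma^2}W)^2\,\cMtwo(W)\times\cMthree(\gamma^2-W;\alpha)\times\cMtwo(W)$, and that, by that definition, the unmarked boundary length $L$ of $S$ is the sum of the left boundary lengths of $S_1$ and $S_3$ and the unmarked boundary length of $S_2$. Writing $\cMtwo(W)=\int_0^\infty\cMtwo(W;x)\,dx$ (disintegration over left boundary length) for the two outer factors, and $\cMthree(\gamma^2-W;\alpha)=\int_0^\infty\cMthree(\gamma^2-W;\alpha;y)\,dy$ for the middle factor --- the latter being the thick case of Lemma~\ref{lem-fixed-bdy-length}, which applies since $\gamma^2-W>\gamma^2/2$ and the hypothesis $\alpha>\max(0,\gamma-2(2Q-\beta))$ reduces to $\alpha>0$ because $\gamma-2(2Q-\beta)=\gamma-4Q+2\beta<\gamma-\frac4\gamma<0$ --- and using Fubini to integrate over the simplex $\{x,y\ge 0,\ x+y\le\ell\}$ with third coordinate $\ell-x-y$, one obtains exactly the measure displayed in \eqref{eq-thin-alpha-disk}. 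Since boundary lengths add under concatenation, samples from $\cMthree(W;\alpha;\ell)$ indeed have unmarked boundary length $\ell$, so this shows $\cMthree(W;\alpha)=\int_0^\infty\cMthree(W;\alpha;\ell)\,d\ell$.

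For the mass formula, the cleanest route is to note that the first identity already identifies $\ell\mapsto|\cMthree(W;\alpha;\ell)|$ with the density of the law of the unmarked boundary length $L$ under $\cMthree(W;\alpha)$. Since the standing hypotheses of \eqref{eq-thin-alpha-disk} include $\alpha>0$ and $\frac12\alpha>\beta-Q$, Proposition~\ref{prop-disk-03-length} applies, and its formula \eqref{eq:H-density} gives this density as $\1_{\ell>0}(Q-\beta)^{-2}\ol H^{(\beta,\beta,\alpha)}_{(0,1,0)}\ell^{\frac2\gamma(\beta+\frac12\alpha-Q)-1}$, which is the second claim in \eqref{eq-disint-3pt}.

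Alternatively --- and as a consistency check --- one can compute $|\cMthree(W;\alpha;\ell)|$ directly from \eqref{eq-thin-alpha-disk}. By Proposition~\ref{prop:thin-length-law} one has $|\cMtwo(W;x)|=\ol R(\beta,1,0)\,x^{a-1}$ with $a:=1-\frac2{\gamma^2}W=\frac2\gamma(\beta-Q)$, and by Lemma~\ref{lem-fixed-bdy-length} applied to weight $\gamma^2-W$ (whose $\beta$-parameter is $2Q-\beta$) one has $|\cMthree(\gamma^2-W;\alpha;y)|=(Q-\beta)^{-2}\ol H^{(2Q-\beta,2Q-\beta,\alpha)}_{(0,1,0)}\,y^{c-1}$ with $c:=\frac2\gamma(Q-\beta+\frac12\alpha)$. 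Combining the Dirichlet integral $\int_0^\ell\int_0^{\ell-x}x^{a-1}(\ell-x-y)^{a-1}y^{c-1}\,dy\,dx=\ell^{2a+c-1}\Gamma(a)^2\Gamma(c)/\Gamma(2a+c)$, the identity $2a+c-1=\frac2\gamma(\beta+\frac12\alpha-Q)-1$, the reflection identity \eqref{eq-reflection-H}, the relation $1-\frac2{\gamma^2}W=\frac2\gamma(\beta-Q)$, and $H^{(\beta,\beta,\alpha)}_{(0,1,0)}=\frac2\gamma\Gamma(\frac2\gamma(\beta+\frac12\alpha-Q))\ol H^{(\beta,\beta,\alpha)}_{(0,1,0)}$ recovers the same constant. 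The only real care needed is bookkeeping: checking that the moment and GMC hypotheses of Proposition~\ref{prop:thin-length-law}, Lemma~\ref{lem-fixed-bdy-length}, and Proposition~\ref{prop-disk-03-length} all hold under $\alpha>0$ and $\frac12\alpha>\beta-Q$, and that the unmarked arcs of the three constituent surfaces concatenate into the unmarked boundary of $S$ with no mismatch.
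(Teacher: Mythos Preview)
Your proof is correct and follows essentially the same approach as the paper: the paper's own argument is just the terse two-sentence version ``The first claim is immediate from Definition~\ref{def-disk-03}, and the second then follows from Proposition~\ref{prop-disk-03-length},'' and you have unpacked precisely those two steps (Fubini plus disintegration for the first, Proposition~\ref{prop-disk-03-length} for the second). The alternative Dirichlet-integral computation you sketch is a nice sanity check but not needed for the proof.
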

\begin{proof}
The first claim is immediate from Definition~\ref{def-disk-03}, and the second then follows from Proposition~\ref{prop-disk-03-length}.
\end{proof}

Recall  that the special case of $\cMthree(W;\alpha)$ with $\alpha=\gamma$ is $\cMthree(W)$ from Definition~\ref{def-three-pt-thick}.
We  now give  a variant of Proposition~\ref{prop-2pt-welding} for $\cMthree(W;\gamma)$.
 The $\mathrm{Weld}$ notation in our next two results is used analogously as in Proposition~\ref{prop-2pt-welding}.

\begin{lemma}\label{lem-curve-unweighted}
	For $W_-, W_+ > 0$ with $W_+, W_- + W_+ \neq \frac{\gamma^2}2$, there is a constant $c_{W_-, W_+} \in (0, \infty)$ such that
	for each $\ell>0$
	\eqb\label{eq-curve-unweighted}
	\begin{split}
		&\cMthree(W_- + W_+; \gamma; \ell)\otimes \SLE_\kappa(W_--2;W_+-2)\\ 
		&= c_{W_-, W_+}\int_0^\infty \mathrm{Weld}\left(\cMtwo(W_-; \ell, x), \cMthree(W_+; \gamma; x)\right)\,dx.
	\end{split}
	\eqe
\end{lemma}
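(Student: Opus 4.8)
The plan is to deduce Lemma~\ref{lem-curve-unweighted} from Proposition~\ref{prop-2pt-welding} by adding a quantum-typical point on the right boundary arc of the welded surface and tracking where it lands. First I would recall that $\cMthree(W;\gamma)$ is, by Proposition~\ref{prop-2-pt-weight-length} and Definition~\ref{def-three-pt-thick}, obtained from $\cMtwo(W)$ by reweighting by the right boundary length $R$ and then sampling the extra point from the normalized quantum length measure on the right arc; and by Lemma~\ref{lem-thin-weighted} the same holds in the thin regime. So I want to start from the welding identity in Proposition~\ref{prop-2pt-welding},
\[
\cMtwo(W_-+W_+;\ell,r)\otimes\SLE_\kappa(W_--2;W_+-2)= c_{W_-,W_+}\int_0^\infty \mathrm{Weld}\left(\cMtwo(W_-;\ell,x),\cMtwo(W_+;x,r)\right)\,dx,
\]
integrate over $r$ to fix only the left boundary length $\ell$, then weight both sides by the length of the right boundary arc of the glued surface and sample a point $\mathbf z$ from the normalized quantum length measure restricted to that arc.

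The key observation is that the right boundary arc of the welded surface $\mathrm{Weld}(\cMtwo(W_-;\ell,x),\cMtwo(W_+;x,r))$ is exactly the right boundary arc of the $\cMtwo(W_+)$ piece (the left arc of the $\cMtwo(W_-)$ piece becomes the left arc of the whole surface, and the glued arc is interior). Hence weighting the welded surface by its right-arc length and adding a quantum-typical point there is the same as weighting the $\cMtwo(W_+;x,r)$ factor by $r$ and adding a quantum-typical point on its right arc — which by definition converts $\cMtwo(W_+;x,\cdot)$ into $\cMthree(W_+;\gamma;\cdot)$ fibered over its right boundary length. The interface $\eta$ is unaffected by this reweighting and point insertion, so the law of $\eta$ conditionally on the surface is still $\SLE_\kappa(W_--2;W_+-2)$ parametrized by quantum length, and on the left side the reweighting turns $\cMtwo(W_-+W_+;\ell)$ into $\cMthree(W_-+W_+;\gamma;\ell)$. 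Matching up the two sides and using that the disintegration $\cMthree(W_+;\gamma)=\int_0^\infty\cMthree(W_+;\gamma;x)\,dx$ is the one established in Lemma~\ref{lem-fixed-bdy-length} (and its thin counterpart) gives \eqref{eq-curve-unweighted} with the same welding constant $c_{W_-,W_+}$.

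The steps in order: (1) state the right-boundary-arc identification for the welded surface and check it is measurable/continuous in the gluing length $x$; (2) apply Proposition~\ref{prop-2pt-welding}, integrate out $r$, then reweight by the right-arc length and add a quantum-typical point, using Fubini to move the reweighting inside the $dx$-integral onto the $\cMtwo(W_+;x,\cdot)$ factor; (3) identify the reweighted-plus-marked factor with $\cMthree(W_+;\gamma;x)$ via Definition~\ref{def-three-pt-thick}, Proposition~\ref{prop-2-pt-weight-length}, and for the thin case Lemma~\ref{lem-thin-weighted}; (4) identify the reweighted left side with $\cMthree(W_-+W_+;\gamma;\ell)$ and note the SLE curve is untouched; (5) read off \eqref{eq-curve-unweighted}. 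The main obstacle I anticipate is a bookkeeping/measure-theoretic one rather than a conceptual one: justifying the interchange of the reweighting and point-sampling operations with the $dx$-integral and with the a.e.-defined conformal welding map, and making sure the canonical version of $\{\cMtwo(W;\ell,r)\}$ (and the corresponding fibered $\cMthree$) is used consistently so that the identity holds for every $\ell$, not just a.e.~$\ell$ — this is where the continuity statements from \cite[Section 2.6]{ahs-disk-welding} and the continuity of $\cMthree(W;\alpha;\ell)$ in $\ell$ will need to be invoked. A minor additional point is handling the excluded values $W_+,W_-+W_+=\gamma^2/2$, which are simply not claimed.
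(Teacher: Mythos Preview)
Your proposal is correct and follows essentially the same approach as the paper: start from Proposition~\ref{prop-2pt-welding}, weight by the right boundary length $r$, sample a quantum-typical point on the right arc, and identify the resulting measures via Proposition~\ref{prop-2-pt-weight-length} in the thick case and Lemma~\ref{lem-thin-weighted} in the thin case. The paper's proof is a single sentence to this effect, and your elaboration of the bookkeeping (tracking which arc is marked/unmarked, the Fubini step, and the role of the canonical disintegration) is accurate and helpful.
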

\begin{proof}
	In Proposition~\ref{prop-2pt-welding}, sample a marked point from quantum length measure on the boundary arc of length $r$ (thus weighting by $r$). The result then follows from Proposition~\ref{prop-2-pt-weight-length} or Lemma~\ref{lem-thin-weighted}, depending on whether the quantum disks are thick or thin. 
\end{proof}

We now extend Lemma~\ref{lem-curve-unweighted} to $\cMthree(W;\alpha)$; see Figure~\ref{fig:conformal-derivative} for an illustration.
We first introduction an $\alpha$ variant of $\SLE_\kappa(W_--2;W_+-2)$.
Given a curve $\eta$ on $\cS$ from $-\infty$ to $\infty$,  let $D$ be the connected component of $\cS \backslash \eta$ containing $0$ on its boundary, and let $\psi_\eta : D \to \cS$ be the conformal map fixing $0$ and sending the first (resp. last) point on $\partial D$ hit by $\eta$ to $-\infty$ (resp. $+\infty$).
For $\alpha\in \R$, let $\Delta(\alpha) = \frac\alpha2 (Q - \frac\alpha2)$. 
For $W_-, W_+ > 0$, we define the measure $\sm(W_-, W_+, \alpha)$ 
on curves on  $\cS$ such that its Radon-Nikodym derivative with respect to $\SLE_\kappa(W_--2;W_+-2)$ is:
\[\frac{d\sm(W_-, W_+, \alpha)}{d\SLE_\kappa(W_--2;W_+-2)} (\eta) = \psi_\eta'(0)^{1-\Delta(\alpha)}.\]

When $W_- + W_+ \geq \frac{\gamma^2}2$, we define $\cMthree(W_- + W_+;\alpha; \ell)\otimes \sm(W_-, W_+, \alpha)$ 
in the exact same way as $	\cMthree(W_- + W_+; \gamma; \ell)\otimes \SLE_\kappa(W_--2;W_+-2) $ in Lemma~\ref{lem-curve-unweighted} 
 with $ \sm(W_-, W_+, \alpha)$ in place of $\SLE_\kappa(W_--2;W_+-2)$.
When $W_- + W_+ < \frac{\gamma^2}2$, we still define $\cMthree(W_- + W_+;\alpha; \ell)\otimes \sm(W_-, W_+, \alpha)$ as a chain of curve-decorated quantum surfaces
as  $	\cMthree(W_- + W_+; \gamma; \ell)\otimes \SLE_\kappa(W_--2;W_+-2) $, except that for the quantum surface containing the additional boundary marked point, 
we use $ \sm(W_-, W_+, \alpha)$ instead  of $\SLE_\kappa(W_--2;W_+-2)$ to decorate that surface.

\begin{proposition}\label{prop-curve-weight}
	For $W_- \geq \frac{\gamma^2}2$ and $W_+ > 0$ with $W_+\neq \frac{\gamma^2}2$, there is a constant $c_{W_-, W_+} \in (0, \infty)$ such that for all $\alpha \in \R$ and $\ell > 0$
	\eqb\label{eq-curve-weighted}
	\begin{split}
		&\cMthree(W_- + W_+;\alpha; \ell)\otimes \sm(W_-, W_+, \alpha) \\
		&=   c_{W_-, W_+} \int_0^\infty \mathrm{Weld}\left( \cMtwo(W_-; \ell, x) , \cMthree(W_+; \alpha; x) \right)\, dx.
	\end{split}
	\eqe
\end{proposition}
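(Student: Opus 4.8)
\textbf{Proof strategy for Proposition~\ref{prop-curve-weight}.}
The plan is to deduce the $\alpha$-version of the welding identity from the $\alpha=\gamma$ case in Lemma~\ref{lem-curve-unweighted} via a Girsanov-type absolute continuity argument, exactly in the spirit of how Definition~\ref{def-3-disk-thick} extends $\cMthree(W;\gamma)$ to $\cMthree(W;\alpha)$. The key observation is that both sides of~\eqref{eq-curve-unweighted} and~\eqref{eq-curve-weighted} are built from Liouville fields with a $(\gamma,\cdot)$ resp.\ $(\alpha,\cdot)$ insertion at the same marked point, and the Liouville field with an $\alpha$-insertion is (locally, away from the other insertions) absolutely continuous with respect to the one with a $\gamma$-insertion with an explicit Radon--Nikodym derivative governed by the difference of the $G$-kernel terms. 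So first I would fix $W_-\ge \gamma^2/2$, $W_+$ (thick or thin), and $\ell>0$, and write down precisely the measure on field-and-curve pairs appearing on each side when embedded in a fixed coordinate, say $(\cS,-\infty,+\infty,0)$: on the left the field is $\frac\gamma2(Q-\beta_+)^{-2}\LF_{\cS,\ell}^{(\beta_++\beta_-,\pm\infty),(\alpha,0)}$-type decorated by an independent $\SLE_\kappa(W_--2;W_+-2)$ reweighted by $\psi_\eta'(0)^{1-\Delta(\alpha)}$, and on the right it is the welding of $\cMtwo(W_-;\ell,x)$ with $\cMthree(W_+;\alpha;x)$.

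Second, I would make the comparison quantitative. Parametrize the $\alpha$-insertion continuously: for a field $\phi$ on $\cS$ and $\alpha\in\R$, reweighting $\LF^{(\beta,\pm\infty),(\gamma,0)}_\cS$ to $\LF^{(\beta,\pm\infty),(\alpha,0)}_\cS$ amounts to inserting a limit of $\eps^{\bullet}e^{\frac{\alpha-\gamma}2\phi_\eps(0)}$ times a deterministic prefactor, by Lemma~\ref{lem:inserting} (and its $\cS$-analogue via Lemma~\ref{lem-equiv-C-cS}). On the welding side, the marked point $0$ sits on the boundary arc of $\cMthree(W_+;\alpha;x)$, and changing the insertion from $\gamma$ to $\alpha$ there reweights that factor identically. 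The content of the proposition is then that the \emph{curve} picks up precisely the factor $\psi_\eta'(0)^{1-\Delta(\alpha)}$ under this reweighting, with the same welding constant $c_{W_-,W_+}$. This is where the conformal covariance of the construction enters: when one reweights a field near a boundary point by a vertex operator of weight $\Delta(\alpha)$ and then applies a conformal map $\psi_\eta$ fixing that point, the coordinate-change formula~\eqref{eq:coordinate} together with the transformation of $\LF^{(\alpha,\cdot)}_\cS$ under $\conf(\cS)$ (Proposition~\ref{prop-hrv-invariance}, transported to $\cS$) produces a factor $|\psi_\eta'(0)|^{-\Delta(\alpha)}$; combined with the factor of $|\psi_\eta'(0)|$ already produced in the $\alpha=\gamma$ case by pushing the third marked point across the welding (the reweighting by $r$ in the proof of Lemma~\ref{lem-curve-unweighted} becomes a Jacobian), the net exponent is $1-\Delta(\alpha)$. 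So I would carry out the reweighting on each side, keeping track of the deterministic constants, and check that they match up to the $c_{W_-,W_+}$ from Lemma~\ref{lem-curve-unweighted}.

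Third, I would handle the thin regime $W_-+W_+<\gamma^2/2$ and the various edge cases by the same bookkeeping that already appears in Definition~\ref{def-disk-03} and~\eqref{eq-thin-alpha-disk}: the reweighting only touches the single bead carrying the marked point $0$, and Campbell's formula is not needed here since $\ell$ is fixed, so the chain-of-beads structure passes through verbatim. The condition $W_-\ge\gamma^2/2$ guarantees that $\cMtwo(W_-;\ell,x)$ is an honest (thick) two-pointed disk so that the welding of Proposition~\ref{prop-2pt-welding} applies without the chain complication on the left side, and $W_+\ne\gamma^2/2$ is needed for $\cMtwo(W_+;x)$ (resp.\ $\cMthree(W_+;\alpha;x)$) to be defined. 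Finally, I would note that the measure-valued identity is first established for fields and curves embedded in a fixed coordinate, and then the statement as an identity of measures on curve-decorated quantum surfaces follows since both sides are invariant under the $\sim_\gamma$-equivalence.

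\textbf{Main obstacle.} The technical heart is justifying that the reweighting by the vertex operator at $0$ acts on the \emph{joint} law of field and independent $\SLE$ curve in the way claimed --- i.e.\ that after reweighting, the curve's conditional law given the field is $\psi_\eta'(0)^{1-\Delta(\alpha)}\,\SLE_\kappa(W_--2;W_+-2)$ rather than the unweighted $\SLE$. This requires care because the curve is sampled independently of the field in the $\alpha=\gamma$ construction, but the $\ell$-fixed disintegration and the welding both couple them; one must verify that the Girsanov tilt, which is a function of the field alone, interacts with the welding map so as to produce a curve-dependent factor. I expect this to be the place where one has to invoke the precise form of the conformal welding theorem (Proposition~\ref{prop-2pt-welding}) together with the covariance of Liouville measures, rather than a soft argument, and to be careful that the limit defining the vertex operator insertion (Lemma~\ref{lem:inserting}) commutes with the welding operation.
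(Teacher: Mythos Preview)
Your approach is essentially the paper's: reweight the $\alpha=\gamma$ identity of Lemma~\ref{lem-curve-unweighted} by a regularized vertex operator $\eps^{\frac14(\alpha^2-\gamma^2)}e^{\frac{\alpha-\gamma}2 X_\eps(0)}$, where $X$ is the field on the right piece in the coordinates $(\cS,\pm\infty,0)$ obtained via $\psi_\eta$, and then use Lemma~\ref{lem-change-insertion} to identify the limits on both sides.

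Your ``main obstacle'' is resolved by a one-line identity, not by soft conformal covariance. If $Y$ is the field on the big disk and $X = \psi_\eta \bullet_\gamma Y$, then the coordinate change formula together with the mean value property of $\log |(\psi_\eta^{-1})'|$ (after Schwarz reflection across $\R$) give
\[
X_\eps(0) = (Y,\theta_\eps^\eta) + Q\log|(\psi_\eta^{-1})'(0)|,
\]
where $\theta_\eps^\eta = (\psi_\eta^{-1})_*\theta_\eps$ is the pushforward of the semicircle average. Substituting and regrouping the $\eps$-power yields
\[
\eps^{\frac14(\alpha^2-\gamma^2)}e^{\frac{\alpha-\gamma}2 X_\eps(0)} = \Big(\tfrac{\eps}{\psi_\eta'(0)}\Big)^{\frac14(\alpha^2-\gamma^2)} e^{\frac{\alpha-\gamma}2 (Y,\theta_\eps^\eta)} \cdot |\psi_\eta'(0)|^{1-\Delta(\alpha)}.
\]
The last factor is precisely the curve weight defining $\sm(W_-,W_+,\alpha)$; the first two factors are, by a Koebe-type distortion estimate (since $\psi_\eta^{-1}$ maps $\partial B_\eps(0)\cap\cS$ to an approximate semicircle of radius $\eps/\psi_\eta'(0)$, uniformly in $\eta$), exactly the regularized reweighting that turns the $\gamma$-insertion in $Y$ into an $\alpha$-insertion via Lemma~\ref{lem-change-insertion}. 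This is how a tilt that is ostensibly a function of the field alone becomes curve-dependent: the \emph{location} of the regularizing circle in the $Y$-coordinates depends on $\eta$ through $\psi_\eta$.

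One correction to your intuition: your decomposition of the exponent as ``$-\Delta(\alpha)$ from covariance plus $1$ from a Jacobian already present in the $\gamma$ case'' is off. The $\alpha=\gamma$ case carries no $|\psi_\eta'(0)|$ factor at all (the curve is unweighted $\SLE$ there), and the reweighting by $r$ in Lemma~\ref{lem-curve-unweighted} is a boundary-length weight, not a derivative. The full exponent $1-\Delta(\alpha)=\Delta(\gamma)-\Delta(\alpha)$ emerges in one shot from the displayed computation above.
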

\begin{figure}[ht!]
	\begin{center}
		\includegraphics[scale=0.5]{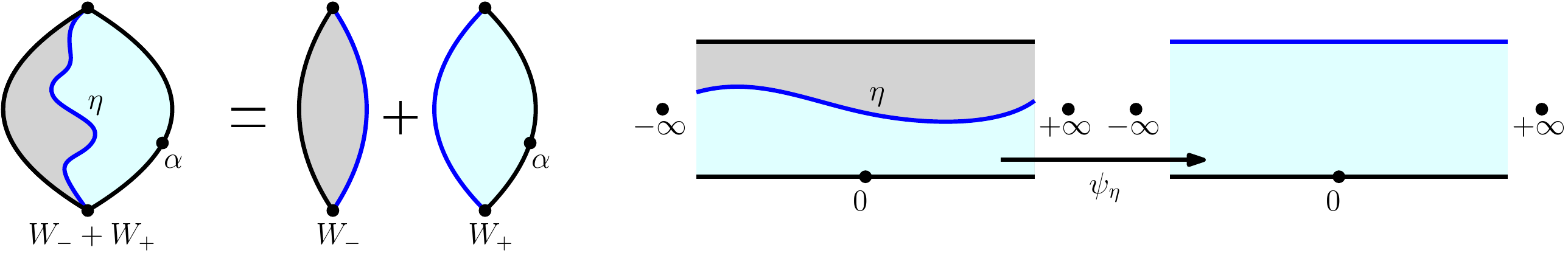}%
	\end{center}
	\caption{\label{fig:conformal-derivative} Proposition~\ref{prop-curve-weight} when $W_+ > \frac{\gamma^2}2$. \textbf{Left: } Illustration of~\eqref{eq-curve-weighted}. \textbf{Right: } Definition of $\psi_\eta$. }
\end{figure}
In the next section we will use Proposition~\ref{prop-curve-weight} to compute $|\sm(W_-, W_+, \alpha)| $, which equals 
$\E[\psi'(0)^{1-\Delta(\alpha)}]$ by definition. 
The key to the proof of Proposition~\ref{prop-curve-weight} is the following lemma based on Girsanov theorem. 

\begin{lemma}\label{lem-change-insertion}
	Let $\alpha_1, \alpha_2, \beta \in \R$ and $\ell > 0$. Then we have the weak convergence of measures
	\eqb \label{eq-change-insertion}
	\lim_{\eps \to 0}
	\eps^{\frac14(\alpha_2^2 - \alpha_1^2)}e^{\frac{(\alpha_2-\alpha_1)}2 \phi_\eps(0)}\LF_{\cS,\ell}^{(\beta, \pm\infty), (\alpha_1, 0)}(d\phi) = \LF_{\cS,\ell}^{(\beta,\pm\infty), (\alpha_2, 0)},
	\eqe
	and moreover $\left|\eps^{\frac14(\alpha_2^2 - \alpha_1^2)}e^{\frac{(\alpha_2-\alpha_1)}2 \phi_\eps(0)}\LF_{\cS,\ell}^{(\beta, \pm\infty), (\alpha_1, 0)}(d\phi)\right| \big/ \left|\LF_{\cS,\ell}^{(\beta,\pm\infty), (\alpha_2, 0)}\right| = 1+ o_\eps(1)$ where  the error $o_\eps(1)$ converge to 0 uniformly in $\ell$. 
\end{lemma}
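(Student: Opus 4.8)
The plan is to run the Girsanov computation of Lemma~\ref{lem:inserting}, but keeping track of the deterministic shift $\frac2\gamma\log\frac\ell L$ and the power-law weight that are built into the definition of $\LF_{\cS,\ell}^{(\beta,\pm\infty),(\alpha,0)}$ in Lemma~\ref{lem-fixed-bdy-length}. First I would unpack that definition: a sample $\phi$ from $\LF_{\cS,\ell}^{(\beta,\pm\infty),(\alpha_1,0)}$ is $\phi=\wt h_1+\frac2\gamma\log\frac\ell{L_1}$, where $h\sim P_\cS$, $\wt h_1=h-(Q-\beta)|\Re\cdot|+\frac{\alpha_1}2 G_\cS(\cdot,0)$, $L_1=\nu_{\wt h_1}(\R+\pi i)$, and $h$ carries the weight $\frac2\gamma\,\ell^{\frac2\gamma(\beta+\frac{\alpha_1}2-Q)-1}L_1^{-\frac2\gamma(\beta+\frac{\alpha_1}2-Q)}$ relative to $P_\cS$. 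Next I would record the standard (and \emph{deterministic}) circle-average asymptotics at $0\in\partial\cS$: the $\eps$-half-circle average of $G_\cS(\cdot,0)$ at $0$ is $-2\log\eps+o_\eps(1)$, that of $|\Re\cdot|$ is $o_\eps(1)$, and $\Var_{\cS}(h_\eps(0))=-2\log\eps+o_\eps(1)$ where $\E_\cS,\Var_\cS$ refer to $P_\cS$. The first two give $\phi_\eps(0)=h_\eps(0)-\alpha_1\log\eps+\frac2\gamma\log\frac\ell{L_1}+o_\eps(1)$, hence
\[
\eps^{\frac14(\alpha_2^2-\alpha_1^2)}e^{\frac{\alpha_2-\alpha_1}2\phi_\eps(0)}=\eps^{\frac14(\alpha_2-\alpha_1)^2}\,e^{\frac{\alpha_2-\alpha_1}2 h_\eps(0)}\,\Big(\tfrac\ell{L_1}\Big)^{\frac{\alpha_2-\alpha_1}\gamma}\,e^{o_\eps(1)},
\]
with the $o_\eps(1)$ deterministic, hence uniform in both $h$ and $\ell$.

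Second, I would multiply this prefactor into the weight and collect exponents. A direct computation shows the powers of $\ell$ combine to $\ell^{\frac2\gamma(\beta+\frac{\alpha_2}2-Q)-1}$ and the powers of $L_1$ combine to $L_1^{-\frac2\gamma(\beta+\frac{\alpha_2}2-Q)}$, i.e.\ exactly the $\alpha_2$-weight, still written through $\wt h_1$ and $L_1$; and the leftover $\eps$-power $\frac14(\alpha_2-\alpha_1)^2$ is precisely the Girsanov normalizer of $e^{\frac{\alpha_2-\alpha_1}2 h_\eps(0)}$, since by the variance asymptotic $\eps^{\frac14(\alpha_2-\alpha_1)^2}e^{\frac{\alpha_2-\alpha_1}2 h_\eps(0)}=(1+o_\eps(1))e^{\frac{\alpha_2-\alpha_1}2 h_\eps(0)}/\E_\cS[e^{\frac{\alpha_2-\alpha_1}2 h_\eps(0)}]$. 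Applying Girsanov's theorem then shifts $h\mapsto h+\frac{\alpha_2-\alpha_1}2 G_{\cS,\eps}(\cdot,0)$, turning $\wt h_1$ into $\wt h_1+\frac{\alpha_2-\alpha_1}2 G_{\cS,\eps}(\cdot,0)$ and $L_1$ into the boundary GMC length of this shifted field on $\R+\pi i$.

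Third, I would pass to the limit $\eps\to0$. Since $G_{\cS,\eps}(\cdot,0)\to G_\cS(\cdot,0)$ in $H^{-1}(\cS)$, the shifted field converges to $\wt h_2:=h-(Q-\beta)|\Re\cdot|+\frac{\alpha_2}2 G_\cS(\cdot,0)$; and since on $\R+\pi i$ the shift function $\frac{\alpha_2-\alpha_1}2 G_{\cS,\eps}(x+\pi i,0)$ converges uniformly, with decay at $\pm\infty$, to the bounded continuous $\frac{\alpha_2-\alpha_1}2 G_\cS(x+\pi i,0)$, reweighting the fixed GMC $\nu_{\wt h_1}\!\mid_{\R+\pi i}$ by $e^{\frac\gamma2(\cdot)}$ shows its mass converges to $L_2:=\nu_{\wt h_2}(\R+\pi i)$. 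Inserting these into the $P_\cS$-expectation and using dominated convergence — with the unbounded factor $L_1^{-\frac2\gamma(\beta+\frac{\alpha_2}2-Q)}$ controlled by truncating on $\{\delta<L_1<\delta^{-1}\}$ and bounding the tails via the finite GMC moments of the boundary length (all negative moments, positive moments below $4/\gamma^2$; cf.\ \cite{rz-boundary} and \cite{rhodes-vargas-review}) — yields $\LF_{\cS,\ell}^{(\beta,\pm\infty),(\alpha_2,0)}$, proving \eqref{eq-change-insertion}. For the total-mass claim I would take the test function identically $1$: both $|\eps^{\frac14(\alpha_2^2-\alpha_1^2)}e^{\frac{\alpha_2-\alpha_1}2\phi_\eps(0)}\LF_{\cS,\ell}^{(\beta,\pm\infty),(\alpha_1,0)}|$ and $|\LF_{\cS,\ell}^{(\beta,\pm\infty),(\alpha_2,0)}|$ equal $\frac2\gamma\ell^{\frac2\gamma(\beta+\frac{\alpha_2}2-Q)-1}$ times an $\ell$-independent $P_\cS$-expectation, so their ratio is manifestly free of $\ell$ and equals $1+o_\eps(1)$ by the convergence just established, the $o_\eps(1)$ being the deterministic one coming from the circle-average asymptotics.

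The main obstacle is the final interchange of $\lim_{\eps\to0}$ with the $P_\cS$-expectation in the presence of the unbounded weight $L_1^{-\frac2\gamma(\beta+\frac{\alpha_2}2-Q)}$ and of the deterministic-but-$h$-dependent shift inside the argument of $F$; this requires a uniform-integrability argument resting on GMC moment bounds. Everything else is bookkeeping with explicit exponents, which — as the above sketch checks — conspire so that the powers of $\eps$, of $\ell$, and of $L_1$ land exactly on the $\alpha_2$-normalization.
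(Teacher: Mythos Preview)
Your proposal is correct and follows essentially the same route as the paper: unpack the definition of $\LF_{\cS,\ell}^{(\beta,\pm\infty),(\alpha_1,0)}$, compute $\phi_\eps(0)$ via the circle-average asymptotics of $G_\cS(\cdot,0)$ and $|\Re\cdot|$ at $0$, collect exponents so that the $\eps$-power becomes the Girsanov normalizer and the $\ell,L$-powers become the $\alpha_2$-weight, apply Girsanov to shift $h\mapsto h+\frac{\alpha_2-\alpha_1}2 G_{\cS,\eps}(\cdot,0)$, and pass to the limit using that $G_{\cS,\eps}(\cdot,0)\to G_\cS(\cdot,0)$ uniformly on $\R+\pi i$. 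The only organizational difference is that the paper first reduces to $\ell=1$ (observing that $\phi\mapsto\phi+\frac2\gamma\log\ell$ transports $(\LF_{\cS,1})^\#$ to $(\LF_{\cS,\ell})^\#$ and that both total masses share the same explicit $\ell$-power), which makes the $\ell$-uniformity automatic; you instead carry $\ell$ throughout and observe that the $o_\eps(1)$ errors are deterministic and $\ell$-free, which gives the same conclusion.
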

\begin{proof}
	When $\phi$ is sampled from $(\LF_{\cS, 1}^{(\beta, \pm\infty), (\alpha, 0)})^\#$, the law of $\phi  + \frac2\gamma \log \ell$ is $(\LF_{\cS, \ell}^{(\beta, \pm\infty), (\alpha, 0)})^\#$.  Moreover, by Lemma~\ref{lem-fixed-bdy-length}
		\[
		\frac{\left|e^{\frac{(\alpha_2-\alpha_1)}2 \phi_\eps(0)}\LF_{\cS,\ell}^{(\beta, \pm\infty), (\alpha_1, 0)}(d\phi)\right|}{	\left|e^{\frac{(\alpha_2-\alpha_1)}2 \phi_\eps(0)}\LF_{\cS,1}^{(\beta, \pm\infty), (\alpha_1, 0)}(d\phi)\right|}
		 =  \frac{ \left|\LF_{\cS,\ell}^{(\beta,\pm\infty), (\alpha_2, 0)}\right|}{\left|\LF_{\cS,1}^{(\beta,\pm\infty), (\alpha_2, 0)}\right|} = \ell^{\frac2\gamma(\beta + \frac{\alpha_2}2 - Q)-1.}
		\]
		Therefore, it suffices to prove~\eqref{eq-change-insertion} for $\ell=1$. To this end, for $\eps >0$, let $G_{\cS, \eps}(z,0) := \E[h(z) h_\eps(0)]$. For a distribution $h$, let $\wt h^j := h - (Q-\beta)\left|\Re \cdot\right| + \frac{\alpha_j}2 G_\cS(\cdot, 0)$ for $j = 1,2$, and let $\wt h^{2,\eps} := \wt h^1 + \frac{\alpha_2 -\alpha_1}2 G_{\cS, \eps}$.	
	Let $f$ be a bounded and continuous functional on $H^{-1}(\cS)$ (see Remark~\ref{rmk:ftn-space}).
	Then 
	\begin{align*}
	&\int \eps^{\frac14(\alpha_2^2 - \alpha_1^2)}e^{\frac{(\alpha_2-\alpha_1)}2 (\wt h^1_\eps(0) - \frac2\gamma \log {\nu_{\wt h^1}(\R+ \pi i)})} f(\wt h^1 - \frac2\gamma \log  {\nu_{\wt h^1}(\R+ \pi i)})\\
	&\qquad\qquad\frac2\gamma \nu_{\wt h^1}(\R+ \pi i)^{-\frac2\gamma(\beta+\frac{\alpha_1}2 - Q)} P_\cS(dh) \\ 
	&= \int (1+o_\eps(1)) \E[e^{\frac{\alpha_2-\alpha_1}2h_\eps(0)}]^{-1} e^{\frac{\alpha_2-\alpha_1}2 h_\eps(0)} f(\wt h^1 - \frac2\gamma \log {\nu_{\wt h^1}(\R+ \pi i)}) \\
	&\qquad\qquad\frac2\gamma {\nu_{\wt h^1}(\R+ \pi i)^{-\frac2\gamma(\beta+\frac{\alpha_2}2 - Q)}} P_\cS(dh) 
	\\ &= \int (1+o_\eps(1)) f(\wt h^{2,\eps} - \frac2\gamma \log{\nu_{\wt h^{2,\eps}}(\R+ \pi i)}) 
	\frac2\gamma {\nu_{\wt h^{2,\eps} }(\R+ \pi i)^{-\frac2\gamma(\beta+\frac{\alpha_2}2 - Q)}} P_\cS(dh) \\
	&\stackrel{\eps \to 0}{\longrightarrow} \int f(\wt h^2 - \frac2\gamma \log {\nu_{\wt h^2}(\R+ \pi i)}) \frac2\gamma {\nu_{\wt h^2}(\R+ \pi i)^{-\frac2\gamma(\beta+\frac{\alpha_2}2 - Q)}} P_\cS(dh)\\
	&= \int f (\phi) \LF_{\cS,\ell}^{(\beta,\pm\infty), (\alpha_2, 0)}(d\phi).
	\end{align*}
In the first equality, we are using that the average of $-(Q-\beta)\left|\Re \cdot \right|+ \frac{\alpha_1}2 G_\cS(\cdot, 0)$ on $\partial B_\eps(0) \cap \cS$ is $-\alpha_1 \log \eps + o_\eps(1)$, and $\E[e^{\frac{\alpha_2-\alpha_1}2h_\eps(0)}] = (1+o_\eps(1))\eps^{-\frac14(\alpha_2-\alpha_1)^2}$. The second equality uses Girsanov's theorem, and the final limit uses the dominated convergence theorem and $\nu_{\wt h^2}(\R+ \pi i) = (1+o_\eps(1)) \nu_{\wt h^{2,\eps}}(\R+ \pi i)$ with error $o_\eps(1)$ uniform in $h$ (indeed $\sup_{z \in \R+ \pi i} |G_\cS(z,0) - G_{\cS, \eps}(z,0)| = o_\eps(1)$).  Since $f$ can be arbitrary we obtain~\eqref{eq-change-insertion} for $\ell=1$. 
\end{proof}

\begin{proof}[Proof of Proposition~\ref{prop-curve-weight}]
	We will weight~\eqref{eq-curve-unweighted} to obtain the proposition. We explain first the case $W_+ > \frac{\gamma^2}2$, then the modifications needed for $W_+ < \frac{\gamma^2}2$.
	
	Consider $W_+ > \frac{\gamma^2}2$ and let $\beta_+ = Q+\frac\gamma2-\frac{W_+}\gamma, \beta = Q + \frac\gamma2 - \frac W\gamma$. 
	Sample $(Y, \eta)$ from $\frac\gamma2 (Q-\beta)^{-2}\LF_{\cS, \ell}^{(\beta, \pm\infty), (\gamma, 0)} \times \SLE_\kappa(W_--2;W_+-2)$, so $(\cS, Y, -\infty, +\infty, 0)/{\sim_\gamma}$ has law given by the left hand side of~\eqref{eq-curve-unweighted}. Let $\xi_\eta$ be the map from the connected component of $\cS\backslash \eta$ containing the boundary arc $\R \pi i$ to $\cS$ such that $\xi_\eta$ fixes $\pm\infty$ and $\pi i$. Set
	\eqb\label{eq-Y}
	X = Y\circ \psi_\eta^{-1} + Q \log \left| (\psi_\eta^{-1})' \right|, \quad Z = Y\circ \xi_\eta^{-1} + Q \log \left| (\xi_\eta^{-1})' \right|.
	\eqe
	By Lemma~\ref{lem-curve-unweighted}, the conditional law of $(\cS, Z, \pm\infty)/{\sim_\gamma}$ given $X$ is $\cM_2^\disk(\frac{\gamma^2}2; \ell, \nu_X(\R + \pi i))^\#$, and the marginal law of $X$ is 
	\eqb\label{eq-law-X}
	c_{W_-, W_+}\int_0^\infty \int_0^\infty\ol R(\beta_1; 1,0) x^{-\frac2\gamma W_-}\frac\gamma2(Q-\beta)^{-2} \LF_{\cS,x}^{(\beta_+,-\infty), (\beta_+, +\infty), (\gamma, 0)} \,dx.
	\eqe
	Here, the expression $\frac\gamma2(Q-\beta)^{-2} \LF_\cS^{(\beta_+,-\infty), (\beta_+, +\infty), (\gamma, 0)}$ comes from Proposition~\ref{prop-2-pt-weight-length}, and the prefactor arises from the weighting induced by welding since $|\cMtwo(W_-; \ell)| = \ol R(\beta_1; 1,0) \ell^{-\frac2\gamma W_-}$.

	By Lemma~\ref{lem-change-insertion}, if we weight the law of $(X, Z)$ by $\eps^{\frac14(\alpha^2-\gamma^2)} e^{\frac{\alpha-\gamma}2 X_\eps(0)}$, as $\eps \to 0$ the marginal law of $X$ converges to 
	\eqb\label{eq-law-X-limit}
	c_{W_-, W_+}\int_0^\infty\ol R(\beta_1; 1,0) x^{-\frac2\gamma W_-}\frac\gamma2(Q-\beta)^{-2} \LF_{\cS,x}^{(\beta_+,-\infty), (\beta_+, +\infty), (\alpha, 0)} \,dx,
	\eqe
	and moreover the conditional law of $(\cS, Z, \pm\infty)/{\sim_\gamma}$ given $X$ is still $\cM_2^\disk(\frac{\gamma^2}2; \ell, \nu_X(\R + \pi i))^\#$ in the limit.

	For $\eps\in(0,1)$ let $\theta_\eps$ denote the uniform probability measure on $B_\eps(0)\cap\cS$ such that we have $h_\eps(0)=(h,\theta_\eps)$. Let $\theta^\eta_\eps=(\psi_\eta^{-1})_*\theta_\eps$ denote the pushforward of $\theta$ under $\psi_\eta^{-1}$. By Schwarz reflection we can extend $\psi_\eta^{-1}: \cS \to \cS$ to a holomorphic map $f$ from $\R \times (-\pi, \pi)$ to itself. Since $f'$ is holomorphic, $\log |f'|$ is harmonic and hence $(\log |f'|, \theta_\eps) = \log |f'(0)|$ by the mean value property of harmonic functions. Thus, by~\eqref{eq-Y}
	\eqb
	 X_\eps(0) 
	 = (Y\circ\psi^{-1}+Q\log|(\psi_\eta^{-1})'| , \theta_\eps)
	 = (Y,\theta^\eta_\eps) + Q\log|(\psi_\eta^{-1})'(0)|,
	 \label{eq-XYeps}
	\eqe
	and so	weighting $X$ by $\eps^{\frac14(\alpha^2-\gamma^2)} e^{\frac{\alpha-\gamma}2 X_\eps(0)}$ corresponds to weighting $(Y, \eta)$ by 
	\eqb \label{eq-weighted-Y} \eps^{\frac14(\alpha^2-\gamma^2)}e^{\frac{\alpha-\gamma}2((Y,\theta^\eta_\eps) - Q \log |\psi_\eta'(0)|)}  =  \left(\frac{\eps}{\psi_\eta'(0)} \right)^{\frac14(\alpha^2-\gamma^2)} e^{\frac{\alpha-\gamma}2 (Y,\theta^\eta_\eps)} \cdot |\psi_\eta'(0)|^{\frac14\alpha^2 - \frac Q2\alpha + 1}.
	\eqe
	
	Now we note that for any fixed curve $\eta_0$ in $\ol\cS$ from $-\infty$ to $+\infty$ that does not hit $0$, we have a distortion estimate $|(\psi_{\eta_0}^{-1})'(z) - (\psi_{\eta_0}^{-1})'(0)| / |(\psi_{\eta_0}^{-1})'(0)| = o_\eps(1)$ for $|z| < \eps$, with $o_\eps(1)$ not depending on $\eta_0$. This follows e.g.\ from  \cite[Theorem 3.21]{lawler-book}, which gives the analogous bound for interior points and can be applied to $\psi_{\eta_0}^{-1}$ after extension by Schwarz reflection. Thus, when $h$ is sampled from $P_\cS$ we have  $\E[e^{\frac{\alpha-\gamma}2 (h, \theta^{\eta_0}_\eps)}] = (1+o_\eps(1))\left(\frac\eps{\psi'_{\eta_0}(0)}\right)^{-(\frac{\alpha-\gamma}2)^2}$, where $o_\eps(1)$ does not depend on $\eta_0$. Using this fact and the argument of Lemma~\ref{lem-change-insertion}, we obtain Lemma~\ref{lem-change-insertion} with $\eps$ replaced by $\eps/\psi_{\eta_0}'(0)$ and $\phi_\eps$ replaced by $(\phi, \theta_\eps^{\eta_0})$, where the $o_\eps(1)$ errors do not depend on $\eta_0$. Therefore, for any bounded measurable function $g$ on the space of curves in $\cS$ from $-\infty$ to $+\infty$ equipped with the Hausdorff  topology and any bounded continuous function $F: (H^{-1}(\cS))^3 \to \R$ we have
	\begin{align*}
		&\lim_{\eps\to0}\int \left(\int \left(\frac{\eps}{\psi_\eta'(0)} \right)^{\frac14(\alpha^2-\gamma^2)} e^{\frac{\alpha-\gamma}2 (Y,\theta^\eta_\eps)} F(X,Y,Z) \frac\gamma2 (Q-\beta)^{-2}\LF_{\cS, \ell}^{(\beta, \pm\infty), (\gamma, 0)}(dY) \right)\\
		&\qquad\qquad \qquad \qquad \qquad \qquad \qquad \qquad \qquad \qquad \qquad  g(\eta) \sm(W_-, W_+, \alpha)(d\eta) \\
		&= \int \left(\int F(\wt X,\wt Y,\wt Z) \frac\gamma2 (Q-\beta)^{-2}\LF_{\cS, \ell}^{(\beta, \pm\infty), (\alpha, 0)}(d\wt Y) \right)g(\wt \eta) \sm(W_-, W_+, \alpha)(d \wt \eta)
	\end{align*}
	where $X,Z$ (resp. $\wt X, \wt Z$) are the functions of $(Y, \eta)$ (resp. $(\wt Y, \wt \eta)$) given by~\eqref{eq-Y}. That is, as $\eps \to 0$ the weighted law of $(X,Y, Z, \eta)$ converges to the law of $(\wt X, \wt Y, \wt Z, \wt \eta)$.  
	Thus, when $(\wt Y, \wt \eta)$ is sampled from  $\frac\gamma2 (Q-\beta)^{-2}\LF_{\cS, \ell}^{(\beta, \pm\infty), (\alpha, 0)} \times \sm(W_-, W_+, \alpha)$, the law of $\wt X$ is~\eqref{eq-law-X-limit}, and the conditional law of $(\cS, \wt Z, \pm\infty)$ given $\wt X$ is $\cM_2^\disk(\frac{\gamma^2}2; \ell, \nu_X(\R + \pi i))^\#$.
	This concludes the proof in the case $W_+ > \frac{\gamma^2}2$.
	
	For the case $W_+ < \frac{\gamma^2}2$, the quantum surface to the right of the curve is no longer simply connected. By Lemma~\ref{lem-thin-weighted}, this quantum surface can be described as the concatenation of $(\cD_1, \cD_\bullet, \cD_2)$ sampled from 
	\eqb\label{eq-law-X-thin}
	\ol R(\beta_1; 1,0) L^{-\frac2\gamma W_-}(1 - \frac2{\gamma^2}W_+)^2 \cMtwo(W_+) \times \cMthree(\gamma^2 - W_+) \times \cMtwo(W_+),
	\eqe
	where $L$ is the sum of the left boundary lengths of $(\cD_1, \cD_\bullet, \cD_2)$. Parametrizing $\cD_\bullet$ as $(\cS, X, -\infty, +\infty, 0)$, and arguing exactly as before, we obtain the proposition.
\end{proof}

\section{The shift relations and proof of Theorem~\ref{thm-conformal-derivative0}}\label{sec-shift}
In this section   we use the welding equation from Proposition~\ref{prop-curve-weight}
and the  integrability for quantum disks from LCFT and mating-of-trees  to prove Theorem~\ref{thm-conformal-derivative0} as outlined in Section~\ref{subsec:welding}.
In Section~\ref{subsec-MOT}, we recall the mating-of-trees theorem for the quantum disk  which gives the joint law of the boundary lengths in $\cMtwo(2)$.
Using this theorem we further derive the  analogous result  for $\cMtwo(\frac{\gamma^2}2)$. 
In Section~\ref{subsec-special-cases} we obtain Theorem~\ref{thm-conformal-derivative0} in the cases where $\beta_-$ corresponds to $W_- \in \{ \frac{\gamma^2}2, 2\}$ and $\beta_+$ to generic $W_+$. This is based on  exact results on the length distribution of quantum disks from Section~\ref{subsec-MOT} for the two special weights, and the ones 
from  Section~\ref{sec-rz-lengths} via LCFT for generic weight.
Finally, we derive shift relations in Section~\ref{subsec-proof} and complete the proof of Theorem~\ref{thm-conformal-derivative0}.


\subsection{Integrability of weights $2$ and  $\gamma^2/2$ quantum disk via mating of trees}
\label{subsec-MOT}

Although Lemma~\ref{lem-disk-perim-law} and Proposition~\ref{prop:thick-length-law} and their thin quantum disk counterparts uniquely characterize 
the total mass  of $\cM_2^\disk(W;\ell, r)$ in terms of the reflection coefficient  $R$, it is quite complicated in  general. For $W\in \{2, \gamma^2/2 \}$, 
we have  a much simpler description from \cite{ahs-disk-welding}.
\begin{proposition}[{\cite[Propositions 7.7 and 7.8]{ahs-disk-welding}}]\label{prop-MOT} 
	For $\ell, r > 0$ we have
	\[
	|\cM_2^\disk(2;\ell, r)| =C_1  (\ell + r)^{-\frac4{\gamma^2}-1} \quad \textrm{and}\quad 
	|\cM_2^\disk(\frac{\gamma^2}2;\ell, r)| =C_2   \frac{(\ell r)^{4/\gamma^2 - 1}}{(\ell^{4/\gamma^2}+r^{4/\gamma^2})^2}.
	\]
\end{proposition}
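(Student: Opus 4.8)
In both cases the assertion is a statement about the joint law of the pair $(L_1,L_2)$ of left and right boundary lengths of the relevant disk. Since the law of the sum $L_1+L_2$ is already supplied by Lemma~\ref{lem-disk-perim-law}, the real task is to identify the conditional law of the split $(L_1,L_2)$ given $L_1+L_2$. The two cases behave very differently: for $W=2$ this split is uniform and the argument is elementary, while for $W=\gamma^2/2$ it requires the mating-of-trees encoding plus a planar Brownian computation.

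\emph{The case $W=2$.} By $\cM_2^\disk(2)=\QD_{0,2}$ and Definition~\ref{def-QD}, a sample from $\cM_2^\disk(2)$ is produced by first sampling a quantum surface (from $\nu_\phi(\partial\cS)^2\,\QD$) and then placing two boundary marked points independently according to the normalized quantum length measure. Hence, conditionally on the surface---in particular on $s:=L_1+L_2$---the two marked points are i.i.d.\ uniform on a circle of circumference $s$, so one of the two arc lengths is uniform on $(0,s)$, i.e.\ $(L_1,L_2)$ given $s$ is uniform on $\{(\ell,s-\ell):0<\ell<s\}$. Since Lemma~\ref{lem-disk-perim-law} (with $\mu_1=\mu_2=1$, $\beta=\gamma$) gives that $L_1+L_2$ has law $\ol R(\gamma;1,1)\,s^{-4/\gamma^2}\,ds$, the change of variables $(L_1,L_2)\mapsto(L_1+L_2,L_1)$ yields $|\cM_2^\disk(2;\ell,r)|=\ol R(\gamma;1,1)\,(\ell+r)^{-4/\gamma^2-1}$, which is the first identity with $C_1=\ol R(\gamma;1,1)$.

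\emph{The case $W=\gamma^2/2$.} Now $\beta=Q$ and the marked points are no longer quantum typical, so I would instead read the joint law off from mating of trees. Apply Proposition~\ref{prop-2pt-welding} with $W_-=W_+=\gamma^2/2$ (legitimate since $\gamma^2<4$, so that the total weight $\gamma^2$ is thick): cutting a weight-$\gamma^2$ quantum disk by an independent $\SLE_\kappa(\frac\kappa2-2;\frac\kappa2-2)$ produces two conditionally independent weight-$\gamma^2/2$ disks, whose interface carries quantum length $x$ and whose outer arcs carry the two boundary lengths of the weight-$\gamma^2$ disk. By the mating-of-trees theorem for quantum disks (\cite{wedges, ag-disk}), the left and right boundary lengths of the unexplored region evolve as a correlated planar Brownian motion with correlation $-\cos(\pi\gamma^2/4)$, run for the disk as a Brownian cone excursion; equivalently, after the linear change of coordinates diagonalizing the covariance, as a standard planar Brownian motion in a wedge of opening angle $\theta=\pi\gamma^2/4$. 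In particular the triple $(\ell,x,r)$ of (left outer length, interface length, right outer length) has a joint law expressed through this wedge excursion.

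The remaining and main step is the planar Brownian computation. Using the classical reflection-principle formulas for Brownian motion in a wedge of angle $\theta$---whose relevant exit/hitting densities scale with the exponent $\pi/\theta=4/\gamma^2$---one extracts the joint density of $(\ell,x,r)$, and then, invoking the conformal-welding conditional independence of the two weight-$\gamma^2/2$ pieces given $x$, reads off $|\cM_2^\disk(\gamma^2/2;\ell,x)|$ up to a multiplicative constant. The outcome has the stated shape: in the variables $(\ell^{4/\gamma^2},x^{4/\gamma^2})$ the density is proportional to $(\ell^{4/\gamma^2}+x^{4/\gamma^2})^{-2}$, i.e.\ $|\cM_2^\disk(\gamma^2/2;\ell,r)|=C_2\,(\ell r)^{4/\gamma^2-1}(\ell^{4/\gamma^2}+r^{4/\gamma^2})^{-2}$. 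The constant is then pinned down by consistency with Lemma~\ref{lem-disk-perim-law}: integrating this density over $\{\ell+r=s\}$ must reproduce $\ol R(Q;1,1)\,s^{-1}\,ds=s^{-1}\,ds$, and the substitution $u=\ell^{4/\gamma^2}/(\ell^{4/\gamma^2}+r^{4/\gamma^2})$ evaluates the relevant integral to $\gamma^2/4$, forcing $C_2=4/\gamma^2$. The delicate points throughout the $\gamma^2/2$ case are (a) identifying precisely which functional of the Brownian cone excursion yields $(L_1,L_2)$ and with which normalization its intensity measure matches the $\sigma$-finite measure $\cM_2^\disk(\gamma^2/2)$, and (b) the bookkeeping of infinite measures and their disintegrations in the welding identity; I expect (a) to be the genuine obstacle, the rest being routine.
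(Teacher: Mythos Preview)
The paper does not give its own proof of this proposition; it is cited directly from the companion paper \cite{ahs-disk-welding}, where both identities are obtained via the mating-of-trees encoding (Brownian cone excursions). So there is no in-paper argument to compare against, only the approach of the cited source.

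Your $W=2$ argument is correct and genuinely simpler than the route taken in \cite{ahs-disk-welding}. The identification $\cM_2^\disk(2)=\QD_{0,2}$ means that, conditionally on the surface, the two marked points are i.i.d.\ from normalized boundary length; hence, given $L_1+L_2=s$, the left arc length is uniform on $(0,s)$. Combined with Lemma~\ref{lem-disk-perim-law} for the law of $L_1+L_2$, this yields $|\cM_2^\disk(2;\ell,r)|=\ol R(\gamma;1,1)(\ell+r)^{-4/\gamma^2-1}$ with no appeal to Brownian motion in cones. This is an economy worth noting; the paper (and \cite{ahs-disk-welding}) instead extracts the formula from the mating-of-trees description of the weight-$2$ disk.

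For $W=\gamma^2/2$, your outline is essentially the strategy of \cite{ahs-disk-welding}: weld two weight-$\gamma^2/2$ disks into a weight-$\gamma^2$ disk via Proposition~\ref{prop-2pt-welding}, describe the latter by the disk mating-of-trees theorem as a Brownian cone excursion of angle $\pi\gamma^2/4$, and read off the joint density of $(\ell,x,r)$. You correctly identify step (a)---matching the cone-excursion functional to the disintegrated measures---as the real work, and you do not carry it out; in \cite{ahs-disk-welding} this occupies much of their Section~7. Your consistency check for $C_2$ (integrating over $\{\ell+r=s\}$ with the substitution $u=\ell^{4/\gamma^2}/(\ell^{4/\gamma^2}+r^{4/\gamma^2})$) is correct and recovers $C_2=4/\gamma^2$, agreeing with Proposition~\ref{prop-left-right-law-2}, but this only fixes the constant once the functional form is in hand. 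So for this case your proposal is a faithful high-level sketch of the cited proof, with the central Brownian computation left undone.
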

We do not need the values of $C_1$ and $C_2$, but they are easy to derive using results from  Section~\ref{sec-rz-lengths}.
\begin{proposition}\label{prop-left-right-law-2}
	For $\ell, r > 0$ we have
	\[|\cM_2^\disk(2;\ell, r)| = \frac{(2\pi)^{\frac4{\gamma^2} -1}}{(1-\frac{\gamma^2}4)\Gamma(1-\frac{\gamma^2}4)^{\frac4{\gamma^2}}} (\ell + r)^{-\frac4{\gamma^2}-1},
	\]
	\[ |\cM_2^\disk(\frac{\gamma^2}2;\ell, r)| = \frac4{\gamma^2} \frac{(\ell r)^{4/\gamma^2 - 1}}{(\ell^{4/\gamma^2}+r^{4/\gamma^2})^2}.
		\]
\end{proposition}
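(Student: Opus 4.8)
The plan is to determine the two unknown constants $C_1$ and $C_2$ from Proposition~\ref{prop-MOT} by integrating the boundary length density against a suitable one-parameter family of test functions and matching with the formulas for the reflection coefficient obtained in Section~\ref{sec-rz-lengths}. Concretely, for $W=2$ we have $\beta = \gamma+\frac2\gamma-\frac2\gamma = \gamma$, and Lemma~\ref{lem-disk-perim-law} (valid since $2 \in [\frac{\gamma^2}2, \gamma Q)$ when $\gamma^2<2$, with the boundary case handled by the thin/thick duality of Proposition~\ref{prop:thin-length-law}) tells us that the law of $\mu_1 L_1 + \mu_2 L_2$ under $\cM_2^\disk(2)$ is $\1_{\ell>0}\ol R(\gamma;\mu_1,\mu_2)\ell^{-\frac4{\gamma^2}}\,d\ell$. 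In particular, taking $\mu_1=\mu_2=1$ gives that the law of $L_1+L_2$ is $\1_{\ell>0}\ol R(\gamma;1,1)\ell^{-\frac4{\gamma^2}}\,d\ell$. On the other hand, from $|\cM_2^\disk(2;\ell,r)| = C_1(\ell+r)^{-\frac4{\gamma^2}-1}$ we can compute the law of $L_1+L_2$ directly: changing variables to $s=\ell+r$, $t=\ell$ with $0<t<s$, the density of $s$ is $C_1 s^{-\frac4{\gamma^2}-1}\int_0^s dt = C_1 s^{-\frac4{\gamma^2}}$. Matching, $C_1 = \ol R(\gamma;1,1)$, and then I just need to evaluate $\ol R(\gamma;1,1)$ from~\eqref{eq-R}.

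For the evaluation of $\ol R(\gamma;1,1)$, I would substitute $\beta=\gamma$ into~\eqref{eq-R}. Note $Q-\beta = \frac2\gamma - \frac\gamma2$, so $\frac2\gamma(Q-\beta) = \frac4{\gamma^2}-1$ and $\frac\gamma2(Q-\beta) = 1-\frac{\gamma^2}4$. With $\mu_1=\mu_2=1$, the parameters $\sigma_j$ satisfy $e^{i\pi\gamma(\sigma_j-\frac Q2)}=1$ with $\Re\sigma_j=\frac Q2$, so $\sigma_1=\sigma_2=\frac Q2$; then $e^{i\pi(\sigma_1+\sigma_2-Q)(Q-\beta)}=1$ and $\Sg(\frac\beta2+\sigma_2-\sigma_1)=\Sg(\frac\gamma2)=\Gg(\frac\gamma2)/\Gg(\frac2\gamma)$ using $\frac\gamma2+\frac2\gamma = b+\frac1b$ with $b=\frac\gamma2$. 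Also $\Gg(\beta-\frac\gamma2)=\Gg(\frac\gamma2)$ and $\Gg(Q-\beta)=\Gg(\frac2\gamma-\frac\gamma2)$. One then combines the prefactor $\frac{(2\pi)^{\frac4{\gamma^2}-\frac32}(\frac2\gamma)^{1-\frac{\gamma^2}4-\frac12}}{(\frac2\gamma-\frac\gamma2)\Gamma(1-\frac{\gamma^2}4)^{\frac4{\gamma^2}-1}}$ with the ratio of double-gamma values, using the shift equations~\eqref{eq-shift} to simplify $\Gg(\frac\gamma2)/\Gg(\frac2\gamma-\frac\gamma2)$ and $\Gg(\frac\gamma2)^2/\Gg(\frac2\gamma)^2$ in terms of ordinary Gamma functions; after cancellation this should collapse to $\frac{(2\pi)^{\frac4{\gamma^2}-1}}{(1-\frac{\gamma^2}4)\Gamma(1-\frac{\gamma^2}4)^{\frac4{\gamma^2}}}$, giving the first claim.

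For the $\gamma^2/2$ case, $\beta = \gamma + \frac2\gamma - \frac\gamma2 = Q$, so the weight is the critical boundary weight and Lemma~\ref{lem-disk-perim-law}/Remark~\ref{rmk:gamma^22} gives that the law of $\mu_1 L_1 + \mu_2 L_2$ is $\1_{\ell>0}\ol R(Q;\mu_1,\mu_2)\ell^{-1}\,d\ell = \1_{\ell>0}\ell^{-1}\,d\ell$ since $\ol R(Q,\mu_1,\mu_2)=1$. I would again integrate $|\cM_2^\disk(\frac{\gamma^2}2;\ell,r)| = C_2\,(\ell r)^{4/\gamma^2-1}/(\ell^{4/\gamma^2}+r^{4/\gamma^2})^2$ against $\mu_1=\mu_2=1$: writing $s=\ell+r$, the density of $s$ is $C_2 \int_0^s \frac{(t(s-t))^{4/\gamma^2-1}}{(t^{4/\gamma^2}+(s-t)^{4/\gamma^2})^2}\,dt$, and by the substitution $t=su$ this equals $C_2 s^{-1}\int_0^1 \frac{(u(1-u))^{4/\gamma^2-1}}{(u^{4/\gamma^2}+(1-u)^{4/\gamma^2})^2}\,du$. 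So $C_2$ times this definite integral must equal $1$. The main obstacle is the clean evaluation of that integral. Setting $p=\frac4{\gamma^2}$ and substituting $v = u^p/(u^p+(1-u)^p)$ should turn it into an elementary integral; indeed $\frac{dv}{du} = \frac{p\,(u(1-u))^{p-1}}{(u^p+(1-u)^p)^2}$, so $\int_0^1 \frac{(u(1-u))^{p-1}}{(u^p+(1-u)^p)^2}\,du = \frac1p\int_0^1 dv = \frac1p = \frac{\gamma^2}4$, whence $C_2 = \frac4{\gamma^2}$, as claimed. The only real work is checking this change of variables is a bijection of $(0,1)$ to itself (which it is, since $u\mapsto u^p/(u^p+(1-u)^p)$ is strictly increasing from $0$ to $1$), so I expect this step to be short; the genuinely tedious part is the double-gamma bookkeeping in the $W=2$ computation, which I would carry out carefully using~\eqref{eq-shift} but not belabor here.
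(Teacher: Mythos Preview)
Your approach is correct and close in spirit to the paper's, but you work with the law of $L_1+L_2$ (i.e.\ $\mu_1=\mu_2=1$) whereas the paper uses the marginal of $L_1$ alone ($\mu_1=1,\mu_2=0$). For $W=2$ the paper integrates $C_1(1+r)^{-4/\gamma^2-1}$ over $r$ to get $\ol R(\gamma,1,0)=\frac{C_1\gamma^2}4$ and then evaluates $\ol R(\gamma,1,0)$ via the simpler formula~\eqref{eq:R0}, which has no double-sine factors; your route through $\ol R(\gamma,1,1)$ via~\eqref{eq-R} introduces the $S_{\gamma/2}(\gamma/2)^2$ term and needs a couple more shift identities, though it collapses to the same answer. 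For $W=\gamma^2/2$ the paper again takes the left-boundary marginal and substitutes $t=r^{4/\gamma^2}$ to get $\int_0^\infty(1+t)^{-2}dt=1$; your substitution $v=u^p/(u^p+(1-u)^p)$ is an equally clean alternative. One small correction: your parenthetical about $\gamma^2<2$ is unnecessary, since $2\in[\frac{\gamma^2}2,\gamma Q)$ for all $\gamma\in(0,2)$ (recall $\gamma Q=\frac{\gamma^2}2+2$), so Lemma~\ref{lem-disk-perim-law} applies directly without invoking any duality.
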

\begin{proof}
	 By Lemma~\ref{lem-disk-perim-law} with $W=2$ , the law of  the quantum length of the left boundary arc of $\cMtwo(2)$ is $\1_{\ell > 0}  \ol R(\gamma, 1,0)  \ell^{-\frac4{\gamma^2}}$ 	with $\ol R(\gamma, 1,0)$ as in~\eqref{eq:R0}. By Proposition~\ref{prop-MOT}, we have
	 \eqbn
	 \begin{split}
	 	\ol R(\gamma, 1,0)=  \int_0^\infty C_1  (1 + r)^{-\frac4{\gamma^2}-1}\, dr
	 	 = \frac{C_1 \gamma^2}4, 
	 \end{split}
	 \eqen
	 and applying the shift equations~\eqref{eq-shift} to~\eqref{eq:R0}, we have
	 \eqbn
	 \begin{split}
	 	\ol R(\gamma, 1,0)&= \frac{(2\pi)^{\frac4{\gamma^2} - \frac32} (\frac2\gamma)^{-\frac12 - \frac{\gamma^2}4}}{(1-\frac{\gamma^2}4)\Gamma(1-\frac{\gamma^2}4)^{\frac4{\gamma^2}-1}} \frac{\Gg(\frac\gamma2)}{\Gg(Q)} \frac{\Gg(Q)}{\Gg(\frac2\gamma)} \frac{\Gg(\frac2\gamma)}{\Gg(\frac2\gamma-\frac\gamma2)} \\
	 	&= \frac{\gamma^2}4 \frac{(2\pi)^{\frac4{\gamma^2} -1}}{(1-\frac{\gamma^2}4)\Gamma(1-\frac{\gamma^2}4)^{\frac4{\gamma^2}}} .
	 	\end{split}
	 	\eqen
This gives $C_1$. 
	Similarly, by Lemma~\ref{lem-disk-perim-law} with $W=\frac{\gamma^2}{2}$,
	the law of the quantum length of the left boundary arc of $\cMtwo(\frac{\gamma^2}2)$ is $\1_{\ell>0}\ell^{-1} d\ell$. By Proposition~\ref{prop-MOT}  and using the  change of variables $t = r^{4/\gamma^2}$, we have
	\[
	1 = \int_0^\infty C_2 \frac{r^{4/\gamma^2 - 1}}{(1+r^{4/\gamma^2})^2}\, dr = \frac{C_2\gamma^2}4 \int_0^\infty \frac{1}{(1+t)^2} =  \frac{C_2\gamma^2}4. \qedhere
	\]
\end{proof}

\subsection{Special cases of Theorem~\ref{thm-conformal-derivative0} }\label{subsec-special-cases}
In this section, we leverage exact formulas for $|\cMtwo(2; \ell, r)|$ and $|\cMtwo(\frac{\gamma^2}2; \ell, r)|$ to show Proposition~\ref{prop-shift-1}, which is Theorem~\ref{thm-conformal-derivative0} in the cases where $\kappa \in (0, 4)$ and $\rho_- \in \{ 0, \frac\kappa2\}$. 

We will use the parameters  from LQG and conformal welding  to express the moment in Theorem~\ref{thm-conformal-derivative0}. 
	More precisely,  for $\gamma\in (0,2), \lambda\in \R$ and $\beta_-, \beta_+ < Q + \frac\gamma2$ we set 
	\eqb\label{eq-m-lambda}
	\mm(\beta_-, \beta_+) := \E[\psi'(1)^\lambda]
	\eqe where $\E[\psi'(1)^\lambda]$ is the moment in Theorem~\ref{thm-conformal-derivative0} with 
	\begin{equation}\label{eq:parameter}
		\kappa = \gamma^2\in (0,4),  \quad \rho_-= \gamma^2 - \gamma \beta_->-2, \quad \textrm{and} \quad \rho_+ = \gamma^2 - \gamma \beta_+>-2.
	\end{equation}
	We first make some  basic observations on  $\mm(\beta_-, \beta_+) $.
	\begin{lemma}\label{lem:mono}
		If $\mm(\beta_-, \beta_+)<\infty$ and $\lambda < \lambda'$	
		then \(\mm(\beta_-, \beta_+) < m^{\lambda'}_\gamma(\beta_-, \beta_+)\). Moreover, $m^0_\gamma(\beta_-, \beta_+) = 1$.
	\end{lemma}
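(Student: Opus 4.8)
The statement to prove is Lemma~\ref{lem:mono}: that $\lambda \mapsto \mm(\beta_-, \beta_+) = \E[\psi'(1)^\lambda]$ is strictly increasing whenever finite, and that $m^0_\gamma(\beta_-,\beta_+) = 1$.

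\textbf{Plan of proof.} The second assertion is immediate: when $\lambda = 0$ we have $\psi'(1)^0 = 1$ almost surely, and since $\mm[\psi'(1)^\lambda]$ with $\lambda = 0$ is the expectation of the constant $1$ under a probability measure (the law of the $\SLE_\kappa(\rho_-;\rho_+)$ curve is a probability measure, and $\psi$ is a.s.\ well-defined by the hypotheses $\rho_->-2$, $\rho_+>(-2)\vee(\tfrac\kappa2-4)$ quoted before Theorem~\ref{thm-conformal-derivative0}), we get $m^0_\gamma(\beta_-,\beta_+)=1$.

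For the monotonicity, the key point is that the conformal map $\psi: D \to \bbH$ fixing $\psi(1) = 1$ and sending the first/last points of $\partial D$ traced by $\eta$ to $0,\infty$ satisfies $\psi'(1) \geq 1$ almost surely. Indeed, $D$ is a connected component of $\bbH \setminus \eta$ containing $1$, so $D \subseteq \bbH$; by the Schwarz lemma applied to $\psi^{-1}: \bbH \to D \subseteq \bbH$ (which fixes $1$, or more precisely which can be post-composed with the inclusion $D \hookrightarrow \bbH$ to give a holomorphic self-map of $\bbH$ fixing the boundary point $1$ with derivative there equal to $(\psi'(1))^{-1}$), the boundary derivative at the fixed boundary point $1$ is at most $1$, hence $\psi'(1) \geq 1$. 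A cleaner way: $\psi^{-1}$ extends by Schwarz reflection across a neighborhood of $1$ in $\R$ (since $1 \notin \eta$ a.s.), and then the monotonicity of boundary derivatives under conformal maps into subdomains gives $(\psi^{-1})'(1) \leq 1$, i.e.\ $\psi'(1) \geq 1$. I would cite the Schwarz lemma at the boundary (e.g.\ the boundary version of the Schwarz--Pick lemma, or \cite[Theorem 3.21]{lawler-book} applied after Schwarz reflection, as is already done in the proof of Proposition~\ref{prop-curve-weight}).

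Given $\psi'(1) \geq 1$ a.s., the function $x \mapsto x^\lambda$ on $[1,\infty)$ is pointwise nondecreasing in $\lambda$, and strictly increasing in $\lambda$ on the event $\{\psi'(1) > 1\}$. Since $\eta$ is a.s.\ a nontrivial curve with positive probability of separating $1$ from $\{0\}$ or $\{\infty\}$ in a way that makes $D$ a proper subset of $\bbH$ with $\psi$ not the identity, we have $\P[\psi'(1) > 1] > 0$. Therefore, if $\lambda < \lambda'$ and $\mm(\beta_-,\beta_+) = \E[\psi'(1)^\lambda] < \infty$, then $\E[\psi'(1)^{\lambda'}] \geq \E[\psi'(1)^\lambda]$ with equality only if $\psi'(1) = 1$ a.s., which is false; hence $\mm(\beta_-,\beta_+) < m^{\lambda'}_\gamma(\beta_-,\beta_+)$. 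The main (only mild) obstacle is justifying $\P[\psi'(1)>1]>0$ and $\psi'(1)\geq 1$ rigorously; both follow from standard conformal-geometry facts (Schwarz reflection plus the boundary Schwarz lemma) together with the basic qualitative fact that the $\SLE_\kappa(\rho_-;\rho_+)$ trace is a.s.\ a non-space-filling curve that with positive probability disconnects $1$ from one of its target points, so that $D \subsetneq \bbH$ with positive probability. I do not expect any genuine difficulty here.
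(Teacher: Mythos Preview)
Your proposal is correct and follows the same approach as the paper, which simply asserts $\psi'(1) > 1$ a.s.\ and calls the second observation trivial. One minor simplification: you can upgrade your $\P[\psi'(1)>1]>0$ to $\psi'(1)>1$ a.s., since $\eta$ is almost surely a nontrivial curve in $\bbH$ and hence $D \subsetneq \bbH$ almost surely; the strict boundary Schwarz lemma then gives the strict inequality directly, avoiding the separate positive-probability argument.
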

	\begin{proof}
		From the definition of $\psi$, we  see that $\psi'(1) > 1$ a.s., giving the  monotonicity property. The second observation is trivial. 
	\end{proof}
	Since we can conformally map from $(\bbH,0, \infty, 1)$ to $(\cS,-\infty, +\infty, 0)$,
	from the definition of  $\sm(W_-, W_+, \alpha)$ above Proposition~\ref{prop-curve-weight}, we see that  
	\begin{equation}\label{eq:moment-weld}
		|\sm(W_-, W_+, \alpha)| = \mm(\beta_-, \beta_+) \quad \textrm{with }
		W_\pm = \gamma(\gamma + \frac2\gamma - \beta_{\pm})
	\end{equation}
	Now we can compute $\mm(\beta_-, \beta_+)$ by computing $|\sm(W_-, W_+, \alpha)|$ via Proposition~\ref{prop-curve-weight}. Based on this idea,
the following lemma computes $\mm(\gamma, \beta_+)$ for  $\beta_+ \neq Q$ and a certain range of  $\lambda$, modulo a $\beta_+$-dependent multiplicative constant.
The range of $\lambda$ below does not contain $0$. Later we will remove this restriction so that the constant can be recovered from $m^0_\gamma(\beta_-, \beta_+) = 1$.
\begin{lemma}\label{lem-shift-1} 
	For any $\beta_+ \in (-\infty, Q) \cup (Q, Q + \frac\gamma2)$ and $\alpha \in (2|\beta_+ - Q|, 4Q - 2\beta_+)$, set $\lambda = 1 - \frac\alpha2 (Q-\frac\alpha2)$. 
	Then there is a constant $C = C_\gamma(\beta_+) \in (0, \infty)$ not depending on $\alpha$ such that 
	\eqb\label{eq-lem-shift-1}
	\mm(\gamma, \beta_+) =C_\gamma(\beta_+) \, \Gamma(\frac2\gamma(Q-\beta_++\frac12\alpha))\Gamma(\frac2\gamma(2Q-\beta_+ - \frac12\alpha)).
	\eqe
\end{lemma}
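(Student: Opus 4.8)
The plan is to compute $|\sm(\gamma, W_+, \alpha)|$ via the welding equation of Proposition~\ref{prop-curve-weight} with $W_- = 2$ (so $\beta_- = \gamma$), and to extract the total mass by comparing the law of one observable --- the unmarked boundary length --- on the two sides. Concretely, I would apply Proposition~\ref{prop-curve-weight} with $W_- = 2$, $W_+ = \gamma(\gamma + \tfrac2\gamma - \beta_+)$, $W_- + W_+ = \gamma(\gamma + \tfrac2\gamma - \beta_+) + 2$; note $W_- = 2 \geq \gamma^2/2$, so the hypothesis $W_- \geq \gamma^2/2$ holds, and the hypotheses $\beta_+ \neq Q$, i.e.\ $W_+ \neq \gamma^2/2$, are exactly the exclusion in the statement. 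Writing $\beta = Q + \tfrac\gamma2 - \tfrac{W_-+W_+}\gamma$ for the insertion weight of the glued surface, the left side of~\eqref{eq-curve-weighted} has total mass (after integrating over $\ell$) equal to $|\cMthree(W_-+W_+;\alpha)| \cdot |\sm(\gamma, W_+, \alpha)|$, which by Proposition~\ref{prop-H-density} or Proposition~\ref{prop-disk-03-length} is a pure power of whichever reference scale one uses --- so I would instead integrate against $e^{-\mu L}$ for the unmarked boundary length $L$, which is finite and explicit on both sides.

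The key computation is then: on the left, $\cMthree(W_-+W_+;\alpha)[e^{-\mu L}] \cdot |\sm(\gamma, W_+, \alpha)|$, and here $\cMthree(W_-+W_+;\alpha)[e^{-\mu L}]$ is given in closed form by~\eqref{eq:H-Laplace} (or its thin counterpart) as $\tfrac\gamma2(Q-\beta)^{-2} H^{(\beta,\beta,\alpha)}_{(0,1,0)}\mu^{-\frac2\gamma(\beta + \frac\alpha2 - Q)}$, where I must check $\alpha > 2(\beta - Q)$ or $\tfrac\alpha2 + \beta > \tfrac\gamma2$ and $\beta < Q$ translate into the stated range $\alpha \in (2|\beta_+ - Q|, 4Q - 2\beta_+)$. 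On the right, by Proposition~\ref{prop-curve-weight} the measure is $c_{2, W_+}\int_0^\infty \mathrm{Weld}(\cMtwo(2;\ell,x), \cMthree(W_+;\alpha;x))\,dx$; forgetting the curve and the conformal structure, the unmarked boundary length $L$ of the welded surface is the sum of the left boundary length of the $\cMtwo(2)$-disk and the unmarked boundary length of the $\cMthree(W_+;\alpha)$-piece, while the welding length $x$ is independent of these. So the Laplace transform factorizes: I would first integrate out $x$ using the disintegration, leaving a convolution whose Laplace transform is the product of the Laplace transform of the left boundary length of $\cMtwo(2)$ (computable from Proposition~\ref{prop-left-right-law-2} or Lemma~\ref{lem-disk-perim-law}, giving a power of $\mu$ times an explicit constant involving $\ol R(\gamma,1,0)$) and $\cMthree(W_+;\alpha)[e^{-\mu L'}] = \tfrac\gamma2(Q-\beta_+)^{-2}H^{(\beta_+,\beta_+,\alpha)}_{(0,1,0)}\mu^{-\frac2\gamma(\beta_+ + \frac\alpha2 - Q)}$ via~\eqref{eq:H-Laplace} or Proposition~\ref{prop-H-density}.

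Matching the powers of $\mu$ on both sides gives the consistency check $\tfrac2\gamma(\beta + \tfrac\alpha2 - Q) = \tfrac2\gamma W_- / \gamma + \tfrac2\gamma(\beta_+ + \tfrac\alpha2 - Q)$ (a trivial identity once $\beta = \beta_+ - \tfrac2\gamma$ is used, since $W_- = 2$), and matching the prefactors expresses $|\sm(\gamma, W_+, \alpha)| = \mm(\gamma, \beta_+)$ as the ratio of $H^{(\beta_+,\beta_+,\alpha)}_{(0,1,0)}$-type factors times explicit Gamma-function and constant factors. The point is that $H^{(\beta,\beta,\alpha)}_{(0,1,0)} = \tfrac2\gamma \Gamma(\tfrac2\gamma(\tfrac\alpha2 + \beta - Q))\ol H^{(\beta,\beta,\alpha)}_{(0,1,0)}$ and the double-gamma factors in $\ol H^{(\beta,\beta,\alpha)}_{(0,1,0)}$ and $\ol H^{(\beta_+,\beta_+,\alpha)}_{(0,1,0)}$ --- with $\beta = \beta_+ - \tfrac2\gamma$ --- differ precisely by shift factors governed by the shift equations~\eqref{eq-shift}, and those shifts turn the $\Gg$-ratios into ordinary $\Gamma$-functions. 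After collecting terms, the $\alpha$-dependence of $\mm(\gamma, \beta_+)$ that survives should be exactly $\Gamma(\tfrac2\gamma(Q - \beta_+ + \tfrac\alpha2))\Gamma(\tfrac2\gamma(2Q - \beta_+ - \tfrac\alpha2))$, with everything else absorbed into a constant $C_\gamma(\beta_+)$ independent of $\alpha$. The main obstacle I anticipate is bookkeeping: carefully tracking the welding-induced prefactor (the power of $x$ and the constant $\ol R(\gamma,1,0)$ from $|\cMtwo(2;\ell)|$, which must be matched against the $\beta = \beta_+ - \tfrac2\gamma$ double-gamma shift via~\eqref{eq-shift}), and confirming that the ratio $\ol H^{(\beta_+ - 2/\gamma, \cdot)}_{(0,1,0)} / \ol H^{(\beta_+, \cdot)}_{(0,1,0)}$ simplifies using only the two shift equations so that no residual double-gamma term is left. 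A secondary subtlety is the thin-disk case $W_+ < \gamma^2/2$, where one must use Proposition~\ref{prop-disk-03-length} and Proposition~\ref{prop:thin-length-law} in place of their thick counterparts, and verify the same final formula emerges --- which it should, since the relevant Laplace-transform formulas are designed to be analytic continuations in $\beta_+$.
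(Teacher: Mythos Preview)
Your overall strategy --- apply Proposition~\ref{prop-curve-weight} with $W_-=2$, compare an observable on the two sides to extract $|\sm(2,W_+,\alpha)|=\mm(\gamma,\beta_+)$, and simplify the ratio $\ol H^{(\beta_+-\frac2\gamma,\beta_+-\frac2\gamma,\alpha)}/\ol H^{(\beta_+,\beta_+,\alpha)}$ via the shift equations~\eqref{eq-shift} --- is exactly the paper's approach, and your final bookkeeping step is correct. But there is a geometric error in the middle that breaks your computation.

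In the welding of Proposition~\ref{prop-curve-weight}, the unmarked boundary arc of $\cMthree(W_+;\alpha;x)$ (of length $x$) is glued to the \emph{right} boundary of $\cMtwo(2;\ell,x)$ (also length $x$); that arc becomes the SLE interface, not part of the outer boundary. The unmarked boundary of the welded surface is therefore just the left boundary of the weight-$2$ disk, with length exactly $\ell$. It is not a sum of two independent pieces, so your claimed factorization of the Laplace transform is invalid. (Equivalently: $\ell$ and the interface length $x$ are coupled through the joint density $|\cMtwo(2;\ell,x)|\propto(\ell+x)^{-4/\gamma^2-1}$, which does not factor.)

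The paper's fix is to compare total masses at fixed $\ell$ (say $\ell=1$). On the left this gives
\[
\mathfrak C \;=\; (Q-\beta)^{-2}\,\ol H^{(\beta,\beta,\alpha)}_{(0,1,0)}\,\mm(\gamma,\beta_+),\qquad \beta=\beta_+-\tfrac2\gamma,
\]
by~\eqref{eq-disint-3pt}. On the right one gets
\[
\mathfrak C \;=\; c_{2,W_+}\int_0^\infty |\cMtwo(2;1,x)|\cdot |\cMthree(W_+;\alpha;x)|\,dx,
\]
and since $|\cMtwo(2;1,x)|\propto(1+x)^{-4/\gamma^2-1}$ (Proposition~\ref{prop-left-right-law-2}) and $|\cMthree(W_+;\alpha;x)|\propto x^{\frac2\gamma(\beta_++\frac\alpha2-Q)-1}$ (Proposition~\ref{prop-H-density} or~\ref{prop-disk-03-length}), this is a beta integral
\[
\int_0^\infty \frac{x^{a-1}}{(1+x)^{a+b}}\,dx=\frac{\Gamma(a)\Gamma(b)}{\Gamma(a+b)},\qquad a=\tfrac2\gamma(\beta_++\tfrac\alpha2-Q),\quad b=\tfrac2\gamma(2Q-\beta_+-\tfrac\alpha2).
\]
The two ordinary Gamma factors in~\eqref{eq-lem-shift-1} come from here (after a further cancellation with the $\ol H$-ratio), not from a Laplace transform. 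The range $\alpha\in(2|\beta_+-Q|,\,4Q-2\beta_+)$ is precisely what makes $a,b>0$ and the hypotheses of Propositions~\ref{prop-H-density}/\ref{prop-disk-03-length} hold. Your shift-equation simplification of $\ol H^{(\beta_+,\beta_+,\alpha)}/\ol H^{(\beta_+-\frac2\gamma,\beta_+-\frac2\gamma,\alpha)}$ is then exactly the paper's last step.
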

\begin{proof}
	Set $W_- = 2$ and $W_+ = \gamma(\gamma+ \frac2\gamma - \beta_+)$ and consider Proposition~\ref{prop-curve-weight} with these parameters. 
	Set $\beta = \beta_+ 	- \frac2\gamma$ so  that $W_-+W_+= \gamma(\gamma+ \frac2\gamma - \beta)$.
	By Proposition~\ref{prop-H-density}  and~\eqref{eq:moment-weld},  since $\alpha > 0 \vee 2(Q-\beta_+)$,
	the unmarked boundary arc's length of a sample from $\cMthree(W_- + W_+;\alpha; \ell)\otimes \sm(W_-, W_+, \alpha) $
	has the power  law distribution $\1_{x > 0} \mathfrak C x^{\frac2\gamma(\beta + \frac12\alpha - Q)-1} dx$ where
	\begin{equation}\label{eq:C1}
	\mathfrak C =(Q-\beta)^{-2} \ol H^{(\beta, \beta, \alpha)}_{(0,1,0)}   |\sm(W_-, W_+, \alpha)|=   (Q-\beta)^{-2} \ol H^{(\beta, \beta, \alpha)}_{(0,1,0)}\mm(\gamma, \beta_+).
	\end{equation}		
	
	We now evaluate $\mathfrak C$ via the right hand side of~\eqref{eq-curve-weighted}.
By Proposition~\ref{prop-H-density} if $\beta_+< Q$ or Proposition~\ref{prop-disk-03-length} if $\beta_+ \in (Q, Q+\frac\gamma2)$, the right hand side of~\eqref{eq-curve-weighted} gives, with $\wt C = {\wt C}_\gamma(\beta_+)$ a constant not depending on $\alpha$,
	\begin{align} 
		\mathfrak C &=  c_{W_-, W_+} \int_0^\infty |\cMtwo(2; 1, \ell)| \cdot (Q-\beta_+)^{-2} \ol H^{(\beta_+, \beta_+, \alpha)}_{(0,1,0)} \ell^{\frac2\gamma(\beta_+ + \frac12\alpha - Q)-1} \, d\ell \nonumber \\
		&= {\wt C}_\gamma(\beta_+) \,  \ol H^{(\beta_+, \beta_+, \alpha)}_{(0,1,0)} \Gamma(\frac2\gamma(\beta_+ +\frac12\alpha - Q)) \Gamma(\frac2\gamma(2Q - \beta_+ - \frac12\alpha)). \label{eq:C2}
	\end{align}
	The second equality follows from $|\cMtwo(2; 1, \ell)| \propto (1+\ell)^{-\frac4{\gamma^2}-1}$
	(Proposition~\ref{prop-MOT}) and the beta function integral $\frac{\Gamma(x)\Gamma(y)}{\Gamma(x+y)} = \int_0^\infty \frac{\ell^{x-1}}{(1+\ell)^{x+y}}\,d\ell$ for $x = \frac2\gamma(\beta_+ + \frac12\alpha -Q) > 0$ and $y = \frac2\gamma(2Q - \beta_+ - \frac12\alpha)>0$. 
	Note that the hypotheses of Proposition~\ref{prop-H-density} (if $\beta_+< Q$) or Proposition~\ref{prop-disk-03-length} (if $\beta_+\in (Q, Q +\frac\gamma2)$) and the inequalities $x,y>0$ all hold because of our conditions on $\alpha, \beta_+$. 
	
	Comparing~\eqref{eq:C1} and~\eqref{eq:C2}, we get 
		\begin{align*}
			\mm(\gamma, \beta_+)= {\wt C}_\gamma(\beta_+) \,   (Q-\beta)^{2}  \Gamma(\frac2\gamma(\beta_+ +\frac12\alpha - Q)) \Gamma(\frac2\gamma(2Q - \beta_+ - \frac12\alpha)) \frac{\ol H^{(\beta_+, \beta_+, \alpha)}_{(0,1,0)}}{\ol H^{(\beta, \beta, \alpha)}_{(0,1,0)}}.
		\end{align*}
			Using the shift relation~\eqref{eq-shift} for $\Gg$ and $\beta_+ = \beta + \frac2\gamma$, we can express $\ol H^{(\beta_+, \beta_+, \alpha)}_{(0,1,0)} \big/
			\ol H^{(\beta, \beta, \alpha)}_{(0,1,0)}$ as 
			\eqbn
			\begin{split}
				&\left( \frac{2\pi}{(\frac\gamma2)^{\frac{\gamma^2}4} \Gamma(1-\frac{\gamma^2}4)} \right)^{\frac4{\gamma^2}}
				\frac{\Gg(Q-\beta)^2}{\Gg(Q - \beta_+)^2}
				\frac{\Gg(Q-\beta_+ +\frac12\alpha)}{\Gg(Q-\beta+\frac12\alpha)} 
				\frac{\Gg(\beta_+ + \frac12\alpha - \frac\gamma2)}{\Gg(\beta + \frac12\alpha-\frac\gamma2)}\\ 
				&\qquad\qquad\qquad\qquad\qquad\qquad={\wh C}_\gamma(\beta_+) \frac{\Gamma(\frac2\gamma(Q-\beta_+ +\frac12\alpha))}{\Gamma(\frac2\gamma(\beta_+ + \frac12\alpha-Q))}
				\\
				&\text{where }
				{\wh C}_\gamma(\beta_+) = \left( \frac{2\pi}{(\frac\gamma2)^{\frac{\gamma^2}4} \Gamma(1-\frac{\gamma^2}4)} \right)^{\frac4{\gamma^2}}
				\frac{\Gg(Q-\beta_+ + \frac2\gamma)^2}{\Gg(Q - \beta_+)^2}  \left(\frac2\gamma\right)^{\frac4\gamma (\beta_+ - Q)}.
			\end{split}
			\eqen
		Setting $C_\gamma(\beta)={\wt C}_\gamma(\beta_+) {\wh C}_\gamma(\beta_+) (Q-\beta)^{2} $ we  conclude the proof.
\end{proof}

The following lemma is the counterpart of Lemma \ref{lem-shift-1} with $\beta_-=Q$ instead of $\beta_-=\gamma$. The proof follows the exact same steps as that of Lemma \ref{lem-shift-1}, with $\cMtwo(\frac{\gamma^2}2)$  in place of $\cMtwo(2)$.
\begin{lemma}\label{lem-shift-2}
	Let $\beta_+ \in (-\infty, Q) \cup (Q, Q + \frac\gamma2)$. Let $\alpha \in (2|\beta_+ - Q|, 4Q - 2\beta_+)$ and $\lambda = 1 - \frac\alpha2 (Q-\frac\alpha2)$. 
	Then there is a constant $C = C_\gamma(\beta_+) \in (0, \infty)$ not depending on $\alpha$ such that 
	\[\mm(Q,\beta_+) = C_\gamma(\beta_+) \, \Gamma(\frac\gamma2(Q - \beta_+ + \frac12\alpha)) \Gamma(\frac\gamma2(2Q - \beta_+ - \frac12\alpha)). \]
\end{lemma}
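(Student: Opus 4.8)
The plan is to mimic the proof of Lemma~\ref{lem-shift-1} line by line, replacing the weight $2$ quantum disk with the weight $\gamma^2/2$ quantum disk throughout. Concretely, I would set $W_- = \frac{\gamma^2}2$ and $W_+ = \gamma(\gamma + \frac2\gamma - \beta_+)$ and apply Proposition~\ref{prop-curve-weight} with these parameters (note $W_- = \frac{\gamma^2}2 \ge \frac{\gamma^2}2$, so the hypothesis on $W_-$ is satisfied, and $W_+ \ne \frac{\gamma^2}2$ holds since $\beta_+ \ne Q$). Set $\beta = \beta_+ - \frac\gamma2$ so that $W_- + W_+ = \gamma(\gamma + \frac2\gamma - \beta)$; this is the analogue of the choice $\beta = \beta_+ - \frac2\gamma$ in Lemma~\ref{lem-shift-1}, and reflects that welding in weight $\frac{\gamma^2}2$ shifts $\beta$ by $\frac\gamma2$ rather than $\frac2\gamma$.

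The two-sided computation of the constant $\mathfrak C$ proceeds identically. On the one hand, by Proposition~\ref{prop-H-density} (or Proposition~\ref{prop-disk-03-length} if $\beta \in (Q, Q+\frac\gamma2)$) applied to $\cMthree(W_-+W_+;\alpha)$ together with~\eqref{eq:moment-weld}, the unmarked boundary arc length of a sample from $\cMthree(W_-+W_+;\alpha;\ell)\otimes \sm(W_-,W_+,\alpha)$ has a power law with coefficient $\mathfrak C = (Q-\beta)^{-2}\ol H^{(\beta,\beta,\alpha)}_{(0,1,0)}\, m^\lambda_\gamma(Q,\beta_+)$. On the other hand, evaluating the right-hand side of~\eqref{eq-curve-weighted}, using $|\cM_2^\disk(\frac{\gamma^2}2;1,\ell)| = \frac4{\gamma^2}\frac{\ell^{4/\gamma^2-1}}{(1+\ell^{4/\gamma^2})^2}$ from Proposition~\ref{prop-left-right-law-2} in place of $(1+\ell)^{-4/\gamma^2-1}$, the relevant integral is $\int_0^\infty \frac{\ell^{4/\gamma^2-1}}{(1+\ell^{4/\gamma^2})^2}\,\ell^{\frac2\gamma(\beta_+ + \frac12\alpha - Q)-1}\,d\ell$; after the substitution $t = \ell^{4/\gamma^2}$ this becomes a beta-function integral $\frac{\gamma^2}4\int_0^\infty \frac{t^{x-1}}{(1+t)^{x+y}}\,dt = \frac{\gamma^2}4\frac{\Gamma(x)\Gamma(y)}{\Gamma(x+y)}$ with $x = \frac\gamma2(\beta_+ + \frac12\alpha - Q)$ and $y = \frac\gamma2(2Q - \beta_+ - \frac12\alpha)$, and $x+y = \frac\gamma2 Q = \frac{\gamma^2}4 + 1$, so $\Gamma(x+y)$ is a harmless constant. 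This yields $\mathfrak C = \wt C_\gamma(\beta_+)\,\ol H^{(\beta_+,\beta_+,\alpha)}_{(0,1,0)}\,\Gamma(\frac\gamma2(\beta_+ + \frac12\alpha - Q))\Gamma(\frac\gamma2(2Q - \beta_+ - \frac12\alpha))$ with $\wt C_\gamma(\beta_+)$ independent of $\alpha$.

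Equating the two expressions for $\mathfrak C$ gives $m^\lambda_\gamma(Q,\beta_+)$ up to the ratio $\ol H^{(\beta_+,\beta_+,\alpha)}_{(0,1,0)}/\ol H^{(\beta,\beta,\alpha)}_{(0,1,0)}$, where now $\beta_+ = \beta + \frac\gamma2$. Here I would apply the shift equation~\eqref{eq-shift} for $\Gamma_{\frac\gamma2}$ in the $\frac\gamma2$-direction (rather than the $\frac2\gamma$-direction used in Lemma~\ref{lem-shift-1}), turning each quotient $\Gamma_{\frac\gamma2}(z)/\Gamma_{\frac\gamma2}(z+\frac\gamma2)$ into an ordinary gamma factor $\frac1{\sqrt{2\pi}}\Gamma(\frac\gamma2 z)(\frac\gamma2)^{-\frac\gamma2 z + \frac12}$. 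Collecting the $\alpha$-dependent pieces, the ratio becomes $\wh C_\gamma(\beta_+)\,\frac{\Gamma(\frac\gamma2(Q - \beta_+ + \frac12\alpha))}{\Gamma(\frac\gamma2(\beta_+ + \frac12\alpha - Q))}$ with $\wh C_\gamma(\beta_+)$ independent of $\alpha$, and the $\Gamma$ in the denominator cancels against the $\Gamma(\frac\gamma2(\beta_+ + \frac12\alpha - Q))$ factor already present; setting $C_\gamma(\beta_+) = \wt C_\gamma(\beta_+)\wh C_\gamma(\beta_+)(Q-\beta)^2$ gives the claimed formula.

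The main obstacle — and really the only place warranting care — is bookkeeping: verifying that the admissibility hypotheses of Propositions~\ref{prop-H-density} and~\ref{prop-disk-03-length} (namely $\alpha > 0$, $\frac\alpha2 + \beta > \frac\gamma2$, $\beta < Q$ in the thick case, and $\alpha > 2(\beta-Q)$ in the thin case), those of Proposition~\ref{prop-curve-weight}, and the positivity conditions $x>0$, $y>0$ on the beta-integral exponents, all follow from the stated ranges $\beta_+ \in (-\infty,Q)\cup(Q,Q+\frac\gamma2)$ and $\alpha \in (2|\beta_+-Q|, 4Q-2\beta_+)$ once one substitutes $\beta = \beta_+ - \frac\gamma2$; this is the same elementary check as in Lemma~\ref{lem-shift-1} but with the shifted $\beta$. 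Everything else is a direct transcription of that proof with $\cM_2^\disk(\frac{\gamma^2}2)$ and the $\frac\gamma2$-shift in place of $\cM_2^\disk(2)$ and the $\frac2\gamma$-shift.
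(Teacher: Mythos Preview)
Your approach is exactly the paper's, and the final formula you obtain is correct. However, two of your intermediate computations are off in a way that happens to cancel. After the substitution $t=\ell^{4/\gamma^2}$ the integrand has denominator $(1+t)^2$, not $(1+t)^{x+y}$ with $x+y=\frac{\gamma^2}4+1$; the correct beta-integral exponents are $x'=\frac\gamma2(\beta_++\tfrac12\alpha-\gamma)$ and $2-x'=\frac\gamma2(2Q-\beta_+-\tfrac12\alpha)$, not your $x=\frac\gamma2(\beta_++\tfrac12\alpha-Q)$. Likewise, in the $\ol H$ ratio with $\beta_+=\beta+\frac\gamma2$, the second $\Gamma_{\gamma/2}$-quotient is $\Gamma_{\gamma/2}(z+\tfrac\gamma2)/\Gamma_{\gamma/2}(z)$ with $z=\beta_++\tfrac12\alpha-\gamma$, yielding a denominator $\Gamma(\tfrac\gamma2(\beta_++\tfrac12\alpha-\gamma))$, not $\Gamma(\tfrac\gamma2(\beta_++\tfrac12\alpha-Q))$. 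Both slips come from mechanically replacing $\frac2\gamma$ by $\frac\gamma2$ in the Lemma~\ref{lem-shift-1} formulas without recomputing the arguments; the paper's proof records the correct intermediate values.
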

\begin{proof}
	This proof is essentially the same as Lemma~\ref{lem-shift-1}. We consider the density of the unmarked boundary arc length of a sample from 
		$\cMthree(W_- + W_+;\alpha; \ell)\otimes \sm(W_-, W_+, \alpha)$, which is given by 	$\1_{x > 0} \mathfrak C x^{\frac2\gamma(\beta + \frac12\alpha - Q) - 1} dx$ 
		with
		\[\mathfrak C = (Q-\beta)^{-2} \ol H^{(\beta, \beta, \alpha)}_{(0,1,0)}\mm(Q, \beta_+),\quad \text{where } \beta = \beta_+ 	- \frac\gamma2.\]
		
		We now use  Propositions~\ref{prop-curve-weight}  and~\ref{prop-MOT} to compute the 
		$\mathfrak C$ from the right side  of~\eqref{eq-curve-weighted}. By Proposition~\ref{prop-H-density} if $\beta_+< Q$ or Proposition~\ref{prop-disk-03-length} if $\beta_+ \in (Q, Q+\frac\gamma2)$, the right hand side of~\eqref{eq-curve-weighted} gives, with $\wt C =  {\wt C}_\gamma(\beta_+)$ a constant not depending on $\alpha$,
	\alb
	\mathfrak C &=  c_{W_-, W_+} \int_0^\infty |\cMtwo(\frac{\gamma^2}2; 1, \ell)| \cdot (Q-\beta_+)^{-2} \ol H^{(\beta_+, \beta_+, \alpha)}_{(0,1,0)} \ell^{\frac2\gamma(\beta_+ + \frac12\alpha - Q)-1} \, d\ell\\
	&= {\wt C}_\gamma(\beta_+) \,\ol H^{(\beta_+, \beta_+, \alpha)}_{(0,1,0)} \Gamma(\frac\gamma2(\beta_+ + \frac12\alpha-\gamma))\Gamma(\frac\gamma2(2Q - \beta_+ - \frac12\alpha)),
	\ale
	where the second equality follows from $|\cMtwo(\frac{\gamma^2}2; 1, \ell)| \propto \frac{\ell^{\frac4{\gamma^2}-1}}{(1+\ell^{\frac4{\gamma^2}})^2}$ (Proposition~\ref{prop-MOT}) and the beta function integral $\frac{\Gamma(x)\Gamma(2-x)}{\Gamma(2)} = \int_0^\infty \frac{t^{x-1}}{(1+t)^{2}}\,dt = \frac4{\gamma^2}\int_0^\infty \frac{\ell^{\frac4{\gamma^2} x-1}}{(1+\ell^{\frac4{\gamma^2}})^2} \, d\ell$ with the change of variables $t = \ell^{4/\gamma^2}$ and with $x = \frac\gamma2(\beta_+ + \frac12\alpha -\gamma) \in (0,2)$. Note that the hypotheses of Proposition~\ref{prop-H-density} (if $\beta_+< Q$) or Proposition~\ref{prop-disk-03-length} (if $\beta_+\in (Q, Q +\frac\gamma2)$) and the bound $x \in (0,2)$ all hold because of our conditions on $\alpha, \beta_+$. 
	
	The rest of the argument is identical to the proof of Lemma~\ref{lem-shift-1} except this time 
			\eqbn
			\begin{split}
			\frac{\ol H^{(\beta_+, \beta_+, \alpha)}_{(0,1,0)}}{
				\ol H^{(\beta, \beta, \alpha)}_{(0,1,0)} }
			=&\,  \frac{2\pi}{(\frac\gamma2)^{\frac{\gamma^2}4} \Gamma(1-\frac{\gamma^2}4)} 
			\frac{\Gg(Q-\beta_+ + \frac\gamma2)^2}{\Gg(Q - \beta_+)^2}  \left(\frac\gamma2\right)^{\frac\gamma2 (2\beta_+ -\gamma - Q)}\\
			&\qquad\cdot\frac{\Gamma(\frac\gamma2(Q-\beta_+ + \frac12\alpha))}{\Gamma(\frac\gamma2(\beta_+ + \frac12\alpha - \gamma))}	
			\end{split}
			\eqen
		since $ \beta_+ -\beta =\frac\gamma2$ instead of $\frac2\gamma$.
		We omit the rest of the details. \qedhere
\end{proof}

The following is equivalent to Theorem~\ref{thm-conformal-derivative0} for $\kappa <4$ and $\rho_- \in \{ 0, \frac\kappa2 -2\}$.
\begin{proposition}\label{prop-shift-1}
	Let $\beta_+< Q + \frac\gamma2$, and let  $\lambda_0 = \frac1\kappa(\rho_+ + 2)(\rho_+ + 4 - \frac\kappa2)$ where $\kappa = \gamma^2$ and $\rho_+ = \gamma^2 - \gamma \beta_+$. For $\lambda < \lambda_0$, let $\alpha$ be either solution to $1 - \frac\alpha2 (Q-\frac\alpha2) = \lambda$. Then
	\alb
	\mm(\gamma, \beta_+) &=
	\frac{\Gamma(\frac2\gamma(Q-\beta_++\frac12\alpha))\Gamma(\frac2\gamma(2Q-\beta_+ - \frac12\alpha))}{\Gamma(\frac2\gamma(Q-\beta_++\frac\gamma2))\Gamma(\frac2\gamma(Q-\beta_+ + \frac2\gamma))},\\
	\mm(Q,\beta_+)&=
	\frac{\Gamma(\frac\gamma2(Q-\beta_++\frac12\alpha))\Gamma(\frac\gamma2(2Q-\beta_+ - \frac12\alpha))}{\Gamma(\frac\gamma2(Q-\beta_++\frac\gamma2))\Gamma(\frac\gamma2(Q-\beta_+ + \frac2\gamma))}.
	\ale
\end{proposition}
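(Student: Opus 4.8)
The plan is to prove the two identities in parallel, presenting the argument for $\mm(\gamma,\beta_+)$ with Lemma~\ref{lem-shift-1} as input; the formula for $\mm(Q,\beta_+)$ then follows by the identical argument with Lemma~\ref{lem-shift-2} in place of Lemma~\ref{lem-shift-1} and with $\tfrac\gamma2$ replacing $\tfrac2\gamma$ in the ordinary Gamma functions. Write $G(\lambda,\beta_+)$ for the claimed right-hand side. Using the two roots $\alpha_\pm(\lambda)=Q\pm\sqrt{Q^2-4+4\lambda}$ of $1-\tfrac\alpha2(Q-\tfrac\alpha2)=\lambda$ and the invariance of $\Gamma(\tfrac2\gamma(Q-\beta_++\tfrac\alpha2))\Gamma(\tfrac2\gamma(2Q-\beta_+-\tfrac\alpha2))$ under $\alpha\mapsto2Q-\alpha$, the quantity $G$ is a single-valued meromorphic function of $\lambda$ (the two $\Gamma$-arguments are the roots of a quadratic in $t$ whose coefficients depend polynomially on $\lambda$, so their product is analytic in $\lambda$ away from the poles of $\Gamma$). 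I will first treat $\beta_+\neq Q$, and handle the borderline value $\beta_+=Q$, i.e.\ $W_+=\tfrac{\gamma^2}2$, separately at the end.

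Fix $\beta_+<Q+\tfrac\gamma2$ with $\beta_+\neq Q$. Lemma~\ref{lem-shift-1} provides a constant $C_\gamma(\beta_+)\in(0,\infty)$, independent of $\alpha$, with
\[ \mm(\gamma,\beta_+)=C_\gamma(\beta_+)\,\Gamma\!\big(\tfrac2\gamma(Q-\beta_++\tfrac\alpha2)\big)\,\Gamma\!\big(\tfrac2\gamma(2Q-\beta_+-\tfrac\alpha2)\big)\qquad\text{for }\alpha\in(2|\beta_+-Q|,\,4Q-2\beta_+),\ \lambda=1-\tfrac\alpha2(Q-\tfrac\alpha2); \]
in particular $\E[\psi'(1)^{\lambda(\alpha)}]<\infty$ for these $\alpha$, and a short computation shows that $\lambda(\alpha)$ ranges over an open interval whose supremum is $\lambda_0$. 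Since $\psi'(1)\ge1$ almost surely (as in the proof of Lemma~\ref{lem:mono}), $\lambda\mapsto\mm(\gamma,\beta_+)=\E[\psi'(1)^\lambda]$ is nondecreasing; being finite for $\lambda\le0$ trivially and finite on a set accumulating at $\lambda_0$, it is finite for every $\lambda<\lambda_0$, and by dominated convergence (using $|\psi'(1)^\lambda|=\psi'(1)^{\Re\lambda}$) it extends to an analytic function of $\lambda$ on $\{\Re\lambda<\lambda_0\}$. One also checks that for $\Re\lambda<\lambda_0$ the arguments of the $\Gamma$-factors in $G$ have positive real part (for real $\lambda$ this is immediate from $\alpha\in(2(\beta_+-Q),4Q-2\beta_+)$, and for $\Re\lambda<1-\tfrac{Q^2}4$ one uses $\beta_+<\tfrac{3Q}2$), so $G(\cdot,\beta_+)$ is analytic on $\{\Re\lambda<\lambda_0\}$ as well. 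The display above reads $\mm(\gamma,\beta_+)=C_\gamma(\beta_+)\,\Gamma(\tfrac2\gamma(Q-\beta_++\tfrac\gamma2))\,\Gamma(\tfrac2\gamma(Q-\beta_++\tfrac2\gamma))\,G(\lambda,\beta_+)$ on a nonempty open real $\lambda$-interval, so by the identity theorem it holds for all $\lambda<\lambda_0$.

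To pin down the constant I specialize to $\lambda=0$, which is legitimate because $\lambda_0=(Q+\tfrac\gamma2-\beta_+)(Q+\tfrac2\gamma-\beta_+)>0$. On one hand $m^0_\gamma(\gamma,\beta_+)=1$ by Lemma~\ref{lem:mono}. On the other hand $\alpha=\gamma$ solves $1-\tfrac\alpha2(Q-\tfrac\alpha2)=0$ (as $Q-\tfrac\gamma2=\tfrac2\gamma$), and substituting $\alpha=\gamma$ together with $2Q-\beta_+-\tfrac\gamma2=Q+\tfrac2\gamma-\beta_+$ shows that the numerator of $G(0,\beta_+)$ coincides with its denominator, so $G(0,\beta_+)=1$. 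Hence $C_\gamma(\beta_+)\,\Gamma(\tfrac2\gamma(Q-\beta_++\tfrac\gamma2))\,\Gamma(\tfrac2\gamma(Q-\beta_++\tfrac2\gamma))=1$, and feeding this back gives $\mm(\gamma,\beta_+)=G(\lambda,\beta_+)$ for all $\lambda<\lambda_0$. Running the same two paragraphs with Lemma~\ref{lem-shift-2} yields the formula for $\mm(Q,\beta_+)$ for every $\beta_+\neq Q$ (the roots $\alpha_\pm$, the value $\lambda_0$, and the identities $\alpha_-(0)=\gamma$, $G^{(Q)}(0,\beta_+)=1$ are unchanged).

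Finally, for $\beta_+=Q$ (excluded from the welding results, as it corresponds to the critical weight $W_+=\tfrac{\gamma^2}2$), note that both claimed right-hand sides are continuous, indeed analytic, in $\beta_+$ at $\beta_+=Q$ for each fixed $\lambda<\lambda_0(Q)=1$. It therefore suffices to show that $\mm(\gamma,\beta_+)=\E[\psi'(1)^\lambda]$ is continuous in $\beta_+$ near $Q$, i.e.\ that the moment is continuous in the force-point weight $\rho_+$ near $\rho_+=\tfrac\kappa2-2$. I would obtain this from weak continuity of the law of the $\SLE_\kappa(\rho_-;\rho_+)$ curve in $\rho_+$ together with uniform integrability of $\psi'(1)^\lambda$: since $\lambda_0(\beta_+)$ is continuous and strictly exceeds $\lambda$ in a neighbourhood of $\beta_+=Q$, the already-established formula furnishes a uniform bound on $\E[\psi'(1)^{\lambda'}]$ there for a fixed $\lambda'\in(\lambda,\lambda_0(Q))$. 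The main obstacle of the proof is precisely this last step --- making the limit $\beta_+\to Q$ rigorous, in particular continuity of $\eta\mapsto\psi'(1)$ on the relevant event and the weak-continuity input --- because the conformal welding machinery that drives everything else degenerates exactly at $W_+=\tfrac{\gamma^2}2$; by contrast the rest of the argument is a routine combination of the cited lemmas with a one-variable analytic-continuation argument.
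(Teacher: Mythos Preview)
Your proof is correct and follows essentially the same route as the paper's. The paper also fixes $\beta_+\neq Q$, invokes Lemma~\ref{lem-shift-1} (resp.~\ref{lem-shift-2}) on its $\alpha$-interval, extends to all $\lambda<\lambda_0$ by holomorphy in $\lambda$, and pins down the constant via $\lambda=0$, $\alpha=\gamma$; the borderline $\beta_+=Q$ is then handled by approximation $\beta_+^n\to Q$.

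There are two minor differences. First, for finiteness of $\mm(\gamma,\beta_+)$ on $(-\infty,\lambda_0)$ the paper quotes Proposition~\ref{prop-finite-moment}, whereas you deduce it directly from monotonicity of $\lambda\mapsto\E[\psi'(1)^\lambda]$ and the fact that the $\alpha$-range in Lemma~\ref{lem-shift-1} produces $\lambda$'s accumulating at $\lambda_0$; your shortcut is valid. Second, for $\beta_+=Q$ the paper first restricts to $\lambda<0$ (so that $\psi'(1)^\lambda\le 1$ and bounded convergence suffices) and then extends by holomorphy, while you aim directly at all $\lambda<\lambda_0(Q)$ via a uniform-integrability argument bootstrapped from the already-proved formula at nearby $\beta_+$; both work. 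The ``weak-continuity input'' you flag as the main obstacle is exactly the content of Lemma~\ref{lem-couple-map}, which the paper proves in the appendix and invokes here, so that gap is already filled.
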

\begin{proof}
	We prove the $\mm(\gamma, \beta_+)$ identity using Lemma~\ref{lem-shift-1}; the proof of the $\mm(Q, \beta_+)$ identity using Lemma~\ref{lem-shift-2} is the same. 
	
	First fix any $\beta_+ \neq Q$. 
	By Proposition~\ref{prop-finite-moment} $\mm(\gamma, \beta_+) < \infty$ for all $\lambda < \lambda_0$, so we can apply Morera's theorem and Fubini's theorem to see that  $\lambda \mapsto \mm(\gamma, \beta_+)$ is holomorphic on $\{ \lambda \in \C \: : \: \Re \lambda < \lambda_0\}$. By Lemma~\ref{lem-shift-1}, this function agrees with the right hand side of~\eqref{eq-lem-shift-1} for some interval in $\R$, so by the uniqueness of holomorphic extensions~\eqref{eq-lem-shift-1} is true for all $\lambda \in \R$ with $\lambda < \lambda_0$. Setting $\lambda = 0$ and $\alpha = \gamma$, we deduce that $C(\beta_+)$ in Lemma~\ref{lem-shift-1}
	equals $\Gamma(\frac2\gamma(Q-\beta_++\frac\gamma2))^{-1}\Gamma(\frac2\gamma(Q-\beta_+ +\frac2\gamma))^{-1}$, completing the proof for $\beta_+ \neq Q$.
	
	When $\beta_+ = Q$ and $\lambda < 0$ we obtain the result from the $\beta_+ \neq Q$ case by taking the approximating sequence $(\kappa^n, \rho_-^n, \rho_+^n) = (\kappa, \gamma^2 - \gamma \beta_-, \gamma^2 - \gamma \beta_+^n)$ with $\beta_+ := Q - \frac1n$ in Lemma~\ref{lem-couple-map}. Then the same holomorphic extension argument as above allows us to address all $\lambda < \lambda_0$. 
\end{proof}
\begin{remark}
	The expressions in 
	Proposition~\ref{prop-shift-1} can be  written as hypergeometric functions:
	\alb
	m_\gamma^\lambda (\gamma, \beta_+) &= {_2}F_1\left(\frac2\gamma( \frac\alpha2 - \frac\gamma2), \frac2\gamma(\frac\alpha2 - \frac2\gamma), \frac2\gamma(Q-\beta_++ \frac\alpha2);1\right),\\ m_\gamma^\lambda (Q, \beta_+) &= {_2}F_1\left( \frac\gamma2(\frac\alpha2 - \frac\gamma2), \frac\gamma2(\frac\alpha2 - \frac2\gamma), \frac\gamma2(Q-\beta_++ \frac\alpha2);1\right).
	\ale
\end{remark}

\subsection{Proof of Theorem~\ref{thm-conformal-derivative0} via shift equations}\label{subsec-proof}
In this section we complete the proof of Theorem~\ref{thm-conformal-derivative0}. We first state a composition relation for $\mm$, then derive shift relations, and finally show that these relations determine $\mm$.

\begin{lemma}[Composition relation]\label{lem-composition}
	For $\beta, \beta_-, \beta_+ < Q + \frac\gamma2$ and $\lambda<0$, we have 
	\[\mm(\beta + \beta_- - Q - \frac\gamma2, \beta_+) = \mm(\beta, \beta_- + \beta_+ - Q - \frac\gamma2) \mm(\beta_-, \beta_+). \]
\end{lemma}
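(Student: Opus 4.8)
\textbf{Proof plan for the composition relation (Lemma~\ref{lem-composition}).}

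The plan is to realize the quantity $\mm(\beta+\beta_--Q-\frac\gamma2,\beta_+)$ by a two-step conformal welding: instead of welding a single weight-$W_-$ disk to a weight-$W_+$ three-pointed disk as in Proposition~\ref{prop-curve-weight}, we first weld $\cMtwo(W_1)$ to $\cMtwo(W_2)$ to create an intermediate weight-$(W_1+W_2)$ disk, and then weld the resulting disk to $\cMthree(W_3;\alpha)$. Here I would set $W_1=\gamma(\gamma+\frac2\gamma-\beta)$, $W_2=\gamma(\gamma+\frac2\gamma-\beta_-)$, $W_3=\gamma(\gamma+\frac2\gamma-\beta_+)$, so that $W_1+W_2$ corresponds to the singularity $\beta+\beta_--Q-\frac\gamma2$ (using $Q+\frac\gamma2=\gamma+\frac2\gamma$) and $W_1+W_2+W_3$ corresponds to $\beta+\beta_-+\beta_+-2Q-\gamma$. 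The key point is that welding is associative: gluing $\cMtwo(W_1)$, $\cMtwo(W_2)$, $\cMthree(W_3;\alpha)$ along two interfaces, one can either first merge the first two (getting $\cMtwo(W_1+W_2)$ decorated by an $\SLE_\kappa(W_1-2;W_2-2)$ loop by Proposition~\ref{prop-2pt-welding}) and then apply Proposition~\ref{prop-curve-weight} with weights $(W_1+W_2, W_3)$, or first merge the last two (via Proposition~\ref{prop-curve-weight} with weights $(W_2,W_3)$, which introduces the factor $\sm(W_2,W_3,\alpha)$) and then weld $\cMtwo(W_1)$ on.

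Concretely, the first computation gives a curve-decorated surface whose curve law carries total mass $|\sm(W_1+W_2,W_3,\alpha)|=\mm(\beta+\beta_--Q-\frac\gamma2,\beta_+)$ by~\eqref{eq:moment-weld}. The second computation builds the same surface but the interface between the $W_1$-part and the $(W_2{+}W_3)$-part is an $\SLE_\kappa(W_1-2;(W_2+W_3)-2)$-type curve, and the reweighting factor $\psi'(0)^{1-\Delta(\alpha)}$ now factors through the composition of the two conformal maps that uniformize successively: if $\psi_1$ uniformizes the complement of the outer curve and $\psi_2$ uniformizes the complement of the inner curve inside the image of $\psi_1$, then by the chain rule $\psi'(0)=\psi_2'(\psi_1(0))\,\psi_1'(0)$ (after suitably normalizing so that $\psi_1$ fixes the relevant point). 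Tracking the reweighting through both maps and using that the two pieces of the welding are conditionally independent, the derivative moment factorizes: the outer map contributes a factor whose expectation is $\mm(\beta,\beta_-+\beta_+-Q-\frac\gamma2)$ and the inner map contributes $\mm(\beta_-,\beta_+)$. Equating the total masses of the curve-measures obtained the two ways, and cancelling the common welding constants $c_{W_i,W_j}$ and the common area/length weightings, yields the claimed identity. The restriction $\lambda<0$ is used to guarantee all the relevant moments are finite so that the manipulations of infinite measures are legitimate; later (as in the proof of Proposition~\ref{prop-shift-1}) one extends to $\lambda<\lambda_0$ by holomorphicity.

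The main obstacle I expect is making the ``associativity of welding'' and the chain-rule bookkeeping fully rigorous at the level of infinite measures: one must check that the order in which two disjoint interfaces are welded does not affect the resulting curve-decorated quantum surface (this should follow from Proposition~\ref{prop-2pt-welding} and Proposition~\ref{prop-curve-weight} applied in the two orders, plus the conditional independence statements in their proofs), and that the reweighting $\psi'(0)^{1-\Delta(\alpha)}$ distributes correctly across the composition $\psi=\psi_2\circ\psi_1$ — in particular that the marked point at $0$ is tracked consistently and that the extra $\log|\psi_i'|$-type terms, which appear exactly as in the proof of Proposition~\ref{prop-curve-weight} via Girsanov (Lemma~\ref{lem-change-insertion}), combine to give precisely the product $\psi_2'(\psi_1(0))^{1-\Delta(\alpha)}\psi_1'(0)^{1-\Delta(\alpha)}$. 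Once these structural points are in place, the identity is a matter of matching total masses, since by definition $|\sm(W_-,W_+,\alpha)|=\mm(\beta_-,\beta_+)$ with the dictionary~\eqref{eq:parameter}.
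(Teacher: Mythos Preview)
Your plan via three-disk welding is sound in spirit, but the paper's proof is a short, purely SLE argument that bypasses the LQG machinery entirely. In $\rho$-parameters ($\rho=\gamma^2-\gamma\beta$, $\rho_\pm=\gamma^2-\gamma\beta_\pm$), the paper samples independently $\eta_1\sim\SLE_\kappa(\rho;\rho_-+\rho_++2)$ and $\eta_2\sim\SLE_\kappa(\rho_-;\rho_+)$ in $\bbH$, with $\psi_1,\psi_2$ the uniformizing maps of the respective right components fixing $1$. The imaginary-geometry result \cite[Proposition~7.4]{ig1} says that $\eta:=\psi_1^{-1}(\eta_2)$ has law $\SLE_\kappa(\rho+\rho_-+2;\rho_+)$, with mapping-out function $\psi=\psi_2\circ\psi_1$. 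The chain rule $\psi'(1)=\psi_1'(1)\psi_2'(1)$ and independence of $\psi_1,\psi_2$ then immediately factor the $\lambda$-moment; $\lambda<0$ just ensures finiteness. This is exactly the chain-rule step you identified, stripped of the welding scaffolding around it.

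If you do carry out the welding route, two points need care beyond what you flag. First, the welding constants do \emph{not} simply cancel: the two orderings produce $c_{W_1,W_2}\,c_{W_1+W_2,W_3}$ versus $c_{W_2,W_3}\,c_{W_1,W_2+W_3}$, and their equality must be established separately (for instance by specializing your argument to $\alpha=\gamma$, where every $|\sm|=1$). Second, Proposition~\ref{prop-curve-weight} as stated requires the left weight $W_-\ge\tfrac{\gamma^2}{2}$, so your iterated application forces $\beta,\beta_-\le Q$, short of the full range $\beta,\beta_-<Q+\tfrac\gamma2$ claimed in the lemma; the paper's direct argument has no such restriction. Your route would yield a self-contained LQG derivation avoiding the imaginary-geometry citation, but since the paper already leans on \cite{ig1} elsewhere, the gain is modest.
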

\begin{proof}
	Let $\rho = \gamma^2 - \gamma \beta$, $\rho_\pm = \gamma^2 - \gamma \beta_\pm$.
	Independently sample an $\SLE_\kappa(\rho; \rho_- + \rho_+ + 2)$ curve $\eta_1$  and an $\SLE_\kappa(\rho_-; \rho_+)$ curve $\eta_2$ in $\bbH$ from $0$ to $\infty$, let $D_j$ be the connected component of $\bbH \backslash \eta_j$ containing $1$ on its boundary for $j=1,2$, and let $\psi_j:D_j \to \bbH$ be the conformal map such that $\psi_j(1) = 1$ and the first (resp. last) point on $\partial D_j$ traced by $\eta_j$ is mapped to $0$ (resp. $\infty$). Let $\eta := \psi_1^{-1}(\eta_2)$ and $\psi := \psi_2 \circ \psi_1$. 
	The theory of imaginary geometry \cite[Proposition 7.4]{ig1}  tells us that the law of $\eta$ is $\SLE_\kappa(\rho + \rho_- + 2; \rho_+)$. Thus, since $\psi'(1) = \psi_1'(1)\psi_2'(1)$ and $\psi_1, \psi_2$ are independent, 
	\eqbn
	\begin{split}
		\mm(\beta + \beta_- - Q - \frac\gamma2, \beta_+) &= \E[\psi'(1)^\lambda] = \E[\psi_1'(1)^\lambda]\E[\psi_2'(1)^\lambda]\\
		&=  \mm(\beta, \beta_- + \beta_+ - Q - \frac\gamma2) \mm(\beta_-, \beta_+).
	\end{split} 
	\eqen
	Here the assumption $\lambda<0$ ensures the finiteness of the two sides.
\end{proof}

We immediately deduce the following shift relations.
\begin{lemma}[Shift relations for $\mm$]\label{lem-shift-m}
	For $\beta_-, \beta_+ < Q +\frac\gamma2$,  $\lambda < 0$ and  $\alpha$ a solution to $1 - \frac\alpha2 (Q-\frac\alpha2) = \lambda$,
	\alb
	\frac{\mm(\beta_- - \frac2\gamma, \beta_+)}{\mm(\beta_-, \beta_+)} &= 
	\frac{\Gamma(\frac2\gamma(2Q + \frac\gamma2 -\beta_--\beta_+ + \frac12\alpha))\Gamma(\frac2\gamma(3Q + \frac\gamma2 -\beta_- - \beta_+ - \frac12\alpha))}{\Gamma(\frac2\gamma(2Q + \gamma -\beta_- - \beta_+ ))\Gamma(\frac2\gamma(3Q -\beta_- - \beta_+))},\\
	\frac{\mm(\beta_- - \frac\gamma2, \beta_+)}{\mm(\beta_-, \beta_+)} &= 
	\frac{\Gamma(\frac\gamma2(2Q+\frac\gamma2-\beta_--\beta_++\frac12\alpha))\Gamma(\frac\gamma2(3Q + \frac\gamma2-\beta_- -\beta_+  - \frac12\alpha))}{\Gamma(\frac\gamma2(2Q + \gamma -\beta_- - \beta_+ ))\Gamma(\frac\gamma2(3Q -\beta_- - \beta_+))}.
	\ale
\end{lemma}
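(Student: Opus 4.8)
\textbf{Proof plan for Lemma~\ref{lem-shift-m}.}
The plan is to combine the composition relation of Lemma~\ref{lem-composition} with the two explicit evaluations proved in Proposition~\ref{prop-shift-1}. The point of the composition relation is that it lets us factor a moment $\mm(\beta+\beta_--Q-\frac\gamma2,\beta_+)$ into a ``generic'' factor $\mm(\beta,\beta_-+\beta_+-Q-\frac\gamma2)$ times $\mm(\beta_-,\beta_+)$. If we feed in the two special values $\beta=\gamma$ and $\beta=Q$, for which $\beta+\beta_--Q-\frac\gamma2$ equals $\beta_--\frac2\gamma$ and $\beta_--\frac\gamma2$ respectively (using $Q=\frac\gamma2+\frac2\gamma$), then the composition relation reads
\begin{align*}
\mm(\beta_--\tfrac2\gamma,\beta_+) &= \mm(\gamma,\beta_-+\beta_+-Q-\tfrac\gamma2)\,\mm(\beta_-,\beta_+),\\
\mm(\beta_--\tfrac\gamma2,\beta_+) &= \mm(Q,\beta_-+\beta_+-Q-\tfrac\gamma2)\,\mm(\beta_-,\beta_+).
\end{align*}
Dividing both sides by $\mm(\beta_-,\beta_+)$ reduces the lemma to computing $\mm(\gamma,\cdot)$ and $\mm(Q,\cdot)$ at the shifted argument $\beta_+':=\beta_-+\beta_+-Q-\frac\gamma2$, which is precisely what Proposition~\ref{prop-shift-1} provides.

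The remaining work is then bookkeeping: substitute $\beta_+'=\beta_-+\beta_+-Q-\frac\gamma2$ into the two formulas of Proposition~\ref{prop-shift-1}. For the first relation one gets
\[
\mm(\gamma,\beta_+') = \frac{\Gamma(\frac2\gamma(Q-\beta_+'+\frac12\alpha))\Gamma(\frac2\gamma(2Q-\beta_+'-\frac12\alpha))}{\Gamma(\frac2\gamma(Q-\beta_+'+\frac\gamma2))\Gamma(\frac2\gamma(Q-\beta_+'+\frac2\gamma))},
\]
and one simplifies $Q-\beta_+' = 2Q+\frac\gamma2-\beta_--\beta_+$, $2Q-\beta_+'=3Q+\frac\gamma2-\beta_--\beta_+$, $Q-\beta_+'+\frac\gamma2 = 2Q+\gamma-\beta_--\beta_+$, and $Q-\beta_+'+\frac2\gamma = 3Q-\beta_--\beta_+$ (using $\frac\gamma2+\frac2\gamma=Q$). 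This gives exactly the stated ratio; the second relation follows identically with $\frac2\gamma$ replaced by $\frac\gamma2$ throughout. One should note that Proposition~\ref{prop-shift-1} is stated for all $\lambda<\lambda_0$ with $\lambda_0$ depending on $\rho_+$; since here we restrict to $\lambda<0\le\lambda_0$, the formulas apply unconditionally, and Lemma~\ref{lem-composition} likewise requires $\lambda<0$, so the hypotheses are consistent.

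There is no real obstacle in this lemma — it is a direct corollary. The one point requiring mild care is checking that all the arguments of $\mm$ appearing in the composition step stay in the admissible range $\beta<Q+\frac\gamma2$ for the inputs with $\beta_-,\beta_+<Q+\frac\gamma2$: the argument $\beta_-+\beta_+-Q-\frac\gamma2$ may lie below the thresholds, but since $\mm$ is defined (via \eqref{eq-m-lambda}) for all second arguments $<Q+\frac\gamma2$ and $\beta_-+\beta_+-Q-\frac\gamma2<\beta_+<Q+\frac\gamma2$, this is fine; similarly $\beta_--\frac2\gamma$ and $\beta_--\frac\gamma2$ are both $<\beta_-<Q+\frac\gamma2$. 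Thus the only genuine inputs are Lemmas~\ref{lem-composition} and Proposition~\ref{prop-shift-1}, and the proof is two lines of substitution plus the simplification of the gamma-function arguments.
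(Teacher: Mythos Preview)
Your proposal is correct and follows exactly the approach in the paper: set $\beta=\gamma$ (resp.\ $\beta=Q$) in Lemma~\ref{lem-composition} and then eliminate the resulting factor $\mm(\gamma,\beta_-+\beta_+-Q-\frac\gamma2)$ (resp.\ $\mm(Q,\beta_-+\beta_+-Q-\frac\gamma2)$) using Proposition~\ref{prop-shift-1}. Your verification of the admissible ranges and the simplification of the Gamma-function arguments are all accurate.
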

\begin{proof}
	For the first identity, set $\beta = \gamma$ in Lemma~\ref{lem-composition}, then use Proposition~\ref{prop-shift-1} to eliminate the term $\mm (\gamma, \beta_- + \beta_+ - Q - \frac\gamma2)$. 
	For the second identity, set $\beta = Q$ in Lemma~\ref{lem-composition}, then use Proposition~\ref{prop-shift-1} to eliminate the term $\mm (Q, \beta_- + \beta_+ - Q - \frac\gamma2)$. 
\end{proof}

We now use the shift relations to prove Theorem~\ref{thm-conformal-derivative0} in some regime.
\begin{proposition}\label{prop-SLE-moment-main}
	Theorem~\ref{thm-conformal-derivative0} holds when $\kappa \in (0,4) \backslash \Q$ and $\lambda < 0$. Namely,
	using the identification of parameters from~\eqref{eq:parameter},  for $\gamma^2  = \kappa \in (0, 4) \backslash \Q$, $\beta_-, \beta_+ < Q + \frac\gamma2$, $\lambda < 0$ and $\alpha$ a solution to $1 - \frac\alpha2 (Q-\frac\alpha2) = \lambda$, we have
	\eqb \label{eq-lem-SLE-moment-main}
	\begin{split}
		\mm(\beta_-, \beta_+) 
		=  &\,\frac{\Gg(2Q + \gamma - \beta_- - \beta_+) \Gg(3Q - \beta_- - \beta_+)}{\Gg(2Q + \frac\gamma2 - \beta_- - \beta_+ + \frac12\alpha)\Gg(3Q + \frac\gamma2 - \beta_- - \beta_+ - \frac12\alpha)} \\
		&\qquad\cdot\frac{\Gg(Q-\beta_+ + \frac12\alpha) \Gg(2Q - \beta_+ - \frac12 \alpha)}{ \Gg(Q - \beta_+ +\frac\gamma2) \Gg(Q - \beta_+ +\frac2\gamma)}. 
	\end{split}
	\eqe
\end{proposition}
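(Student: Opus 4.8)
The plan is to verify that the explicit right-hand side of~\eqref{eq-lem-SLE-moment-main} satisfies both shift relations from Lemma~\ref{lem-shift-m} in the $\beta_-$-variable, and then to invoke a uniqueness argument to conclude that it equals $\mm(\beta_-,\beta_+)$. First I would denote by $G(\beta_-,\beta_+)$ the proposed formula on the right-hand side of~\eqref{eq-lem-SLE-moment-main}, so the goal is to show $\mm(\beta_-,\beta_+)=G(\beta_-,\beta_+)$ for $\kappa\in(0,4)\setminus\Q$ and $\lambda<0$. Using the shift equation~\eqref{eq-shift} for $\Gg$, which rewrites $\Gg(z)/\Gg(z+\frac\gamma2)$ and $\Gg(z)/\Gg(z+\frac2\gamma)$ in terms of ordinary $\Gamma$-functions, I would compute the ratios $G(\beta_--\tfrac2\gamma,\beta_+)/G(\beta_-,\beta_+)$ and $G(\beta_--\tfrac\gamma2,\beta_+)/G(\beta_-,\beta_+)$. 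The terms in the second line of~\eqref{eq-lem-SLE-moment-main} do not involve $\beta_-$, so they cancel in both ratios; only the first-line factors contribute, and each $\Gg$ appearing there gets shifted by $\frac2\gamma$ (resp.\ $\frac\gamma2$) in its argument, producing exactly the $\Gamma$-quotients in Lemma~\ref{lem-shift-m}. This is a mechanical check once one is careful with the sign of the shifts: in $G$, decreasing $\beta_-$ by $\frac2\gamma$ \emph{increases} each of $2Q+\gamma-\beta_--\beta_+$, $3Q-\beta_--\beta_+$, $2Q+\frac\gamma2-\beta_--\beta_++\frac12\alpha$, $3Q+\frac\gamma2-\beta_--\beta_+-\frac12\alpha$ by $\frac2\gamma$.

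Next I would establish a matching initial/boundary condition. Proposition~\ref{prop-shift-1} gives the explicit value of $\mm(\gamma,\beta_+)$ and $\mm(Q,\beta_+)$ (equivalently $\beta_-\in\{\gamma,Q\}$), and a direct substitution of $\beta_-=\gamma$ into $G$ should reproduce the stated formula for $\mm(\gamma,\beta_+)$ --- here the second line of~\eqref{eq-lem-SLE-moment-main} and the $\Gg$-to-$\Gamma$ conversion via~\eqref{eq-shift} combine to give the ${}_2F_1$-type expression of Proposition~\ref{prop-shift-1}. Together with the shift relations this pins down $G$ along the lattice $\{\gamma-\frac2\gamma\Z_{\ge0}-\frac\gamma2\Z_{\ge0}\}$ (and similarly from $\beta_-=Q$). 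Since $\kappa=\gamma^2$ is irrational, $\frac2\gamma$ and $\frac\gamma2$ are incommensurable, so this lattice is dense; combined with the fact that both $\mm(\cdot,\beta_+)$ and $G(\cdot,\beta_+)$ are meromorphic in $\beta_-$ (for $\mm$ this uses that $\lambda\mapsto\mm$ is holomorphic for $\Re\lambda<\lambda_0$ as in the proof of Proposition~\ref{prop-shift-1}, together with the analytic dependence on $\rho_-$, hence on $\beta_-$, of the SLE moment --- which I would justify by Morera/Fubini as before or cite the relevant continuity input), the two functions agree on a set with an accumulation point and hence everywhere they are both defined. This is the standard ``shift equations plus meromorphicity determine the solution'' mechanism familiar from the DOZZ setting.

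The main obstacle I anticipate is not the algebra but the analyticity/uniqueness step: one must know that $\beta_-\mapsto\mm(\beta_-,\beta_+)$ extends to a genuinely meromorphic function so that density of the lattice forces equality. The shift relations of Lemma~\ref{lem-shift-m} are only derived for $\lambda<0$ (to guarantee finiteness of all moments in the composition relation Lemma~\ref{lem-composition}), so I would first prove the identity~\eqref{eq-lem-SLE-moment-main} for $\lambda<0$ and all admissible $\beta_-,\beta_+$, using the two shift relations to move $\beta_-$ by $\pm\frac2\gamma$ and $\pm\frac\gamma2$ while staying in the region where everything is finite and holomorphic, and anchoring at $\beta_-=\gamma$ and $\beta_-=Q$ via Proposition~\ref{prop-shift-1}. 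The extension to $\lambda\ge 0$ (up to $\lambda_0$) and to rational/large $\kappa$ is explicitly deferred to later parts of Section~\ref{subsec-proof} and to the SLE-duality argument, so for the present proposition I only need the $\kappa\in(0,4)\setminus\Q$, $\lambda<0$ case. A secondary point to handle carefully is checking that the second line of~\eqref{eq-lem-SLE-moment-main}, which carries all the $\beta_+$- and $\alpha$-dependence not captured by the $\beta_-$-shifts, is correctly normalized; this I would verify by setting $\lambda=0$, $\alpha=\gamma$ (so $\mm\equiv1$ by Lemma~\ref{lem:mono}) and confirming $G(\beta_-,\beta_+)=1$ identically, which forces the precise combination of $\Gg$'s displayed.
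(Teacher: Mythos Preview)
Your overall strategy is essentially the paper's: verify that the proposed formula satisfies the two shift relations of Lemma~\ref{lem-shift-m}, use the irrationality of $\gamma^2$ to propagate equality from an anchor point (such as $\beta_-=\gamma$ or $\beta_-=Q$, via Proposition~\ref{prop-shift-1}) to a dense set of $\beta_-$, and then pass from dense to all $\beta_-$.

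However, there is a genuine gap in the last step. You propose to close it by asserting that $\beta_-\mapsto \mm(\beta_-,\beta_+)$ is meromorphic, invoking ``Morera/Fubini as before or the relevant continuity input.'' The Morera/Fubini argument in the proof of Proposition~\ref{prop-shift-1} gives holomorphicity of $\lambda\mapsto \mm$ because $\psi'(1)^\lambda=e^{\lambda\log\psi'(1)}$ is entire in $\lambda$ for each fixed realization of $\psi$. It says nothing about dependence on $\rho_-$ (equivalently $\beta_-$), since varying $\rho_-$ changes the law of $\eta$ itself; there is no integrand that is analytic in $\rho_-$ to which Morera applies. The continuity inputs available in the paper (Lemmas~\ref{lem-couple-map} and~\ref{lem-couple-map-nonsimple}) only yield continuity of $\psi'(1)$ in probability under restricted parameter ranges, not analyticity, and in particular do not cover $\beta_-\in(Q,Q+\tfrac\gamma2)$. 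So as written you have no mechanism to upgrade ``equality on a dense set'' to equality everywhere.

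The paper replaces this step by a monotonicity argument: for $\lambda<0$, Lemma~\ref{lem:mono} gives $\psi'(1)^\lambda<1$, and Lemma~\ref{lem-composition} (with arbitrary $\beta<Q+\tfrac\gamma2$) shows $\mm(\beta_- - (Q+\tfrac\gamma2-\beta),\beta_+)=\mm(\beta,\beta_-+\beta_+-Q-\tfrac\gamma2)\,\mm(\beta_-,\beta_+)<\mm(\beta_-,\beta_+)$, so $\beta_-\mapsto\mm(\beta_-,\beta_+)$ is monotone. Since the explicit right-hand side is continuous in $\beta_-$, agreement on a dense set together with monotonicity of the left-hand side forces equality for all $\beta_-<Q+\tfrac\gamma2$. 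This is the missing ingredient in your proposal.
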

\begin{proof}
	
	We first show that there is a function $c(\beta_+,\alpha)$ such that 
	\eqb \label{eq-lem-moment-mod-const}
	\begin{split}
		&\mm(\beta_-, \beta_+) \\
		&\qquad= c(\beta_+,\alpha) \frac{\Gg(2Q + \gamma - \beta_- - \beta_+) \Gg(3Q - \beta_- - \beta_+)}{\Gg(2Q + \frac\gamma2 - \beta_- - \beta_+ + \frac12\alpha)\Gg(3Q + \frac\gamma2 - \beta_- - \beta_+ - \frac12\alpha)}. 
	\end{split}
	\eqe
	Let $\wtmm(\beta_-, \beta_+)$ denote the right hand side of~\eqref{eq-lem-moment-mod-const} divided by $c(\beta_+,\alpha)$. Using the shift relations of $\Gg$~\eqref{eq-shift} it is easy to check that the equations of Lemma~\ref{lem-shift-m} still hold when $\mm$ is replaced by $\wtmm$. Consequently, for all $\beta_+, \beta_- < Q + \frac\gamma2$ we have
	\[\frac{\mm(\beta_- - \frac2\gamma, \beta_+)}{\wtmm(\beta_- - \frac2\gamma, \beta_+)} = \frac{\mm(\beta_-, \beta_+)}{\wtmm(\beta_- , \beta_+)} = \frac{\mm(\beta_- - \frac\gamma2, \beta_+)}{\wtmm(\beta_- - \frac\gamma2, \beta_+)}.\]
	Keep $\beta_+$ fixed. Since $\gamma^2 \not \in \Q$, starting from $\beta_- = 0$ and making upward jumps $\beta_- \mapsto \beta_- + \frac2\gamma$ and downward jumps $\beta_- \mapsto \beta_- - \frac\gamma2$, we conclude that for $\beta_-$ in a dense subset of $(-\infty, Q + \frac\gamma2)$ we have $\mm(\beta_-, \beta_+)/\wtmm(\beta_-, \beta_+) = \mm(0, \beta_+)/\wtmm(0, \beta_+) =: c(\beta_+, \alpha)$, i.e.\ \eqref{eq-lem-moment-mod-const} holds for a dense set of $\beta_- \in (-\infty, Q + \frac\gamma2)$.

	Since $\lambda < 0$, by Lemmas~\ref{lem:mono} and~\ref{lem-composition} we have $\mm(\beta_- - (Q+\frac\gamma2 - \beta), \beta_+) = \mm(\beta, \beta_-+\beta_+ - Q-\frac\gamma2) \mm(\beta_-, \beta_+) < \mm(\beta_-, \beta_+)$ for all $\beta < Q + \frac\gamma2$, so $\mm(\beta_-, \beta_+)$ is monotone in $\beta_-$. The right hand side of~\eqref{eq-lem-moment-mod-const} is continuous in $\beta_-$, so by monotonicity we can extend~\eqref{eq-lem-moment-mod-const} from a dense set to the full range $\beta_- \in (-\infty, Q + \frac\gamma2)$. Thus we have shown~\eqref{eq-lem-moment-mod-const}.

	Now, both Proposition~\ref{prop-shift-1} and~\eqref{eq-lem-moment-mod-const}  give expressions for  $\mm(Q, \beta_+)$, in the latter case, in terms of $c(\beta_+,\alpha)$. Comparing these yields 
	\eqbn
	\begin{split}
		&c(\beta_+,\alpha) \frac{\Gg(Q + \gamma - \beta_+)\Gg(2Q - \beta_+)}{\Gg(Q + \frac\gamma2 - \beta_+ + \frac12\alpha)\Gg(2Q + \frac\gamma2 -\beta_+ -\frac12\alpha)}\\
		&\qquad\qquad\qquad\qquad\qquad= \frac{\Gamma(\frac\gamma2(Q - \beta_+ + \frac12\alpha)) \Gamma(\frac\gamma2(2Q - \beta_+ - \frac12\alpha))}{\Gamma(\frac\gamma2(Q  - \beta_+ + \frac\gamma2 )) \Gamma(\frac\gamma2(Q   - \beta_+ + \frac2\gamma))}.
	\end{split}
	 \eqen
	We may simplify this using the shift relations for $\Gg$ to get 
	\[c(\beta_+,\alpha) = \frac{\Gg(Q-\beta_+ + \frac12\alpha) \Gg(2Q - \beta_+ - \frac12 \alpha)}{ \Gg(Q - \beta_+ +\frac\gamma2) \Gg(Q - \beta_+ +\frac2\gamma)}, \]
	and eliminating $c(\beta_+,\alpha)$ from~\eqref{eq-lem-moment-mod-const} gives~\eqref{eq-lem-SLE-moment-main}.
\end{proof}
Given Proposition~\ref{prop-SLE-moment-main}, we extend Theorem~\ref{thm-conformal-derivative0} to all rational $\kappa \in (0,4]$ by continuity, to all $\lambda<\lambda_0$ by holomorphicity, and to all $\kappa >4$ by SLE duality.
\begin{lemma}\label{lem-thm-small}
	Theorem~\ref{thm-conformal-derivative0} holds for $\kappa \in (0, 4]$ and $\rho_-, \rho_+ > -2$. 
\end{lemma}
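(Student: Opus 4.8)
The plan is to upgrade Proposition~\ref{prop-SLE-moment-main}, which gives Theorem~\ref{thm-conformal-derivative0} only for $\kappa\in(0,4)\setminus\Q$ and $\lambda<0$, to the full range $\kappa\in(0,4]$, $\lambda<\lambda_0$ by three successive extension arguments.

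First I would fix $\lambda<0$ and upgrade from irrational to all rational $\kappa\in(0,4]$ by continuity. The right-hand side of~\eqref{eq-lem-SLE-moment-main} is a ratio of products of double gamma functions $\Gg=\Gamma_{\sqrt\kappa/2}$, which depends continuously (indeed real-analytically) on $\gamma\in(0,2)$ for $\beta_-,\beta_+,\alpha$ in the relevant ranges, since $\Gamma_b(z)$ depends analytically on $b>0$ away from its poles. On the probabilistic side, I would invoke a continuity-in-$\kappa$ statement for $\E[\psi'(1)^\lambda]$: for fixed $\lambda<0$ the law of $\psi'(1)$, and hence its (bounded, since $\psi'(1)\ge1$ and $\lambda<0$) moment $\E[\psi'(1)^\lambda]\in(0,1]$, varies continuously as $(\kappa,\rho_-,\rho_+)$ vary continuously, because $\SLE_\kappa(\rho_-;\rho_+)$ depends continuously on its parameters in a suitable topology (this is the role of \texttt{Lemma~\ref{lem-couple-map}}/\texttt{Proposition~\ref{prop-finite-moment}} alluded to earlier). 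Approximating a rational $\kappa\in(0,4)$ by irrationals with matching $\beta_-,\beta_+$ (equivalently matching $\rho_\pm$ up to the reparametrization) and passing to the limit gives~\eqref{eq-lem-SLE-moment-main} for all $\kappa\in(0,4)$; the endpoint $\kappa=4$ follows by the same limiting argument.

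Next, with $\kappa\in(0,4]$ fixed, I would extend from $\lambda<0$ to all $\lambda<\lambda_0$ by holomorphicity. As in the proof of Proposition~\ref{prop-shift-1}, $\lambda\mapsto\E[\psi'(1)^\lambda]$ is holomorphic on $\{\Re\lambda<\lambda_0\}$ (Morera plus Fubini, using $\E[\psi'(1)^{\lambda}]<\infty$ for $\Re\lambda<\lambda_0$ from Proposition~\ref{prop-finite-moment}), while the right-hand side of~\eqref{eq-lem-SLE-moment-main}, re-expressed through $\alpha$ via $1-\frac\alpha2(Q-\frac\alpha2)=\lambda$, is meromorphic in $\lambda$ and agrees with $\E[\psi'(1)^\lambda]$ on the half-line $\lambda<0$; by uniqueness of analytic continuation the two agree throughout $\{\lambda<\lambda_0\}$, and one checks the claimed expression has no poles there. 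One then rewrites~\eqref{eq-lem-SLE-moment-main} in the normalized form $F(\alpha,\kappa,\rho_-,\rho_+)/F(\sqrt\kappa,\kappa,\rho_-,\rho_+)$ of Theorem~\ref{thm-conformal-derivative0}: substituting $\beta_\pm=\gamma-\rho_\pm/\gamma$ (i.e.\ $\rho_\pm=\gamma^2-\gamma\beta_\pm$) into the double-gamma ratio and using the shift equations~\eqref{eq-shift} for $\Gg$ to match arguments, with the denominator $F(\sqrt\kappa,\dots)$ absorbing the $\beta_-$-independent normalization (equivalently, fixing $\E[\psi'(1)^0]=1$). Finally the divergence claim $\E[\psi'(1)^\lambda]=\infty$ for $\lambda\ge\lambda_0$ is exactly Proposition~\ref{prop-finite-moment}.

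The main obstacle I anticipate is making the continuity-in-$\kappa$ step rigorous: one needs that $\SLE_\kappa(\rho_-;\rho_+)$ converges weakly (say in the topology of uniform convergence of curves up to reparametrization on $\bbH\cup\R$) as $(\kappa,\rho_-,\rho_+)\to(\kappa_0,\rho_-^0,\rho_+^0)$, \emph{and} that the functional $\eta\mapsto\psi'_\eta(1)$ is continuous at the relevant curves, so that $\E[\psi'(1)^\lambda]$ (for $\lambda<0$, where the integrand is bounded) passes to the limit. Subtleties arise because for small $\rho_-$ the curve can come close to the point $1$, so the conformal map $\psi$ and its derivative are only continuous off an event that must be shown to be negligible uniformly in $\kappa$ near $\kappa_0$; this is presumably what \texttt{Lemma~\ref{lem-couple-map}} provides via an explicit coupling, and I would lean on it rather than reprove it. Everything else—analytic continuation, the double-gamma shift-equation bookkeeping, and the reduction to the $F$-ratio form—is routine.
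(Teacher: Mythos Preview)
Your high-level plan is exactly the paper's plan, but there is a genuine gap in the continuity step. You invoke Lemma~\ref{lem-couple-map} as if it gave continuity of $\psi'(1)$ for all $(\kappa,\rho_-,\rho_+)$, but that lemma carries the hypothesis $\rho_\pm^n\ge \frac{\kappa^n}2-2$, i.e.\ it only covers the non-boundary-touching regime $\beta_\pm\le Q$. In the boundary-touching regime $\rho_+\in(-2,\tfrac\kappa2-2)$ the domain $D$ containing $1$ is bounded by several curve excursions and intervals of $\R$, and the continuity of $\eta\mapsto\psi_\eta'(1)$ is genuinely more delicate; the paper's separate Lemma~\ref{lem-couple-map-nonsimple} handles this, but only for $\rho_-=0$. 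So your limiting argument, as written, establishes~\eqref{eq-lem-SLE-moment-main} for rational $\kappa\in(0,4]$ only on $\{\beta_-,\beta_+\le Q\}\cup\{\beta_-=\gamma,\ \beta_+<Q+\tfrac\gamma2\}$, not on all of $\{\beta_\pm<Q+\tfrac\gamma2\}$.

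The paper closes this gap not by proving a stronger coupling lemma but by an algebraic bootstrap: using the composition relation (Lemma~\ref{lem-composition}) twice to write $\mm(\beta_-,\beta_+)$ as a ratio of four $\mm$-values each of which lies in the already-established region (this is equation~\eqref{eq-thm1-mero-ugly}), and observing that the resulting expression is meromorphic in $\beta_\pm$. Your proposal is missing this step. A minor second point: Proposition~\ref{prop-finite-moment} only asserts \emph{finiteness} for $\lambda<\lambda_0$; the divergence for $\lambda\ge\lambda_0$ is deduced from the explicit formula via $\lim_{\eps\downarrow0}m_\gamma^{\lambda_0-\eps}(\beta_-,\beta_+)=\infty$ and monotonicity (Lemma~\ref{lem:mono}), not from that proposition.
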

\begin{proof}
	We first prove the result for $\lambda < 0$. Extend the definition of $\mm$ in~\eqref{eq-m-lambda} to $\gamma = 2$; this is the only place in the paper where we consider $\gamma = 2$ (corresponding to $\kappa = 4$ and $Q = 2$).
	As before, for each $\gamma \in (0,2]$ it suffices to prove~\eqref{eq-lem-SLE-moment-main} for all $\beta_-,\beta_+< Q + \frac\gamma2$. 
	
	We first show that
	\eqb\label{eq-thm1-special-cases}
	\text{\eqref{eq-lem-SLE-moment-main} holds for }\kappa \in (0, 4], \lambda < 0,\{\beta_-, \beta_+ \leq Q \} \cup \{\beta_- = \gamma, \beta_+ < Q + \frac\gamma2 \}.
	\eqe
	For $\kappa = \gamma^2 \in (0, 4]$, $\lambda < 0$, and $\beta_\pm \leq Q$,~\eqref{eq-lem-SLE-moment-main}   follows from  Proposition~\ref{prop-SLE-moment-main} and Lemma~\ref{lem-couple-map} applied to the sequence $(\kappa^n, \rho_-^n, \rho_+^n) = (\kappa^n, \rho_-, \rho_+)$, where $(\kappa^n)_{n \geq 1}$ is an increasing sequence of irrational numbers with limit $\kappa$, and $\rho_\pm^n=\rho_\pm = \gamma^2 - \gamma \beta_\pm$. 
	Likewise, when $\kappa \in (0, 4]$, $\lambda < 0$, and $\rho_- = 0, \rho_+ \in (0, \frac\kappa2-2)$,~\eqref{eq-lem-SLE-moment-main} follows from Proposition~\ref{prop-SLE-moment-main} and Lemma~\ref{lem-couple-map-nonsimple}. Thus we have verified~\eqref{eq-thm1-special-cases}.

	Now consider any $\beta_-, \beta_+ < Q + \frac\gamma2$ and $\lambda < 0$. Using Lemma~\ref{lem-composition} yields $\mm( \gamma, \beta_- + \beta_+ - Q - \frac\gamma2) \mm (\beta_-, \beta_+) = \mm (\beta_- - \frac2\gamma, \beta_+)$ and, for any sufficiently negative $\beta \ll 0$,  
	\eqbn
	\begin{split}
		\mm(\beta, \beta_+ - \frac2\gamma)& \mm(\gamma, \beta_+) = \mm (\beta -\frac2\gamma, \beta_+)\\ 
		&= \mm(\gamma + \frac2\gamma + \beta - \beta_-, \beta_- + \beta_+ - 2Q) \mm(\beta_- - \frac2\gamma, \beta_+).  
	\end{split}
	\eqen
	Eliminating $\mm(\beta_- - \frac2\gamma, \beta_+)$ yields
	\eqb \label{eq-thm1-mero-ugly}
	\mm(\beta_-, \beta_+) = \frac
	{\mm(\beta, \beta_+ - \frac2\gamma) \mm(\gamma, \beta_+)}
	{\mm(\gamma + \frac2\gamma + \beta - \beta_-, \beta_- + \beta_+ - 2Q)\mm( \gamma, \beta_- + \beta_+ - Q - \frac\gamma2) }.
	\eqe
	For $\beta$ negative enough, each of the four factors on the right side of~\eqref{eq-thm1-mero-ugly} can be evaluated by~\eqref{eq-thm1-special-cases}, which gives meromorphic functions in $\beta_-$ and $\beta_+$ on a complex neighborhood of $(-\infty,Q+\frac{\gamma}2)$. This means that $\mm(\beta_-, \beta_+) $ is meromorphic in $\beta_-$ and $\beta_+$ on a complex neighborhood of $(-\infty,Q+\frac{\gamma}2)$. This shows that~\eqref{eq-lem-SLE-moment-main} holds for all $\beta_-, \beta_+ < Q + \frac\gamma2$ and $\lambda < 0$.
	
	Now, we extend from $\lambda < 0$ to the full result. Indeed, as in the proof of Proposition~\ref{prop-shift-1}, by holomorphic extension in $\lambda$~\eqref{eq-lem-SLE-moment-main} holds for all 	$\lambda <\lambda_0 = \frac1\kappa(\rho_+ + 2)(\rho_+ + 4 - \frac\kappa2)$. 
	For $\lambda \geq \lambda_0$ and $\eps>0$, by Lemma~\ref{lem:mono} we have $\mm(\beta_-, \beta_+) \geq m^{\lambda_0 - \eps}_\gamma(\beta_-, \beta_+)$. 
	Since $\lambda_0$ is achieved when   $\alpha=2(\beta_+-Q)$, by the explicit formula in~\eqref{eq-lem-SLE-moment-main}, we have $\lim_{\eps \to 0+} m^{\lambda_0 - \eps}_\gamma(\beta_-, \beta_+) = \infty$ hence $ \mm(\beta_-, \beta_+)=\infty$.
\end{proof}

The following lemma treats the case $\kappa>4$ using SLE duality.
\begin{lemma}\label{lem-thm-big}
	Theorem~\ref{thm-conformal-derivative0} holds for $\kappa \in (4, \infty)$, $\rho_->-2$ and $\rho_+ > \frac\kappa2-4$. 
\end{lemma}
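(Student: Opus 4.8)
The plan is to reduce the case $\kappa\in(4,\infty)$ to the already‑established case $\kappa\in(0,4]$ (Lemma~\ref{lem-thm-small}) by SLE duality. Set $\kappa'=16/\kappa\in(0,4)$ and write, as local shorthand, $\gamma=\sqrt\kappa$, $\gamma'=\sqrt{\kappa'}=4/\gamma$. The starting observation is that the conformal map $\psi$ of Theorem~\ref{thm-conformal-derivative0} is a measurable function of the marked domain $(D;p_0,1,p_\infty)$ alone, where $p_0$ (resp.\ $p_\infty$) is the first (resp.\ last) point of $\partial D$ traced by $\eta$; and this marked domain is determined by the right outer frontier of $\eta$, decorated by its two endpoints on $\R$. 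By SLE duality \cite{zhan-duality1,dubedat-duality,ig1}, this right frontier, with its marked endpoints, has the same law as the right frontier of an $\SLE_{\kappa'}(\rho_-';\rho_+')$ curve from $0$ to $\infty$ in $\bbH$, for explicit affine dual weights $\rho_\pm'=\rho_\pm'(\kappa,\rho_\pm)$. Hence $\psi'(1)$ has the same law whether computed from $\SLE_\kappa(\rho_-;\rho_+)$ or from $\SLE_{\kappa'}(\rho_-';\rho_+')$, and Lemma~\ref{lem-thm-small} applies to the latter, giving $\E[\psi'(1)^\lambda]=F(\alpha,\kappa',\rho_-',\rho_+')/F(\gamma',\kappa',\rho_-',\rho_+')$.

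First I would record the precise form of SLE duality in the present normalization, extracting from \cite{ig1} (cf.\ \cite{zhan-duality1,dubedat-duality}) the identity in law between the right frontier of $\SLE_\kappa(\rho_-;\rho_+)$ for $\kappa>4$ and that of $\SLE_{\kappa'}(\rho_-';\rho_+')$, together with the affine correspondence $\rho_\pm\leftrightarrow\rho_\pm'$. The consistency checks are that the hypotheses $\rho_->-2$ and $\rho_+>\frac\kappa2-4$ are exactly what make the dual weights satisfy $\rho_\pm'>-2$, so that Lemma~\ref{lem-thm-small} is applicable, and that the duality respects the marked–point data defining $\psi$. The slightly delicate point here is verifying that the ``first/last point traced'' conventions match on the two sides, in particular in the regime $\frac\kappa2-4<\rho_+<\frac\kappa2-2$ where $\eta$ touches $(0,\infty)$ and $D$ is a proper subdomain of the right side of $\eta$; this requires tracking the time‑reversal of the frontier and using that $\psi$ depends only on the marked domain, not on a parametrization.

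The main work — and the step I expect to be the principal obstacle — is the purely algebraic verification that substituting the dual weights into the right‑hand side of Theorem~\ref{thm-conformal-derivative0} leaves it unchanged, i.e.
\[
\frac{F(\alpha,\kappa',\rho_-',\rho_+')}{F(\gamma',\kappa',\rho_-',\rho_+')}=\frac{F(\alpha,\kappa,\rho_-,\rho_+)}{F(\gamma,\kappa,\rho_-,\rho_+)}.
\]
Here the same $\alpha$ may be used on both sides because $\frac{\gamma'}2+\frac2{\gamma'}=\frac\gamma2+\frac2\gamma=Q$, so the quadratic $1-\frac\alpha2(Q-\frac\alpha2)=\lambda$ is duality‑invariant; moreover $\Gamma_{\gamma'/2}=\Gamma_{2/\gamma}=\Gamma_{\gamma/2}$ by the symmetry $\Gamma_b=\Gamma_{1/b}$ of \eqref{eq:double-gamma}, so every double gamma factor occurring in the dual formula is a $\Gamma_{\gamma/2}$. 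After inserting $\rho_\pm'$ and simplifying, the arguments of these $\Gamma_{\gamma/2}$ factors differ from those in $F(\cdot,\kappa,\rho_-,\rho_+)$ by integer combinations of $\frac\gamma2$ and $\frac2\gamma$, and the discrepancies are absorbed using the shift equations \eqref{eq-shift} together with elementary $\Gamma$‑function identities (the $\lambda$‑independent prefactors cancelling in the ratio). Finally, the ``moreover'' assertion that $\E[\psi'(1)^\lambda]=\infty$ for $\lambda\ge\lambda_0$ is automatic once one checks that $\lambda_0=\frac1\kappa(\rho_++2)(\rho_++4-\frac\kappa2)$ is invariant under the duality correspondence $(\kappa,\rho_+)\mapsto(\kappa',\rho_+')$, so that it follows from the corresponding statement in Lemma~\ref{lem-thm-small} for $\kappa'<4$; alternatively, as there, one invokes the monotonicity of $\lambda\mapsto\E[\psi'(1)^\lambda]$ (Lemma~\ref{lem:mono}) and the fact that $\lim_{\eps\to0+}\E[\psi'(1)^{\lambda_0-\eps}]=\infty$, which can be read off directly from the explicit formula of Theorem~\ref{thm-conformal-derivative0}.
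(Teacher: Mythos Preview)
Your proposal is correct and follows the same duality strategy as the paper. One simplification: the algebraic verification is easier than you anticipate, since with the explicit dual weights $\wt\rho_-=\frac{\wt\kappa}2-2+\frac{\wt\kappa}4\rho_-$ and $\wt\rho_+=\wt\kappa+\frac{\wt\kappa}4\rho_+-4$ the three combinations $\frac2{\sqrt\kappa}+\frac{\rho_-}{\sqrt\kappa}$, $\frac4{\sqrt\kappa}+\frac{\rho_+}{\sqrt\kappa}$, and $\frac2{\sqrt\kappa}+\frac{\sqrt\kappa}2$ are exactly duality-invariant, so the four $\Gamma_{\gamma/2}$-arguments in $F$ match termwise and $F(\alpha,\kappa,\rho_-,\rho_+)=F(\alpha,\wt\kappa,\wt\rho_-,\wt\rho_+)$ directly, with no shift equations needed.
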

\begin{proof}
	By SLE duality (see \cite[Theorem 5.1]{zhan-duality1} and \cite[Theorem 1.4]{ig1}) the right boundary of an $\SLE_\kappa(\rho_-; \rho_+)$ has the law of an $\SLE_{\wt \kappa}(\wt\rho_-; \wt\rho_+)$ curve, where $\wt\kappa = \frac{16}{\kappa} < 4$, $\wt\rho_- = \frac{\wt\kappa}2 -2 +\frac{\wt\kappa}4\rho_-$ and $\wt\rho_+ = \wt\kappa + \frac{\wt\kappa}4 \rho_+ - 4$. Hence when $\lambda < \wt \lambda_0 = \frac1{\wt \kappa} (\wt \rho_+ + 2) (\wt \rho_+ + 4 - \frac{\wt \kappa}2)$, by Lemma~\ref{lem-thm-small} we have
	\alb
	\E[\psi'(1)^\lambda] &= \frac{F(\wt\alpha, \wt\kappa, \wt\rho_-, \wt\rho_+)}{F(\sqrt{\wt\kappa}, \wt\kappa, \wt\rho_-,\wt \rho_+)}, \quad \text{ where } \wt \alpha \text{ solves } 1 - \frac{\wt \alpha}2 (\frac{\sqrt{\wt\kappa}}{2} + \frac2{\sqrt{\wt\kappa}} -\frac{\wt\alpha}2) = \lambda. 
	\ale 
	Once can easily verify that $\lambda_0 = \wt \lambda_0$, and 
	\eqb\label{eq-invariants}
	\frac2{\sqrt\kappa} + \frac{\rho_-}{\sqrt\kappa} = \frac2{\sqrt{\wt\kappa}} + \frac{\wt \rho_-}{\sqrt{\wt \kappa}}, \quad \frac4{\sqrt\kappa} + \frac{\rho_+}{\sqrt\kappa} = \frac4{\sqrt{\wt\kappa}} + \frac{\wt \rho_+}{\sqrt{\wt \kappa}}, \quad \frac2{\sqrt\kappa} + \frac{\sqrt\kappa}2 = \frac2{\sqrt{\wt \kappa}} + \frac{\sqrt{\wt \kappa}}2.
	\eqe
	The last identity above means we can take $\alpha = \wt \alpha$.
	Comparing $F(\alpha, \kappa, \rho_-, \rho_+)$ and $F(\wt \alpha, \wt \kappa, \wt \rho_-, \wt\rho_+)$, using~\eqref{eq-invariants} we can pair up their terms so their arguments agree. Since $\Gamma_{\sqrt \kappa /2} = \Gamma_{\sqrt{\wt \kappa}/2}$ as $\frac{\sqrt\kappa}2= \frac2{\sqrt{\wt \kappa}}$, we get termwise equality. Thus $F(\alpha, \kappa, \rho_-, \rho_+)=F(\wt \alpha, \wt \kappa, \wt \rho_-, \wt\rho_+)$, and similarly $F(\sqrt\kappa, \kappa, \rho_-, \rho_+) = F(\sqrt{\wt \kappa}, \wt \kappa, \wt \rho_-, \wt\rho_+)$. We conclude that 
	\[
	\E[\psi'(1)^\lambda] = \frac{F(\wt\alpha, \wt\kappa, \wt\rho_-, \wt\rho_+)}{F(\sqrt{\wt\kappa}, \wt\kappa, \wt\rho_-,\wt \rho_+)} = \frac{F(\alpha,\kappa,\rho_-,\rho_+)}{F(\sqrt\kappa, \kappa, \rho_-, \rho_+)}, \]
	hence Theorem~\ref{thm-conformal-derivative0} holds for $\kappa \in (4, \infty)$, $\rho_->-2$, $\rho_+ > \frac\kappa2-4$ and $\lambda < \lambda_0$.


	It remains to check that $\E[\psi'(1)^\lambda] = \infty$ for all $\lambda \geq \lambda_0$. 
	As before, we have $\psi'(1) > 1$ a.s., so the function $x \mapsto \E[\psi'(1)^x]$ is increasing on $\R$, and from the explicit formula we have just shown, we see that $\E[\psi'(1)^{\lambda}] \geq \lim_{\eps \to 0^+}  \E[\psi'(1)^{\lambda_0-\eps}] = \infty$.  
\end{proof}

\begin{proof}[Proof of Theorem~\ref{thm-conformal-derivative0}]
	The heart of the argument is Proposition~\ref{prop-SLE-moment-main}, and Lemmas~\ref{lem-thm-small} and~\ref{lem-thm-big} tie up the remaining details. 
\end{proof}

\appendix
\section{Backgrounds on  Schramm-Loewner evolutions}
In this section we provide further background on SLE$_\kappa(\rho_-;\rho_+)$ that is relevant to Theorem~\ref{thm-conformal-derivative0}.

\subsection{The Loewner evolution definition of SLE$_\kappa(\rho_-;\rho_+)$}\label{app:sle-def}
Let $\BB H$ be the upper half-plane. For a continuous function $(W_t)_{t\geq 0}$ that we call the \emph{driving function} consider the solution $g_t(z)$ of the Loewner differential equation  
\eqbn
g_t(z) =  \int_0^t \frac{2}{g_s(z) - W_s}\, ds,\qquad g_0(z)=z,\,z\in\BB H.
\eqen
For each $z\in\BB H$ let $\tau_z$ denote the supremum of times $t>0$ such that $g_t(z)$ is well-defined. For certain choices of $W$ one can show that there exists a unique continuous curve $\eta$ in $\BB H$ from 0 to $\infty$ such that if $K_t\subset\BB H$ denotes the set of points in $\BB H$ which are disconnected from $\infty$ by $\eta([0,t])$ then $K_t=\{z\in\BB H\,:\,\tau_z \leq t \}$. We say that $W$ is the Loewner driving function of $\eta$. By setting $W_t=\sqrt{\kappa}B_t$ for a standard Brownian motion $(B_t)_{t\geq 0}$ and $\kappa>0$ we get the curve $\eta$ which is known as a \emph{Schramm-Loewner evolution with parameter $\kappa$} (SLE$_\kappa$). See e.g.\ \cite{lawler-book,schramm0} for more details.

SLE$_\kappa(\rho_-;\rho_+)$ is the natural generalization of SLE$_\kappa$ when we keep track of two additional marked points on the domain boundary. Let $\rho_-,\rho_+>-2$. Given a standard Brownian motion $(B_t)_{t \geq 0}$ consider the solutions $W,V^\pm$ of the following stochastic differential equations 
\eqb\label{eq-SDE}
W_t = \sqrt\kappa B_t + \int_0^t \frac{\rho_-}{W_s - V_s^-}\, ds
+ \int_0^t \frac{\rho_+}{W_s - V_s^+}\, ds, \qquad 
V_t^\pm =  \int_0^t \frac{2}{V_s^\pm - W_s}\, ds
\eqe
with initial condition $(W_0, V_0^-, V_0^+) = (0,0,0)$. The uniqueness in law of the solution was proved in  \cite[Theorem 2.2]{ig1}.
Moreover, one can show that there is a unique curve $\eta$ from 0 to $\infty$ in $\BB H$ which has Loewner driving function given by $W$. 
We call $\eta$ an SLE$_\kappa(\rho_-;\rho_+)$. See \cite{lsw-restriction,dubedat-rho,ig1} for further details.


\subsection{Finiteness of  moments}
In this section we prove the following finiteness of moment statement. 

\begin{proposition}\label{prop-finite-moment}
For $\kappa \in (0,4)$ and $\rho_-, \rho_+ > -2$, sample $\eta \sim \SLE_\kappa(\rho_-; \rho_+)$ in $\bbH$ from $0$ to $\infty$. Let $\lambda_0 = \frac1\kappa(\rho_+ + 2) (\rho_+ + 4 - \frac\kappa2)$. Let $D$ be the connected component of $\bbH \backslash \eta$ containing $1$ on its boundary, and let $\psi$ be the conformal map from $D$ to $\bbH$ with $\psi(1) = 1$ and mapping the first (resp.\ last) point on $\partial D$ traced by $\eta$ to 0 (resp.\ $\infty$). Then \(\E[\psi'(1)^\lambda] < \infty \) when 
$\lambda < \lambda_0$.
\end{proposition}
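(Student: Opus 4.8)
The plan is to prove Proposition~\ref{prop-finite-moment} by analyzing the conformal radius of $\eta$ seen from the point $1$ using the Loewner evolution and It\^o calculus, since the target is a pure moment bound and the welding machinery of the body of the paper is not yet available at this point in the logical order. Concretely, I would run the $\SLE_\kappa(\rho_-;\rho_+)$ as in Appendix~\ref{app:sle-def}, with driving function $W_t$ and force point processes $V_t^\pm$ solving~\eqref{eq-SDE}, and track the images $g_t(1)$, $W_t$, $V_t^\pm$. Let $D$ be the component of $\bbH\setminus\eta$ containing $1$, and note that (when $1\notin\eta$) $\psi'(1)$ is comparable up to universal constants to the inverse conformal radius $\mathrm{crad}(1;D)^{-1}$, by the Koebe distortion theorem, since $\psi$ maps $D$ to $\bbH$ fixing $1$ and sending the two ``tips'' of $D$ on $\partial D$ to $0$ and $\infty$. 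Thus it suffices to bound $\E[\mathrm{crad}(1;D)^{-\lambda}]$.

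The key computation is the standard one for the evolution of the conformal radius under Loewner flow stopped at the disconnection time $\tau$ of the point $1$. Writing $u_t = \log\frac{g_t(1)-W_t}{\text{(derivative normalization)}}$ or more precisely tracking $O_t := \frac{g_t(1)-V_t^+}{g_t(1)-W_t}$ and the log-derivative $\log g_t'(1)$, one finds that $\mathrm{crad}(1;D)$ equals $\lim_{t\to\tau}$ of an explicit functional of $g_t'(1)$, $g_t(1)-W_t$. The process $Z_t := \frac{W_t - g_t(1)}{\sqrt{(g_t(1)-V_t^+)(g_t(1)-V_t^-)}}$ (or a suitable trigonometric coordinate) is, after a time change $s = s(t)$, a diffusion on a bounded interval whose generator is a Jacobi-type operator, and disconnection corresponds to hitting the endpoint associated with the $V^+$ side. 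The quantity $\mathrm{crad}(1;D)^{-\lambda}$ becomes $e^{\lambda\int_0^{\tau} (\text{drift}) \,ds}$ times a martingale factor, and the standard approach is to look for a function $G$ of the diffusion coordinate so that $G(\text{coordinate})\cdot \mathrm{crad}_t^{-\lambda}$ is a local martingale; this requires $G$ to solve a hypergeometric ODE, and $G$ stays bounded and positive on the relevant interval precisely when the indicial exponent at the $V^+$-endpoint is nonnegative, which is exactly the condition $\lambda<\lambda_0$ with $\lambda_0 = \frac1\kappa(\rho_++2)(\rho_++4-\frac\kappa2)$. Optional stopping (with a localization/Fatou argument at $\tau$) then gives $\E[\psi'(1)^\lambda] \le C\, G(\text{initial coordinate}) < \infty$.

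I would organize the steps as: (i) reduce from $\psi'(1)$ to the conformal radius via Koebe, quoting e.g.~\cite[Theorem 3.21]{lawler-book}; (ii) write down the SDEs for $g_t(1)-W_t$, $g_t(1)-V_t^\pm$, $\log g_t'(1)$ from~\eqref{eq-SDE} and perform the time change to turn the driving coordinate into a standard diffusion; (iii) identify the SDE for the bounded coordinate $Z_t$ and the drift appearing in $\mathrm{crad}_t^{-\lambda}$; (iv) solve the resulting second-order ODE for the ``eigenfunction'' $G$ and check, via its behavior at the endpoint corresponding to disconnection from the $V^+$ side, that $G$ is bounded and strictly positive iff $\lambda<\lambda_0$; (v) run the supermartingale/optional-stopping argument to conclude finiteness, and observe (consistent with the body of the paper) that $\lambda=\lambda_0$ is the blow-up threshold, which dovetails with the explicit formula in Theorem~\ref{thm-conformal-derivative0}.

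The main obstacle I expect is step (iv): correctly identifying the hypergeometric ODE and its indicial exponents so that the threshold comes out to be exactly $\lambda_0$, and handling the boundary behavior of $G$ at the disconnection endpoint (where the diffusion coordinate hits a degenerate boundary point of its interval) carefully enough to justify both positivity of $G$ and the optional-stopping step — in particular ruling out that mass escapes at $\tau$ without contributing. A secondary technical point is that when $\rho_- \le \frac\kappa2-2$ or $\rho_+\le\frac\kappa2-2$ the curve can touch the boundary, so one must confirm that $1\notin\eta$ a.s.\ under the stated hypothesis $\rho_+ > (-2)\vee(\frac\kappa2-4)$ (this is exactly the regime in~\cite{ig1} where $1$ is a.s.\ avoided) and that $\psi'(1)$ is well defined; this is cited rather than reproved. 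Everything else — the SDE manipulations, the time change, the Girsanov/martingale bookkeeping — is routine It\^o calculus of the kind done throughout~\cite{lawler-book}.
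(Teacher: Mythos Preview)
Your approach is genuinely different from the paper's, and while it is in principle workable, it is considerably more involved than what the paper does.

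The paper's proof of Proposition~\ref{prop-finite-moment} does not use Loewner/It\^o calculus at all. Instead it proceeds in two short steps. First (Lemma~\ref{lem-finite-moment-0}), the case $\rho_-=0$ is obtained by quoting the tail estimate $\P[\psi'(1)>y]=y^{-\lambda_0+o(1)}$ from \cite[Theorem~1.8]{miller-wu-dim} and integrating. Second, the general case is reduced to $\rho_-=0$ via the composition relation of Lemma~\ref{lem-composition}: by imaginary geometry \cite[Proposition~7.4]{ig1}, an $\SLE_\kappa(\rho+\rho_-+2;\rho_+)$ curve can be realized as $\psi_1^{-1}(\eta_2)$ with $\psi_1,\psi_2$ independent, so $\psi'(1)=\psi_1'(1)\psi_2'(1)$ and the moment factorizes. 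Inducting on $\rho_-\in 2\Z_{\ge0}$ (using the $\rho_-=0$ case for one factor and the monotonicity of $x\mapsto\frac1\kappa(x+2)(x+4-\frac\kappa2)$), and then writing an arbitrary $\rho_-$ as a difference, finishes the proof in a few lines.

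The comparison: your direct martingale/hypergeometric argument would be self-contained and would in effect re-derive (and extend to two force points) the Miller--Wu tail estimate, which is valuable but laborious --- with both force points present the driving diffusion is genuinely two-dimensional after scaling, so identifying the eigenfunction $G$ and checking its boundary behavior at the $V^+$-endpoint is real work. The paper's route is much shorter because it outsources the analytic input to \cite{miller-wu-dim} and then uses the purely SLE-theoretic composition identity (which requires no welding and no LCFT) to move $\rho_-$; note in particular that your premise that ``the welding machinery is not yet available'' is a red herring, since the paper's argument here uses only imaginary geometry. If you want to keep your approach, the cleanest path is to do the It\^o computation only for $\rho_-=0$ (recovering Miller--Wu) and then invoke the composition trick for general $\rho_-$; that also makes transparent why $\lambda_0$ depends only on $\rho_+$.
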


\begin{lemma}\label{lem-finite-moment-0}
Proposition~\ref{prop-finite-moment} holds when $\rho_-=0$. 
\end{lemma}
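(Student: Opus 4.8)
The plan is to realize the $\rho_-=0$ case of Proposition~\ref{prop-finite-moment} as a consequence of a conformal welding identity together with the explicit boundary-length formulas already established. Concretely, I would take $W_-=\gamma^2/2$ (so $\beta_-=Q$, i.e.\ the weight $\gamma^2/2$ disk, which corresponds to $\rho_-=W_--2 = \kappa/2-2$) --- wait, more precisely for $\rho_-=0$ one takes $W_-=2$, so $\beta_-=\gamma$. Then by~\eqref{eq:moment-weld} we have $\E[\psi'(1)^\lambda] = m^\lambda_\gamma(\gamma,\beta_+) = |\mathsf{m}(W_-,W_+,\alpha)|$ where $W_+ = \gamma(\gamma+\tfrac2\gamma-\beta_+)$ and $\alpha$ solves $1-\tfrac\alpha2(Q-\tfrac\alpha2)=\lambda$. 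The strategy is to show this total mass is finite for $\lambda<\lambda_0$ by bounding it above via the welding equation~\eqref{eq-curve-weighted} of Proposition~\ref{prop-curve-weight}: the right-hand side is $c_{W_-,W_+}\int_0^\infty \mathrm{Weld}(\cMtwo(W_-;\ell,x),\cMthree(W_+;\alpha;x))\,dx$, whose total mass (forgetting the curve and the surface structure, keeping only, say, the unmarked boundary length as in the proof of Lemma~\ref{lem-shift-1}) is computed by Proposition~\ref{prop-MOT} (for $|\cMtwo(2;1,\ell)|\propto(1+\ell)^{-4/\gamma^2-1}$) and Proposition~\ref{prop-H-density}/Proposition~\ref{prop-disk-03-length} (for $|\cMthree(W_+;\alpha;\ell)|$), giving a Beta-type integral $\int_0^\infty (1+\ell)^{-4/\gamma^2-1}\ell^{\frac2\gamma(\beta_++\frac\alpha2-Q)-1}\,d\ell$.

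The key point is then to identify exactly when this Beta integral converges: it is finite iff both exponents are in the admissible range, i.e.\ $\tfrac2\gamma(\beta_++\tfrac\alpha2-Q)>0$ and $\tfrac2\gamma(2Q-\beta_+-\tfrac\alpha2)>0$. Translating $\beta_+ = \gamma+\tfrac2\gamma-\tfrac{W_+}\gamma$, $\rho_+ = W_+-2 = \gamma^2-\gamma\beta_+$, and the quadratic relation $\lambda = 1-\tfrac\alpha2(Q-\tfrac\alpha2)$, the first inequality becomes precisely $\lambda < \lambda_0 = \tfrac1\kappa(\rho_++2)(\rho_++4-\tfrac\kappa2)$ (the threshold $\lambda_0$ is realized at $\alpha = 2(\beta_+-Q)$, cf.\ the end of the proof of Lemma~\ref{lem-thm-small}), while the second gives the complementary constraint, which is automatically satisfied on the relevant branch of $\alpha$ and can be handled by choosing the appropriate root of the quadratic. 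So the chain is: welding equation $\Rightarrow$ $|\mathsf{m}(W_-,W_+,\alpha)|$ times a finite explicit constant $= c_{W_-,W_+}$ times a Beta integral that converges exactly when $\lambda<\lambda_0$, whence $\E[\psi'(1)^\lambda]<\infty$.

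One technical wrinkle I would need to address carefully: Proposition~\ref{prop-curve-weight} requires $W_-\ge\gamma^2/2$, which holds for $W_-=2$, and requires $W_+\neq\gamma^2/2$; the boundary case $\beta_+=Q$ (i.e.\ $W_+=\gamma^2/2$, $\rho_+=\kappa/2-2$) would be handled separately by a limiting argument (as in the proof of Proposition~\ref{prop-shift-1}), or one notes that the finiteness statement is monotone in $\rho_+$ in a suitable sense and approximates. Also one must make sure the hypotheses of Proposition~\ref{prop-H-density} (namely $\alpha>0$, $\tfrac\alpha2+\beta_+>\tfrac\gamma2$, $\beta_+<Q$) or Proposition~\ref{prop-disk-03-length} (for $\beta_+\in(Q,Q+\tfrac\gamma2)$) are met for $\alpha$ in a neighborhood of the range $(2|\beta_+-Q|,4Q-2\beta_+)$; since we only need an upper bound for a fixed $\lambda<\lambda_0$, it suffices to produce one valid $\alpha$ in that range realizing the given $\lambda$, and then invoke monotonicity of $\lambda\mapsto\E[\psi'(1)^\lambda]$ (Lemma~\ref{lem:mono}) to cover all smaller $\lambda$.

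The main obstacle I anticipate is bookkeeping rather than conceptual: correctly matching up the parametrizations ($W$ versus $\beta$ versus $\rho$ versus $\alpha$ versus $\lambda$) and verifying that the convergence threshold of the Beta integral is \emph{exactly} $\lambda_0$ and not merely a sufficient condition --- in particular pinning down that the constraint coming from the other Gamma factor does not cut in before $\lambda_0$. A secondary subtlety is that Proposition~\ref{prop-curve-weight} as stated guarantees finiteness of $c_{W_-,W_+}$ but one should check that the argument does not secretly require $|\mathsf{m}(W_-,W_+,\alpha)|<\infty$ as an input (it does not: the welding equation of~\cite{ahs-disk-welding} is an identity of a priori possibly-infinite measures, and finiteness of the right side forces finiteness of the left); spelling this out cleanly is where I would be most careful.
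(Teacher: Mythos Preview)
Your approach is valid but takes a genuinely different route from the paper. The paper's proof is two lines: it cites the tail asymptotics $\P[\psi'(1)>y]=y^{-\lambda_0+o(1)}$ from \cite[Theorem~1.8]{miller-wu-dim} and integrates the moment against the tail to conclude.

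Your plan instead extracts finiteness directly from the welding computation that the paper carries out in Lemma~\ref{lem-shift-1}. This is legitimate and not circular: Proposition~\ref{prop-curve-weight} is an identity of $\sigma$-finite measures and does not presuppose $|\mathsf m(W_-,W_+,\alpha)|<\infty$; the factor $|\cMthree(W_-+W_+;\alpha;\ell)|$ on the left is finite and nonzero in the stated $\alpha$-range by Lemma~\ref{lem-fixed-bdy-length} and Proposition~\ref{prop-H-density}; and the Beta integral on the right is finite exactly on the interval $\alpha\in(2|\beta_+-Q|,4Q-2\beta_+)$, whose upper $\lambda$-endpoint is $\lambda_0$. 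The monotonicity trick via Lemma~\ref{lem:mono} then covers all smaller $\lambda$, as you note, and the edge case $\beta_+=Q$ can indeed be handled by approximation as in the proof of Proposition~\ref{prop-shift-1}. The tradeoff is that the paper's proof is short, keeps this purely-SLE lemma in the appendix, and makes the appendix logically independent of the LQG machinery in Sections~\ref{sec-rz-lengths}--\ref{sec-shift}; your approach removes the dependence on \cite{miller-wu-dim} and makes finiteness a byproduct of computations already done, at the cost of the parameter bookkeeping and edge cases you correctly flag.
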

\begin{proof}
By \cite[Theorem 1.8]{miller-wu-dim},  we have \(\P[\psi'(1) > y] = y^{-\lambda_0 + o(1)}\) as $ y \to \infty$. 
Since \(\E[\psi'(1)^\lambda] = \lambda^{-1} \int_0^\infty y^{\lambda - 1} \P[\psi'(1) > y] \, dy\), we conclude.
\end{proof}
%

\begin{proof}[Proof of Proposition~\ref{prop-finite-moment}]
We first inductively show that Proposition~\ref{prop-finite-moment} holds for $\rho_- = 2n$ for nonnegative integers $n$. The case $n=0$ is shown in Lemma~\ref{lem-finite-moment-0}, and if we have proved the statement for some $n$, then we obtain it for $n+1$ by using Lemma~\ref{lem-composition} with $\beta = \gamma, \beta_- = \gamma - \frac2\gamma n$ and $\beta_+ = \gamma - \frac{\rho_+}\gamma$. Here we use  that $f(x) = \frac1\kappa(x+2)(x+4 - \frac\kappa2)$ is increasing on $[\frac\kappa2-2, \infty)$. 

Now, we extend the proof to arbitrary $\rho_- > -2$. Pick $n \in \N$ with $2n > \rho_-+2$, and apply Lemma~\ref{lem-composition} with $\beta =  \gamma + \frac2\gamma - \frac{2n-2 - \rho_-}\gamma$ and $\beta_\pm = \gamma - \frac{\rho_\pm}\gamma$ to get
\(m^\lambda(\beta + \beta_- - \gamma - \frac2\gamma; \beta_+) = m^\lambda(\beta, \beta_- + \beta_- - \gamma - \frac2\gamma) m^\lambda (\beta_-, \beta_+).\)
By our inductive argument, the left hand side is finite for $\lambda < \lambda_0$, and hence so is $m^\lambda(\beta_-, \beta_+)$. This translates to the desired finiteness. 
\end{proof}

\subsection{Continuity of moments in SLE parameters}
Now, we prove continuity results (Lemmas~\ref{lem-couple-map} and  \ref{lem-couple-map-nonsimple}) used in the proof of Theorem~\ref{thm-conformal-derivative0}. 
We start from the case when the curve does not touch the domain boundary.
\begin{lemma}\label{lem-couple-map}
	Consider a sequence $(\kappa^n, \rho_-^n, \rho_+^n)_{n \geq 1}$ such that $\kappa^n \in (0, 4], \rho_-^n, \rho_+^n \geq \frac{\kappa^n}2 - 2$, and which converges componentwise to $(\kappa, \rho_-, \rho_+)$.
	Let $\eta$ be sampled from $\SLE_\kappa(\rho_-;\rho_+)$ and let $\psi$ be the mapping out function of the domain to the right of $\eta$ fixing $0, 1, \infty$. Define $\eta_n$ and $\psi_n$ similarly for the parameters $\kappa^n,\rho^n_-,\rho^n_+$.
	Then there is a coupling of $\eta,\eta_n$ such that $\psi_n'(1) \to \psi'(1)$ in probability. 
\end{lemma}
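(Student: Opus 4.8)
\textbf{Proof strategy for Lemma~\ref{lem-couple-map}.}
The plan is to couple $\eta$ and the $\eta_n$'s through their Loewner driving functions, i.e.\ through the solutions $(W^n, V^{n,\pm})$ of the SDE system~\eqref{eq-SDE} with parameters $(\kappa^n, \rho_-^n, \rho_+^n)$, driven by a common Brownian motion $B$. First I would fix a large constant $T$ and note that since $\rho^n_\pm \geq \tfrac{\kappa^n}2 - 2$, the curve $\eta_n$ (and $\eta$) does not hit $(0,\infty)$, so the point $1$ is swallowed at a finite time $\tau_n$, and on $[0,\tau_n\wedge T]$ the processes $V^{n,\pm} - W^n$ stay bounded away from $0$ in a quantitative way controlled by how close $1$ is to being swallowed. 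The key analytic input is a standard stability estimate for the SDE system: on the event that the denominators $W^n - V^{n,\pm}$ stay bounded below by $\delta$ up to time $t$, the coefficients are Lipschitz, so Gronwall's inequality gives that $(W^n, V^{n,\pm}) \to (W, V^\pm)$ uniformly on $[0,t]$ in probability as the parameters converge. (One must first argue that the swallowing times $\tau_n$ of the point $1$ converge to $\tau$, which follows from the same estimate together with the fact that $1$ is a.s.\ not on the trace of $\eta$ and the trace is a.s.\ at positive distance from $1$ before $\tau$ — or, more robustly, from a monotonicity/continuity argument for when a fixed boundary point is swallowed.)

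Next I would translate driving-function convergence into convergence of the conformal maps. The map $\psi_n$ factors as $g_{\tau_n}^n$ (the Loewner map at the swallowing time of $1$, which maps the component $D_n$ to the right of $\eta_n$ onto $\BB H$) post-composed with a Möbius transformation normalizing $0,1,\infty$. Since $\psi_n'(1)$ is a composition of $(g^n_{\tau_n})'(1)$ with the derivative of the Möbius normalization, and the latter depends continuously on $g^n_{\tau_n}(0), g^n_{\tau_n}(1^\pm), g^n_{\tau_n}(\infty)$ and $(g^n_{\tau_n})'(1)$, it suffices to show that $g^n_t(z) \to g_t(z)$ and $(g^n_t)'(z) \to g'_t(z)$ locally uniformly for $z$ near $1$ and $t$ near $\tau$, and that $\tau_n \to \tau$. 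Both $g^n_t(z)$ and $(g^n_t)'(z)$ solve ODEs in $t$ whose coefficients are smooth functions of $g^n_t(z)$ and $W^n_t$, so uniform convergence of $W^n$ on $[0,T]$ (from the previous step) plus continuity of the ODE flow yields the desired convergence, as long as $z$ stays at positive distance from the hull $K^n_t$; the boundary point $1$ satisfies this uniformly for $t < \tau_n$, and a short argument handles the behavior as $t \uparrow \tau_n$ using that $(g^n_{\tau_n})'(1)$ is finite a.s.\ and the monotone behavior of derivatives.

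Putting these together: in the common coupling, with probability tending to $1$ we have $\tau_n$ close to $\tau$, the driving functions uniformly close, hence $g^n_{\tau_n}$ and its derivative at $1$ close to $g_\tau$ and its derivative, hence $\psi_n'(1) \to \psi'(1)$ in probability. The main obstacle I anticipate is the control near the swallowing time $\tau$: the denominators $W^n - V^{n,\pm}$ and $g^n_t(1) - W^n_t$ degenerate as $t \uparrow \tau_n$, so the naive Gronwall bound blows up, and one needs to argue that this degeneration happens at comparable rates for all $n$ and that $(g^n_{\tau_n})'(1)$ — which is exactly the quantity controlling the blow-up — remains tight. This is where the hypothesis $\rho^n_\pm \geq \tfrac{\kappa^n}2 - 2$ is essential (it keeps the curve from touching $(0,\infty)$, so $1$ is swallowed ``transversally''), and where I would most carefully invoke the finiteness-of-moments estimate of Proposition~\ref{prop-finite-moment} (uniformly in $n$, using continuity of $\lambda_0$ in the parameters) to get the required tightness of $\psi_n'(1)$ and rule out mass escaping to infinity.
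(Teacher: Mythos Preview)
Your overall architecture---couple via driving functions, deduce convergence of Loewner maps, pass to $\psi_n'(1)$---matches the paper's, and your Gronwall/Lipschitz reasoning for the driving-function step is essentially the content of Lemma~\ref{lem-couple-Wt}.

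There is, however, a genuine gap in your second step. You write that ``the point $1$ is swallowed at a finite time $\tau_n$'' and propose to factor $\psi_n$ through the Loewner map $g^n_{\tau_n}$. But under the hypothesis $\rho_\pm^n \geq \tfrac{\kappa^n}2 - 2$, the curve $\eta_n$ is simple and does \emph{not} hit $(0,\infty)$, so the point $1$ is \emph{never} swallowed: its Loewner lifetime is $+\infty$, and there is no finite time at which a Loewner map uniformizes the domain $D_n$ to the right of $\eta_n$. The region $D_n$ is an unbounded simply connected domain bounded by all of $\eta_n$ together with $[0,\infty)$, and $\psi_n$ is a Riemann map for that domain, not a finite-time Loewner map post-composed with a M\"obius transformation. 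Your entire discussion of controlling the degeneration as $t\uparrow\tau_n$ and of invoking Proposition~\ref{prop-finite-moment} for tightness of $(g^n_{\tau_n})'(1)$ is therefore aimed at an object that does not exist in this regime. (Your picture is closer to what happens in the boundary-touching case of Lemma~\ref{lem-couple-map-nonsimple}, where the curve does hit $(0,\infty)$ and one really does factor through $g_\tau$ at the time the component containing $1$ is cut off.)

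The paper avoids this by working directly with $\psi_n$. Once driving-function convergence is upgraded to an almost-sure coupling (Lemma~\ref{lem-couple-Wt}), the deterministic stability estimate of Lemma~\ref{lem:kempp} and the argument of \cite[Lemma~A.4]{ahs-disk-welding} give $\psi_n\to\psi$ uniformly on compact subsets of $(\bbH\setminus\eta)\cup\R_+$. Schwarz reflection across $(0,\infty)$ then extends $\psi_n$ and $\psi$ holomorphically to a full neighborhood of $1$, and Cauchy's integral formula upgrades locally uniform convergence of the maps to convergence of $\psi_n'(1)$. No swallowing-time analysis and no appeal to Proposition~\ref{prop-finite-moment} are needed.
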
 
To prove Lemma~\ref{lem-couple-map}, we recall  \cite[Lemma A.5]{ahs-disk-welding}, whose proof builds on \cite{kemppainen-book}. 
It gives continuity of the mapping out function $g_t$ in the Loewner driving function $W$.
\begin{lemma}[Lemma A.5 in \cite{ahs-disk-welding}]
	Let $\eta$ and $\wt\eta$ be curves in $\BB H$ from 0 to $\infty$ with Loewner driving function $(W_t)_{t\geq 0}$ and $(\wt W_t)_{t\geq 0}$, respectively, and let $(g_t)_{t\geq 0}$ and $(\wt g_t)_{t\geq 0}$ denote the Loewner maps. For any $\eps\in(0,1)$ there is a $\delta\in(0,1)$ such that if
	$$ 
	A = \{ (t,z) \in [0,T]\times\ol{\BB H}\,:\,  \inf_{s\in[0,t]}|g_s(z)-W_s|>\eps \}
	\quad\text{and}\quad
	\sup_{t\in[0,T]}|W_t-\wt W_t|\leq\delta.
	$$ 
	then $	\sup_{(t,z)\in A}|g_t(z)- \wt g_t(z)| < \eps.$
	\label{lem:kempp}
\end{lemma}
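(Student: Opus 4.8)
The plan is to treat this as a purely deterministic stability estimate for the chordal Loewner ODE and prove it by a Grönwall inequality run inside a bootstrap. First I would fix $\eps\in(0,1)$ and $(t,z)\in A$, so that $|g_s(z)-W_s|>\eps$ for all $s\in[0,t]$ (in particular $z$ is never swallowed by $\eta$ before time $t$, hence $g$ is defined on $[0,t]$), set $\delta:=\sup_{s\in[0,T]}|W_s-\wt W_s|$, and write $\Delta_s:=g_s(z)-\wt g_s(z)$. Since $\partial_s g_s(z)=2/(g_s(z)-W_s)$ and likewise for $\wt g$, on any interval where both flows are defined one has $\Delta_0=0$ and
\[ \partial_s \Delta_s \;=\; \frac{2}{g_s(z)-W_s}-\frac{2}{\wt g_s(z)-\wt W_s} \;=\; \frac{2\big((W_s-\wt W_s)-\Delta_s\big)}{(g_s(z)-W_s)(\wt g_s(z)-\wt W_s)}. \]

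The key difficulty is that membership in $A$ controls $|g_s(z)-W_s|$ from below but says nothing directly about $|\wt g_s(z)-\wt W_s|$; this is where the bootstrap enters. I would let $\tau$ be the first time $s\in[0,t]$ with $|\Delta_s|+|W_s-\wt W_s|\geq\eps/2$, and $\tau:=t$ if there is none. On $[0,\tau]$ the triangle inequality gives $|\wt g_s(z)-\wt W_s|\geq |g_s(z)-W_s|-|\Delta_s|-|W_s-\wt W_s|>\eps/2$, so $\wt g_s(z)$ cannot blow up on $[0,\tau)$ and $\wt g$ is defined up to and including $\tau$. Feeding the two denominator bounds into the identity above yields $|\partial_s\Delta_s|\leq \tfrac{4}{\eps^2}(\delta+|\Delta_s|)$ on $[0,\tau]$, hence $|\Delta_s|\leq \tfrac{4}{\eps^2}\int_0^s(\delta+|\Delta_r|)\,dr$, and Grönwall's inequality gives $|\Delta_s|\leq \tfrac{4\delta T}{\eps^2}\exp(4T/\eps^2)=:\delta\,C(\eps,T)$ for all $s\in[0,\tau]$.

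To close the argument I would choose $\delta\in(0,1)$ small enough that both $\delta<\eps/4$ and $\delta\,C(\eps,T)<\eps/4$; crucially $C(\eps,T)$ does not involve $z$, so this $\delta$ depends only on $\eps$ and $T$. Then on $[0,\tau]$ one has $|\Delta_s|+|W_s-\wt W_s|<\eps/4+\eps/4=\eps/2$, so if $\tau<t$ then continuity of $s\mapsto|\Delta_s|+|W_s-\wt W_s|$ would force equality $\eps/2$ at $s=\tau$, a contradiction; hence $\tau=t$, and in particular $|\Delta_t|<\eps/4$. Since $(t,z)\in A$ was arbitrary and $\delta$ was chosen independently of it, this yields $\sup_{(t,z)\in A}|g_t(z)-\wt g_t(z)|\leq \eps/4<\eps$. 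The same computation applies verbatim when $z\in\partial\BB H$, since the ODE argument is insensitive to whether $z$ lies in the interior or on the boundary.

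I expect the bootstrap step — simultaneously guaranteeing that $\wt g_s(z)$ remains defined on $[0,t]$ and that $|\wt g_s(z)-\wt W_s|$ stays bounded below by a constant uniform in $z$ — to be the only genuinely delicate point; once that is in place the remainder is a routine Grönwall estimate, and in particular the uniformity of $\delta$ over $(t,z)\in A$ is automatic. An alternative would be to quote the continuity-of-Loewner-maps results of \cite{kemppainen-book} directly, but the self-contained argument above appears to be the shortest route.
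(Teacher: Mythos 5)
Your argument is correct, and it is genuinely self-contained, which is more than the paper does: here the lemma is only quoted from the companion paper \cite{ahs-disk-welding} (Lemma A.5 there), whose proof is in turn said to build on the general Loewner-continuity framework of \cite{kemppainen-book}. Your route — differentiating $\Delta_s=g_s(z)-\wt g_s(z)$, using the identity $\partial_s\Delta_s=2\bigl((W_s-\wt W_s)-\Delta_s\bigr)/\bigl((g_s(z)-W_s)(\wt g_s(z)-\wt W_s)\bigr)$, and closing a Gr\"onwall estimate inside a bootstrap that simultaneously keeps $|\wt g_s(z)-\wt W_s|\geq\eps/2$ and guarantees existence of $\wt g_s(z)$ up to time $t$ — is the natural elementary proof of this deterministic stability statement, and it correctly identifies the one nontrivial point, namely that the hypothesis only controls $|g_s(z)-W_s|$ and the lower bound on the other denominator must be bootstrapped. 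Two small points to tighten: (i) your $\delta$ depends on $T$ as well as $\eps$ (through $C(\eps,T)$); this is consistent with the lemma as used, where $T$ is fixed, but worth stating explicitly since the lemma's phrasing mentions only $\eps$; (ii) as written, $\tau$ is defined in terms of $\Delta_s$, which presupposes that $\wt g_s(z)$ exists on all of $[0,t]$ — to avoid the circularity one should define $\tau$ as the exit time from the good event on the maximal interval of existence of $s\mapsto\wt g_s(z)$ (or equivalently take the minimum with the swallowing time for $\wt W$), and then observe, exactly as you do, that the denominator bound rules out blow-up before $\tau$. With that cosmetic fix the proof is complete, and the uniformity of $\delta$ over $(t,z)\in A$ is, as you say, automatic because the Gr\"onwall constant involves only $\eps$ and $T$.
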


\begin{lemma}\label{lem-couple-Wt}
Consider a sequence $(\kappa^n, \rho_-^n, \rho_+^n)_{n \geq 1}$ such that $\kappa^n \in (0, 4], \rho_-^n, \rho_+^n \geq \frac{\kappa^n}2 - 2$, and which converges componentwise to $(\kappa, \rho_-, \rho_+)$. 
As $n \to \infty$, the driving function of $\SLE_{\kappa^n}(\rho_-^n; \rho_+^n)$ converges in law to that of $\SLE_\kappa(\rho_-; \rho_+)$ in the uniform topology on compact sets.
\end{lemma}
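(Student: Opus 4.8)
The plan is to prove Lemma~\ref{lem-couple-Wt} by the standard \emph{tightness-plus-identification} scheme, the only real work being to control the singular drift of~\eqref{eq-SDE} near time $0$ uniformly in $n$. Couple all the processes to one standard Brownian motion $(B_t)_{t\ge0}$: let $(W^n,V^{n,-},V^{n,+})$ solve~\eqref{eq-SDE} with parameters $(\kappa^n,\rho_-^n,\rho_+^n)$ and noise $\sqrt{\kappa^n}B$, and $(W,V^-,V^+)$ with $(\kappa,\rho_-,\rho_+)$ and $\sqrt\kappa B$; since we want convergence in law the coupling is irrelevant. For $t>0$ set $A^n_t=W^n_t-V^{n,-}_t>0$, $B^n_t=V^{n,+}_t-W^n_t>0$, $R^n_t=A^n_t+B^n_t$, and $X^n_t=A^n_t/R^n_t\in(0,1)$. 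From~\eqref{eq-SDE} one computes $\tfrac{d}{dt}(R^n_t)^2=\tfrac4{X^n_t(1-X^n_t)}\ge16$, so pathwise $R^n_t\ge4\sqrt t$; and after the (path-dependent) time change $d\sigma=dt/(R^n_t)^2$ the ratio $X^n$ becomes the autonomous Jacobi diffusion $dX^n=\sqrt{\kappa^n}\,d\widetilde B+\bigl(\tfrac{\rho_-^n+2}{X^n}-\tfrac{\rho_+^n+2}{1-X^n}\bigr)d\sigma$ on $(0,1)$, which (because $\rho_\pm^n\ge\tfrac{\kappa^n}2-2$) never hits $\{0,1\}$ and is stationary with invariant density $\propto x^{2(\rho_-^n+2)/\kappa^n-1}(1-x)^{2(\rho_+^n+2)/\kappa^n-1}$. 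Combining the two displays gives the basic a priori bound $\tfrac1{A^n_s}+\tfrac1{B^n_s}\le\tfrac1{4\sqrt s}\,\tfrac1{X^n_{\sigma(s)}(1-X^n_{\sigma(s)})}$.

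Assume first $\rho_\pm^n,\rho_\pm>\tfrac{\kappa}2-2$ strictly (with the strictness uniform along a subsequence, say $\inf_n(\rho_\pm^n-\tfrac{\kappa^n}2+2)>0$), so the Jacobi invariant density has strictly positive exponents and hence $\sup_n\E[(X^n_\sigma)^{-q}]<\infty$ and $\sup_n\E[(1-X^n_\sigma)^{-q}]<\infty$ for some $q>1$, uniformly in $\sigma$. Transferring this through the time change and using $R^n_s\ge4\sqrt s$ yields, for each $T$, a uniform-in-$n$ bound $\sup_n\E\bigl[\int_0^T(\tfrac1{A^n_s}+\tfrac1{B^n_s})^{p}ds\bigr]<\infty$ for some $p>1$; since $\int_0^Ts^{-1/2}\,ds<\infty$, Hölder's inequality then gives a uniform modulus-of-continuity estimate for the drift part of $W^n_t-W^n_s=\sqrt{\kappa^n}(B_t-B_s)+\int_s^t(\tfrac{\rho_-^n}{A^n_u}-\tfrac{\rho_+^n}{B^n_u})\,du$, and similarly for $V^{n,\pm}$. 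Hence $\{\mathcal{L}(W^n,V^{n,-},V^{n,+})\}_n$ is tight in $C([0,\infty))^3$ with the local-uniform topology. For any subsequential limit, pass (Skorokhod) to a space on which $(W^{n_k},V^{n_k,\pm})\to(\widehat W,\widehat V^\pm)$ a.s.\ locally uniformly and $\sqrt{\kappa^{n_k}}B\to\sqrt\kappa B$; the uniform-integrability consequences of the moment bounds let one pass to the limit inside the drift integrals $\int_0^t\tfrac{\rho_\pm^{n_k}}{A^{n_k}_s}ds$, $\int_0^t\tfrac2{V^{n_k,\pm}_s-W^{n_k}_s}ds$, so $(\widehat W,\widehat V^\pm)$ solves~\eqref{eq-SDE} with parameters $(\kappa,\rho_-,\rho_+)$ and initial data $(0,0,0)$. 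By the uniqueness in law of~\eqref{eq-SDE}, \cite[Theorem 2.2]{ig1}, the limit law is that of the $\SLE_\kappa(\rho_-;\rho_+)$ driving function, independent of the subsequence, proving the lemma in the strict regime. The boundary case $\rho_\pm=\tfrac{\kappa}2-2$ (and $\rho_\pm^n=\tfrac{\kappa^n}2-2$) then follows from the strict case: approximate each borderline parameter from above (e.g.\ by $\rho_\pm+\tfrac1m$), use the strict-regime conclusion for each fixed large $m$ together with the continuity of the explicit formula, and let $m\to\infty$; equivalently one checks directly that $\mathcal{L}(W^n)$ depends continuously on the parameters down to the boundary using the Jacobi representation.

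The main obstacle is exactly the uniform-in-$n$ control of the drift of~\eqref{eq-SDE} near $t=0$: there the three coordinates collide, $\int_0^t\tfrac{\rho_\pm}{\cdot}\,ds$ is a genuinely singular integral, and tightness fails unless $A^n_s,B^n_s$ (equivalently $X^n_{\sigma(s)}$ and $1-X^n_{\sigma(s)}$) are shown not to become small too fast with $n$-uniform constants. This is where the hypothesis $\rho_\pm^n\ge\tfrac{\kappa^n}2-2$ is used, via the nonnegativity of the exponents of the Jacobi invariant measure; the subtle point — and the reason for the two-step strict-then-limit structure above — is that at the boundary exponent $0$ the naive quantity $\E[(X^n(1-X^n))^{-1}]$ is already infinite, so one cannot work there directly and must produce the $L^p$ ($p>1$) drift bound only in the strict regime, with constants depending solely on $\sup_n\kappa^n$, $\sup_n|\rho_\pm^n|$ and $\inf_n(\rho_\pm^n-\tfrac{\kappa^n}2+2)$, after using Brownian scaling to reduce to the interval $[0,1]$.
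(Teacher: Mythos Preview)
Your approach via the Jacobi decomposition is genuinely different from the paper's proof, which simply defers to (a minor modification of) the uniqueness argument in \cite[Theorem 2.2]{ig1} to produce a coupling in which $W^n\to W$ in probability. Your route is more explicit, but it contains a computational error that drives the whole strict/boundary dichotomy.

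The invariant density of the Jacobi diffusion $dX=\sqrt\kappa\,d\wt B+\bigl(\tfrac{\rho_-+2}{X}-\tfrac{\rho_++2}{1-X}\bigr)d\sigma$ is proportional to $x^{2(\rho_-+2)/\kappa}(1-x)^{2(\rho_++2)/\kappa}$, \emph{not} $x^{2(\rho_-+2)/\kappa-1}(1-x)^{2(\rho_++2)/\kappa-1}$. (Check: at the boundary $\rho_\pm=\tfrac\kappa2-2$ the drift is $\tfrac\kappa2(\tfrac1X-\tfrac1{1-X})$, and one verifies directly that $\pi(x)=6x(1-x)$ satisfies $L^*\pi=0$, so the invariant law is $\mathrm{Beta}(2,2)$, not uniform.) With the correct exponents the Beta parameters are $\alpha_n,\beta_n=\tfrac{2(\rho_\pm^n+2)}{\kappa^n}+1\ge 2$ uniformly under the hypothesis $\rho_\pm^n\ge\tfrac{\kappa^n}2-2$, so $\E[(X^n(1-X^n))^{-p}]$ is finite and uniformly bounded for every $p<2$, not merely $p<1$. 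This means your $L^p$ drift bound with $p\in(1,2)$ holds across the \emph{entire} parameter range in the lemma, and the two-step strict-then-boundary structure is unnecessary. This matters because your boundary-case argument, as written, is circular: ``continuity of the explicit formula'' refers to nothing here (there is no formula for the driving function), and approximating $\rho_\pm$ by $\rho_\pm+\tfrac1m$ just produces another instance of the statement you are trying to prove.

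Two smaller gaps remain even after the correction. First, you assert that $X^n$ is stationary with the Beta law but do not justify it; this needs the scale-invariance argument (the law of $X^n_t$ is independent of $t>0$, and after noting that $\sigma(t)\to\pm\infty$ as $t\to\infty,0^+$ the two-sided Jacobi diffusion in $\sigma$-time must be stationary). Second, the passage from $\sup_n\E\!\int_0^T(\tfrac1{A^n_s}+\tfrac1{B^n_s})^p\,ds<\infty$ to tightness in $C([0,T])$ and to the limit inside the drift integrals needs more than one line (Kolmogorov's criterion requires control of higher moments of the increments, not just $L^1$ of the $L^p$-norm), though these are fillable by standard arguments.
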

\begin{proof}
Let $(B_t)_{t \geq 0}$ be a standard Brownian motion and let $(W_t, V_t^-, V_t^+)$ be the solution to~\eqref{eq-SDE} as defined and constructed in \cite[Definition 2.1, Theorem 2.2]{ig1}. Similarly let $(B_t^n)_{t \geq 0}$ be standard Brownian motion and $(W_t^n, V_t^{n, -}, V_t^{n,+})$ the corresponding stochastic process for $\SLE_{\kappa^n}(\rho_-^n; \rho_+^n)$. 
We claim that there exists  a coupling of these processes such that for fixed $T>0$ we have $\sup_{t \in [0,T]} |W_t - W_t^n| \to 0$ in probability as $n \to \infty$. 
This claim immediately yields the lemma. This claim would follow from easy stochastic calculus arguments if we consider  $\SLE_\kappa(\rho^-;\rho^+)$ with force points away from zero.   The adaptation  from non-zero force points to the $0^\pm$  case is also considered in the proof of the uniqueness in law  of the solution to the  SDE system (\cite[Theorem 2.2]{ig1}). A minor modification of that argument gives the result so we omit the details.
\end{proof}

\begin{proof}[Proof of Lemma~\ref{lem-couple-map}]
	Consider a coupling such that the convergence in Lemma~\ref{lem-couple-Wt} in almost sure.
By the argument of \cite[Lemma A.4]{ahs-disk-welding}, for any compact $K \subset (\bbH \backslash \eta)\cup \R_+$ we have $\psi_n \to \psi$ uniformly on $K$ in probability. 
By Schwarz reflection, $\psi$ (resp.\ $\psi_n$) can be extended to a conformal map $\wt\psi$ (resp.\ $\wt\psi_n$) from the right connected component of $\C\setminus(\eta \cup \ol\eta)$ (resp.\ $\C\setminus(\eta_n \cup \ol\eta_n)$) to $\C \setminus \R_-$. Cauchy's integral formula then gives convergence of $\psi_n'(1)$ to $\psi'(1)$ in probability.
\end{proof}
The following lemma gives the counterpart of Lemma \ref{lem-couple-map} in the boundary touching case. In this case, we only consider curves with a single force point at $0^+$, namely  
	$ \SLE_\kappa(\rho_+) := \SLE_\kappa(0;\rho_+)$.
	This simplifies the analysis of the driving function in Lemma \ref{lem-bessel-cts} and also suffices for our application.
\begin{lemma}\label{lem-couple-map-nonsimple}
Suppose $\kappa \leq 4$ and $\rho_+ \in (-2, \frac\kappa2 - 2)$, and let $\eta  $ be sampled from $\SLE_\kappa(\rho_+)$. 
Let $D$ be the connected component of component of $\bbH \backslash \eta$ with 1 on its boundary, and let $\psi:D \to \bbH$ be the conformal map fixing $1$ and mapping the first (resp.\ last) boundary point traced by $\eta$ to $0$ (resp.\ $\infty$). Letting $(\kappa^n)_{n \geq 1}$ be a sequence tending to $\kappa$ and sampling $\eta_n \sim \SLE_{\kappa^n}(\rho_+)$ with force point at $0^+$, we likewise define domains $D_n$ and maps $\psi_n:D_n\to\BB H$. Then there is a coupling of $\eta,\eta_n$ such that $\psi_n'(1) \to \psi'(1)$ in probability. 
\end{lemma}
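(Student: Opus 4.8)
The plan is to reduce Lemma~\ref{lem-couple-map-nonsimple} to convergence of Loewner driving functions, exactly as we did for the non-boundary-touching case in Lemma~\ref{lem-couple-map}, but with the added subtlety that the curve $\eta$ now hits $(0,\infty)$. First I would establish the analogue of Lemma~\ref{lem-couple-Wt}: the driving function $W^n$ of $\SLE_{\kappa^n}(\rho_+)$ converges in law to the driving function $W$ of $\SLE_\kappa(\rho_+)$ in the uniform topology on compacts. Since we only have one force point at $0^+$ and no force point at $0^-$, the pair $(W_t - V_t^+)$ is (after a deterministic time change) a Bessel-type process of dimension $d = 1 + \tfrac{2(\rho_++2)}{\kappa} \in (1, 2)$ started from $0$; this is precisely the content I would isolate as a lemma (call it ``Lemma~\ref{lem-bessel-cts}'' as referenced), namely that the driving function is a continuous functional of a Bessel process whose dimension depends continuously on $\kappa$, and hence converges as $\kappa^n \to \kappa$. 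The key point making the single-force-point case tractable is that $W_t - V_t^+$ stays nonnegative and we do not need to worry about the curve swallowing the force point $0^-$ or about the leapfrogging between two force points; one can couple the driving Brownian motions and use standard stability of solutions of the Bessel SDE (away from the origin) plus the fact that the excursion decomposition of the Bessel process depends continuously on the dimension.

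Given a coupling in which $W^n \to W$ uniformly on $[0,T]$ almost surely, the next step is to transfer this to convergence of the mapping-out functions. Here I would invoke Lemma~\ref{lem:kempp} (Lemma A.5 of~\cite{ahs-disk-welding}): for $z$ in a compact set $K$ bounded away from the curve, $\inf_{s \le t}|g_s(z) - W_s| > \eps$, so $g^n_t(z) \to g_t(z)$ uniformly. The point $1 \in \partial\bbH$ lies on the boundary of the component $D$ of $\bbH \setminus \eta$ (a.s.\ $1$ is not on the trace, since for $\rho_+ < \tfrac\kappa2 - 2$ the curve does hit $(0,\infty)$ but the Lebesgue measure of $\eta \cap \R$ is zero, so $1 \notin \eta$ a.s.), and a neighborhood of $1$ in $\ol\bbH$ stays at positive distance from $\eta$ up to the (finite) time at which $\eta$ disconnects $1$ from $\infty$. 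I would then follow the argument of Lemma A.4 of~\cite{ahs-disk-welding} to get $\psi_n \to \psi$ uniformly on a compact neighborhood of $1$ in $\ol D$, in probability. Finally, extending $\psi, \psi_n$ by Schwarz reflection across a boundary arc near $1$ (which is possible because $\eta, \eta_n$ do not touch that arc) and applying Cauchy's integral formula on a small circle around $1$ gives $\psi_n'(1) \to \psi'(1)$ in probability.

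The main obstacle I anticipate is the driving-function convergence in the boundary-touching regime, i.e.\ establishing Lemma~\ref{lem-bessel-cts} with enough care. Unlike the non-touching case, where $W - V^\pm$ stays bounded away from $0$ for a positive time and elementary Gr\"onwall-type estimates suffice, here $W_t - V_t^+$ returns to $0$ on a set of times of positive measure, so the continuity in $\kappa$ of the law of the driving function must be argued through the Bessel excursion measure, whose total mass and excursion-length distribution both depend on the dimension $d(\kappa)$. One clean route is: write the time-changed process $Z_t = W_t - V_t^+$ as a $\delta$-dimensional Bessel process and its reflected ``local time at $0$'' contribution, use that $\delta^n \to \delta$ with all $\delta^n, \delta \in (1,2)$, couple the $\delta^n$- and $\delta$-Bessel processes via the same driving Brownian motion and a strong approximation for squared Bessel processes (which are additive in the dimension parameter), and then recover $W$ from $Z$ through the integral relations in~\eqref{eq-SDE}. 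This is somewhat technical but standard; alternatively one can cite the analogous continuity statements already used implicitly in the imaginary-geometry literature (e.g.\ \cite{ig1}, whose proof of uniqueness in law of~\eqref{eq-SDE} handles exactly the $0^\pm$ degeneracy) and note that a minor modification yields joint continuity in the parameters. Once the driving-function convergence is in hand, the rest is a routine repeat of the proof of Lemma~\ref{lem-couple-map}.
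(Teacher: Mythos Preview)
Your reduction to driving-function convergence via the Bessel nature of $W-V^+$ is correct and matches the paper's Lemma~\ref{lem-bessel-cts}. The gap is in the second half: you assert that once $W^n\to W$ uniformly on compacts, ``the rest is a routine repeat of the proof of Lemma~\ref{lem-couple-map}.'' It is not. In the non-touching case the map $\psi$ is the mapping-out function of the entire curve, and Lemma~\ref{lem:kempp} plus the argument of \cite[Lemma A.4]{ahs-disk-welding} applies directly. In the boundary-touching case the domain $D$ is bounded by a \emph{single excursion} $\eta([\tau,\sigma])$ of the curve away from $\R$, and $\psi$ is the Riemann map of that pocket, not a Loewner map at any fixed time. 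To show $\psi_n'(1)\to\psi'(1)$ you need the excursion endpoints to converge, $\tau_n\to\tau$ and $\sigma_n\to\sigma$; this requires not just uniform convergence of $W^n$ but Hausdorff convergence of the \emph{zero set} of $W^n-V^{+,n}$, which the paper states explicitly in Lemma~\ref{lem-bessel-cts} and you only allude to via ``excursion decomposition.''

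Even granting $\tau_n\to\tau$, $\sigma_n\to\sigma$, the convergence $\psi_n'(1)\to\psi'(1)$ does not follow from Lemma~\ref{lem:kempp} alone. The paper factors $\psi'_n(1)=(g^n_{\tau_n})'(1)\cdot\wt\psi'_n(g^n_{\tau_n}(1))$ and handles the first factor via Loewner ODE estimates; but for the second factor it must show that the conformal image $\wt\eta_n([\tau_n,\sigma_n])$ converges in Hausdorff distance to $\wt\eta([\tau,\sigma])$. The difficulty is at the endpoints: near $\sigma_n$ the curve is about to close the pocket, and one must show the tail $\wt\eta_n([\sigma_n-s(\eps),\sigma_n])$ has uniformly small diameter. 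The paper achieves this through a non-trivial argument combining harmonic-measure considerations (to control the part outside an approximating domain) with \emph{reversibility} of $\SLE_{\kappa^n}(\rho_+)$ (to transfer a small-diameter bound from the start of the excursion to its end). None of this appears in your proposal, and the Schwarz-reflection/Cauchy argument you sketch presupposes exactly the uniform convergence of $\psi_n$ near $1$ that is at issue.
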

\begin{lemma}
	In the setting of Lemma \ref{lem-couple-map-nonsimple} we can couple $(W^n,V^{+,n})$ and $(W,V^{+})$ such that for any $T$, $\sup_{t\in[0,T]}|W^n_t-W_t|+|V^{+,n}_t-V^+_t|$ converges a.s.\ to 0 and the zero set of $(W^n_t-V^{+,n}_t)_{t\in[0,T]}$ converges a.s.\ to the zero set of $(W_t-V^+_t)_{t\in[0,T]}$ for the Hausdorff topology. 
	\label{lem-bessel-cts}
\end{lemma}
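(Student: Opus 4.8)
The plan is to reduce the statement to a continuity property of the radial Bessel-type SDE driving $\SLE_\kappa(\rho_+)$ with a single force point at $0^+$, and then transfer that to the conformal maps $\psi_n$ via a Carath\'eodory/kernel-convergence argument combined with Schwarz reflection, exactly as in the proof of Lemma~\ref{lem-couple-map}. First I would set up the SDE: writing $Z_t = W_t - V_t^+$, the difference process solves (after a time change) an equation for a Bessel process of dimension $\delta(\kappa,\rho_+) = 1 + \frac{2(\rho_++2)}{\kappa}$, which for $\rho_+ \in (-2,\tfrac\kappa2-2)$ lies in $(1,2)$, so the process hits zero but is instantaneously reflecting; this is the regime in which the curve touches the boundary. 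The key input for Lemma~\ref{lem-bessel-cts} is that Bessel processes depend continuously (in law, jointly with their zero sets in the Hausdorff topology on compact time intervals) on the dimension parameter when that parameter stays in $(1,2)$; this is a standard fact that can be extracted e.g.\ from absolute continuity relations between Bessel processes of nearby dimensions, or by a Skorokhod-embedding / time-change argument. I would couple all the processes on one probability space using the same driving Brownian motion, show $\sup_{[0,T]}(|W^n_t - W_t| + |V^{+,n}_t - V^+_t|) \to 0$ a.s.\ by Gr\"onwall-type estimates away from the zero set and a short-excursion control near it, and separately show Hausdorff convergence of the zero sets from the dimension continuity. Then Lemma~\ref{lem-bessel-cts} follows.

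Granting Lemma~\ref{lem-bessel-cts}, the proof of Lemma~\ref{lem-couple-map-nonsimple} mirrors that of Lemma~\ref{lem-couple-map}. On the coupling where $W^n \to W$ uniformly on compacts and the zero sets converge, I would invoke the argument of \cite[Lemma A.4]{ahs-disk-welding} together with Lemma~\ref{lem:kempp} to get that the Loewner hulls $K^n_t$ converge to $K_t$ (and hence $\eta_n \to \eta$) in an appropriate sense; the extra care relative to the non-touching case is that the relevant domain $D_n$ to the right of $\eta_n$ may be pinched off at boundary touching points, but since $1$ lies strictly inside $D$, for $n$ large the point $1$ still lies in $D_n$ and any compact $K \subset (\bbH\setminus\eta)\cup\R_{>0}$ containing $1$ eventually lies in $D_n$ as well, by the kernel convergence and the convergence of the zero sets (which control where the curve returns to the boundary near $1$). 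Then $\psi_n \to \psi$ uniformly on $K$ in probability by Carath\'eodory convergence of the maps with the normalization fixing $0,1,\infty$. Finally, Schwarz reflection across $\R$ extends $\psi$ and $\psi_n$ to conformal maps on the reflected doubled domains, so uniform convergence on a neighborhood of $1$ upgrades to convergence of $\psi_n'(1) \to \psi'(1)$ via the Cauchy integral formula, exactly as at the end of the proof of Lemma~\ref{lem-couple-map}.

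The main obstacle I anticipate is Lemma~\ref{lem-bessel-cts} itself, specifically the joint convergence of the zero sets in the Hausdorff topology: uniform convergence of the processes $W^n \to W$ does not automatically give Hausdorff convergence of zero sets (a small positive excursion can disappear in the limit, or a tangency can develop), so one genuinely needs the dimension-continuity of reflecting Bessel processes in $(1,2)$, including the fact that the local time at $0$ varies continuously. I would handle this by using the representation of the squared Bessel process as the unique strong solution of $dR_t = \delta\, dt + 2\sqrt{R_t}\,dB_t$, noting pathwise monotone dependence of $R$ on $\delta$ and continuity as $\delta^n \to \delta$, and then transferring to the Bessel process $\sqrt{R}$; the zero set of $\sqrt{R}$ equals that of $R$, and continuity of $R$ together with the fact that $R$ has no "flat spots" at $0$ (since $\delta > 0$) gives Hausdorff convergence of the zero sets on any $[0,T]$. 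I would also double-check the boundary case where the force point interacts with the tip, i.e.\ the instantaneous reflection is well-posed; this is exactly the content already used in \cite{wedges,ig1} for weight $W \in (0,\gamma^2/2)$ disks, so I would cite those rather than redo it.

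\medskip

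\noindent\textit{Remark on scope.} Since the excerpt ends at the statement of Lemma~\ref{lem-bessel-cts}, the above is really a two-part plan: prove Lemma~\ref{lem-bessel-cts} (the SDE/Bessel continuity statement, which is the hard analytic core), and then deduce Lemma~\ref{lem-couple-map-nonsimple} from it by the same Carath\'eodory-plus-Schwarz-reflection scheme already used for Lemma~\ref{lem-couple-map}, with the only new wrinkle being to track that $1$ stays in the correct complementary component and that the touching structure converges, both of which are consequences of the Hausdorff convergence of the zero sets.
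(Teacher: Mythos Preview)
Your approach to Lemma~\ref{lem-bessel-cts} is correct and matches the paper's: identify $V^+_t - W_t$ as (a multiple of) a Bessel process of dimension depending on $\kappa$ and $\rho_+$, then invoke continuity of Bessel processes in the dimension parameter; the paper's proof is a two-sentence sketch to exactly this effect, and your squared-Bessel monotonicity argument is a reasonable way to flesh it out. Note, however, that most of your proposal addresses Lemma~\ref{lem-couple-map-nonsimple} rather than Lemma~\ref{lem-bessel-cts}, which is the only statement at issue here.
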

\begin{proof}
Since $\rho_-=0$ the law of $V^+_t-W_t$ is given by a multiple of a Bessel process. Using this and the continuity property of Bessel processes in its dimension we get the  lemma.
\end{proof}

\begin{proof}[Proof of Lemma \ref{lem-couple-map-nonsimple}]
	Consider a coupling such that the convergence in Lemma \ref{lem-bessel-cts} is a.s.
	Let $\tau_n,\tau$ (resp.\ $\sigma_n,\sigma$) be such that
	\eqbn
	\partial D = \eta([\tau,\sigma])\cup[\eta(\tau),\eta(\sigma)],\qquad
	\partial D_n = \eta_n([\tau_n,\sigma_n])\cup[\eta_n(\tau_n),\eta_n(\sigma_n)].
	\eqen
	Lemma \ref{lem-bessel-cts} implies that $\tau_n\rta \tau$ and $\sigma_n\rta\sigma$ a.s. Let $\wt\psi:g_{\tau}(D)\to\BB H$ be such that $\psi=\wt\psi\circ g_{\tau}$ and define $\wt\psi_n$ similarly. Then the chain rule for differentiation gives the following
	\eqb
	\psi'_n(1) = (g^n_{\tau_n})'(1)\cdot\wt\psi'_n( g^n_{\tau_n}(1) ),\qquad
	\psi'(1) = g'_{\tau}(1)\cdot\wt\psi'( g_{\tau}(1) ).
	\label{eq:psi-prod}
	\eqe
	Extending \cite[equation (4.5)]{lawler-book} to points on $\R$ we get
	\eqbn
	\dot g'_t(1) = -\frac{g'_t(1)}{(g_t(1)-W_t)^2},
	\eqen
	and the analogous equation for $g^n_t$. By using this, $\tau_n\rta\tau$, $(W^n_t)\rta(W_t)$, and the fact that the denominator on the right side of \eqref{eq:psi-prod1} is bounded away from 0 during $[0,\tau]$, we get that a.s.,
	\eqbn
	(g^n_{\tau_n})'(1)\rta g'_{\tau}(1).
	\eqen
	Combining this with \eqref{eq:psi-prod}, in order to conclude the proof of the lemma it is sufficient to show that $\wt\psi'_n( g^n_{\tau_n}(1) )\rta\wt\psi'( g_{\tau}(1) )$ a.s.
	
	Let $\wt\eta:[\tau,\infty)\to\BB H\cup\{0 \}$ be defined by 
	$\wt\eta(t)=g_{\tau}(\eta(t))$, and define
	$\wt\eta_n:[\tau_n,\infty)\to\BB H\cup\{0 \}$ by $\wt\eta_n(t)=g^n_{\tau_n}(\eta_n(t))$.
	We will now argue that $\wt \eta_n([\tau_n,\sigma_n])$ converges in Hausdorff topology to $\wt \eta([\tau,\sigma])$, which is sufficient to conclude the proof of the lemma since it implies $\wt\psi'_n( g^n_{\tau_n}(1) )\rta\wt\psi'( g_{\tau}(1) )$ a.s. Let $h_{\op{max}}=\sup\{ \op{Im}(\wt\eta(t))\,:\,t\in[\tau,\sigma] \}$. For $\eps\in(0,1)$ pick $\bar s(\eps),s(\eps)>0$ such that $\wt \eta_n([\tau_n+\bar s(\eps),\sigma_n-s(\eps)])$ is an excursion above the line $\{z\,:\, \op{Im}(z)=\eps h_{\op{max}}\}$ which attains the value $h_{\op{max}}$. Notice that this a.s.\ uniquely specifies $\bar s(\eps),s(\eps)$.
	
	Since $\wt \eta([\tau,\sigma-s(\eps)])$ is a simple curve, Lemma \ref{lem:kempp} implies that for any neighborhood $A$ of $\wt \eta([\tau,\sigma-s(\eps)])$ we will have $\wt \eta_n([\tau_n,\sigma_n-s(\eps)])\subset A$ for all sufficiently large $n$. Since the half-plane capacity of $\wt \eta_n([\tau_n,\sigma_n-s(\eps)])$ converges to the half-plane capacity of $\wt \eta([\tau,\sigma-s(\eps)])$, this implies that $\wt \eta_n([\tau_n,\sigma_n-s(\eps)])$ converges to $\wt \eta([\tau,\sigma-s(\eps)])$ for the Hausdorff distance, and that $\wt \eta_n(\sigma_n-s(\eps))$ converges to $\wt \eta(\sigma-s(\eps))$.
	To conclude that $\wt \eta_n([\tau_n,\sigma_n])$ converges to $\wt \eta([\tau,\sigma])$ for the Hausdorff distance it thus suffices to prove 
	\eqb
	\lim_{\eps\rta 0} \sup_{n\in\N} \op{diam}( \wt \eta_n([\sigma_n-s(\eps),\sigma_n]) ) \rta 0.
	\label{eq-excursion-uniform}
	\eqe
	Let $L^\eps_n$ be a simple curve of diameter $o_\eps(1)$  which connects $\wt\eta_n(\sigma_n-s(\eps))$ to $\R$ and is disjoint from $\wt \eta_n([\tau_n,\sigma_n-s(\eps)])$ except at its end-points. The curve $L^\eps_n\cup\wt \eta_n([\tau_n,\sigma_n-s(\eps)])$ is simple and divides $\BB H$ into a bounded and an unbounded set; let $\wt D^\eps_n$ denote the bounded set. To prove \eqref{eq-excursion-uniform} it is sufficient to argue
	\eqb
	\begin{split}
		&(i)\,\lim_{\eps\rta 0} \sup_{n\in\N} \op{diam}( \wt \eta_n([\sigma_n-s(\eps),\sigma_n])\cap \wt D^\eps_n ) \rta 0
		\,\,\,\text{and}\,\,\,\\
		&(ii)\,\lim_{\eps\rta 0} \sup_{n\in\N} \op{diam}( \wt \eta_n([\sigma_n-s(\eps),\sigma_n]) \setminus \wt D^\eps_n ) \rta 0.
		\label{eq-excursion-uniform-2}
	\end{split}
	\eqe
	We see that (ii) holds since otherwise there would be a (random) constant $c>0$ independent of $\eps$ such that for arbitrarily large $n$ and all $y>1$ sufficiently large, $y$ times the harmonic measure of $\wt \eta_n([\sigma_n-s(\eps),\sigma_n])$ seen from $iy$ in $\BB H\setminus \wt\eta_n([\tau_n,\sigma_n])$ would be at least $c$; this contradicts the assumed convergence of $(W^n,V^{n,+})$.
	
	To prove that (i) holds we can first proceed similarly as in the proof of (ii) and use harmonic measure considerations and convergence of $(W^n,V^{n,+})$ to conclude that 
	\eqb
	\lim_{\eps\rta 0} \sup_{n\in\N} \op{diam}( \wt \eta_n([\tau_n,\tau_n+\bar s(\eps)])) \rta 0.
	\label{eq-small-diam}
	\eqe
	By Lemma \ref{lem:kempp} the map $g^n_{\tau_n}$ converges uniformly to $g_{\tau}$ away from the hull of $\eta_n|_{[0,\tau_n]}$.
	Combining this with \eqref{eq-small-diam} we get that for any $\delta>0$ we can find a sufficiently small $\eps>0$, such that $\eta_n([\tau_n,\tau_n+\bar s(\eps)])$ is contained in the $\delta$-neighborhood of the hull of $\eta_n|_{[0,\tau_n]}$. By reversibility of  SLE$_{\kappa_n}(\rho_+)$ the same property holds for $\eta_n([\sigma_n-s(\eps),\sigma_n])$ and the hull created by $\eta_n|_{[\sigma_n,\infty)}$. Applying the map $g^n_{\tau_n}$ this gives (i).
\end{proof}

\section{The LCFT description of  $\QS_3$}
\label{app:sphere}
In this section we prove Proposition~\ref{prop-AHS} and  Lemma~\ref{lem-inserting-general}. Our proofs closely follow  those of Proposition \ref{prop-2-pt-weight-length} and Lemma \ref{lem-inserting}, respectively, which are the corresponding statements for the disk case.

It will be convenient to work on the cylinder rather than $\C$. Define the cylinder $\cC$ by $\cC := ([0,2\pi] \times \R)/{\sim}$ where $(x,0) \sim (x,2\pi)$ for all $x\in \R$, and let $P_\cC$ be the law of the GFF $h_\cC$ on $\cC$ normalized to have average zero on $(0,2\pi)$. 
This way, $h_\C\sim P_\C$ and $h_\cC$ are related by the exponential map between $\C $ and $\cC$. We can then deduce the covariance kernel of $h_\cC$ from that of $h_\C$:
\(	G_\cC(z,w) = -\log|e^z-e^w| + \max(\Re z, 0) + \max(\Re w, 0). \)

As in the horizontal strip case, we have a radial-lateral decomposition of $h_\cC$. 
We write $H_1(\cC) \subset H(\cC)$ (resp.\ $H_2(\cC) \subset H(\cC)$) for the subspace of functions which are constant (resp.\ have mean zero) on $\{t\} \times [0,2\pi]$ for each $t \in \R$. We have the orthogonal decomposition $H(\cC) = H_1(\cC) \oplus H_2(\cC)$. In this case the projection of $h_\cC$ onto $H_1(\cC)$ has the distribution of $\{B_{t}\}_{t\in \R}$.

Now, we introduce the weight-$W$ quantum sphere of \cite{wedges}.
\begin{definition}
	\label{def-sphere}
	For $W >0$ and $\alpha = Q - \frac W{2\gamma}<Q$, let 
	\[Y_t =
	\left\{
	\begin{array}{ll}
		B_{t} - (Q -\alpha)t  & \mbox{if } t \geq 0 \\
		\wt B_{-t} +(Q-\alpha) t & \mbox{if } t < 0
	\end{array}
	\right. , \]
	where $(B_s)_{s \geq 0}$ is a standard Brownian motion  conditioned on	$B_{s} - (Q-\alpha)s<0$ for all $s>0$, and $(\wt B_s)_{s \geq 0}$ is an independent copy of $(B_s)_{s \geq 0}$.
	Let $h^1(z) = Y_{\Re z}$ for each $z \in \cC$.
	Let $h^2_\cC$ be independent of $h^1$  and have the law of the projection of  $h_\cC$ onto $H_2(\cC)$. Let $\wh h=h^1+h^2_\cC$.
	Let  $\mathbf c$ be a real number  sampled from $ \frac\gamma2 e^{2(\alpha-Q)c}dc$ independent of $\wh h$ and $\phi=\wh h+\mathbf c$.
	Let $\cM_2^\sph(W)$ be the infinite measure  describing the law of $(\cC, \phi, -\infty, +\infty)/{\sim_\gamma}$.
	We call a sample from $\cM_2^\sph(W)$ a (two-pointed) \emph{quantum sphere of weight $W$}.
\end{definition}
The case where $W = 4-\gamma^2$ is special since conditioned on the quantum surface, the two marked points are independently distributed according to the quantum area measure, motivating the following definition.
\begin{definition}\label{def-QS}
	Let  $(\cC, \phi, +\infty,-\infty)/{\sim_\gamma}$ be a sample from $\cM^\sph_2(4-\gamma^2)$.
	Let $\QS$ be the law of $(\cC, \phi)/{\sim_\gamma}$ under the reweighted measure $\mu_\phi(\cC)^{-2}\cM^\sph(4-\gamma^2)$.
	For $m\ge 0$,  let $(\cC,\phi)$ be a sample from $\mu_\phi(\cC)^m\QS$, and then
	independently sample $z_1,\cdots, z_m$ according to $\mu_\phi^\#$.
	Let $\QS_{m}$ be  the law of $(\cC, \phi, z_1,\cdots, z_m)/{\sim_\gamma}$.
	We  call a sample from $\QS_{m}$ a  \emph{quantum sphere with $m$ marked points}. 
\end{definition}
We have $\cM_2^\sph(4-\gamma^2) = \QS_2$ \cite[Proposition A.11]{wedges}.

Recall the Liouville field on the plane defined in Definition~\ref{def-RV}. When $\alpha_1=\alpha_2$ we often prefer to put the field on the cylinder. 
\begin{definition}\label{def-RV-cl}
	Let $(h, \mathbf c)$ be sampled from $ C_\cC^{(\alpha, \pm\infty), (\alpha_3,z_3)}  P_\cC \times [e^{(2\alpha + \alpha_3 - 2Q)c}dc]$ where $\alpha \in \R$, $(\alpha_3,z_3) \in  \R \times \cC$, and 
	\[
	C_\cC^{(\alpha, \pm\infty), (\alpha_3,z_3)} = e^{(-\alpha_3(Q-\frac{\alpha_3}2) +\alpha \alpha_3) \left|\Re z_3\right|}.
	\]
	Let \(\phi(z) = h(z) -(Q-\alpha) \left|\Re z\right|  +  \alpha_3 G_\cC(z, z_3) + \mathbf c\).
	We  write $\LF_\cC^{(\alpha, \pm\infty), (\alpha_3,z_3)}$ as the law of $\phi$. When $\alpha_3=0$, we write it as 
	$\LF_\cC^{(\alpha, \pm\infty)}$. 
\end{definition}

Our next lemma relates the fields of Definitions~\ref{def-RV-sph} and~\ref{def-RV-cl} under change of coordinates for one choice of conformal map. The proof is identical to that of Lemma~\ref{lem-equiv-C-cS}.

\begin{lemma}\label{lem-equiv-C-cC}
	Let $\alpha\in\R$ and $(\alpha_3, z_3) \in \R \times \cC$. Let $f: \cC \to \C$ be the unique conformal map satisfying $f(-\infty) = 0, f(+\infty) = -1$ and $f(z_3) = 1$. Then 
	\[\LF_{\C}^{(\alpha, -1), (\alpha, 0), (\alpha_3, 1)} = 2^{ - 2\Delta_{\alpha_3}} \cdot f_* \LF_\cC^{(\alpha, \pm\infty), (\alpha_3, z_3)}. \]
\end{lemma}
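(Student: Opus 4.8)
The plan is to imitate the proof of Lemma~\ref{lem-equiv-C-cS}, with $\cS$ replaced by $\cC$ and the half-plane insertions replaced by their bulk counterparts. First I would observe that if $h$ is sampled from $P_\cC$ then $\widetilde h := h \circ f^{-1}$ is a GFF on $\C$; however, unlike in the strip case, $f^{-1}$ does \emph{not} send the normalization measure of $P_\cC$ (uniform on $\{0\}\times(0,2\pi)$) to the unit circle, so $\widetilde h$ has the law of $h_\C$ plus an explicit deterministic constant (indeed, $\E[(\widetilde h, \mu_{\C})^2]$ differs from the $P_\C$-normalization by a finite shift). This constant gets absorbed into the $\mathbf c$-integral and contributes to the numerical constant $2^{-2\Delta_{\alpha_3}}$. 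Rather than track this, it is cleaner to use the invariance statement directly: by Proposition~\ref{prop-RV-invariance}, $\LF_\C^{(\alpha,z_1),(\alpha,z_2),(\alpha_3,z_3)}$ transforms covariantly under $\conf(\widehat\C)$, so it suffices to establish the identity for one convenient choice of the three points on $\C$ and then push forward by the Möbius map to $(-1,0,1)$.

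Concretely, the key steps are: (1) Recall the exponential-type map $\exp_\cC : \cC \to \C\setminus\{0\}$, $z \mapsto e^z$, under which $G_\cC(z,w) = G_\C(e^z,e^w)$ and $-(Q-\alpha)|\Re z| + Q\log|\exp_\cC'(z)| $ matches $-2Q\log|e^z|_+ + \alpha G_\C(e^z,0) + \alpha G_\C(e^z,\infty)$ up to the boundary terms encoded in $C_\cC^{(\alpha,\pm\infty),(\alpha_3,z_3)}$; this identifies $\exp_{\cC,*}\LF_\cC^{(\alpha,\pm\infty),(\alpha_3,z_3)}$ with a constant multiple of $\lim_{w\to\infty}|w|^{\alpha(2Q-\alpha)}\LF_\C^{(\alpha,0),(\alpha,w),(\alpha_3,e^{z_3})}$, exactly as in the proof sketch commented out in the excerpt. (2) Apply Lemma~\ref{lem:LF-H-inf}'s sphere analogue (the $w\to\infty$ asymptotics for bulk insertions, which is standard and can be quoted from \cite{dkrv-lqg-sphere} or proved as in Lemma~\ref{lem:LF-H-inf}) to make sense of the limit as $\LF_\C^{(\alpha,0),(\alpha,\infty),(\alpha_3,e^{z_3})}$. (3) Use Proposition~\ref{prop-RV-invariance} with the Möbius transformation $g$ fixing $0$, sending $\infty\mapsto -1$ and $e^{z_3}\mapsto 1$, and compute the three derivative factors $|g'(0)|, |g'(\infty)|, |g'(e^{z_3})|$; since $\Delta_\alpha$ appears with the $\alpha$-insertions at $0$ and $\infty$ and those factors will combine with the constants from steps (1)--(2), the net effect on the $\alpha$-insertions is a pure constant, while the $\alpha_3$-insertion picks up precisely $|g'(e^{z_3})|^{-2\Delta_{\alpha_3}}$, and tracking all the numerical prefactors gives the stated $2^{-2\Delta_{\alpha_3}}$.

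The main obstacle, as usual in these change-of-coordinate lemmas, is bookkeeping the numerical constant: one must carefully combine the $C_\cC$ prefactor, the shift coming from the mismatch of normalization measures between $P_\cC$ and $P_\C$ under $\exp_\cC$, the $|w|^{\alpha(2Q-\alpha)}$ renormalization in the $w\to\infty$ limit, and the three Möbius derivative factors, and check that everything except $|g'(e^{z_3})|^{-2\Delta_{\alpha_3}} = 2^{-2\Delta_{\alpha_3}}$ cancels. Since $\Delta_\gamma = 1$ is the case actually used in Proposition~\ref{prop:3pt-haar} (where the corresponding strip computation in Lemma~\ref{lem-equiv-C-cS} already gave clean powers of $2$), I expect the general $\alpha_3$ case to go through with the same algebra; the safest route is to first verify the $\alpha_3=0$ case $\LF_\C^{(\alpha,-1),(\alpha,0)} = f_*\LF_\cC^{(\alpha,\pm\infty)}$ with no constant, then bolt on the $\alpha_3$-insertion at $1$ via Lemma~\ref{lem-inserting-general} on both sides, which automatically produces matching insertions and reduces the constant-tracking to the single factor at the point $1$. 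Everything else is routine and parallels Lemma~\ref{lem-equiv-C-cS} verbatim, so I would keep the write-up short and point to that proof.
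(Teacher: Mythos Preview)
Your proposal is correct and takes essentially the same approach as the paper, which simply says the proof is identical to that of Lemma~\ref{lem-equiv-C-cS}: factor $f$ as $\exp_\cC$ followed by a M\"obius map, use $G_\cC(z,w)=G_\C(e^z,e^w)$ together with the fact that $\exp_\cC$ sends the normalization circle of $P_\cC$ to the unit circle (so there is in fact no additive shift under $\exp_\cC$), and then apply Proposition~\ref{prop-RV-invariance} with the $\infty$-insertion handled by a limit as in Lemma~\ref{lem:LF-H-inf}. One minor caveat: your suggested ``safest route'' via Lemma~\ref{lem-inserting-general} only inserts a $\gamma$-singularity, so for general $\alpha_3$ you would need the sphere analogue of Lemma~\ref{lem:inserting} instead; but your primary argument already covers the general case.
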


We give an LCFT description of the quantum sphere. 
\begin{theorem}\label{thm-two-sph-equivalence}
	Fix $W > 0$ and let $\phi$ be as in Definition~\ref{def-sphere} so that $(\cC, \phi, +\infty, -\infty)$ is an embedding of  a sample from $\cM_2^\sph(W)$.
	Let $T\in \R$ be sampled from the Lebesgue measure $dt$ independently of 	$\phi$.  
	Let $\wt \phi(z)=\phi (z+T)$.
	Then the law of $\wt \phi$ is given by $ \frac{\gamma}{4(Q-\alpha)^2} \LF_\cC^{(\alpha,\pm\infty)}$
	where $\alpha = Q - \frac W{2\gamma}$.  
\end{theorem}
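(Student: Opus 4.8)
The statement is the sphere analogue of Theorem~\ref{thm-two-disk-equivalence}, and the plan is to mirror that proof verbatim, replacing the strip $\cS$ by the cylinder $\cC$, the half-line drift $(Q-\beta)$ by $(Q-\alpha)$, and the boundary normalization constants by their bulk counterparts. The key point, exactly as before, is that by Definition~\ref{def-RV-cl} the claimed identity reduces to a purely probabilistic statement about drifted two-sided Brownian motion: namely that if $Y$ is the drifted Brownian motion from Definition~\ref{def-sphere} (one side conditioned to stay negative, the other an independent copy), $\mathbf c$ sampled from $\frac\gamma2 e^{2(\alpha-Q)c}\,dc$, $T$ sampled from Lebesgue measure, and $X_1(t) = Y_{t-T}+\mathbf c$, then $X_1$ agrees in law with $X_2(t) = B_t - (Q-\alpha)|t| + \mathbf c'$ where $B$ is a standard two-sided Brownian motion (note the diffusivity is $1$, not $2$, on the cylinder) and $\mathbf c'$ is sampled from an appropriate $e^{2(\alpha-Q)c}\,dc$-type measure with the correct prefactor. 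This is precisely Proposition~\ref{prop-Bessel-equiv} with $a$ playing the role of $(Q-\alpha)$, except that on the cylinder the Brownian motion is run at unit speed rather than speed $2$ — so I would first restate Proposition~\ref{prop-Bessel-equiv} (and Lemmas~\ref{lem:BM-above},~\ref{lem:0}) in the unit-speed normalization, which is a trivial time-rescaling of the stated results and changes only the numerical constant (the $\frac1{2a^2}$ in Lemma~\ref{lem:0} and the prefactor matching).

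\textbf{Steps.} First, I would record the unit-speed version of Proposition~\ref{prop-Bessel-equiv}: with $\wh B, \wt B$ standard Brownian motions, $\wh B$ conditioned on $\wh B_t - at < 0$ for all $t>0$, and $Y_t$ defined as in Definition~\ref{def-sphere}, sampling $(\mathbf c, T)$ from $e^{-2ac}\,dc\,dt$ and setting $X_1(t) = Y_{t-T}+\mathbf c$, one has $X_1 \eqD X_2$ where $X_2(t) = B_t - a|t| + \mathbf c'$ with $\mathbf c'$ sampled from a constant multiple of $e^{-2ac}\,dc$; the constant is determined by computing the law of $X_1(0)$ exactly as in Lemma~\ref{lem:0} (the Bessel-excursion argument of Lemma~\ref{lem:BM-above} goes through unchanged since it is the excursion measure of a $(2-2a)$-dimensional Bessel process, independent of the time-scaling convention for the reparametrizing Brownian motion). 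The law of $X_1(0)$ will come out to $c_a\, e^{-2ac}\,dc$ for an explicit $c_a$; I expect $c_a = \frac1{4a^2}$ up to the exact normalization bookkeeping, and in any case this is a direct Gaussian integral. Second, I would set $a = Q-\alpha$ and insert the $\frac\gamma2$ prefactor from Definition~\ref{def-sphere} together with the factor from the reparametrized Lebesgue measure (now with coefficient $1$ rather than $\frac12$ since we are at unit speed), matching it against $e^{(2\alpha+2\cdot 0 - 2Q)c} = e^{-2(Q-\alpha)c}$ appearing in $\LF_\cC^{(\alpha,\pm\infty)}$ via Definition~\ref{def-RV-cl}. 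Third, using that the law of $h^2_\cC$ (the lateral part) is translation-invariant and independent of $h^1$, I would conclude that $\wt\phi = \phi(\cdot + T)$ has the law of $\{X_1(\Re z) + h^2_\cC(z)\}_{z\in\cC}$, which by the unit-speed Proposition equals $\{X_2(\Re z) + h^2_\cC(z)\}_{z\in\cC}$, and by Definition~\ref{def-RV-cl} this is $\frac\gamma{4(Q-\alpha)^2}\LF_\cC^{(\alpha,\pm\infty)}$ once the constant $\frac\gamma{4(Q-\alpha)^2}$ is checked against the product of $\frac\gamma2$, the $\mathbf c'$-normalization $c_a$ with $a = Q-\alpha$, and the Radon--Nikodym bookkeeping — exactly as in the final paragraph of the proof of Theorem~\ref{thm-two-disk-equivalence}.

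\textbf{Main obstacle.} There is no conceptual difficulty; the only thing requiring care is the constant-chasing. The disk proof carries a $\frac\gamma{2(Q-\beta)^2}$ and the cylinder statement carries $\frac\gamma{4(Q-\alpha)^2}$, and the factor-of-two discrepancy traces to the different diffusivity ($B_{2t}$ on the strip versus $B_t$ on the cylinder) together with the corresponding change in the reparametrized-Lebesgue coefficient and in the $X_1(0)$-law constant. I would isolate this by carefully running through Lemma~\ref{lem:0} in the unit-speed setting: the computation $\int_0^\infty \P[A^M_t > -M]\,dt = \int_0^\infty \P[Z > a\sqrt{t}]\,dt = \int_0^\infty \frac{x^2}{a^2}\frac1{\sqrt{2\pi}}e^{-x^2/2}\,dx = \frac1{2a^2}$ is unchanged (the drift $at$ rather than $2at$ only rescales $t$), but the matching of $\frac\gamma2 e^{2(\alpha-Q)c}\,dc$ (Definition~\ref{def-sphere}) against the reparametrized Lebesgue measure — which now has coefficient $1$ on the cylinder instead of $\frac12$ on the strip where time ran as $B_{2t}$ — produces the prefactor $\frac\gamma4$ rather than $\frac\gamma8$ in the analogue of $\frac{\gamma}{8a^2}$, giving $\frac\gamma{4a^2} = \frac\gamma{4(Q-\alpha)^2}$ as claimed. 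Once this arithmetic is verified the proof is a word-for-word transcription of the proof of Theorem~\ref{thm-two-disk-equivalence}.
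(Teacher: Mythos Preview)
Your proposal is correct and takes essentially the same approach as the paper. One minor simplification: Proposition~\ref{prop-Bessel-equiv} is already stated for a standard (unit-speed) Brownian motion $\wh B_t - at$, so there is no need to restate it---you apply it directly with $a = Q-\alpha$, and the factor of $\tfrac12$ that appeared in the disk proof (coming from the time-change $B_{2t}\mapsto B_t$) simply does not arise, yielding the prefactor $\tfrac\gamma2 \cdot \tfrac1{2a^2} = \tfrac\gamma{4(Q-\alpha)^2}$.
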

\begin{proof}
	We follow the proof of Theorem~\ref{thm-two-disk-equivalence}, except that we set $a = (Q-\alpha)$, and no factor of $\frac12$ is incurred since the projection of $h \sim P_\cC$ to $\cH_1(\cC)$ is standard Brownian motion with no factor of 2 in its time parametrization. So the prefactor is instead $\frac\gamma{4a^2} = \frac{\gamma}{4 (Q-\alpha)^2}$. 
\end{proof}

Now, we give an LCFT description of a weight $W$ quantum sphere with a marked point added.

\begin{definition}\label{def-sphere-marked}
	Fix $W > 0$. 
	Let  $(D,\phi,a,b)$ be an embedding of a sample from  $\cM_2^\sph(W)$ and $\mu_\phi$ be the quantum are measure. 
	Let $A$ be the total $\mu_\phi$-area of $D$.
	Now consider $(D,\phi,a,b)$ from the reweighted measure $A\cM_2^\sph(W)$. Given $\phi$,  
	sample $\mathbf z$ from the probability measure proportional to  $\mu_\phi$.
	We write $\cM_{2, \bullet}^\sph(W)$ as the law of the marked quantum surface $(D,\phi,a,b,\mathbf z)/{\sim_\gamma}$.
\end{definition}

\begin{proposition}\label{prop-QS3-LF}
	For $W>0$,	let $\phi$ be sampled from $\frac{\pi \gamma}{2 (Q-\alpha)^{2}} \cdot \LF_\cC^{(\alpha, \pm\infty), (\gamma, 0)}$ where $\alpha = Q - \frac W{2\gamma}$. 
	Then  $(\cC, \phi, -\infty, +\infty, 0)/{\sim_\gamma}$ is  a sample from  $\cM_{2, \bullet}^\sph(W)$.
\end{proposition}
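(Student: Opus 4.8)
The plan is to mimic exactly the proof of Proposition~\ref{prop-2-pt-weight-length}, which is the disk analogue of this statement, replacing the strip $\cS$ by the cylinder $\cC$, the boundary length measure $\nu_\phi$ by the area measure $\mu_\phi$, and $\cMtwo(W)$ by $\cM_2^\sph(W)$. All the pieces needed are already in place: Theorem~\ref{thm-two-sph-equivalence} plays the role of Theorem~\ref{thm-two-disk-equivalence}, and for the Girsanov step we need the cylinder version of Lemma~\ref{lem-girsanov} (adding an interior point via the area measure) together with the cylinder version of Lemma~\ref{lem-inserting} (Lemma~\ref{lem-inserting-general} is the $\C$-statement; its proof on $\cC$ is identical to that of Lemma~\ref{lem-inserting}, using the analogue of Lemma~\ref{lem-interpret-partition-fn} for $\E_\cC[\mu_h(d^2 u)]$ and the relation $C_\cC^{(\alpha,\pm\infty),(\gamma,u)}\,d^2u = e^{\gamma(-(Q-\alpha)|\Re u|)}\E_\cC[\mu_h(d^2 u)]$, which follows from $\Var(h_\eps(u)) = -2\log\eps + 2|\Re u| + o_\eps(1)$ on $\cC$ and $e^{-\gamma^2/2 \cdot(-\log\eps)} = \eps^{\gamma^2/2}$ in the GMC normalization).

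First I would set up the measures: let $M^u_\bullet$ be defined so that $(\cC,\phi,-\infty,+\infty,u)/{\sim_\gamma}$ has law $\cM_{2,\bullet}^\sph(W)$ when $\phi\sim M^u_\bullet$, and the goal is $M^0_\bullet = \frac{\pi\gamma}{2(Q-\alpha)^2}\LF_\cC^{(\alpha,\pm\infty),(\gamma,0)}$. Let $M_0$ be the law of $\phi$ in Definition~\ref{def-sphere}, and let $Q_0$ be the measure with $\int f(\phi)g(z)\,Q_0(d\phi,d^2z) = \int f(\phi)\big(\int_\cC g(z)\,\mu_\phi(d^2z)\big)\,M_0(d\phi)$; then the $Q_0$-law of $(\cC,\phi,-\infty,+\infty,z)/{\sim_\gamma}$ is $\cM_{2,\bullet}^\sph(W)$ (note the reweighting by total area $A = \mu_\phi(\cC)$ is exactly the effect of integrating $g\equiv 1$ against $\mu_\phi$, and conditionally $\mathbf z$ is $\mu_\phi^\#$-distributed, matching Definition~\ref{def-sphere-marked}). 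Sample $(\phi,\mathbf z)$ and independently $T$ from $Q_0\times dt$, set $\mathbf u = \mathbf z - T$, $\wt\phi(\cdot) = \phi(\cdot+T)$, and let $M$ be the law of $(\wt\phi,\mathbf u)$. By translation invariance of Lebesgue measure, $M[f(\wt\phi)g(u)] = \int_\R M^u_\bullet[f(\wt\phi)]g(u)\,du$.

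Next, since the $\mu_\phi(\cC)^{-1}Q_0$-law of $\phi$ is $M_0$, Theorem~\ref{thm-two-sph-equivalence} gives that under $\mu_{\wt\phi}(\cC)^{-1}M$ the law of $\wt\phi$ is $\frac{\gamma}{4(Q-\alpha)^2}\LF_\cC^{(\alpha,\pm\infty)}$, and conditionally on $\wt\phi$ the point $\mathbf u$ is $\mu_{\wt\phi}^\#$-distributed; hence $M[f(\wt\phi)g(u)] = \frac{\gamma}{4(Q-\alpha)^2}\LF_\cC^{(\alpha,\pm\infty)}\big[f(\wt\phi)\int_\cC g(u)\,\mu_{\wt\phi}(d^2u)\big]$. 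Applying the cylinder analogue of Lemma~\ref{lem-inserting} turns this into $\frac{\gamma}{4(Q-\alpha)^2}\int_\cC \LF_\cC^{(\alpha,\pm\infty),(\gamma,u)}[f(\phi)]g(u)\,d^2u$. Comparing with the first formula for $M[f(\wt\phi)g(u)]$ and varying $f,g$, we get $M^u_\bullet[f(\wt\phi)] = \frac{\gamma}{4(Q-\alpha)^2}\LF_\cC^{(\alpha,\pm\infty),(\gamma,u)}[f(\wt\phi)]$; setting $u=0$ finishes the proof --- \emph{except} for the discrepancy of a factor $2\pi$ between $\frac{\gamma}{4(Q-\alpha)^2}$ and the claimed $\frac{\pi\gamma}{2(Q-\alpha)^2}$.

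The main obstacle, then, is tracking this factor of $2\pi$. This is exactly the kind of constant that differs between the disk and sphere cases (compare the $\pi$ in Proposition~\ref{prop-AHS} versus its absence in Remark~\ref{rmk:equivalence-disk}), and it arises because on the cylinder the lateral part $h^2_\cC$ has $L^2$-mass on circles of length $2\pi$ rather than $\pi$, or equivalently because the Girsanov/insertion computation on $\cC$ picks up a different normalization constant than on $\cS$ --- concretely, the analogue of Lemma~\ref{lem-interpret-partition-fn} on $\cC$ must be stated with the correct prefactor, and the $2\pi$ should enter there (the circle-average variance on $\cC$ is $-2\log\eps + 2|\Re u| + o(1)$, but the constant relating $C_\cC^{(\alpha,\pm\infty),(\gamma,u)}$ to $\E_\cC[\mu_h(d^2u)]$ carries the geometric factor). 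I would therefore be careful to compute the cylinder version of Lemma~\ref{lem-interpret-partition-fn} from scratch, getting $C_\cC^{(\alpha,\pm\infty),(\gamma,u)}\,d^2u = \tfrac{1}{2\pi}e^{\gamma(-(Q-\alpha)|\Re u|)}\,\E_\cC[\mu_h(d^2u)]$ (or whatever the correct constant is), which feeds a compensating $2\pi$ through the Lemma~\ref{lem-inserting} analogue and yields the stated $\frac{\pi\gamma}{2(Q-\alpha)^2}$. Alternatively, one can sidestep the explicit bookkeeping by transporting to $\C$ via Lemma~\ref{lem-equiv-C-cC} and comparing with the already-known constant in Proposition~\ref{prop-AHS}/Proposition~\ref{prop-QS3-LF} for the case $W = 4-\gamma^2$, $\alpha=\gamma$, but since Proposition~\ref{prop-AHS} is itself deduced from Proposition~\ref{prop-QS3-LF}, I would instead just do the direct constant computation.
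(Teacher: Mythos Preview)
Your overall approach is correct and matches the paper's: mimic Proposition~\ref{prop-2-pt-weight-length} using Theorem~\ref{thm-two-sph-equivalence} and the cylinder analogue of Lemma~\ref{lem-inserting}. You also correctly flag the $2\pi$ discrepancy as the only nontrivial point. However, your diagnosis of \emph{where} the $2\pi$ comes from is wrong, and this is tied to a genuine sloppiness in your first formula.

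You write $M[f(\wt\phi)g(u)] = \int_\R M^u_\bullet[f(\wt\phi)]g(u)\,du$, but $\mathbf u = \mathbf z - T$ lives in the two-dimensional cylinder $\cC$, not in $\R$: the horizontal translation $T\in\R$ only moves $\Re\mathbf z$, while $\Im\mathbf u = \Im\mathbf z$ is untouched. So this formula as written does not typecheck. On the other side, the cylinder analogue of Lemma~\ref{lem-inserting} gives
\[
\LF_\cC^{(\alpha,\pm\infty)}\Big[f(\phi)\int_\cC g(u)\,\mu_\phi(du)\Big] = \int_\cC \LF_\cC^{(\alpha,\pm\infty),(\gamma,u)}[f(\phi)]\,g(u)\,\Leb_\cC(du),
\]
with $\Leb_\cC$ the two-dimensional Lebesgue measure on $\R\times[0,2\pi)$. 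The mismatch between the one-dimensional translation measure $dt$ and the two-dimensional $\Leb_\cC$ is exactly the circumference $2\pi$; this is the paper's explanation. Concretely: since $M_0$ is invariant under vertical rotation, you may as well sample the translation from $\Leb_\cC$ rather than from $dt$, which multiplies the law of $\wt\phi$ in Theorem~\ref{thm-two-sph-equivalence} by $2\pi$, turning $\frac{\gamma}{4(Q-\alpha)^2}$ into $\frac{\pi\gamma}{2(Q-\alpha)^2}$, and now both formulas integrate against $\Leb_\cC(du)$ and can be compared directly.

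Your proposed source --- a $\tfrac{1}{2\pi}$ in the cylinder analogue of Lemma~\ref{lem-interpret-partition-fn} --- is not there. A direct computation gives $\Var(h_\eps(u)) = -\log\eps + |\Re u| + o(1)$ on $\cC$, hence $\E_\cC[\eps^{\gamma^2/2}e^{\gamma h_\eps(u)}] \to e^{\frac{\gamma^2}{2}|\Re u|}$, and one checks $C_\cC^{(\alpha,\pm\infty),(\gamma,u)} = e^{\gamma(-(Q-\alpha)|\Re u|)}\cdot e^{\frac{\gamma^2}{2}|\Re u|}$ exactly, with no stray constant.
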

\begin{proof}
	The argument  is identical to that of Proposition~\ref{prop-2-pt-weight-length}, except that we use the following in place of Lemma~\ref{lem-inserting}.
	\eqb\label{eq-inserting-cC}
	\LF_\cC^{(\alpha, \pm\infty)}\left[ f(\phi)\int_\cC g(u)\,\mu_\phi(du)\right]=
	\int_\cC  \LF_\cC^{(\alpha, \pm\infty), (\gamma, u)}[f(\phi)]g(u) \,\Leb_\cC (du).
	\eqe
	In Proposition~\ref{prop-2-pt-weight-length}, the prefactor $\frac\gamma{2(Q-\beta)^2}$ agrees with that of Theorem~\ref{thm-two-disk-equivalence}. The prefactor $\frac{\pi \gamma}{2 (Q-\alpha)^{2}}$ in this proposition instead differs from that of Theorem~\ref{thm-two-sph-equivalence} by a factor of $2\pi$, because $\cC$ is defined from $\R\times [0,2\pi]$ hence $\Leb_\cC$ in~\eqref{eq-inserting-cC} contributes a factor of $2\pi$.
\end{proof}

Finally, we prove Proposition~\ref{prop-AHS} and  Lemma~\ref{lem-inserting-general}.

\begin{proof}[Proof of Proposition~\ref{prop-AHS}]
		By definition, $\cM_{2, \bullet}^\sph(4-\gamma^2) = \QS_3$. 
		Thus the result follows by setting $\alpha=\gamma$
		in Proposition~\ref{prop-QS3-LF} and using the change of coordinate from Proposition~\ref{prop-RV-invariance} and Lemma~\ref{lem-equiv-C-cC}. 
\end{proof}
\begin{proof}[Proof of Lemma~\ref{lem-inserting-general}]
	We focus on proving 
	\begin{equation}\label{eq:insert-sphere}
		\LF_{\C}\left[ f(\phi)\int_\C g(u)\,\mu_\phi(du)\right]=
		\int_\C  \LF_{\C}^{(\gamma, u)}[f(\phi)]g(u)\,d^2 u.
	\end{equation}
	Once this is done, we can add insertions $(\alpha_i,z_i)_i$ to both sides of~\eqref{eq:insert-sphere} using the sphere analog of Lemma~\ref{lem:inserting} and its proof. This gives the general case.

	The proof of~\eqref{eq:insert-sphere} is almost identical to that of Lemma~\ref{lem-inserting} so we only point out the modifications. 
	First, as in Lemma~\ref{lem-girsanov}, by the Girsanov theorem we have 
	\[
	\int f(h)\left(\int_\C g(u)\,\mu_h(du)\right)\, P_\C(dh)=\int_\C\E_\C\left[f(h +\gamma G_\C(\cdot, u))\right] g(u)\rho(u) \,du.
	\] 
	where $\E_\C$ is the expectation over $P_\C$ and  $\rho(u)$ is defined by  $\rho(u) du=\E_\C[\mu_h(du)]$.
	On the other hand, the sphere analog of Lemma~\ref{lem-interpret-partition-fn} gives \(C^{(\gamma,u)}_{\C} du =e^{ -2\gamma Q \log |z|_+}  \rho(u)du\).
	Now the same argument as in the proof of Lemma~\ref{lem-inserting} gives~\eqref{eq:insert-sphere}. 
\end{proof}
 
\bibliographystyle{hmralphaabbrv}
\bibliography{cibib}

\newcommand{\etalchar}[1]{$^{#1}$}
\def\cprime{$'$}
\begin{thebibliography}{CDCH{\etalchar{+}}14}

\bibitem[AG21]{ag-disk}
M.~{Ang} and E.~{Gwynne}.
\newblock {Liouville quantum gravity surfaces with boundary as matings of
  trees}.
\newblock {\em Annales de l'Institut Henri Poincar{\'e}, Probabilit{\'e}s et
  Statistiques}, 57(1):1 -- 53, 2021.

\bibitem[AHS]{ahs-loop}
M.~Ang, N.~Holden, and X.~Sun.
\newblock {The SLE loop via conformal welding of quantum disks}.
\newblock {\em In preparation}.

\bibitem[AHS17]{ahs-sphere}
J.~Aru, Y.~Huang, and X.~Sun.
\newblock Two perspectives of the 2{D} unit area quantum sphere and their
  equivalence.
\newblock {\em Comm. Math. Phys.}, 356(1):261--283, 2017, \arxiv{1512.06190}.
  \MR{3694028}

\bibitem[AHS20]{ahs-disk-welding}
M.~{Ang}, N.~{Holden}, and X.~{Sun}.
\newblock {Conformal welding of quantum disks}.
\newblock {\em arXiv e-prints}, September 2020, \arxiv{2009.08389}.

\bibitem[ALS20]{als-cle-radius}
J.~Aru, T.~Lupu, and A.~Sepúlveda.
\newblock {Extremal distance and conformal radius of a CLE$_4$ loop}, 2020,
  2004.13570.

\bibitem[ARS21]{ARS-FZZ}
M.~Ang, G.~Remy, and X.~Sun.
\newblock {FZZ} formula of boundary {L}iouville {CFT} via conformal welding.
\newblock {\em ArXiv e-prints}, Apr 2021, \arxiv{2104.09478}.

\bibitem[AS21]{AS-CLE}
M.~Ang and X.~Sun.
\newblock Integrability of the conformal loop ensemble.
\newblock {\em arXiv e-print}, 2021.

\bibitem[Ber17]{berestycki-gmt-elementary}
N.~Berestycki.
\newblock An elementary approach to {G}aussian multiplicative chaos.
\newblock {\em Electron. Commun. Probab.}, 22:Paper No. 27, 12, 2017,
  \arxiv{1506.09113}. \MR{3652040}

\bibitem[BP21]{berestycki-lqg-notes}
N.~{Berestycki} and E.~{Powell}.
\newblock {I}ntroduction to the {G}aussian {F}ree {F}ield and {L}iouville
  {Q}uantum {G}ravity.
\newblock {A}vailable at
  \url{https://homepage.univie.ac.at/nathanael.berestycki/articles.html}, 2021.

\bibitem[BPZ84]{bpz-conformal-symmetry}
A.~A. Belavin, A.~M. Polyakov, and A.~B. Zamolodchikov.
\newblock Infinite conformal symmetry in two-dimensional quantum field theory.
\newblock {\em Nuclear Phys. B}, 241(2):333--380, 1984. \MR{757857}

\bibitem[CDCH{\etalchar{+}}14]{chel-smirnov}
D.~Chelkak, H.~Duminil-Copin, C.~Hongler, A.~Kemppainen, and S.~Smirnov.
\newblock Convergence of {I}sing interfaces to {S}chramm's {SLE} curves.
\newblock {\em C. R. Math. Acad. Sci. Paris}, 352(2):157--161, 2014.
  \MR{3151886}

\bibitem[{Cer}19]{cercle-quantum-disk}
B.~{Cercl{\'e}}.
\newblock {Unit boundary length quantum disk: a study of two different
  perspectives and their equivalence}.
\newblock {\em ArXiv e-prints}, Dec 2019, \arxiv{1912.08012}.

\bibitem[DDDF20]{dddf-lfpp}
J.~Ding, J.~Dub\'{e}dat, A.~Dunlap, and H.~Falconet.
\newblock Tightness of {L}iouville first passage percolation for {$\gamma \in
  (0,2)$}.
\newblock {\em Publ. Math. Inst. Hautes \'{E}tudes Sci.}, 132:353--403, 2020.
  \MR{4179836}

\bibitem[DKRV16]{dkrv-lqg-sphere}
F.~David, A.~Kupiainen, R.~Rhodes, and V.~Vargas.
\newblock Liouville quantum gravity on the {R}iemann sphere.
\newblock {\em Comm. Math. Phys.}, 342(3):869--907, 2016, \arxiv{1410.7318}.
  \MR{3465434}

\bibitem[DMS14]{wedges}
B.~{Duplantier}, J.~{Miller}, and S.~{Sheffield}.
\newblock {Liouville quantum gravity as a mating of trees}.
\newblock {\em ArXiv e-prints}, Sep 2014, \arxiv{1409.7055}.

\bibitem[DO94]{do-dozz}
H.~{Dorn} and H.-J. {Otto}.
\newblock {Two- and three-point functions in Liouville theory}.
\newblock {\em Nuclear Physics B}, 429:375--388, October 1994,
  \arxiv{hep-th/9403141}.

\bibitem[DRV16]{drv-torus}
F.~David, R.~Rhodes, and V.~Vargas.
\newblock Liouville quantum gravity on complex tori.
\newblock {\em J. Math. Phys.}, 57(2):022302, 25, 2016, \arxiv{1504.00625}.
  \MR{3450564}

\bibitem[DS11]{shef-kpz}
B.~Duplantier and S.~Sheffield.
\newblock Liouville quantum gravity and {KPZ}.
\newblock {\em Invent. Math.}, 185(2):333--393, 2011, \arxiv{1206.0212}.
  \MR{2819163 (2012f:81251)}

\bibitem[Dub05]{dubedat-rho}
J.~Dub\'{e}dat.
\newblock {${\rm SLE}(\kappa,\rho)$} martingales and duality.
\newblock {\em Ann. Probab.}, 33(1):223--243, 2005. \MR{2118865}

\bibitem[Dub06]{dub-watts}
J.~Dub\'{e}dat.
\newblock Excursion decompositions for {SLE} and {W}atts' crossing formula.
\newblock {\em Probab. Theory Related Fields}, 134(3):453--488, 2006.
  \MR{2226888}

\bibitem[Dub09a]{dubedat-duality}
J.~Dub{\'e}dat.
\newblock Duality of {S}chramm-{L}oewner evolutions.
\newblock {\em Ann. Sci. \'Ec. Norm. Sup\'er. (4)}, 42(5):697--724, 2009,
  \arxiv{0711.1884}. \MR{2571956 (2011g:60151)}

\bibitem[Dub09b]{dubedat-coupling}
J.~Dub{\'e}dat.
\newblock S{LE} and the free field: partition functions and couplings.
\newblock {\em J. Amer. Math. Soc.}, 22(4):995--1054, 2009, \arxiv{0712.3018}.
  \MR{2525778 (2011d:60242)}

\bibitem[Far08]{faraut-lie-groups}
J.~Faraut.
\newblock {\em Analysis on {L}ie groups}, volume 110 of {\em Cambridge Studies
  in Advanced Mathematics}.
\newblock Cambridge University Press, Cambridge, 2008.
\newblock An introduction. \MR{2426516}

\bibitem[FZZ00]{fzz}
V.~Fateev, A.~Zamolodchikov, and A.~Zamolodchikov.
\newblock Boundary {L}iouville field theory {I}. {B}oundary state and boundary
  two-point function.
\newblock {\em arXiv preprint hep-th/0001012}, 2000.

\bibitem[GHM20]{ghm-conformal-dim}
E.~Gwynne, N.~Holden, and J.~Miller.
\newblock Dimension transformation formula for conformal maps into the
  complement of an {SLE} curve.
\newblock {\em Probab. Theory Related Fields}, 176(1-2):649--667, 2020.
  \MR{4055197}

\bibitem[GHS19]{ghs-mating-survey}
E.~{Gwynne}, N.~{Holden}, and X.~{Sun}.
\newblock {Mating of trees for random planar maps and Liouville quantum
  gravity: a survey}.
\newblock {\em ArXiv e-prints}, Oct 2019, \arxiv{1910.04713}.

\bibitem[GKRV20]{gkrv-bootstrap}
C.~{Guillarmou}, A.~{Kupiainen}, R.~{Rhodes}, and V.~{Vargas}.
\newblock {Conformal bootstrap in Liouville Theory}.
\newblock {\em arXiv e-prints}, page arXiv:2005.11530, May 2020, 2005.11530.

\bibitem[GM16]{gwynne-miller-saw}
E.~{Gwynne} and J.~{Miller}.
\newblock {Convergence of the self-avoiding walk on random quadrangulations to
  SLE$_{8/3}$ on $\sqrt{8/3}$-Liouville quantum gravity}.
\newblock {\em {A}nnales de {l'ENS}}, to appear, 2016, \arxiv{1608.00956}.

\bibitem[GM21]{gm-uniqueness}
E.~Gwynne and J.~Miller.
\newblock Existence and uniqueness of the {L}iouville quantum gravity metric
  for {$\gamma\in(0,2)$}.
\newblock {\em Invent. Math.}, 223(1):213--333, 2021. \MR{4199443}

\bibitem[GRV19]{gkrv-genus}
C.~Guillarmou, R.~Rhodes, and V.~Vargas.
\newblock Polyakov's formulation of {$2d$} bosonic string theory.
\newblock {\em Publ. Math. Inst. Hautes \'{E}tudes Sci.}, 130:111--185, 2019.
  \MR{4028515}

\bibitem[Gwy20]{gwynne-ams-survey}
E.~Gwynne.
\newblock Random surfaces and {L}iouville quantum gravity.
\newblock {\em Notices Amer. Math. Soc.}, 67(4):484--491, 2020. \MR{4186266}

\bibitem[HRV18]{hrv-disk}
Y.~Huang, R.~Rhodes, and V.~Vargas.
\newblock Liouville quantum gravity on the unit disk.
\newblock {\em Ann. Inst. Henri Poincar\'{e} Probab. Stat.}, 54(3):1694--1730,
  2018, \arxiv{1502.04343}. \MR{3825895}

\bibitem[HS19]{hs-cardy-embedding}
N.~{Holden} and X.~{Sun}.
\newblock {Convergence of uniform triangulations under the Cardy embedding}.
\newblock {\em ArXiv e-prints}, May 2019, \arxiv{1905.13207}.

\bibitem[IJS16]{cle-3pt-phy}
Y.~Ikhlef, J.~L. Jacobsen, and H.~Saleur.
\newblock {Three-Point Functions in c\ensuremath{\leq}1 Liouville Theory and
  Conformal Loop Ensembles}.
\newblock {\em Phys. Rev. Lett.}, 116(13):130601, 2016, 1509.03538.

\bibitem[Kem17]{kemppainen-book}
A.~Kemppainen.
\newblock {\em Schramm-{L}oewner evolution}, volume~24 of {\em SpringerBriefs
  in Mathematical Physics}.
\newblock Springer, Cham, 2017. \MR{3751352}

\bibitem[Kin93]{kingman-ppp}
J.~F.~C. Kingman.
\newblock {\em Poisson processes}, volume~3 of {\em Oxford Studies in
  Probability}.
\newblock The Clarendon Press, Oxford University Press, New York, 1993.
\newblock Oxford Science Publications. \MR{1207584}

\bibitem[KRV20]{krv-dozz}
A.~Kupiainen, R.~Rhodes, and V.~Vargas.
\newblock Integrability of {L}iouville theory: proof of the {DOZZ} formula.
\newblock {\em Ann. of Math. (2)}, 191(1):81--166, 2020. \MR{4060417}

\bibitem[Law05]{lawler-book}
G.~F. Lawler.
\newblock {\em Conformally invariant processes in the plane}, volume 114 of
  {\em Mathematical Surveys and Monographs}.
\newblock American Mathematical Society, Providence, RI, 2005. \MR{2129588
  (2006i:60003)}

\bibitem[LSW03]{lsw-restriction}
G.~Lawler, O.~Schramm, and W.~Werner.
\newblock Conformal restriction: the chordal case.
\newblock {\em J. Amer. Math. Soc.}, 16(4):917--955 (electronic), 2003,
  \arxiv{math/0209343v2}. \MR{1992830 (2004g:60130)}

\bibitem[LSW04]{lsw-lerw-ust}
G.~F. Lawler, O.~Schramm, and W.~Werner.
\newblock Conformal invariance of planar loop-erased random walks and uniform
  spanning trees.
\newblock {\em Ann. Probab.}, 32(1B):939--995, 2004, \arxiv{math/0112234}.
  \MR{2044671 (2005f:82043)}

\bibitem[MS16]{ig1}
J.~Miller and S.~Sheffield.
\newblock Imaginary geometry {I}: interacting {SLE}s.
\newblock {\em Probab. Theory Related Fields}, 164(3-4):553--705, 2016,
  \arxiv{1201.1496}. \MR{3477777}

\bibitem[MS19]{sphere-constructions}
J.~Miller and S.~Sheffield.
\newblock Liouville quantum gravity spheres as matings of finite-diameter
  trees.
\newblock {\em Ann. Inst. Henri Poincar\'{e} Probab. Stat.}, 55(3):1712--1750,
  2019, \arxiv{1506.03804}. \MR{4010949}

\bibitem[MSW20]{msw-cle}
J.~{Miller}, S.~{Sheffield}, and W.~{Werner}.
\newblock {Simple Conformal Loop Ensembles on Liouville Quantum Gravity}.
\newblock {\em arXiv e-prints}, page arXiv:2002.05698, February 2020,
  2002.05698.

\bibitem[MW17]{miller-wu-dim}
J.~Miller and H.~Wu.
\newblock Intersections of {SLE} {P}aths: the double and cut point dimension of
  {SLE}.
\newblock {\em Probab. Theory Related Fields}, 167(1-2):45--105, 2017,
  \arxiv{1303.4725}. \MR{3602842}

\bibitem[Pol81]{polyakov-qg1}
A.~M. Polyakov.
\newblock Quantum geometry of bosonic strings.
\newblock {\em Phys. Lett. B}, 103(3):207--210, 1981. \MR{623209 (84h:81093a)}

\bibitem[Rem18]{remy-annulus}
G.~Remy.
\newblock Liouville quantum gravity on the annulus.
\newblock {\em J. Math. Phys.}, 59(8):082303, 26, 2018, \arxiv{1711.06547}.
  \MR{3843631}

\bibitem[Rem20]{remy-fb-formula}
G.~Remy.
\newblock The {F}yodorov-{B}ouchaud formula and {L}iouville conformal field
  theory.
\newblock {\em Duke Math. J.}, 169(1):177--211, 2020. \MR{4047550}

\bibitem[RV14]{rhodes-vargas-review}
R.~Rhodes and V.~Vargas.
\newblock Gaussian multiplicative chaos and applications: {A} review.
\newblock {\em Probab. Surv.}, 11:315--392, 2014, \arxiv{1305.6221}.
  \MR{3274356}

\bibitem[RZ20a]{rz-gmc-interval}
G.~Remy and T.~Zhu.
\newblock The distribution of {G}aussian multiplicative chaos on the unit
  interval.
\newblock {\em Ann. Probab.}, 48(2):872--915, 2020. \MR{4089497}

\bibitem[RZ20b]{rz-boundary}
G.~Remy and T.~Zhu.
\newblock Integrability of boundary {L}iouville conformal field theory.
\newblock {\em arXiv preprint arXiv:2002.05625}, 2020.

\bibitem[Sch00]{schramm0}
O.~Schramm.
\newblock Scaling limits of loop-erased random walks and uniform spanning
  trees.
\newblock {\em Israel J. Math.}, 118:221--288, 2000, \arxiv{math/9904022}.
  \MR{1776084 (2001m:60227)}

\bibitem[Sch01]{schramm-left-passage}
O.~Schramm.
\newblock A percolation formula.
\newblock {\em Electronic Communications in Probability}, 6:115--120, 2001.

\bibitem[She07]{shef-gff}
S.~Sheffield.
\newblock Gaussian free fields for mathematicians.
\newblock {\em Probab. Theory Related Fields}, 139(3-4):521--541, 2007,
  \arxiv{math/0312099}. \MR{2322706 (2008d:60120)}

\bibitem[She16a]{shef-zipper}
S.~Sheffield.
\newblock Conformal weldings of random surfaces: {SLE} and the quantum gravity
  zipper.
\newblock {\em Ann. Probab.}, 44(5):3474--3545, 2016, \arxiv{1012.4797}.
  \MR{3551203}

\bibitem[She16b]{shef-burger}
S.~Sheffield.
\newblock Quantum gravity and inventory accumulation.
\newblock {\em Ann. Probab.}, 44(6):3804--3848, 2016, \arxiv{1108.2241}.
  \MR{3572324}

\bibitem[Smi01]{smirnov-cardy}
S.~Smirnov.
\newblock Critical percolation in the plane: conformal invariance, {C}ardy's
  formula, scaling limits.
\newblock {\em C. R. Acad. Sci. Paris S\'er. I Math.}, 333(3):239--244, 2001,
  \arxiv{0909.4499}. \MR{1851632 (2002f:60193)}

\bibitem[{S}pr09]{spreafico2009}
M.~{S}preafico.
\newblock On the barnes double zeta and gamma functions.
\newblock {\em Journal of Number Theory}, 129(9):2035--2063, 2009.

\bibitem[SS09]{ss-dgff}
O.~Schramm and S.~Sheffield.
\newblock Contour lines of the two-dimensional discrete {G}aussian free field.
\newblock {\em Acta Math.}, 202(1):21--137, 2009, \arxiv{math/0605337}.
  \MR{2486487 (2010f:60238)}

\bibitem[SSW09]{ssw-radii}
O.~Schramm, S.~Sheffield, and D.~B. Wilson.
\newblock Conformal radii for conformal loop ensembles.
\newblock {\em Comm. Math. Phys.}, 288(1):43--53, 2009, \arxiv{math/0611687}.
  \MR{2491617 (2010g:60218)}

\bibitem[SW11]{sw-watts}
S.~Sheffield and D.~B. Wilson.
\newblock Schramm's proof of {W}atts' formula.
\newblock {\em Ann. Probab.}, 39(5):1844--1863, 2011. \MR{2884875}

\bibitem[SW16]{shef-wang-lqg-coord}
S.~{Sheffield} and M.~{Wang}.
\newblock {Field-measure correspondence in Liouville quantum gravity almost
  surely commutes with all conformal maps simultaneously}.
\newblock {\em ArXiv e-prints}, May 2016, \arxiv{1605.06171}.

\bibitem[Tes95]{Teschner-shift}
J.~Teschner.
\newblock {On the Liouville three point function}.
\newblock {\em Phys. Lett. B}, 363:65--70, 1995, hep-th/9507109.

\bibitem[{Var}17]{vargas-dozz-notes}
V.~{Vargas}.
\newblock {Lecture notes on Liouville theory and the DOZZ formula}.
\newblock {\em ArXiv e-prints}, Dec 2017, \arxiv{1712.00829}.

\bibitem[Zha08]{zhan-duality1}
D.~Zhan.
\newblock Duality of chordal {SLE}.
\newblock {\em Invent. Math.}, 174(2):309--353, 2008, \arxiv{0712.0332}.
  \MR{2439609 (2010f:60239)}

\bibitem[ZZ96]{zz-dozz}
A.~{Zamolodchikov} and A.~{Zamolodchikov}.
\newblock {Conformal bootstrap in Liouville field theory}.
\newblock {\em Nuclear Physics B}, 477:577--605, February 1996,
  \arxiv{hep-th/9506136}.

\end{thebibliography}

\end{document}